\date{}
\def\theenumi{\arabic{enumi}}
\def\theenumii{\alph{enumii}}
\def\p@enumii{\theenumi.}
\def\theenumiii{\arabic{enumiii}}
\def\p@enumiii{(\theenumi)(\theenumii)}
\def\p@enumiv{\p@enumiii.\theenumiii}
\newtheorem{theorem}{Theorem}[section]
\newtheorem{corollary}[theorem]{Corollary}
\newtheorem{proposition}[theorem]{Proposition}
\newtheorem{lemma}[theorem]{Lemma}
\theoremstyle{definition}
\newtheorem{definition}[theorem]{Definition}
\newtheorem{remark}[theorem]{Remark}
\newtheorem{assumption}[theorem]{Assumption}
\newcommand{\ra}{\rightarrow}
\DeclareMathOperator*{\esssup}{ess\,sup}
\DeclareMathOperator*{\essinf}{ess\,inf}
\newcommand{\tens}[1]{%
	\mathbin{\mathop{\otimes}\limits_{#1}}%
}
\begin{document}

\title{Time regularity for local weak solutions of the heat equation on local Dirichlet spaces}

\author{Qi Hou\thanks{%
Partially supported by NSF grant DMS  1404435 and DMS 1707589} \,
and Laurent Saloff-Coste\thanks{ Partially supported by NSF grant DMS  1404435 and DMS 1707589} \\
{\small Department of Mathematics}\\
{\small Cornell University}  }
\maketitle

\begin{abstract}
We study the time regularity of local weak solutions of the heat equation in the context of local regular symmetric Dirichlet spaces.  Under two basic and rather minimal assumptions, namely, the existence of certain cut-off functions and a very weak $L^2$ Gaussian type upper-bound for the heat semigroup, we prove that the time derivatives of a local weak solution of the heat equation are themselves local weak solutions. This applies, for instance, to local weak solutions of parabolic equations with uniformly elliptic symmetric divergence form second order operators with measurable coefficients. We describe some applications 
to the structure of ancient local weak solutions of such equations which generalize recent results of \cite{app1} and \cite{app2analyticity}.
\end{abstract}

\section{Introduction}   \setcounter{equation}{0} 
 When $-P$ is the infinitesimal generator of a self-adjoint strongly continuous semigroup of operators $H_t=e^{-tP}$ acting on a Hilbert space $\mathbf H$, spectral theory implies the time regularity of any (global) solution $u(t)= H_tu_0$  of the equation $(\partial_t+P)u=0$ with initial data $u_0\in \mathbf H$.  When $\mathbf H=L^2(X,m)$ and $-P$ is associated with a bilinear form $\mathcal E$ so that $\mathcal E(f,g)=\int fPg\, dm$ for enough functions $f,g$, it is often very useful to consider the concept of local weak solution of the equation $(\partial_t+P) u=0$ in $I\times \Omega \subset \mathbb R\times X$, in some appropriate sense. Such definition goes roughly as follows. A local weak solution $u$ is a function defined on $I\times \Omega$ which MUST belong (locally) to a certain  function space $\mathcal F$ (in the most classical case, $\mathcal F$ is related to the Sobolev space) and satisfies 
\begin{equation} \label{ws}
-\int_{I\times \Omega} u \partial_t \phi \,dtdm +\int_I \mathcal E(u,\phi) dt=0
\end{equation}
for all ``test functions'' $\phi$ compactly supported in $I\times \Omega$. The precise nature of the space $\mathcal F$ and of the space of test functions to be used here are an important part of such definition.  When dealing with such a definition, the time regularity of a local weak solution is not automatic. Formally, one expects 
the time derivative of a local weak solution to be a local weak solution of (\ref{ws}), but the problem lies with the a priori requirement that $v=\partial _tu$ 
belongs locally to the space $\mathcal F$.   

Consider the classical case when $P$ is a symmetric locally uniformly elliptic second order operator with measurable coefficients  $(a_{ij}(x))_{i,j=1}^n$
so that $$\mathcal E(f,g)= \int \sum_{i,j} a_{ij}(x) \partial_if(x)\partial_jg(x)\,dx.$$ 
The basic assumption, local uniform ellipticity, means that for any compact subset $K$ there are $\epsilon_K>0$ and $C_K<\infty$ such that
$$\max_{i,j}\sup_K\{|a_{ij}|\}\le C_K \mbox{ and } \sum_{i,j}a_{ij}\xi_i\xi_j\ge \epsilon_K \|\xi\|_2^2,\,\forall\xi=(\xi_i)_1^n .$$ 
A local weak solution of $(\partial_t+P)u=0$ in $(a,b)\times \Omega$ is an element $u \in L_{\mbox{\tiny loc}}^2((a,b)\ra W_{\mbox{\tiny loc}}^{1,2}(\Omega)) $ such that
$$-\int _a^b\int_\Omega u(t,x) \partial_t\phi (t,x) dxdt+\int_a^b\int_\Omega \sum_{i,j} a_{ij}(x) \partial_i u(t,x) \partial_j \phi (t,x) \,dx dt=0$$
for all functions $\phi \in C^{\infty}((a,b)\times \Omega)$ with compact support in $(a,b)\times \Omega$.  

One consequence of the general results proved in this paper is that the iterated time derivatives $v_{k}(t,x)=\partial^k_tu(t,x)$ of any local weak solution $u$ of the equation above are themselves in $L_{\mbox{\tiny loc}}^2((a,b)\ra W_{\mbox{\tiny loc}}^{1,2}(\Omega))$ and are local weak solutions of the same equation in $(a,b)\times \Omega$. This follows for instance from the following more general  theorem. In this statement we assume that $(X,m)$ is a locally compact separable Hausdorff space and $m$ is a positive radon measure with full support. 

\begin{theorem} Assume $(\mathcal E,\mathcal F)$ is a symmetric strictly local regular Dirichlet form on $L^2(X,m)$ whose intrinsic pseudo-metric is a continuous metric which induces the topology of $X$. For any local weak solution $u$ of the associated heat equation  in $(a,b)\times \Omega$, the iterated time derivatives $v_{k}=\partial_t^k u$ are themselves local weak solutions of the same heat equation in $(a,b)\times \Omega$.
\end{theorem}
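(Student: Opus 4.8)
The plan is to reduce the statement to the one-step assertion that $\partial_t u$ is again a local weak solution of the heat equation whenever $u$ is, and then to induct: applying this one-step assertion to $v_{k-1}=\partial_t^{k-1}u$ in place of $u$ gives that every $v_k=\partial_t^k u$ is a local weak solution. So fix a local weak solution $u$ of the heat equation on $(a,b)\times\Omega$; we must show $\partial_t u$ is one too. Testing \eqref{ws} against products $\alpha(t)\beta(x)$ shows that for each fixed $\beta\in\mathcal F_c(\Omega)$ the scalar function $t\mapsto\int u(t)\beta\,dm$ has weak time derivative $-\mathcal E(u(t),\beta)$, which is $L^2_{loc}$ in time; hence $\partial_t u$ already exists as a distribution in time, identifiable with $-Pu$ as an $L^2_{loc}$ function in time with values in the local dual $\mathcal F'_{loc}(\Omega)$. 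The content of the theorem beyond this is the \emph{regularity upgrade} $\partial_t u\in L^2_{loc}((a,b)\to\mathcal F_{loc}(\Omega))$: once this is known, differentiating the identity $\tfrac{d}{dt}\int u(t)\beta\,dm=-\mathcal E(u(t),\beta)$ in $t$ — equivalently, testing \eqref{ws} with $-\partial_t\psi$ and integrating by parts in time, using bilinearity of $\mathcal E$ and the product rule $\tfrac{d}{dt}\mathcal E(u(t),\psi(t))=\mathcal E(\partial_t u(t),\psi(t))+\mathcal E(u(t),\partial_t\psi(t))$ — shows at once that $\partial_t u$ satisfies \eqref{ws}. So the whole theorem rests on the upgrade.

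To obtain the upgrade I would localize and then invoke the heat semigroup $H_t=e^{-tP}$. Fix open sets $\Omega_1\Subset\Omega_2\Subset\Omega_3\Subset\Omega$ and a compact interval $[a_1,b_1]\subset(a,b)$. Since the intrinsic metric is a continuous metric inducing the topology, the distance function lies in $\mathcal F_{loc}$ with energy density bounded by $m$, so there is a cut-off $\eta\in\mathcal F(X)$ with $0\le\eta\le1$, $\eta\equiv1$ on $\Omega_2$, $\mathrm{supp}\,\eta\subset\Omega_3$, and $d\Gamma(\eta,\eta)\le\Lambda\,dm$. Put $w:=\eta^2 u$, extended by zero off $\Omega_3$; for a.e.\ $t$ this lies in $\mathcal F(X)$ with compact support. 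Using \eqref{ws} with test functions of the form $\eta^2\phi$ together with the Leibniz identities for the energy measure of the strictly local form $\mathcal E$, one verifies that $w$ is a (global) weak solution on $(a,b)\times X$ of an inhomogeneous heat equation $\partial_t w+Pw=g$, where $g\in L^2_{loc}((a,b)\to\mathcal F'(X))$ is assembled from $u(t)$ and the energy measures $\Gamma(\eta,u(t))$, $\Gamma(\eta,\eta)$ — crucially involving \emph{no} time derivative of $u$ — and is supported in the annular region $\Omega_3\setminus\overline{\Omega_2}$ where $\eta$ fails to be locally constant.

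Variation of parameters now applies: $w\in L^2_{loc}(\to\mathcal F(X))$ with $\partial_t w\in L^2_{loc}(\to\mathcal F'(X))$ has a continuous $L^2(X)$ representative, and for $a<t_0<t<b$
\[
\partial_t w(t)=-PH_{t-t_0}w(t_0)+g(t)-\int_{t_0}^{t}PH_{t-s}g(s)\,ds .
\]
Restrict to $\Omega_1$ and $t\in[a_1,b_1]$, with $t_0\in(a,a_1)$. On $\Omega_1$ the term $g(t)$ vanishes; the term $PH_{t-t_0}w(t_0)$ is bounded in $\mathcal F(X)$ uniformly in $t\in[a_1,b_1]$ by analyticity of the self-adjoint semigroup, $t-t_0$ being bounded below. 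For the convolution term, $g(s)$ is supported at intrinsic distance $\ge\delta:=d(\Omega_1,X\setminus\Omega_2)>0$ from $\Omega_1$; combining the Davies--Gaffney ($L^2$ Gaussian-type) upper bound, valid for strictly local forms, with the analytic smoothing of $H_r$ yields a Gaffney-type estimate $\|\mathbf 1_{\Omega_1}PH_r\,g(s)\|_{\mathcal F}\le C\,e^{-c\delta^2/r}r^{-2}\|g(s)\|_{\mathcal F'}$, and since $r\mapsto e^{-c\delta^2/r}r^{-2}$ is integrable on $(0,\infty)$, Young's inequality bounds $\int_{t_0}^{t}\mathbf 1_{\Omega_1}PH_{t-s}g(s)\,ds$ in $L^2([a_1,b_1]\to\mathcal F(X))$ by $C\|g\|_{L^2((a,b)\to\mathcal F'(X))}$. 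Hence $\partial_t w\in L^2([a_1,b_1]\to\mathcal F(\Omega_1))$, and since $w=u$ on $\Omega_2\supset\Omega_1$ this is $\partial_t u\in L^2([a_1,b_1]\to\mathcal F(\Omega_1))$. As $\Omega_1\Subset\Omega$ and $[a_1,b_1]\subset(a,b)$ were arbitrary, the upgrade holds; with the first paragraph this proves $\partial_t u$ is a local weak solution, and induction finishes the theorem.

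I expect the decisive obstacle to be precisely the convolution term $\int PH_{t-s}g(s)\,ds$: the operator $PH_r$ has size only $r^{-1}$ into $L^2$, a rate that is \emph{not} time-integrable near $r=0$, so variation of parameters alone does not produce a Sobolev-class time derivative — this is the analytic shadow of the fact, stressed in the introduction, that time regularity is genuinely non-trivial for weak solutions. The cure, and the reason the two standing hypotheses (good cut-off functions, an $L^2$ Gaussian-type upper bound) are exactly what is needed, is that after localization the inhomogeneity $g$ can be taken supported away from the set $\Omega_1$ on which we estimate $\partial_t u$, so the Davies--Gaffney factor $e^{-c\delta^2/r}$ beats the borderline-divergent kernel. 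Establishing this ``analytic smoothing $\times$ off-diagonal Gaussian decay'' bound for $PH_r:\mathcal F'(X)\to\mathcal F(X)$ between sets at positive distance is the technical core; the Leibniz identities producing $g$, the variation-of-parameters formula, and the differentiation of \eqref{ws} are routine once the function classes are in place.
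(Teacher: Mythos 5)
Your strategy is genuinely different from the paper's. The paper does not localize and run Duhamel; instead it builds an approximating family $\widetilde u_\tau(s,\cdot)=\int_I \rho_\tau(s-t)H_{s-t}(\overline\eta^t u^t)\,dt$ with a mollifier $\rho_\tau$ supported in $(\tau,2\tau)$, and shows $\{\overline\psi\widetilde u_\tau\}$ is Cauchy in $W^{n,2}(I\to\mathcal F)$ by estimating $\|\partial_\tau\partial_s^k(\overline\psi\widetilde u_\tau)\|$ and integrating the rate $\tau^{-1/2}$ in $\tau$. This cleverly keeps the semigroup time $s-t$ in $[\tau,2\tau]$, \emph{bounded away from zero} at each fixed $\tau$, so the worst object one ever needs is an off-diagonal bound on $\partial_t^k H_t$ as an operator on $L^2$ for $t$ bounded away from $0$ (Assumption~\ref{L2gaussian}), rather than the borderline-singular $PH_r$ as $r\downarrow 0$ that your Duhamel integral forces you to confront head on. In exchange, the paper pays in bookkeeping: splitting $\|\partial_\tau\partial_s^k(\overline\psi\widetilde u_\tau)\|$ into the three pieces $A_k,B_k,C_k$ and treating each with cut-off and Cauchy--Schwarz manipulations. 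Your reduction in the first paragraph (prove the $\mathcal F$-regularity upgrade, then differentiate the weak identity, then induct) mirrors the paper's split into Theorem~\ref{L2thm} and Corollary~\ref{L2corollary}.

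The genuine gap is the estimate
\[
\bigl\| \mathbf 1_{\Omega_1} P H_r\, g(s)\bigr\|_{\mathcal F} \;\lesssim\; e^{-c\delta^2/r}\, r^{-2}\, \|g(s)\|_{\mathcal F'},
\]
which you declare to be ``the technical core'' but offer only one sentence for. It is not a routine composition of ``analytic smoothing $\times$ off-diagonal decay.'' Two separate difficulties occur. First, $\mathbf 1_{\Omega_1} f$ is not in $\mathcal F$ even when $f\in\mathcal D(P)$; what you must actually bound is a \emph{localized} energy $\int_{\Omega_1}d\Gamma(PH_rg,PH_rg)+\|PH_rg\|_{L^2(\Omega_1)}^2$, and that requires a Caccioppoli-type cut-off argument (Lemma~\ref{gradientineq}) applied to $PH_r g$ --- which brings in $P^2 H_r g$ and costs another factor of $r^{-1}$ unless the off-diagonal gain is fed in at the same time. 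Second and more seriously, you cannot first smooth from $\mathcal F'$ to $L^2$ and then apply Davies--Gaffney, because the smoothing step $H_{r/2}:\mathcal F'\to L^2$ destroys the support separation. You need to work throughout by duality, insert a second cut-off $\chi$ supported near $\operatorname{supp} g(s)$, write $\langle g,\, PH_r\phi\rangle=\langle g,\,\chi\,PH_r\phi\rangle$, estimate $\|\chi\,PH_r\phi\|_{\mathcal F}$ by a Caccioppoli inequality reducing it to $\|PH_r\phi\|_{L^2(\operatorname{supp}\chi)}$ and $\|P^2H_r\phi\|_{L^2(\operatorname{supp}\chi)}$, and only then invoke the $L^2$ off-diagonal bound on $\partial_t^k H_t$. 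All these moves are available under the paper's hypotheses (and the intrinsic-metric hypothesis gives bounded-gradient cut-offs, so the constants are clean), but the argument is several pages, not a slogan, and without it your variation-of-parameters proof has no content at the one place where it is not formal. A smaller wrinkle worth naming: your Duhamel formula $\partial_t w = -PH_{t-t_0}w(t_0)+g(t)-\int_{t_0}^t PH_{t-s}g(s)\,ds$ is justified (for a.e.\ $t$, in $\mathcal F'$) only once the convolution term is shown to converge absolutely in the relevant norm --- i.e.\ the Gaffney bound is a prerequisite to writing the formula you then use to invoke it, so the presentation should be reorganized so the a priori bound comes first and the Duhamel identity is derived locally on $\Omega_1$ afterwards.
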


Although this theorem excludes fractal sets such as the Sierpinski Gasket and the Sierpinski Carpet (on such examples, the intrinsic pseudo-distance is 
identically equal to $0$) as well as some infinite dimensional examples (e.g., on the  infinite dimensional torus in cases when the intrinsic pseudo-distance  is infinite almost surely), these cases are in fact also covered by our more general results.  Indeed, only two related types of assumptions play a key part in our results:
\begin{itemize}
\item  the existence of good cut-off functions (in a sense that is somewhat weaker than most conditions of this type that exist in the literature);  
\item  a very weak $L^2$-Gaussian bound, namely, the fact that for any $a>0$\ and any integer $k=0,1,2,\dots$, for any disjoint compact sets $V_1,V_2$,
$$  t^{-a}\sup_{\phi_1,\phi_2}\int _X \phi_2 \partial_t^k H_t\phi_1  dm  \ra 0\ \ (\mbox{as }t\rightarrow 0)$$
where the sup is taken over all  functions $\phi_1$, $\phi_2$ supported respectively in $V_1,V_2$ and with $L^2$-norm at most $1$.
\end{itemize}

As an application of our results, we extend two recent structure theorems regarding ancient weak solutions, \cite{app1,App2LZ,app2analyticity}. The first result of this type describes very general conditions under which any ancient (local) weak solutions with ``polynomial growth'' must be of the form $u(t,x)=\sum_{k=1}^d t^k u_k(x)$ where all $u_k$\ are of polynomial growth, $u_d$ is a harmonic function, and other $u_k$'s satisfy $\Delta u_k=(k+1)u_{k+1}$ in a weak sense. The integer $d$ is related to the given growth degree of $u$.  The second result  describes very general conditions under which any ancient weak solution of ``exponential growth''  is real analytic in time.  

The general approach we take is to utilize the \textit{heat semigroup} to study the time regularity properties of local weak solutions of the heat equation. The basic idea of deriving hypoelliticity type results from properties of the heat semigroup goes back to Kusuoka and Stroock's paper \cite{Kusuoka} which is written in the context of the heat equation associated with H\"ormander sums of squares of vector fields on Euclidean spaces. It was also implemented in \cite{hypoellipticity} to study distributional solutions of the Laplace equation on the infinite dimensional torus and other infinite dimensional compact groups.

This approach differs from the classical hypoellipticity viewpoint in the primary role it gives to the fundamental solution of the heat equation (here, in the very minimal form of the heat semigroup itself), while traditional studies of hypoellipticity treat all solutions equally and are then used to deduce the basic regularity of the fundamental solution. In this paper we generalize this heat semigroup approach on hypoelliticity to the general setting of Dirichlet spaces on metric measure spaces. One natural goal is to cover rougher structures that make smoothness more elusive. Here, we treat a purely $L^2$-theory.
In a sequel of this paper, we will further utilize this method to study the local boundedness and continuity properties of local weak solutions of the heat equation (the $L^\infty$-type properties) under additional assumptions. 

This work is organized as follows. In Section 2, we introduce our general Dirichlet space setup and define the relevant notion of local weak solutions. In Section 3, we introduce and discuss our two main hypotheses, the existence of certain cut-off functions and the notion of a very weak $L^2$-Gaussian bound. We state in Section 4 the main theorems proved in this paper and give a sketch of the proof of the main result that conveys the main ideas while avoiding many long necessary computations and technical details.  In Section 5 we give a complete proof of the main theorems stated in Section 4. Section 6 is devoted to the results concerning the structure of ancient (local weak) solutions. Section 7 discusses briefly several typical examples that illustrate the results of this paper in a variety of different contexts. Lastly Section 8 provides tools to verify that the very weak $L^2$-Gaussian bound is satisfied under rather weak assumptions involving the
existence of cut-off functions, as well as the proofs for some lemmas regarding cut-off functions.

We remark that, in this paper, the Dirichlet forms we treat are symmetric, and are not time dependent. The independence on time is a crucial assumption for us, as we take advantage of the smoothness of the heat semigroup in time. The symmetry assumption can probably be replaced by some form of the sector condition but we leave this to a further study. For related but different results (under stronger assumptions) for nonsymmetric or time dependent Dirichlet spaces, we refer to \cite{Sturm2,Sturm3} and \cite{Lierl1,Lierl2,Lierl3}.

\section{Dirichlet spaces and local weak solutions} \setcounter{equation}{0} 
\subsection{Dirichlet spaces} We briefly review some concepts and properties related to Dirichlet forms. A classical reference for (symmetric) Dirichlet forms is \cite{Fukushima}. Let $(X,d,m)$\ be a metric measure space where $X$\ is locally compact separable, $m$\ is a Radon measure on $X$\ with full support, and $d$\ is some metric on $X$\ that we omit writing in the rest of the paper since we do not use it explicitly. Let $(\mathcal{E},\mathcal{F})$\ be a symmetric regular local Dirichlet form on $L^2(X,m)$, $\mathcal{F}$\ denotes the domain of $\mathcal{E}$. By definition, a (symmetric) Dirichlet form is a closed symmetric form that further satisfies the Markov property. Here the term symmetric form refers to any symmetric, nonnegative definite, densely defined bilinear form. 
The domain $\mathcal{F}$\ equipped with the $\mathcal{E}_1$\ norm
\begin{eqnarray*}
	||f||_{\mathcal{E}_1}:=\left(\mathcal{E}(f,f)+\int_Xf^2\,dm\right)^{1/2}
\end{eqnarray*}
is a Hilbert space. A Dirichlet form $(\mathcal{E},\mathcal{F})$\ is called regular, if $C_c(X)\cap \mathcal{F}$\ is dense in $C_c(X)$\ in the sup norm and dense in $\mathcal{F}$\ in the $\mathcal{E}_1$\ norm. Any subset $\mathcal{C}\subset C_c(X)\cap \mathcal{F}$\ that is dense in these two senses is called a core of $\mathcal{E}$. A Dirichlet form $(\mathcal{E},\mathcal{F})$\ is called local, if $\mathcal{E}(u,v)=0$\ for $u,v\in \mathcal{F}$\ whenever $\mbox{supp}\{u\}$\ and $\mbox{supp}\{v\}$\ are disjoint and compact.

Regular Dirichlet forms satisfy the Beurling-Deny decomposition formula; as a corollary, any regular local Dirichlet form $(\mathcal{E},\mathcal{F})$\ admits the decomposition formula
\begin{eqnarray*}
	\mathcal{E}(u,v)=\int_Xd\Gamma(u,v)+\int_Xuv\,dk.
\end{eqnarray*}
Here $dk$\ is a positive Radon measure, called the killing measure. $\Gamma$\ stands for the \textit{energy measure}, which is a (Radon) measure-valued bilinear form first defined for any $u$\ in $\mathcal{F}\cap L^\infty(X)$\ by
\begin{eqnarray*}
	\int_X \phi\,d\Gamma(u,u):=\mathcal{E}(\phi u,u)-\frac{1}{2}\mathcal{E}(u^2,\phi)
\end{eqnarray*} 
for any $\phi\in \mathcal{F}\cap C_c(X)$, then extended by polarization for arbitrary pairs of $u,v\in \mathcal{F}\cap L^\infty(X)$. For $u\in \mathcal{F}$, the energy measure of $u$ is the limit of the energy measures associated with the truncation functions $\left(\left(u\wedge n\right)\vee -n\right)$\ as $n\rightarrow \infty$.

As a generalization of the classical energy integral $\int_{\mathbb{R}^n} \nabla u\cdot \nabla v\,dx$\ in $\mathbb{R}^n$, that is, intuitively as a measure given by gradients, the energy measure satisfies the following properties. We do not specify quasi-continuous modifications of functions.
\begin{itemize}
\item (Leibniz rule) For any $u,v,w\in \mathcal{F}$\ with $uv\in \mathcal{F}$\ (e.g. $u,v\in \mathcal{F}\cap L^\infty$),
\begin{eqnarray*}
	d\Gamma(uv,w)=u\,d\Gamma(v,w)+v\,d\Gamma(u,w).
\end{eqnarray*}
\item (Chain rule) For any $u,v\in \mathcal{F}$, any $\Phi\in C^1(\mathbb{R})$\ with bounded derivative and satisfies $\Phi(0)=0$,
\begin{eqnarray*}
	d\Gamma(\Phi(u),v)=\Phi'(u)\,d\Gamma(u,v).
\end{eqnarray*}
\item (Cauchy-Schwartz inequality) For any $f,g,u,v\in \mathcal{F}\cap L^\infty$\ (more generally, when $u,v\in \mathcal{F}\cap L^\infty$\ and $f\in L^2(X,\Gamma(u,u))$, $g\in L^2(X,\Gamma(v,v))$)
\begin{eqnarray}
	\int fg\,d\Gamma(u,v)&\leq& \left(\int f^2\,d\Gamma(u,u)\right)^{1/2}\left(\int g^2d\Gamma(v,v)\right)^{1/2}\notag\\
	&\leq& \frac{C}{2}\int f^2\,d\Gamma(u,u)+\frac{1}{2C}\int g^2d\Gamma(v,v).\label{CSineq}
\end{eqnarray}
This last inequality holds for any $C>0$. The corresponding measure version is
\begin{eqnarray*}
	|fg|\,d|\Gamma(u,v)|\leq \frac{C}{2}f^2\,d\Gamma(u,u)+\frac{1}{2C}g^2\,d\Gamma(v,v).
\end{eqnarray*}
\item (Strong locality) For any $u,v\in \mathcal{F}$, if on some precompact open set $U\Subset X$, $v\equiv C$\ for some constant $C$, then
\begin{eqnarray*}
	1_U\,d\Gamma(u,v)=0.
\end{eqnarray*}
\end{itemize}
Any Dirichlet form $(\mathcal{E},\mathcal{F})$\ is associated with a corresponding Markov semigroup $(H_t)_{t>0}$, an (infinitesimal) generator $-P$ with dense domain $\mathcal{D}(P)$, and a Markov resolvent $(G_\alpha)_{\alpha>0}$ (in the sense of \cite[page 15]{Fukushima}). The semigroup $H_t$ and resolvent $G_\alpha$\ have domain $L^2(X,m)$; the domain $\mathcal{D}(P)$ of $-P$ is dense in $\mathcal{F}$\ w.r.t. the $\mathcal{E}_1$\ norm. These are self-adjoint operators. By spectral theory, $P$ has a spectral resolution $(E_\lambda)_{\lambda\geq 0}$ such that, for any $t>0$,
\begin{eqnarray*}
	PH_t=\int_{0}^{\infty}\lambda e^{-\lambda t}\,dE_\lambda.
\end{eqnarray*}
As a consequence, for any $k\in \mathbb{N}$,
\begin{eqnarray*}
	\left|\left|\partial_t^kH_t\right|\right|_{L^2(X)\rightarrow L^2(X)}=\left|\left|P^kH_t\right|\right|_{L^2(X)\rightarrow L^2(X)}\leq \left(k/et\right)^k.
\end{eqnarray*}
For any function $u_0\in L^2(X,m)$, $u(t,x):=H_tu_0(x)$\ is smooth in $t>0$, and solves $$\partial_t u=-Pu$$ in the strong sense (i.e., $\lim\limits_{h\rightarrow 0}\frac{u(t+h,\cdot)-u(t,\cdot)}{h}=-Pu(t,\cdot)$\ in $L^2(X,m)$).
 
Given the notations above, our main goal in this section is to define local weak solutions of the heat equation (with appropriate right-hand side $f$)
\begin{eqnarray*}
	(\partial_t+P)u=f.
\end{eqnarray*}
\subsection{Function spaces associated with $(\mathcal{E},\mathcal{F})$}
To properly discuss candidate functions for local weak solutions, and later their properties, we first introduce some function spaces associated with $(\mathcal{E},\mathcal{F})$. In choosing notations for these function spaces, we mostly follow \cite{Sturm2} with a few exceptions that we will remark on later. Among these function spaces there are two prevalent types, one type consists of functions that have compact support (all with subscript ``$c$''); the other type of functions that locally satisfy the required properties (all with subscript ``$\mbox{loc}$'').

Recall that the inclusion $\mathcal{F}\subset L^2(X)$\ is dense. Equating $L^2(X)$\ with its dual w.r.t. the $L^2$\ inner product, we get the Hilbert triple
\begin{eqnarray*}
\mathcal{F}\subset L^2(X)\subset \mathcal{F}'
\end{eqnarray*}
in which the inclusions are dense and continuous. Intuitively, the ``$\sim_c$'' spaces are on the ``$\mathcal{F}$'' end, and the ``$\sim_{\scaleto{\mbox{loc}}{5pt}}$'' spaces are on the ``$\mathcal{F}'$'' (dual space) end. We consider the dual spaces of ``$\sim_c$'' spaces too.

We now give precise definitions of these spaces, organized in pairs, starting with the following two pairs:
\begin{itemize}
\item $\mathcal{F}_c(X):=\left\{f\in \mathcal{F}\,|\, f\ \mbox{has compact (essential) support}\right\}$;
\item $\mathcal{F}_{\scaleto{\mbox{loc}}{5pt}}(X):=\left\{f\in L^2_{\scaleto{\mbox{loc}}{5pt}}(X)\,|\,\right.\\[0.05in]
\left. \forall \mbox{compact }K\subset X\ \exists f^\sharp\in \mathcal{F}\ \mbox{s.t.}\ f^\sharp=f\ \mbox{a.e. on } K\right\}$.
\end{itemize}
For any open subset $U\subset X$, define
\begin{itemize}
\item $\mathcal{F}_c(U):=\left\{f\in \mathcal{F}\,|\,f\ \mbox{has compact (essential) support in }U\right\};$
\item $\mathcal{F}_{\scaleto{\mbox{loc}}{5pt}}(U):=\left\{f\in L^2_{\scaleto{\mbox{loc}}{5pt}}(U)\,|\,\right.\\[0.05in]
\left.\forall \mbox{compact }K\subset U\ \exists f^\sharp\in \mathcal{F}\  \mbox{s.t. }f^\sharp=f\ \mbox{a.e. on } K\right\}$.
\end{itemize}
\begin{remark}
	\label{FS2}
	When $U\neq X$, by definition, there is an injection $i: \mathcal{F}_c(U)\hookrightarrow\mathcal{F}_c(X)$, and clearly $\mathcal{F}_{\scaleto{\mbox{loc}}{5pt}}(X)\hookrightarrow\mathcal{F}_{\scaleto{\mbox{loc}}{5pt}}(U)$\ by restriction to $U$. Note, however, that $\mathcal{F}_{\scaleto{\mbox{loc}}{5pt}}(U)$\ is not a subspace of $\mathcal{F}_{\scaleto{\mbox{loc}}{5pt}}(X)$.
\end{remark}

Fix any open set $U\subset X$\ and open interval $I=(a,b)\subset \mathbb{R}$. $-\infty\leq a<b\leq \infty$. In the sequel, when there is no ambiguity, we use the notation $u^t(\cdot)$\ as an abbreviation for $u(t,\cdot)$. That is, for any fixed $t$, consider $u(t,y)$\ as a function of $y$, denoted by $u^t$. Note that this is not any power of $u$\ or time derivative of $u$; the time derivative is denoted by $\partial_t u$.
Consider the following function spaces involving time and space associated to $\mathcal{E}$. In defining these spaces, we switch freely between two viewpoints where elements in these spaces are viewed (1) as functions of time and space; (2) as functions on the time interval $I$\ with values in some (spatial) function space. The rigorous setup for the latter viewpoint is the theory of Bochner integrals, for which we refer to \cite{Wloka}.

First, we fix the notation for the ``base space''
\begin{itemize}
    \item $\mathcal{F}(I\times X):=L^2(I\rightarrow \mathcal{F})$.
\end{itemize}

\begin{remark}
	\label{FS3}
	$L^2(I\rightarrow \mathcal{F})$\ is the completion of the space of bounded continuous functions from $I$\ to $\mathcal{F}$, $C_b(I\rightarrow \mathcal{F})$, under the $\left|\left|\cdot\right|\right|_{L^2(I\rightarrow \mathcal{F})}$\ norm $$\left|\left|u\right|\right|_{L^2(I\rightarrow \mathcal{F})}=\left(\int_I\left|\left|u^t\right|\right|_{\mathcal{E}_1}^2\,dt\right)^{1/2}.$$  
	The space $C_c^\infty(I\rightarrow \mathcal{F})$\ of smooth compactly supported functions from $I$\ to $\mathcal{F}$\ is also dense in $L^2(I\rightarrow \mathcal{F})$\ w.r.t. the $\left|\left|\cdot\right|\right|_{L^2(I\rightarrow \mathcal{F})}$\ norm. We use the notation $\mathcal{F}(I\times X)$ to clarify the use of notations $\mathcal{F}_c(I\times U)$\ and $\mathcal{F}_{\scaleto{\mbox{loc}}{5pt}}(I\times U)$\ for function spaces defined below. See 
	also Remark \ref{remarkonequivofdefs}.
\end{remark}
Based on the ``base space'' $\mathcal{F}(I\times X)$, for any open subset $U\subset X$, define
\begin{itemize}
    \item $\mathcal{F}_c(I\times U):=\left\{u\in \mathcal{F}(I\times X)\, |\, u\ \mbox{is compactly supported in }I\times U\right\}$;
    \item 
	$\mathcal{F}_{\scaleto{\mbox{loc}}{5pt}}\left(I\times U\right):=\left\{u\in L^2_{\scaleto{\mbox{loc}}{5pt}}\left(I\times U\right)\, |\, \right.\\[0.05in]
	\left.\forall I'\Subset I,\ \forall U'\Subset U,\ \exists u^\sharp\in \mathcal{F}\left(I\times X\right)\ \mbox{s.t. }u^\sharp=u\ \mbox{on }I'\times U'\ \mbox{a.e.}\right\}$.
\end{itemize}

The first two spaces $\mathcal{F}(I\times X)$\ and $\mathcal{F}_c(I\times U)$\ are subspaces of $L^2(I\times X)$\ and $L^2(I\times U)$, respectively. We identify the $L^2$\ spaces with their own duals (under the $L^2$\ inner product), and denote the dual spaces of $\mathcal{F}(I\times X)$, $\mathcal{F}_c(I\times U)$\ under the $L^2$-inner-product by $\left(\mathcal{F}(I\times X)\right)'$, $\left(\mathcal{F}_c(I\times U)\right)'$.
\begin{remark}
	$\left(\mathcal{F}(I\times X)\right)'=\left(L^2(I\rightarrow \mathcal{F})\right)'=L^2(I\rightarrow \mathcal{F}')$.
\end{remark}
\begin{remark}
	\label{remarkonequivofdefs}
	Here our notations are slightly different from the ones used in other papers (e.g. \cite{Sturm2,Pavel}). In the definition of $\mathcal{F}(I\times X)$, we do not require the functions to further be in $W^{1,2}(I\rightarrow \mathcal{F}')$\ (functions with time derivatives in the distribution sense that belong to $L^2(I\rightarrow \mathcal{F}')$). The reason we consider the function spaces defined above instead of the ones obtained by taking the intersection with $W^{1,2}(I\rightarrow \mathcal{F}')$, is to put minimum assumptions on the definition of local weak solutions. Under our definition and hypotheses, such local weak solutions automatically satisfy better properties. In particular, we explain at the end of this section that under a very natural assumption on existence of cut-off functions, and when we require the right-hand side $f$\ to be locally in $L^2(I\rightarrow \mathcal{F}')$, our choice of definition of local weak solutions agrees with the definition used in other papers. This is proved by adapting the proof of Lemma 1 in \cite{Nate}.
\end{remark}
To include more time derivatives we introduce the following notations for function spaces
\begin{itemize}
    \item $\mathcal{F}^k(I\times X):=W^{k,2}(I\rightarrow \mathcal{F})$;
    \item $\mathcal{F}^k_c(I\times U):=\left\{u\in \mathcal{F}^k(I\times X)\, |\, u\ \mbox{is compactly supported in }I\times U\right\}$;
    \item $\mathcal{F}^k_{\scaleto{\mbox{loc}}{5pt}}(I\times U):=\left\{u\in L^2_{\scaleto{\mbox{loc}}{5pt}}(I\times U)\, |\,\right.\\[0.05in]
	\left.\forall I'\Subset I,\ \forall U'\Subset U,\ \exists u^\sharp\in \mathcal{F}^k\left(I\times X\right)\, \mbox{s.t.}\ u^\sharp=u\ \mbox{on }I'\times U'\ a.e.\right\}$.
\end{itemize}

\begin{remark}
	\label{defoflocallyinspace}
	In general, we say a function $u$\ is locally in some function space if for any compact set, there exists a function $w$\ in the said function space such that $w=u$\ $a.e.$\ on the compact set.
\end{remark}
\subsection{Notion of local weak solutions}
For any symmetric local regular Dirichlet form $(\mathcal{E},\mathcal{F})$\ on $L^2(X,m)$, we define the following notion of local weak solutions of the associated heat equation (below $-P$\ and $(H_t)_{t>0}$\ are the corresponding generator and semigroup as before). 	

\begin{definition}[local weak solution]
	\label{soldef1}
	Let $U\subset X$\ be an open subset and $I\subset \mathbb{R}$\ be an open interval. Let $f$\ be a function locally in $L^2(I\rightarrow \mathcal{F}')$. We say $u$\ is a \textit{local weak solution} of the heat equation $(\partial_t+P)u=f$\ on $I\times U$, if $u\in \mathcal{F}_{\scaleto{\mbox{loc}}{5pt}}(I\times U)$, and for any $\varphi\in \mathcal{F}_c(I\times U)\cap C_c^\infty(I\rightarrow \mathcal{F})$,
	\begin{eqnarray}
	\label{localweaksol}
	-\int_I\int_X\ u\cdot \partial_t\varphi\,dm dt+\int_I\mathcal{E}(u,\varphi)\,dt\ =\int_I<f,\,\varphi>_{\mathcal{F}',\mathcal{F}}\,dt.
	\end{eqnarray}
\end{definition}	
Here $u$\ in the integral is understood as $u^\sharp$\ (relative to the support of $\varphi$) as in the definition for $\mathcal{F}_{\scaleto{\mbox{loc}}{5pt}}(I\times U)$. We take this convention throughout this paper. Note that $\mathcal{E}\left(u,\varphi\right)$\ is well-defined (independent of the choice of $u^\sharp$) by the local property of $\mathcal{E}$. $<\cdot,\cdot>_{\mathcal{F}',\mathcal{F}}$\ stands for the pairing between elements in $\mathcal{F}'$\ and $\mathcal{F}$.

We remark that we can define local weak solutions for more general right-hand side $f$, e.g., $f\in \left(\mathcal{F}_c(I\times U)\right)'$. But in the propositions and theorems in this paper we always put more restrictions on $f$\ than $f$\ locally in $L^2(I\rightarrow \mathcal{F}')$, moreover the results are interesting even for the case $f\equiv 0$, so here in the definition we do not aim to consider the most general right-hand side. With this choice, Definition \ref{soldef1} will be shown to be equivalent to the following variant, under a natural assumption on the existence of certain cut-off functions. As mentioned in Remark \ref{remarkonequivofdefs}, the following definition is often adopted in the literature.
\begin{definition}[local weak solution, variant]
	\label{soldef2}
	Let $U,I,f$\ be as in Definition \ref{soldef1}. Let $u$\ be a function locally in $L^2(I\rightarrow \mathcal{F})\cap W^{1,2}(I\rightarrow \mathcal{F}')$. $u$\ is called a local weak solution of the heat equation $(\partial_t+P)u=f$, if for any $\varphi$\ in $L^2(I\rightarrow \mathcal{F})\cap W^{1,2}(I\rightarrow \mathcal{F}')$\ with compact support in $I\times U$, for any $J\Subset I$,
	\begin{eqnarray*}
	\int_J\int_X<\partial_tu,\,\varphi>_{\mathcal{F}',\mathcal{F}}\,dmdt+\int_J\mathcal{E}(u,\varphi)\,dt=\int_J<f,\,\varphi>_{\mathcal{F}',\mathcal{F}}\,dt.\label{equivdef}
	\end{eqnarray*}
\end{definition}
Under a natural assumption on existence of some type of cut-off functions to be introduced below (Assumption \ref{cut-offgeneralassumption}), the two notions of local weak solutions defined above agree.
	
Note that in general,
$$\mathcal{F}_c(I\times U)\cdot \mathcal{F}_{\scaleto{\mbox{loc}}{5pt}}(I\times U)\nsubseteq \mathcal{F}_c(I\times U),$$
roughly because $\mathcal{F}$\ is not an algebra. What we want to assume is that there is a subset of $\mathcal{F}_c(I\times U)\cap C(I\times U)$\ that contains enough functions, each of which brings functions in $\mathcal{F}_{\scaleto{\mbox{loc}}{5pt}}(I\times U)$\ to $\mathcal{F}_{c}(I\times U)$\ by multiplication (these can be thought of as cut-off functions with some nice properties). Here $C(I\times U)$\ is the space of continuous functions on $I\times U$. We denote this subset of cut-off functions by $\mathfrak{C}(I\times U)$. Observe that we just need the existence of an analogous subset $\mathfrak{C}(U)\subset \mathcal{F}_c(U)\cap C(U)$, and then to construct $\mathfrak{C}(I\times U)$, we take products of functions in $\mathfrak{C}(U)$\ with standard cut-off functions in $C^\infty_c(\mathbb{R})$. The following assumption makes precise what we want to require from the set $\mathfrak{C}(U)\subset \mathcal{F}_c(U)\cap C(U)$.
\begin{assumption}
	\label{cut-offgeneralassumption}
	There exists a subset $\mathfrak{C}(U)\subset \mathcal{F}_{c}(U)\cap C(U)$\ such that
	\begin{itemize}
	\item[(i)] for any pair of open sets $V\Subset U\Subset X$, there exists a function $\varphi\in \mathfrak{C}(U)$\ such that $\varphi=1$\ on $V$, $\mbox{supp}\{\varphi\}\subset U$;
	\item[(ii)] for any $\varphi\in \mathfrak{C}(U)$, any $u\in \mathcal{F}_{\scaleto{\mbox{loc}}{5pt}}(U)$, the product $\varphi u\in \mathcal{F}_{c}(U)$.
	\end{itemize}
\end{assumption}
\begin{remark}
The requirement (i) in Assumption \ref{cut-offgeneralassumption} is standard and easily fulfilled when the Dirichlet form is regular. The requirement (ii) is nontrivial. In general, only the products of functions in $\mathcal{F}\cap L^\infty(X)$\ are guaranteed to belong to $\mathcal{F}$.
\end{remark}
We now state the equivalence of the two definitions for local weak solutions.
\begin{lemma}[Equivalence of definitions of local weak solutions]	
	Under Assumption \ref{cut-offgeneralassumption}, Definition \ref{soldef1} is equivalent to Definition \ref{soldef2}.
\end{lemma}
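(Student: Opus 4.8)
The plan is to prove the two implications separately, using Assumption \ref{cut-offgeneralassumption} to bridge the gap between the two classes of test functions and, more subtly, between the two regularity requirements on $u$ itself.

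First I would show that Definition \ref{soldef2} implies Definition \ref{soldef1}. Suppose $u$ is a local weak solution in the sense of Definition \ref{soldef2}, so $u$ is locally in $L^2(I\rightarrow \mathcal{F})\cap W^{1,2}(I\rightarrow \mathcal{F}')$ and in particular locally in $L^2(I\rightarrow \mathcal{F})=\mathcal{F}(I\times X)$, hence $u\in \mathcal{F}_{\scaleto{\mbox{loc}}{5pt}}(I\times U)$ as required. Given a test function $\varphi\in \mathcal{F}_c(I\times U)\cap C_c^\infty(I\rightarrow \mathcal{F})$, note that $\varphi$ is in particular compactly supported in $I\times U$ and lies in $L^2(I\rightarrow \mathcal{F})\cap W^{1,2}(I\rightarrow \mathcal{F}')$ (it is smooth in time with values in $\mathcal{F}$). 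Picking $J\Subset I$ containing the time-support of $\varphi$, the identity of Definition \ref{soldef2} gives $\int_J\int_X \langle \partial_t u,\varphi\rangle\,dmdt+\int_J\mathcal{E}(u,\varphi)\,dt=\int_J\langle f,\varphi\rangle\,dt$; integrating the first term by parts in $t$ (legitimate because $u$ is locally in $W^{1,2}(I\rightarrow \mathcal{F}')$ and $\varphi$ vanishes near $\partial J$), and noting that $\langle\partial_t u,\varphi\rangle$ paired over space is $\int_X \partial_t u\,\varphi\,dm$, we recover exactly \eqref{localweaksol}. Here one must be slightly careful that the pairing $\langle\partial_t u,\varphi\rangle_{\mathcal{F}',\mathcal{F}}$ agrees with $\int_X \partial_t u\,\varphi\,dm$ on the support of $\varphi$; this uses the local nature of the construction of $u^\sharp$.

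The reverse implication is the substantive direction and is where the cut-off functions do real work. Assume $u$ satisfies Definition \ref{soldef1}: $u\in\mathcal{F}_{\scaleto{\mbox{loc}}{5pt}}(I\times U)$ and \eqref{localweaksol} holds for all $\varphi\in\mathcal{F}_c(I\times U)\cap C_c^\infty(I\rightarrow\mathcal{F})$. The two things to establish are (a) that $u$ is in fact locally in $W^{1,2}(I\rightarrow\mathcal{F}')$, and (b) that the weak formulation extends to all test functions $\varphi$ that merely lie in $L^2(I\rightarrow\mathcal{F})\cap W^{1,2}(I\rightarrow\mathcal{F}')$ with compact support in $I\times U$. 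For (a), fix $J\Subset I$ and $V\Subset U$, choose via Assumption \ref{cut-offgeneralassumption}(i) a cut-off $\psi\in\mathfrak{C}(U)$ with $\psi\equiv1$ on $V$ and compact support in $U$, and a standard $\chi\in C_c^\infty(I)$ with $\chi\equiv1$ on $J$; set $\Psi=\chi\psi\in\mathfrak{C}(I\times U)$. Then $\Psi u\in\mathcal{F}_c(I\times U)\subset\mathcal{F}(I\times X)=L^2(I\rightarrow\mathcal{F})$ by Assumption \ref{cut-offgeneralassumption}(ii). One reads off from \eqref{localweaksol}, tested against $\varphi$ supported in $J\times V$, that the distributional time derivative $\partial_t(\Psi u)$ acts on such $\varphi$ by $\varphi\mapsto \int_I\mathcal{E}(u,\varphi)\,dt-\int_I\langle f,\varphi\rangle\,dt$, whose absolute value is bounded by $C\|u\|_{L^2(J\to\mathcal F)}\|\varphi\|_{L^2(J\to\mathcal F)}+\|f\|_{L^2(J\to\mathcal F')}\|\varphi\|_{L^2(J\to\mathcal F)}$; this linear functional is therefore bounded in the $L^2(I\rightarrow\mathcal{F})$ norm, so $\partial_t(\Psi u)\in L^2(I\rightarrow\mathcal{F}')$, i.e. $\Psi u\in W^{1,2}(I\rightarrow\mathcal{F}')$, and since $\Psi\equiv1$ on $J\times V$ this gives $u$ locally in $W^{1,2}(I\rightarrow\mathcal{F}')$. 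This is essentially the adaptation of the proof of Lemma 1 in \cite{Nate} alluded to in Remark \ref{remarkonequivofdefs}. For (b), once $u$ is known to be locally in $L^2(I\rightarrow\mathcal{F})\cap W^{1,2}(I\rightarrow\mathcal{F}')$, I would take a general admissible $\varphi$, localize it by multiplying against a cut-off $\Psi\in\mathfrak{C}(I\times U)$ equal to $1$ on a neighborhood of its support (so $\Psi\varphi=\varphi$), and approximate $\varphi$ in the $L^2(I\rightarrow\mathcal{F})\cap W^{1,2}(I\rightarrow\mathcal{F}')$ norm on the relevant time interval by functions $\varphi_n\in C_c^\infty(I\rightarrow\mathcal{F})$ compactly supported in $I\times U$ (mollifying in time; the spatial cut-off $\Psi$ keeps supports under control). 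Passing to the limit in \eqref{localweaksol} — the first term converges because $u\in L^2_{\mathrm{loc}}$ and $\partial_t\varphi_n\to\partial_t\varphi$ weakly, the energy term because $\varphi_n\to\varphi$ in $L^2(I\rightarrow\mathcal{F})$ and $\mathcal{E}_1$-continuity, and the right-hand side because $f$ is locally in $L^2(I\rightarrow\mathcal F')$ — and then integrating by parts back the other way yields the identity in Definition \ref{soldef2}.

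The main obstacle is step (a): producing the a priori bound showing $\partial_t u$ is locally a genuine $L^2(I\rightarrow\mathcal{F}')$ function rather than merely a distribution. The delicate point is that $\mathcal{F}$ is not an algebra, so one cannot simply multiply $u$ by a bounded cut-off and stay in $\mathcal F$; this is exactly why Assumption \ref{cut-offgeneralassumption}(ii) is imposed, and the argument must route the localization through $\mathfrak C(U)$ rather than through arbitrary bump functions. A secondary technical nuisance, appearing in both directions, is bookkeeping the local representatives $u^\sharp$ consistently across overlapping compact sets and checking that the pairing $\langle\partial_t u,\varphi\rangle_{\mathcal F',\mathcal F}$ reduces to the integral $\int_X\partial_t u\,\varphi\,dm$ wherever $\varphi$ is supported — but this is routine given the local property of $\mathcal{E}$ already noted after Definition \ref{soldef1}.
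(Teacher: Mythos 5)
Your overall plan is the standard one and matches what the paper intends (the paper simply points to Lemma~1 of \cite{Nate}): split into two directions, and in the substantive direction first establish $u$ locally in $W^{1,2}(I\rightarrow\mathcal{F}')$ via localization, then enlarge the class of test functions by density. The easy direction and your step (b) look sound.

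However, step (a) has a genuine gap. To conclude $\Psi u\in W^{1,2}(I\rightarrow\mathcal{F}')$ you must control the distribution $\partial_t(\Psi u)$ against \emph{all} test functions $\varphi\in C_c^\infty(I\rightarrow\mathcal{F})$, but you only pair it with $\varphi$ supported in $J\times V$, the region where $\Psi\equiv 1$. That estimate controls $\partial_t(\Psi u)$ only on $J\times V$; it says nothing about the transition region where $\chi$ ramps from $1$ down to $0$ (and likewise for $\psi$), which is precisely where $\partial_t(\Psi u)$ picks up the term $(\partial_t\Psi)u$ and where $\Psi u$ is not simply $u$. Concluding ``this linear functional is bounded in the $L^2(I\rightarrow\mathcal{F})$ norm, so $\partial_t(\Psi u)\in L^2(I\rightarrow\mathcal{F}')$'' does not follow from the restricted class of $\varphi$. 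The correct route is to take arbitrary $\varphi\in C_c^\infty(I\rightarrow\mathcal{F})$, write $\Psi\,\partial_t\varphi = \partial_t(\Psi\varphi) - (\partial_t\Psi)\,\varphi$, apply the weak formulation \eqref{localweaksol} to the admissible test function $\Psi\varphi$ (valid by Assumption \ref{cut-offgeneralassumption}(ii)), and separately bound the harmless term $\int u\,(\partial_t\Psi)\,\varphi$. This forces you to confront a second point you glossed over: you now must bound $\left|\int_I\mathcal{E}(u,\Psi\varphi)\,dt\right|$ by $\|\varphi\|_{L^2(I\rightarrow\mathcal{F})}$, which requires the quantitative statement $\|\psi g\|_{\mathcal{E}_1}\le C\|g\|_{\mathcal{E}_1}$ for $g\in\mathcal{F}$. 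Assumption \ref{cut-offgeneralassumption}(ii) gives only membership $\psi g\in\mathcal{F}$, not a norm bound; you should invoke the closed graph theorem (multiplication by $\psi$ is a closed linear map $\mathcal{F}\to\mathcal{F}$, hence bounded) to upgrade it. With these two repairs the argument goes through.
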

\begin{proof}
The proof follows essentially that of \cite[Lemma 1]{Nate}.    
\end{proof}

\section{Main hypotheses} \setcounter{equation}{0}
\subsection{Assumption on existence of cut-off functions}
For a pair of open sets $V\Subset U\Subset X$, by a cut-off function for the pair $V\subset U$\ we mean a function $\eta\in \mathcal{F}\cap C(X)$\ in between $0$\ and $1$\ such that $\eta=1$\ on $V$\ and $\mbox{supp}\{\eta\}\subset U$. Such cut-off functions always exist for any pair of open sets $V\subset U$ in a regular Dirichlet space, see \cite{Fukushima}. For results in this paper what we need is the existence of cut-off functions that further have controlled energy, we explain what this means in the following assumption.
\begin{assumption}[existence of nice cut-off functions]
	\label{assumptionbothcases}
	There exists some topological basis $\mathcal{T}\mathcal{B}$\ of $X$\ such that for any pair of open sets $V\Subset U$, $U,V\in \mathcal{T}\mathcal{B}$, for any $0<C_1<1$, there exists some constant $C_2(C_1,U,V)>0$\ and some cut-off function $\eta$\ for the pair $V\subset U$, such that for any $v\in \mathcal{F}$,
	\begin{eqnarray}
	\label{cut-offbddenergy}
	\int_X v^2 d\Gamma(\eta,\eta)\leq C_1\int_X\eta^2d\Gamma(v,v)+C_2\int_{\scaleto{\mbox{supp}\{\eta\}}{6pt}}v^2\,dm.
	\end{eqnarray}
	We call such $\eta$\ functions \textit{nice cut-off functions} corresponding to $C_1,C_2$.
\end{assumption}
\begin{remark}
\label{cut-offrmk}
Later we show that in Assumption \ref{assumptionbothcases}, the condition $U,V\in \mathcal{TB}$\ for some topological basis $\mathcal{TB}$\ is ``redundant'', in the sense that Assumption \ref{assumptionbothcases} implies automatically that nice cut-off functions in the sense of (\ref{cut-offbddenergy}) exist for any pair of open sets $V\Subset U$. See Lemma \ref{cut-offannuli}. We also remark Assumption \ref{assumptionbothcases} has a straightforward equivalent form that for any pair of precompact open sets $U,V$\ with disjoint closures, i.e., $\overline{U}\cap\overline{V}=\emptyset$, for any $C_1$\ between $0$\ and $1$, there exists a cut-off function $\eta$\ such that $\eta=1$\ on $U$, $\eta=0$\ on $V$, and  there exists some constant $C_2(C_1,U,V)>0$, such that for any $v\in \mathcal{F}$,
\begin{eqnarray}
\label{cut-offbddenergy1}
\int_X v^2 d\Gamma(\eta,\eta)\leq C_1\int_X\eta^2d\Gamma(v,v)+C_2(C_1,U,V)\int_{\scaleto{\mbox{supp}\{\eta\}}{6pt}}v^2\,dm.
\end{eqnarray}
\end{remark}
Let $\eta(x)$\ be a nice cut-off function and $l(t)$\ be a smooth function on $\mathbb{R}$\ with compact support, then the product $\eta(x)l(t)$\ is a function in $\mathcal{F}_c(I\times X)$. We call such product functions \textit{nice product cut-off functions}, and we denote such functions by adding an overline, i.e., $\overline{\eta}(t,x):=\eta(x)l(t)$.
\begin{remark}
If a cut-off function $\eta$\ for some pair $V\subset U$\ satisfies that its corresponding energy measure is absolutely continuous w.r.t. $m$, and $d\Gamma(\eta,\eta)/dm$\ is bounded, i.e.,
\begin{eqnarray}
\label{cut-offbddgradient}
d\Gamma(\eta,\eta)\leq C\,dm
\end{eqnarray}
for some $C<\infty$, then $\eta$\ is a nice cut-off function and satisfies (\ref{cut-offbddenergy}) with $C_1=0$\ (hence any $0<C_1<1$), $C_2=C$. $C_2$\ is independent of $C_1$. We say in this special case that the cut-off function $\eta$\ has \textit{bounded gradient}.

Conversely, if for some nice cut-off function $\eta$, (\ref{cut-offbddenergy}) can be extended to hold true for $C_1=0$\ and $C_2(0,U,V)<\infty$, then $\eta$\ has bounded gradient.

In particular, when the intrinsic pseudo-distance of the Dirichlet space,
\begin{eqnarray}
\label{intrinsicdist}
\rho_X(x,y)=\sup\left\{\varphi(x)-\varphi(y)\,|\,\varphi\in \mathcal{F}_{\scaleto{\mbox{loc}}{5pt}}(X)\cap C(X),\ d\Gamma(\varphi,\varphi)\leq dm\right\},
\end{eqnarray}
is a continuous metric that induces the same topology of $X$, the Dirichlet space satisfies Assumption \ref{assumptionbothcases} with existence of cut-off functions with bounded gradient, and the cut-off functions can be explicitly constructed using the intrinsic distance. Cf. \cite{Sturm2}.
\end{remark}
\begin{remark}
Typical examples of Dirichlet spaces that satisfy Assumption \ref{assumptionbothcases} but do not possess cut-off functions with bounded gradient are some fractal spaces, including for example the Sierpinski gasket and the Sierpinski carpet. For fractal spaces, usually the existence of nice cut-off functions is guaranteed as consequences of  other properties like sub-Gaussian upper bounds satisfied by the Dirichlet space (heat kernel). In general, in such cases, there are no simple explicit constructions of cut-off functions satisfying (\ref{cut-offbddenergy}). For references we mention \cite{AndresBarlow} and \cite{EHI}.
\end{remark}
Let $(X,\mathcal{E},\mathcal{F})$\ be a symmetric regular local Dirichlet space as before. We first verify that the cut-off functions in Assumption \ref{assumptionbothcases} indeed satisfy the conditions in Assumption \ref{cut-offgeneralassumption}.
\begin{lemma}
	\label{cut-offweaklemma}
	Any nice cut-off function $\varphi$\ in the sense of (\ref{cut-offbddenergy}) satisfies (ii) in Assumption \ref{cut-offgeneralassumption}, namely, let $U\Subset X$\ be some open set such that $\mbox{supp}\{\varphi\}\subset U$, then for any $u\in \mathcal{F}_{\scaleto{\mbox{loc}}{5pt}}(U)$, the product $\varphi\cdot u\in \mathcal{F}_{c}(U)$.	
\end{lemma}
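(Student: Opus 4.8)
The plan is to first reduce the problem to multiplying $\varphi$ by a genuine element of $\mathcal{F}$, and then to approximate that element by its truncations, using the nice cut-off inequality (\ref{cut-offbddenergy}) to keep the energies of the products under control.

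\textbf{Step 1: reduction to $u\in\mathcal{F}$.} Put $K:=\mbox{supp}\{\varphi\}$, a compact subset of the open set $U$. By the definition of $\mathcal{F}_{\scaleto{\mbox{loc}}{5pt}}(U)$ there is $u^\sharp\in\mathcal{F}$ with $u^\sharp=u$ $m$-a.e. on $K$. Since $\varphi=0$ off $K$ while $u=u^\sharp$ a.e. on $K$, we get $\varphi u=\varphi u^\sharp$ $m$-a.e. on $X$, and this function vanishes a.e. outside $K\Subset U$. Hence it suffices to show that $\varphi w\in\mathcal{F}$ for every $w\in\mathcal{F}$: this yields $\varphi u=\varphi u^\sharp\in\mathcal{F}$ with essential support inside the compact set $K\subset U$, i.e. $\varphi u\in\mathcal{F}_c(U)$.

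\textbf{Step 2: truncation.} Given $w\in\mathcal{F}$, let $w_n:=(w\wedge n)\vee(-n)\in\mathcal{F}\cap L^\infty$. It is standard that $w_n\to w$ both in $L^2(X,m)$ and in $(\mathcal{F},\mathcal{E}_1)$; for the latter one uses the chain rule in the (Lipschitz) form $d\Gamma(w,w_n)=1_{\{|w|<n\}}\,d\Gamma(w,w)$ together with $d\Gamma(w_n,w_n)=1_{\{|w|<n\}}\,d\Gamma(w,w)$, which give $\mathcal{E}(w-w_n,w-w_n)=\int 1_{\{|w|\ge n\}}\,d\Gamma(w,w)+\int(w-w_n)^2\,dk\to 0$. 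Since $\varphi\in\mathcal{F}\cap L^\infty$ and $\mathcal{F}\cap L^\infty$ is an algebra, each $\varphi w_n\in\mathcal{F}\cap L^\infty$, and $\varphi w_n\to\varphi w$ in $L^2(X,m)$ because $|\varphi(w_n-w)|\le|w_n-w|$.

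\textbf{Step 3: energy estimate.} For $g\in\mathcal{F}\cap L^\infty$ we have $\varphi g\in\mathcal{F}$, so the Leibniz rule and the Cauchy--Schwarz inequality (\ref{CSineq}) for energy measures give, as measures, $d\Gamma(\varphi g,\varphi g)\le 2\varphi^2\,d\Gamma(g,g)+2g^2\,d\Gamma(\varphi,\varphi)$. Bounding the last term via (\ref{cut-offbddenergy}) with $v=g$, and using $0\le\varphi\le1$ and $dk\ge0$, one obtains a finite constant $C'=C'(C_1,C_2)$, independent of $g$, with $\mathcal{E}_1(\varphi g,\varphi g)\le C'\,\mathcal{E}_1(g,g)$. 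Taking $g=w_n-w_m$ and using Step 2 shows $\{\varphi w_n\}$ is Cauchy in $(\mathcal{F},\mathcal{E}_1)$; by completeness it converges there, and the limit is $\varphi w$ because $\varphi w_n\to\varphi w$ in $L^2(X,m)$. Hence $\varphi w\in\mathcal{F}$, completing the proof.

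\textbf{Where the work is.} The only non-formal point is Step 3: one must verify that the product rule for energy measures applies to $\varphi g$ (it does, since $\varphi,g\in\mathcal{F}\cap L^\infty\Rightarrow\varphi g\in\mathcal{F}$) and then absorb $\int g^2\,d\Gamma(\varphi,\varphi)$ via (\ref{cut-offbddenergy}). Note that the smallness $C_1<1$ plays no role here; all that is used is that the right-hand side of (\ref{cut-offbddenergy}) is finite and dominated by $\mathcal{E}_1(g,g)$ up to a constant. The constant $C_1<1$ will matter later, in genuine absorption arguments.
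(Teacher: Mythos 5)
Your proof is correct and uses the same key estimate as the paper: reduce to a representative $u^\sharp\in\mathcal{F}$ agreeing with $u$ near $\mbox{supp}\{\varphi\}$, expand $d\Gamma(\varphi\,\cdot,\varphi\,\cdot)$ via Leibniz and Cauchy--Schwarz, and absorb $\int (\cdot)^2\,d\Gamma(\varphi,\varphi)$ using (\ref{cut-offbddenergy}). Where you go beyond the paper is the truncation layer in Steps 2--3. The paper applies the Leibniz expansion directly to $d\Gamma(\varphi u^\sharp,\varphi u^\sharp)$ and concludes $\|\varphi u^\sharp\|_{\mathcal{E}_1}<\infty$, but strictly speaking the Leibniz rule (as stated for energy measures) requires knowing $\varphi u^\sharp\in\mathcal{F}$ in advance, so the paper's computation is really an a priori estimate. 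You close this gap cleanly: first run the estimate for $g\in\mathcal{F}\cap L^\infty$, where $\varphi g\in\mathcal{F}$ automatically (algebra property), obtain the uniform bound $\mathcal{E}_1(\varphi g,\varphi g)\le C'\,\mathcal{E}_1(g,g)$, and then use it on differences of truncations $w_n-w_m$ to get a Cauchy sequence in $(\mathcal{F},\mathcal{E}_1)$ converging in $L^2$ to $\varphi u^\sharp$. The ingredients you invoke (strong $\mathcal{E}_1$-convergence of $w_n\to w$ via the Lipschitz chain rule and dominated convergence; $\mathcal{F}\cap L^\infty$ an algebra) are all standard in this setting. Your closing remark that $C_1<1$ is irrelevant here is also correct: for this lemma only finiteness of the right-hand side of (\ref{cut-offbddenergy}) is used, not any absorption.
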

\begin{proof}
	The support of the product function $\varphi u$\ is clearly contained in $U$. To show $\varphi u\in \mathcal{F}$, recall that $u\in \mathcal{F}_{\scaleto{\mbox{loc}}{5pt}}(U)$\ means $u$\ is in $L^2_{\scaleto{\mbox{loc}}{5pt}}(U)$, and satisfies for any $V\Subset U$, there exists some $u^\sharp$\ in $\mathcal{F}$\ such that $u^\sharp=u$\ $m$-a.e. on $V$. Pick some open set $V$\ such that $\mbox{supp}\{\varphi\}\subset V\Subset U$, fix some $u^\sharp\in \mathcal{F}$\ that agrees with $u$\ $m$-a.e. on $V$. Then
	\begin{eqnarray*}
		\lefteqn{\left|\left|\varphi u^\sharp\right|\right|^2_{\mathcal{E}_1}
		= \int_X (\varphi u^\sharp)^2\,dm+\int_X d\Gamma(\varphi u^\sharp,\varphi u^\sharp)+\int_X (\varphi u^\sharp)^2\,dk}\\
		&\leq& \int_X (\varphi u^\sharp)^2\,dm+\int_X (\varphi u^\sharp)^2\,dk+2\left[\int_X \varphi^2\,d\Gamma(u^\sharp,u^\sharp)+\int_X (u^\sharp)^2\,d\Gamma(\varphi,\varphi)\right].
	\end{eqnarray*}
	The first two terms are clearly finite, the third term is bounded above by $\mathcal{E}_1(u^\sharp,u^\sharp)$\ up to some constant, and the last term is finite due to (\ref{cut-offbddenergy}). Hence $\left|\left|\varphi u^\sharp\right|\right|_{\mathcal{E}_1}<+\infty$, and $\varphi u=\varphi u^\sharp\in \mathcal{F}_c(U)$.
\end{proof} 
So far the examples we have described satisfy Assumption \ref{assumptionbothcases} for all pairs of open sets $V\Subset U$. The reason in Assumption \ref{assumptionbothcases} we only require nice cut-off functions to exist for pairs of open sets in some topological basis $\mathcal{TB}$\ is to make the assumption easy to check for some infinite dimensional examples, like the infinite dimensional torus or the infinite product of Sierpinski gaskets.

In the next lemma we state the automatic extension of existence of nice cut-off functions for general pairs of open sets, given Assumption \ref{assumptionbothcases}. We postpone the proof to Section 8.
\begin{lemma}
	\label{cut-offannuli}	
	Suppose Assumption \ref{assumptionbothcases} holds. Then for any open sets $U,V$\ with $V\Subset U$, any constant $0<C_1<1$, there exists some constant $C_2=C_2(C_1,U,V)>0$\ and some nice cut-off function in the sense of (\ref{cut-offbddenergy}) corresponding to $C_1,C_2$. In particular, $U,V$\ are not necessarily in $\mathcal{TB}$.
\end{lemma}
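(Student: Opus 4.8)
The plan is a partition-of-unity style gluing of the basis-set cut-off functions furnished by Assumption \ref{assumptionbothcases}. Fix $V\Subset U$ and $C_1\in(0,1)$. Since $\overline V$ is compact and $X$ is locally compact Hausdorff, for each $x\in\overline V$ I would first choose a precompact open neighbourhood of $x$ whose closure lies in $U$, and then, using that $\mathcal{TB}$ is a basis, interpose two basis sets $V_x,U_x\in\mathcal{TB}$ with $x\in V_x\Subset U_x$ and $\overline{U_x}\subset U$ (a second application of local compactness is what gives $V_x\Subset U_x$). By compactness of $\overline V$ extract a finite subcover $\{V_{x_1},\dots,V_{x_N}\}$ of $\overline V$ and set $V_i:=V_{x_i}$, $U_i:=U_{x_i}$. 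Applying Assumption \ref{assumptionbothcases} to each pair $V_i\Subset U_i$, with a constant $c_1\in(0,1)$ to be fixed at the end, yields constants $c_2^{(i)}=C_2(c_1,U_i,V_i)>0$ and nice cut-off functions $\eta_i$ for $V_i\subset U_i$, so in particular $\eta_i\in\mathcal{F}_c\cap C(X)$, $0\le\eta_i\le1$, $\eta_i\equiv1$ on $V_i$, $\mbox{supp}\{\eta_i\}\subset U_i$, and $\int_X v^2\,d\Gamma(\eta_i,\eta_i)\le c_1\int_X\eta_i^2\,d\Gamma(v,v)+c_2^{(i)}\int_{\mbox{supp}\{\eta_i\}}v^2\,dm$ for all $v\in\mathcal{F}$.

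The candidate cut-off function is $\eta:=1-\prod_{i=1}^N(1-\eta_i)$. Expanding, $\eta=\sum_{\emptyset\ne S\subseteq\{1,\dots,N\}}(-1)^{|S|+1}\prod_{i\in S}\eta_i$ is a finite sum of finite products of functions in $\mathcal{F}\cap L^\infty$ with compact support in $U$, so $\eta\in\mathcal{F}_c\cap C(X)$; it is $[0,1]$-valued because $0\le1-\eta=\prod_i(1-\eta_i)\le1$; it equals $1$ on $V$ because $\overline V\subset\bigcup_iV_i$ and $\eta_i\equiv1$ on $V_i$ force one factor of $1-\eta$ to vanish on $V$; and $\{\eta\ne0\}=\bigcup_i\{\eta_i\ne0\}$ gives $\mbox{supp}\{\eta\}=\bigcup_i\mbox{supp}\{\eta_i\}\subset\bigcup_i\overline{U_i}\subset U$. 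Thus $\eta$ is a cut-off function for $V\subset U$. I record two elementary facts for later: from $1-\eta=(1-\eta_i)\prod_{j\ne i}(1-\eta_j)\le1-\eta_i$ we get $\eta_i\le\eta$ for every $i$; and $\mbox{supp}\{\eta_i\}\subseteq\mbox{supp}\{\eta\}$.

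It remains to verify the energy inequality \eqref{cut-offbddenergy} with a suitable $C_2$. Since all the $\eta_i$ and their products lie in $\mathcal{F}\cap L^\infty$, the Leibniz rule applies and, writing $a_i:=\prod_{j\ne i}(1-\eta_j)\in[0,1]$, gives $d\Gamma(\eta,\cdot)=\sum_ia_i\,d\Gamma(\eta_i,\cdot)$, whence $d\Gamma(\eta,\eta)=\sum_{i,k}a_ia_k\,d\Gamma(\eta_i,\eta_k)$; the measure form of the Cauchy--Schwarz inequality for energy measures (cf. \eqref{CSineq}) then yields, as measures, $v^2\,d\Gamma(\eta,\eta)\le N\sum_i v^2a_i^2\,d\Gamma(\eta_i,\eta_i)\le N\sum_i v^2\,d\Gamma(\eta_i,\eta_i)$. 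Integrating, inserting the bound for each $\eta_i$, and using $\eta_i\le\eta$ together with $\mbox{supp}\{\eta_i\}\subseteq\mbox{supp}\{\eta\}$, one arrives at
\begin{eqnarray*}
\int_X v^2\,d\Gamma(\eta,\eta)\ \le\ N^2c_1\int_X\eta^2\,d\Gamma(v,v)+N^2\big(\max_ic_2^{(i)}\big)\int_{\mbox{supp}\{\eta\}}v^2\,dm .
\end{eqnarray*}
Now choose $c_1:=C_1/N^2\in(0,1)$, so the first coefficient becomes exactly $C_1$, and set $C_2:=N^2\max_i c_2^{(i)}$; since $N$ and the sets $U_i,V_i$ depend only on $U,V$ (and on the chosen cover), $C_2$ is of the required form $C_2(C_1,U,V)$, which completes the proof.

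The steps are all routine; the points requiring care are (a) the covering construction, where local compactness must be used to wedge precompact neighbourhoods between the basis sets so that $V_i\Subset U_i$ and $\overline{U_i}\subset U$ simultaneously, and (b) the energy computation, where one must confirm that every product of the $\eta_i$'s stays in $\mathcal{F}\cap L^\infty$ (possibly after multiplying the auxiliary factors $a_i$ by a global cut-off equal to $1$ on $\mbox{supp}\{\eta\}$, which does not change the relevant energy measures) so that the Leibniz and Cauchy--Schwarz rules apply, and where the combinatorial factor $N$ must be tracked so that it can be absorbed into the freely chosen constant $c_1$. Neither is a genuine conceptual obstacle.
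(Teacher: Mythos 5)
Your proof is correct, but it takes a genuinely different route from the paper's. Both arguments begin by covering $\overline V$ with finitely many basis pairs $V_i\Subset U_i\in\mathcal{TB}$ and invoking Assumption~\ref{assumptionbothcases} on each pair. From there the paper introduces an intermediate set $V'$ with $V\Subset V'\Subset U$, produces a \emph{second} finite family of basis cut-offs covering $\overline U\setminus V'$, forms sums $\eta=\sum\eta_j$ and $\varphi=\sum\varphi_i+\sum\eta_j$, and takes the partition-of-unity quotient $\psi=\eta/\varphi$; the energy estimate for $\psi$ is then obtained indirectly via separate lemmas on sums and products of nice cut-off functions plus a chain-rule computation for $d\Gamma(1/\varphi,\cdot)$. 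You instead use the complementary-product construction $\eta=1-\prod_{i=1}^N(1-\eta_i)$, which needs only a single cover and no auxiliary lemmas: after expanding $\eta$ as a polynomial (with no constant term) in the $\eta_i$ and applying Leibniz and Cauchy--Schwarz, you get $d\Gamma(\eta,\eta)\le N\sum_i d\Gamma(\eta_i,\eta_i)$ as measures, and the observation $\eta_i\le\eta$ lets you pass from $\eta_i^2\,d\Gamma(v,v)$ to $\eta^2\,d\Gamma(v,v)$, so that the combinatorial factor $N^2$ is absorbed simply by choosing $c_1=C_1/N^2$ at the outset. This is shorter, gives cleaner constants, and avoids the paper's product lemma (which by itself only applies for $C_1<\tfrac14$); on the other hand, the paper's route reuses its sum/product lemmas, which the authors presumably state because they are wanted elsewhere. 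One small point worth stating explicitly in your write-up: the identity $d\Gamma(\eta,w)=\sum_i a_i\,d\Gamma(\eta_i,w)$ should be derived by applying the Leibniz rule to each monomial $\prod_{i\in S}\eta_i$ (these all lie in $\mathcal F_c\cap L^\infty$) and then regrouping the resulting bounded continuous coefficients into $a_i=\prod_{j\ne i}(1-\eta_j)$; the coefficients themselves need not lie in $\mathcal F$, and this sidesteps the global-cut-off device you mention in your caveat.
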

Given any nice cut-off function and any function in the domain $\mathcal{F}$, by Lemma \ref{cut-offweaklemma}, their product belongs to $\mathcal{F}$. The energy of the product function satisfies the following estimate, which we later refer to as the gradient inequality.
\begin{lemma}[gradient inequality]
	\label{gradientineq}
	Let $\eta$\ be a nice cut-off function corresponding to $C_1,C_2$\ in the sense of (\ref{cut-offbddenergy}), let $v\in \mathcal{F}$. Then 
	\begin{eqnarray}
	\int_Xd\Gamma(\eta v,\,\eta v)\leq \frac{1-2C_1}{1-4C_1}\int_Xd\Gamma(\eta^2v,\,v)+\frac{C_2}{1-4C_1}\int_{\scaleto{\mbox{supp}\{\eta\}}{5pt}}v^2\,dm.\label{gradient1}
	\end{eqnarray}
\end{lemma}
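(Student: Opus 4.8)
The plan is to expand the left-hand side using the Leibniz rule for the energy measure and then use the gradient bound from Assumption~\ref{assumptionbothcases} together with the Cauchy--Schwarz inequality~(\ref{CSineq}) to absorb the ``bad'' term back into the left-hand side. First I would write, by the Leibniz rule applied to $u=\eta v$ and $w=\eta v$ (using that $\eta v\in\mathcal{F}$ by Lemma~\ref{cut-offweaklemma}, and similarly $\eta^2 v\in\mathcal{F}$),
\begin{eqnarray*}
\int_X d\Gamma(\eta^2 v,\,v)
&=&\int_X \eta\,d\Gamma(\eta v,\,v)+\int_X \eta v\,d\Gamma(\eta,\,v)\\
&=&\int_X d\Gamma(\eta v,\,\eta v)-\int_X v\,d\Gamma(\eta v,\,\eta)+\int_X \eta v\,d\Gamma(\eta,\,v),
\end{eqnarray*}
where in the last step I used $\eta\,d\Gamma(\eta v,v)=d\Gamma(\eta v,\eta v)-v\,d\Gamma(\eta v,\eta)$, again by Leibniz. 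Expanding the two remaining cross terms with Leibniz once more ($d\Gamma(\eta v,\eta)=\eta\,d\Gamma(v,\eta)+v\,d\Gamma(\eta,\eta)$) and collecting, I expect the identity to reduce to something of the shape
\begin{eqnarray*}
\int_X d\Gamma(\eta^2 v,\,v)=\int_X d\Gamma(\eta v,\,\eta v)-\int_X v^2\,d\Gamma(\eta,\,\eta),
\end{eqnarray*}
i.e. the familiar ``completing the square'' identity $\int \eta^2\,d\Gamma(v,v)=\int d\Gamma(\eta v,\eta v)-\int v^2\,d\Gamma(\eta,\eta)$ rewritten via $\int d\Gamma(\eta^2 v,v)=\int\eta^2\,d\Gamma(v,v)+\int \eta v\,d\Gamma(\eta,v)$ and one more Cauchy--Schwarz on the mixed term.

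Once such an identity (or a one-sided inequality in the right direction) is in hand, the next step is to control $\int_X v^2\,d\Gamma(\eta,\eta)$ and any leftover mixed term $\int_X \eta v\,d\Gamma(\eta,v)$. For the first, Assumption~\ref{assumptionbothcases} gives directly
\begin{eqnarray*}
\int_X v^2\,d\Gamma(\eta,\eta)\leq C_1\int_X \eta^2\,d\Gamma(v,v)+C_2\int_{\mathrm{supp}\{\eta\}}v^2\,dm.
\end{eqnarray*}
For the mixed term I would apply the measure form of~(\ref{CSineq}) with a suitable constant, writing $|\eta v|\,d|\Gamma(\eta,v)|\leq \tfrac{1}{2}\eta^2\,d\Gamma(v,v)+\tfrac{1}{2}v^2\,d\Gamma(\eta,\eta)$ (or with a free parameter to optimize), and then feed the resulting $\int v^2\,d\Gamma(\eta,\eta)$ back into the Assumption~\ref{assumptionbothcases} bound. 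After these substitutions one is left with an inequality relating $\int d\Gamma(\eta v,\eta v)$, $\int \eta^2\,d\Gamma(v,v)$ and $\int_{\mathrm{supp}\{\eta\}}v^2\,dm$; since $\int\eta^2\,d\Gamma(v,v)\leq\int d\Gamma(v,v)$ and, more to the point, $\int\eta^2\,d\Gamma(v,v)$ itself can be re-expressed through $\int d\Gamma(\eta v,\eta v)$ and $\int v^2\,d\Gamma(\eta,\eta)$, everything can be rearranged so that the coefficient of $\int d\Gamma(\eta v,\eta v)$ on the left dominates, provided $C_1<1/4$ (which is exactly the hypothesis hidden in the stated constants $\tfrac{1-2C_1}{1-4C_1}$ and $\tfrac{C_2}{1-4C_1}$). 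Tracking the constants carefully through the two Cauchy--Schwarz applications should produce precisely the claimed coefficients.

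The main obstacle I anticipate is bookkeeping rather than conceptual: getting the two constants to come out as $\tfrac{1-2C_1}{1-4C_1}$ and $\tfrac{C_2}{1-4C_1}$ requires choosing the free Cauchy--Schwarz parameter optimally and being careful about which terms get absorbed on which side. A secondary technical point is justifying that all the products appearing ($\eta v$, $\eta^2 v$, and the truncations needed to define the energy measures) genuinely lie in $\mathcal{F}$ and that the Leibniz and chain rules apply to them — but since $\eta$ is a bounded continuous cut-off and the relevant products are handled by Lemma~\ref{cut-offweaklemma} (and, for boundedness issues, by first truncating $v$, establishing the inequality for $v\wedge n\vee(-n)$, and passing to the limit using that the energy measures of the truncations converge), this is routine. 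I do not expect any genuinely new idea beyond the standard ``IMS-type'' localization identity combined with the energy-controlled cut-off hypothesis.
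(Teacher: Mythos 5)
Your plan coincides with the paper's proof: the paper likewise expands $\int_X d\Gamma(\eta v,\eta v)$ by the Leibniz rule, uses Cauchy--Schwarz on the cross term together with the cut-off inequality~(\ref{cut-offbddenergy}) to first derive a bound for $\int_X\eta^2\,d\Gamma(v,v)$ in terms of $\int_X d\Gamma(\eta v,\eta v)$ and the $L^2$ term, and then substitutes this into the exact identity $\int_X d\Gamma(\eta v,\eta v)=\int_X d\Gamma(\eta^2 v,v)+\int_X v^2\,d\Gamma(\eta,\eta)$ to absorb the remaining term, with the same $C_1<\tfrac14$ threshold. The only slip is the ``i.e.''\ identifying $\int_X d\Gamma(\eta^2 v,v)$ with $\int_X\eta^2\,d\Gamma(v,v)$ --- these in fact differ by $2\int_X\eta v\,d\Gamma(\eta,v)$ --- but since you correctly anticipate that mixed term and the extra Cauchy--Schwarz it requires, the bookkeeping you describe does close.
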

The point of the lemma is to bound the energy of the product function $\eta v$ on the left-hand side by $L^2$\ integrals on the right-hand side (when $v\in \mathcal{D}(P)$). Indeed, the first integral is equal to $\int_X \eta^2 v\,Pv\,dm$.

It is easy to check the validity of this lemma in the special case when the cut-off function has bounded gradient. In this case, by expanding $\int_Xd\Gamma(\eta v,\,\eta v)$\ by the product rule and utilizing the upper bound  $d\Gamma(\eta,\eta)/dm\le M$, we get
\begin{eqnarray*}
\int_Xd\Gamma(\eta v,\,\eta v)\leq \int_Xd\Gamma(\eta^2v,\,v)+M\int_{\scaleto{\mbox{supp}\{\eta\}}{5pt}}v^2\,dm,
\end{eqnarray*}
which is exactly (\ref{gradient1}) with $C_1=0$, $C_2=M$. 

In the general case, when the cut-off function does not have bounded gradient (thus $C_1$\ in (\ref{cut-offbddenergy}) must be taken as positive), (\ref{gradient1}) is less obvious, and we give the proof below.

\begin{proof}[Proof for Lemma \ref{gradientineq}]
	\begin{eqnarray*}
		\lefteqn{\int_Xd\Gamma(\eta v,\,\eta v)= \int_X\eta^2\,d\Gamma(v,v)+\int_Xv^2\,d\Gamma(\eta,\eta)+2\int_X\eta v\,d\Gamma(\eta,v)}\notag\\
		&\geq& \int_X\eta^2\,d\Gamma(v,v)+\int_Xv^2\,d\Gamma(\eta,\eta)-\frac{1}{2}\int_X\eta^2\,d\Gamma(v,v)-2\int_Xv^2\,d\Gamma(\eta,\eta)\notag\\
		&=& \frac{1}{2}\int_X\eta^2\,d\Gamma(v,v)-\int_Xv^2\,d\Gamma(\eta,\eta)\notag\\
		&\geq& \frac{1}{2}\int_X\eta^2\,d\Gamma(v,v)-\left[C_1\int_X\eta^2\,d\Gamma(v,v)+C_2\int_{\scaleto{\mbox{supp}\{\eta\}}{6pt}}v^2\,dm\right]\notag\\
		&=&\left(\frac{1}{2}-C_1\right)\int_X\eta^2\,d\Gamma(v,v)-C_2\int_{\scaleto{\mbox{supp}\{\eta\}}{6pt}}v^2\,dm.
	\end{eqnarray*}
	Hence when $C_1<\frac{1}{2}$,
	\begin{eqnarray}
	\int_X\eta^2\,d\Gamma(v,v)\leq \frac{1}{\frac{1}{2}-C_1}\int_Xd\Gamma(\eta v,\,\eta v)+\frac{C_2}{\frac{1}{2}-C_1}\int_{\scaleto{\mbox{supp}\{\eta\}}{6pt}}v^2\,dm.\label{gradient0}
	\end{eqnarray}
	On the other hand,
	\begin{eqnarray*}
		\lefteqn{\int_Xd\Gamma(\eta v,\,\eta v)= \int_Xd\Gamma(\eta^2v,\,v)+\int_Xv^2\,d\Gamma(\eta,\eta)}\\
		&\leq& \int_Xd\Gamma(\eta^2v,\,v)+C_1\int_X\eta^2\,d\Gamma(v,v)+C_2\int_{\scaleto{\mbox{supp}\{\eta\}}{6pt}}v^2\,dm.
	\end{eqnarray*}
	Substituting the upper bound in (\ref{gradient0}) for $\int_X\eta^2\,d\Gamma(v,v)$\ here, we get
	\begin{eqnarray*}
		\lefteqn{\int_Xd\Gamma(\eta v,\,\eta v)
		\leq \int_Xd\Gamma(\eta^2v,\,v)+C_2\int_{\scaleto{\mbox{supp}\{\eta\}}{6pt}}v^2\,dm}\\
		&&+C_1\left[\frac{1}{\frac{1}{2}-C_1}\int_Xd\Gamma(\eta v,\,\eta v)+\frac{C_2}{\frac{1}{2}-C_1}\int_{\scaleto{\mbox{supp}\{\eta\}}{6pt}}v^2\,dm\right].
	\end{eqnarray*}	
	When $C_1<\frac{1}{4}$, this implies
	\begin{eqnarray*}
		\int_Xd\Gamma(\eta v,\,\eta v)\leq \frac{1-2C_1}{1-4C_1}\int_Xd\Gamma(\eta^2v,\,v)+\frac{C_2}{1-4C_1}\int_{\scaleto{\mbox{supp}\{\eta\}}{6pt}}v^2\,dm.
	\end{eqnarray*}
\end{proof}
In applications we do not care about the exact constants, so in the following we consider $C_1<\frac{1}{8}$\ and (\ref{gradient1}) implies
\begin{eqnarray}
\int_Xd\Gamma(\eta v,\,\eta v)\leq 2\int_Xd\Gamma(\eta^2v,\,v)+2C_2\int_{\scaleto{\mbox{supp}\{\eta\}}{6pt}}v^2\,dm.\label{gradient11}
\end{eqnarray}
Since $dk$\ is nonnegative, we also have
\begin{eqnarray}
\label{gradient111}
\mathcal{E}(\eta v,\,\eta v)\leq 2\mathcal{E}(\eta^2v,\,v)+2C_2\int_{\scaleto{\mbox{supp}\{\eta\}}{6pt}}v^2\,dm.
\end{eqnarray}

\subsection{$L^2$\ Gaussian upper bound}
In our treatment of the $L^2$\ time regularity of local weak solutions, we rely much on the heat semigroup, which is smooth in time. Roughly speaking, we use the heat semigroup to construct an approximate sequence to a local weak solution $u$, and show that this approximate sequence (1) converges to $u$\ in some weak sense; (2) forms a Cauchy sequence in the space $\mathcal{F}^n(I\times X)=W^{n,2}(I\rightarrow\mathcal{F})$. These two statements together then imply that $u$\ is (locally) in $\mathcal{F}^n(I\times X)$. To show the approximate sequence is Cauchy, we use the following $L^2$\ version of Gaussian type upper bound for the heat semigroup.
\begin{assumption}
	\label{L2gaussian}
	For any two open sets $V_1,V_2\Subset X$\ with $\overline{V_1}\cap \overline{V_2}=\emptyset$, let $\mathcal{A}(V_1,V_2):=\left\{(g_1,g_2)\,\big|\,\mbox{supp}\{g_i\}\subset V_i,\,||g_i||_{\scaleto{L^2(V_i,m)}{6pt}}\leq 1,\,i=1,2\right\}$. For any $a\geq 0$, any $n\in \mathbb{N}$,
	\begin{eqnarray*}
		\lim_{t\rightarrow 0^+}\left(\sup_{(g_1,g_2)\in \mathcal{A}(V_1,V_2)}\left\{\frac{1}{t^a}\left|<\partial_t^nH_tg_1,g_2>\right|\right\}\right)=0.
	\end{eqnarray*}
	To simplify notation we write
	\begin{eqnarray*}
		G_{\scaleto{V_1,V_2}{6pt}}(a,n,t)
		:=\max_{0\leq k\leq n}\sup\left\{\frac{1}{t^a}\left|<\partial_t^kH_tg_1,g_2>\right|\,\bigg|\,(g_1,g_2)\in \mathcal{A}(V_1,V_2)\right\}.
	\end{eqnarray*}
\end{assumption}
\begin{remark}
The $L^2$\ Gaussian type bound above is a very weak Gaussian upper bound. For example, from this bound itself we cannot tell if the heat semigroup admits a density, and even if we assume there is a density, neither can we say anything about the pointwise estimate of the density function. On the other hand, when there is some pointwise Gaussian or sub-Gaussian upper bound, then the $L^2$\ Gaussian bound is a very weak consequence. So we still name it ``$L^2$\ Gaussian type upper bound'' after the name of the classical pointwise Gaussian or sub-Gaussian upper bound.
\end{remark}
\begin{remark}
We also remark that the $L^2$\ Gaussian type upper bound is often automatically satisfied by the heat semigroup. For example, when there are enough cut-off functions with bounded gradient (see (\ref{cut-offbddgradient})), or when the general assumption (Assumption \ref{assumptionbothcases}) holds with $C_2(C_1,U,V)=C(U,V)C_1^{-\alpha}$\ for some $\alpha>0$, then the $L^2$\ Gaussian bound for the semigroup holds.
\end{remark}
More precisely, under Assumption \ref{assumptionbothcases} with cut-off functions with bounded gradient, one can define the distance between sets (cf. \cite{Hino}): for any two measurable precompact sets $U,V$,
\begin{eqnarray}
\label{defd(U,V)}
d(U,V):=\sup_{\substack{\scaleto{\phi\in \mathcal{F}_{\scaleto{\mbox{loc}}{3pt}}(X)\cap L^\infty}{6pt}\\\scaleto{d\Gamma(\phi,\phi)\leq dm}{6pt}}}\left\{\essinf_{x\in U}\phi(x)-\esssup_{y\in V}\phi(y)\right\}.
\end{eqnarray}
The following more concrete $L^2$\ Gaussian bound is a classical result, often referred to as the Takeda formula (cf. \cite{Takeda}). Let $V_1,V_2$\ be two precompact measurable subsets of $X$\ with $\overline{V_1}\cap \overline{V_2}=\emptyset$. Then $0<d(V_1,V_2)<\infty$. For any pair $(g_1,g_2)\in \mathcal{A}(V_1,V_2)$,
\begin{eqnarray}
\label{L2Gaussiannicecut-off}
|<H_tg_1,g_2>|\leq \exp{\left\{-\frac{d(V_1,V_2)^2}{4t}\right\}}.
\end{eqnarray}
Proofs for various kinds of Gaussian upper bounds usually use the so-called Davies' method, cf. eg. \cite{Davies}. To generalize the upper bound for terms like $|<\partial_t^nH_tg_1,g_2>|$, one can use for example the complex analysis method from \cite{Coulhon}, or the method in \cite{DaviesnonGaussian}.

However, when the existence of nice cut-off functions with bounded gradient is not guaranteed, there could be disjoint closed measurable sets $U,V$\ with distance $d(U,V)=0$\ (because roughly speaking the only functions with bounded gradient are constant functions), then this distance notion is not helpful in getting a Gaussian type upper bound.

Under Assumption \ref{assumptionbothcases} with cut-off functions satisfying the general inequality (\ref{cut-offbddenergy}), or (\ref{cut-offbddenergy1}) as in the equivalent form of Assumption \ref{assumptionbothcases} (see Remark \ref{cut-offrmk}), when furthermore $C_2$\ depends on $C_1$\ in the specific form $C_2(C_1,U,V)=C(U,V)C_1^{-\alpha}$\ for some $\alpha>0$, $C(U,V)>0$, by a modification of Davies' method, we can show the following $L^2$\ Gaussian type bound
\begin{eqnarray}
\label{L2Gaussiancut-off}
|<H_tg_1,g_2>|\leq \exp{\left\{-\left(\frac{1}{4^{\alpha+1}C(V_1,V_2)t}\right)^{\frac{1}{1+2\alpha}}\right\}}.
\end{eqnarray}
Here again $V_1,V_2$\ are two precompact measurable subsets of $X$\ with $\overline{V_1}\cap \overline{V_2}=\emptyset$, $(g_1,g_2)\in \mathcal{A}(V_1,V_2)$.

We call both (\ref{L2Gaussiannicecut-off}) and (\ref{L2Gaussiancut-off}) Gaussian type upper bounds. Note that (formally) if we take $\alpha=0$\ and $C(V_1,V_2)=d(V_1,V_2)^{-2}$\ in (\ref{L2Gaussiancut-off}), then we recover (\ref{L2Gaussiannicecut-off}). In Section 8 we give a proof for (\ref{L2Gaussiancut-off}), as well as how this implies a similar bound for $\left|\left<\partial_t^nH_tg_1,\,g_2\right>\right|$.

\section{Statement of the main results and overview of proof} \setcounter{equation}{0} 
\subsection{Statement of the main results}
In this section we state our results on the time regularity property of local weak solutions of the heat equation $(\partial_t+P)u=f$. Our main result is that the regularity in time of $u$\ is as good as that of the right-hand side $f$. Note that as a local weak solution on some $I\times U\subset I\times X$, $u$\ satisfies the prerequisite $u\in \mathcal{F}_{\scaleto{\mbox{loc}}{5pt}}(I\times U)$, so any of its ``$\mathcal{F}(I\times X)$\ representative'' $u^\sharp$\ automatically has distributional time derivatives of any order. The challenge hence lies in showing that these time derivatives belong to $\mathcal{F}(I\times X)=L^2(I\rightarrow \mathcal{F})$. 
Our main theorem is the following.
\begin{theorem}
	\label{L2thm}
	Let $(X,m)$\ be a metric measure space and $(\mathcal{E}, \mathcal{F})$\ be a symmetric regular local Dirichlet form satisfying Assumption \ref{assumptionbothcases} (existence of nice cut-off functions). Assume the associated heat semigroup $(H_t)_{t>0}$\ satisfies Assumption \ref{L2gaussian} (the $L^2$\ Gaussian type upper bound). 
	Let $U\subset X$\ be an open set, $I=(a,b)$\ be an open interval, and $f$\ be a function locally in $W^{n,2}(I\rightarrow L^2(U))$. Let $u$\ be a local weak solution of $(\partial_t+P)u=f$\ on $I\times U$. Then $u$\ is in $\mathcal{F}^n_{\scaleto{\mbox{loc}}{5pt}}(I\times U)$.
\end{theorem}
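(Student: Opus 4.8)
The plan is to run the ``heat semigroup'' scheme announced in the introduction, with Duhamel's formula as the organizing device and the $L^2$ Gaussian bound used precisely to tame the commutator (``annular'') term. First I would reduce to a global statement: since $\mathcal{F}^n_{\scaleto{\mbox{loc}}{5pt}}$ is defined through compact pieces, it suffices to fix $I'=(\alpha,\beta)\Subset(a,b)$ and $U'\Subset U$ and to produce a single $u^\sharp\in\mathcal{F}^n((a,b)\times X)=W^{n,2}((a,b)\to\mathcal{F})$ with $u^\sharp=u$ a.e.\ on $I'\times U'$. Choose $a<t_0<\alpha$ with $u(t_0,\cdot)\in\mathcal{F}_{\scaleto{\mbox{loc}}{5pt}}(U)$ (possible for a.e.\ $t_0$), a nice cut-off $\eta$ as furnished by Assumption \ref{assumptionbothcases} and Lemma \ref{cut-offannuli} with $\eta\equiv1$ on an open set $V_1\supseteq\overline{U'}$ and $\mathrm{supp}\,\eta\Subset U$, and $\ell\in C^\infty(\mathbb{R})$ with $\ell\equiv1$ on $[t_0,\beta]$ and $\mathrm{supp}\,\ell\Subset(a,b)$; set $\overline{\eta}(t,x)=\eta(x)\ell(t)$ and $w:=\overline{\eta}\,u$. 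By Lemma \ref{cut-offweaklemma}, $w\in\mathcal{F}_c((a,b)\times X)$, and $w=u$ on $[t_0,\beta]\times V_1\supseteq I'\times U'$.

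\emph{Global equation and Duhamel.} Testing \eqref{localweaksol} against $\overline{\eta}\varphi$ and using the Leibniz and chain rules for the energy measure, one checks that $w$ is a \emph{global} weak solution of $(\partial_t+P)w=g$ on $\mathbb{R}\times X$ with $g=\overline{\eta}f+(\partial_t\overline{\eta})u+g_\sharp$, where $g_\sharp(t)$ is the commutator functional $\varphi\mapsto\ell(t)\big(\int u(t)\,d\Gamma(\eta,\varphi)-\int\varphi\,d\Gamma(u(t),\eta)\big)$. From $u\in\mathcal{F}_{\scaleto{\mbox{loc}}{5pt}}$, the nice cut-off inequality \eqref{cut-offbddenergy}, and $f$ locally in $W^{n,2}((a,b)\to L^2(U))$, one gets $\overline{\eta}f\in W^{n,2}((a,b)\to L^2(X))$ with compact support, $(\partial_t\overline{\eta})u=\eta\ell'u\in L^2((a,b)\to\mathcal{F})$ with time support in $\{\ell'\ne0\}$, and $g_\sharp\in L^2((a,b)\to\mathcal{F}')$ with compact support; in particular $\partial_t w=-Pw+g\in L^2((a,b)\to\mathcal{F}')$, so Duhamel's formula $w(t)=H_{t-t_0}w(t_0)+\int_{t_0}^t H_{t-r}g(r)\,dr$ holds for $t\ge t_0$. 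Restricting to $(t,x)\in I'\times U'$ (where $w=u$) and using that $[t_0,\beta]\subseteq\{\ell'=0\}$ kills the $\eta\ell'u$-contribution there, one obtains $u=I_1+I_2+I_3$ a.e.\ on $I'\times U'$, with $I_1(t)=H_{t-t_0}[\eta\,u(t_0)]$, $I_2(t)=\int_{t_0}^t H_{t-r}[\overline{\eta}f(r)]\,dr$, and $I_3(t)=\int_{t_0}^t H_{t-r}\,g_\sharp(r)\,dr$. (This $I_1+I_2+\eta'I_3$, with $\eta'$ as below, is the ``approximate sequence'' of the overview, in its limiting form.)

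\emph{The three terms.} For $I_1$: since $\eta u(t_0)\in L^2(X)$ and $t-t_0\ge\alpha-t_0>0$ on $I'$, $\partial_t^k I_1=(-P)^kH_{t-t_0}[\eta u(t_0)]$ is smooth on $I'$ with values in $\mathcal{D}(P^\infty)$, so $I_1\in\mathcal{F}^n(I'\times X)$. For $I_2$: integrating by parts in the time variable of the Duhamel integral moves all $t$-derivatives onto $\overline{\eta}f$ (the boundary terms being $(-P)^jH_{t-t_0}$ applied to $\partial_t^m(\overline{\eta}f)(t_0)$, $m\le n-1$, which are smooth on $I'$), after which $\|H_\sigma\|_{L^2\to\mathcal{F}}\lesssim\sigma^{-1/2}\in L^1_{\scaleto{\mbox{loc}}{5pt}}(d\sigma)$ together with Young's convolution inequality gives $\partial_t^kI_2\in L^2(I'\to\mathcal{F})$ for $k\le n$, i.e.\ $I_2\in\mathcal{F}^n(I'\times X)$. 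For $I_3$ — the only place Assumption \ref{L2gaussian} enters — fix a further nice cut-off $\eta'$ with $\eta'\equiv1$ near $\overline{U'}$ and $\mathrm{supp}\,\eta'\Subset V_1$. Since $g_\sharp(r)$ pairs with test functions only through $d\Gamma(\eta,\cdot)$ it ``lives on'' $A_0:=\mathrm{supp}\,\Gamma(\eta,\eta)$, and by strong locality $A_0\cap\overline{V_1}=\emptyset$ (in particular $g_\sharp(r)$ annihilates functions supported in $U'$). Writing $\eta'I_3(t)=\int_{t_0}^tK(t-r)g_\sharp(r)\,dr$ with $K(\sigma):=\eta'H_\sigma(\cdot)$, one shows by iterating the Gaussian bound of Assumption \ref{L2gaussian} along a fixed finite chain of concentric cut-offs between $\overline{U'}$ and $A_0$ (using the semigroup property to ``hop'') and converting $L^2$-bounds into energy bounds via the gradient inequality Lemma \ref{gradientineq}, that for every $k$ and every $a>0$, $\|\partial_\sigma^kK(\sigma)\|_{\mathcal{F}'\to\mathcal{F}}=\|\eta'(-P)^kH_\sigma(\cdot)\|_{\mathcal{F}'\to\mathcal{F}}=o(\sigma^a)$ as $\sigma\to0^+$. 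In particular $\partial_\sigma^kK(0^+)=0$, so differentiating $\eta'I_3$ in $t$ moves every derivative onto $K$ with no boundary terms, $\partial_t^k(\eta'I_3)(t)=\int_{t_0}^tK^{(k)}(t-r)g_\sharp(r)\,dr$; since $\|K^{(k)}(\sigma)\|_{\mathcal{F}'\to\mathcal{F}}$ is bounded on bounded $\sigma$-intervals and $g_\sharp\in L^2((a,b)\to\mathcal{F}')$, Young's inequality gives $\eta'I_3\in C^\infty(I'\to\mathcal{F})\subseteq\mathcal{F}^n(I'\times X)$. As $\eta'=1$ near $U'$, $\eta'I_3=I_3$ on $I'\times U'$, so $u^\sharp:=I_1+I_2+\eta'I_3$ works, and arbitrariness of $I',U'$ yields $u\in\mathcal{F}^n_{\scaleto{\mbox{loc}}{5pt}}(I\times U)$.

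\emph{Main obstacle.} The one substantive difficulty is the $I_3$ term: Assumption \ref{L2gaussian} only controls the $L^2$ pairings $\langle\partial_t^kH_tg_1,g_2\rangle$ for data supported in disjoint sets, whereas what is needed is a bound on the operator $\eta'\partial_\sigma^kH_\sigma(\cdot)$ \emph{from the $\mathcal{F}'$-type space of the commutator datum into $\mathcal{F}$}, uniformly as $\sigma\to0^+$ and for all $k$. Upgrading the raw $L^2$ bound to an $\mathcal{F}'\to\mathcal{F}$ bound requires (i) trading energy control against $L^2$ control on slightly enlarged sets, which is exactly what the Caccioppoli/gradient inequality Lemma \ref{gradientineq} supplies, and (ii) propagating the Gaussian smallness across a fixed finite number of nested cut-offs, since the bound is only stated between two disjoint sets; tracking the dependence of all constants on these (finitely many) cut-offs and on $\sigma$, and verifying the Bochner-measurability that legitimizes differentiating under the integral, is the bulk of the computation. (The bookkeeping can alternatively be arranged as an induction on $n$, invoking the theorem at level $n-1$ to control the time derivatives of $u$ feeding into $g_\sharp$; with the decomposition above this is a convenience rather than a logical necessity.)
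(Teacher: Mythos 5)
Your Duhamel decomposition is a genuinely different organization from the paper's: the paper works with a mollified family $\widetilde{u}_\tau(s,x)=\int_I\rho_\tau(s-t)H_{s-t}(\overline{\eta}^tu^t)(x)\,dt$ and proves it is Cauchy in $W^{n,2}(I\to\mathcal{F})$ by controlling $\partial_\tau\partial_s^k(\overline{\psi}\widetilde{u}_\tau)$. Your $u^\sharp=I_1+I_2+\eta'I_3$ is essentially the $\tau\to0$ limit of that approximant, and your $I_3$ estimate plays the role of the paper's commutator term $B_k(\tau,\varphi)$; the reduction to producing a representative $u^\sharp$, the rewriting with right-hand side $g=\overline{\eta}f+(\partial_t\overline{\eta})u+g_\sharp$, Duhamel (via the equivalence of Definitions \ref{soldef1} and \ref{soldef2}), and the treatment of $I_1,I_2$ are all sound. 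The framing would be a nice alternative to record, but as written there is a genuine gap at the $I_3$ step, which is the heart of the matter.

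The specific claim $\|\partial_\sigma^kK(\sigma)\|_{\mathcal{F}'\to\mathcal{F}}=\|\eta'(-P)^kH_\sigma(\cdot)\|_{\mathcal{F}'\to\mathcal{F}}=o(\sigma^a)$ as $\sigma\to0^+$ is false as an operator norm statement, and the subsequent Young-inequality step that rests on it therefore does not go through. For a counterexample, take any $h_0\in L^2$ supported inside $\mathrm{supp}\,\eta'$ and set $h=H_1h_0\in\mathcal{F}\subset\mathcal{F}'$; then $\eta'(-P)^kH_\sigma h\to\eta'(-P)^kH_1h_0\neq0$ in $\mathcal{F}$ as $\sigma\to0^+$ while $\|h\|_{\mathcal{F}'}\lesssim\|h_0\|_{L^2}$, so the operator norm has a positive lower bound near $\sigma=0$. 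The decay you want is available only when the operator acts on functionals, such as $g_\sharp(r)$, whose pairing with $\mathcal{F}$ is concentrated on $A_0$, and this requires a formulation on the relevant subspace or directly in terms of the pairing $\langle g_\sharp(r),(-P)^kH_\sigma(\eta'\phi)\rangle$. Moreover one cannot obtain the estimate by composing an $\mathcal{F}'\to L^2$ smoothing bound, the $L^2$ Gaussian bound, and an $L^2\to\mathcal{F}$ smoothing bound, because each $H_{\sigma/2}$ spreads supports and destroys the disjointness Assumption \ref{L2gaussian} needs at the next step. The paper avoids this entirely by never forming an operator norm: it applies the gradient inequality \eqref{gradient111} to rewrite $\mathcal{E}(\Phi\overline{\eta}v_{k,\tau},\Phi\overline{\eta}v_{k,\tau})$ as $\int(\Phi\overline{\eta})^2v_{k,\tau}\,Pv_{k,\tau}\,dm$ plus an $L^2$ term, then applies the Gaussian bound to this $L^2$ pairing (where the two sides, $(\Phi\overline{\eta})^2v_{k,\tau}$ and the data $\psi\varphi^s$ inside $Pv_{k,\tau}$, do have disjoint supports), absorbing the common factor by a self-improving step. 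You would need the analogous hands-on computation on the pairing rather than an $\mathcal{F}'\to\mathcal{F}$ norm; once done, it is roughly the same volume of work as the paper's $B_k$ estimate, so the Duhamel framing reorganizes but does not shortcut the hard part.
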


In short, Theorem \ref{L2thm} claims that if the right-hand side $f$\ of the heat equation locally has time derivatives up to order $n$, then so does the local weak solution $u$, and its time derivatives up to order $n$\ locally belong to $L^2(I\rightarrow \mathcal{F})$. An important implication of Theorem \ref{L2thm} is that the time derivatives of $u$\ (up to order $n$) are local weak solutions of corresponding heat equations.
\begin{corollary}
	\label{L2corollary}
	Under the hypotheses in Theorem \ref{L2thm}, if $f$\ is locally in the space $W^{n,2}(I\rightarrow L^2(U))$, then for any $1\leq k\leq n$, $\partial_t^ku$\ is a local weak solution of
	\begin{eqnarray}
	\label{L2coreqn1}
	\left(\partial_t+P\right)\partial_t^ku=\partial_t^kf.
	\end{eqnarray}
	In particular, if $u$\ is a local weak solution of 
	$(\partial_t+P)u=0$ on $I\times U$, then all time derivatives of $u$\ are local weak solutions of the same heat equation on $I\times U$.
\end{corollary}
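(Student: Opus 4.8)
The plan is to derive Corollary \ref{L2corollary} from Theorem \ref{L2thm} by combining two ingredients: the regularity just obtained ($\partial_t^k u$ lies locally in $\mathcal{F}(I\times X)$, so it is an admissible candidate for being a local weak solution), and a differentiation-under-the-integral argument applied to the weak formulation \eqref{localweaksol}. First I would fix $k$ with $1\le k\le n$; since $f$ is locally in $W^{n,2}(I\to L^2(U))$, the right-hand side $\partial_t^k f$ of \eqref{L2coreqn1} is locally in $W^{n-k,2}(I\to L^2(U))\subset L^2(I\to \mathcal{F}')$ (composing with the continuous inclusion $L^2(U)\hookrightarrow \mathcal{F}'$), so it is a legitimate right-hand side in the sense of Definition \ref{soldef1}. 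By Theorem \ref{L2thm}, $u\in\mathcal{F}^n_{\scaleto{\mbox{loc}}{5pt}}(I\times U)$, hence in particular $\partial_t^k u\in \mathcal{F}_{\scaleto{\mbox{loc}}{5pt}}(I\times U)$, which is the prerequisite regularity required of a local weak solution. It remains to verify the integral identity.

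The core step is the following: given a test function $\varphi\in\mathcal{F}_c(I\times U)\cap C_c^\infty(I\to\mathcal{F})$, I want to show
\begin{eqnarray*}
-\int_I\int_X (\partial_t^k u)\,\partial_t\varphi\,dm\,dt+\int_I\mathcal{E}(\partial_t^k u,\varphi)\,dt=\int_I \langle \partial_t^k f,\varphi\rangle_{\mathcal{F}',\mathcal{F}}\,dt.
\end{eqnarray*}
The natural device is integration by parts in the time variable: since $\varphi$ is smooth and compactly supported in $t$, the function $(-1)^k\partial_t^k\varphi$ is again in $\mathcal{F}_c(I\times U)\cap C_c^\infty(I\to\mathcal{F})$ and is therefore an admissible test function for the original equation $(\partial_t+P)u=f$. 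Applying Definition \ref{soldef1} to $u$ with test function $(-1)^k\partial_t^k\varphi$ gives
\begin{eqnarray*}
-(-1)^k\int_I\int_X u\,\partial_t^{k+1}\varphi\,dm\,dt+(-1)^k\int_I\mathcal{E}(u,\partial_t^k\varphi)\,dt=(-1)^k\int_I\langle f,\partial_t^k\varphi\rangle_{\mathcal{F}',\mathcal{F}}\,dt.
\end{eqnarray*}
Now I transfer the time derivatives back onto $u$ in each of the three terms. For the first term, $(-1)^k\int_I\int_X u\,\partial_t^{k+1}\varphi\,dm\,dt=\int_I\int_X(\partial_t^k u)\,\partial_t\varphi\,dm\,dt$ by $k$ successive integrations by parts in $t$ (all boundary terms vanish because $\varphi$ has compact support in $I$), where $\partial_t^k u$ is interpreted via the $\mathcal{F}^k(I\times X)$-representative supplied by Theorem \ref{L2thm} on a slightly larger precompact box containing $\mathrm{supp}\,\varphi$; the identity of distributional and Bochner time derivatives here is exactly what membership in $W^{k,2}(I\to\mathcal{F})$ buys us. For the energy term, one uses that $\mathcal{E}(u,\cdot)$ is a bounded functional on $\mathcal{F}$ and that $t\mapsto u^t$ is $k$-times weakly differentiable with values in $\mathcal{F}$, so $\int_I\mathcal{E}(u,\partial_t^k\varphi)\,dt=(-1)^k\int_I\mathcal{E}(\partial_t^k u,\varphi)\,dt$; justifying this commutation requires noting that $\mathcal{E}$ is continuous and bilinear, so differentiation in the Bochner sense passes through it, or equivalently approximating $\varphi$ in $C_c^\infty(I\to\mathcal{F})$ and using the definition of the weak derivative of $u^t$ in $\mathcal{F}$. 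For the right-hand side, $(-1)^k\int_I\langle f,\partial_t^k\varphi\rangle_{\mathcal{F}',\mathcal{F}}\,dt=\int_I\langle\partial_t^k f,\varphi\rangle_{\mathcal{F}',\mathcal{F}}\,dt$ is simply the definition of the distributional time derivative of the $\mathcal{F}'$-valued (locally even $L^2(U)$-valued) function $f$, paired against the smooth compactly supported $\varphi$. Combining the three identities yields the desired weak formulation for $\partial_t^k u$, and the final sentence of the corollary is the special case $f\equiv 0$, for which $\partial_t^k f\equiv 0$.

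The main obstacle is bookkeeping rather than conceptual: one must be careful that "$\partial_t^k u$" in the new weak identity means the Bochner/weak time derivative in $\mathcal{F}$ of a local representative $u^\sharp\in\mathcal{F}^k(I\times X)$, that this coincides (up to the usual null sets) with the distributional time derivative of $u$ as a scalar function on $I\times U$, and that all the integrations by parts in $t$ are legitimate for $\mathcal{F}$-valued and $\mathcal{F}'$-valued maps of the appropriate Sobolev regularity — this is where the strength of Theorem \ref{L2thm} (namely $u\in\mathcal{F}^n_{\scaleto{\mbox{loc}}{5pt}}$, not merely $u\in\mathcal{F}_{\scaleto{\mbox{loc}}{5pt}}$ with distributional derivatives) is essential. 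One should also check that the locality of $\mathcal{E}$ makes $\mathcal{E}(\partial_t^k u,\varphi)$ independent of the choice of representative, exactly as in the remark following Definition \ref{soldef1}. Once these measure-theoretic and functional-analytic points are in place, the computation is the routine time-integration-by-parts displayed above.
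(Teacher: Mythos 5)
Your proposal is correct and follows essentially the same strategy as the paper: insert $(-1)^k\partial_t^k\varphi$ as a test function in the weak formulation for $u$, then transfer the $k$ time derivatives back onto $u$ and $f$ by integration by parts, using the regularity $u\in\mathcal{F}^n_{\scaleto{\mbox{loc}}{5pt}}(I\times U)$ from Theorem \ref{L2thm} to make sense of $\partial_t^k u$. The one place where your write-up and the paper diverge in execution is the energy term: you invoke the boundedness and bilinearity of $\mathcal{E}$ to pass Bochner-type differentiation through it, while the paper verifies the same identity $\int_I\mathcal{E}(u,\partial_t\varphi)\,dt=-\int_I\mathcal{E}(\partial_t u,\varphi)\,dt$ by hand via difference quotients $\varphi_n=(\varphi(\cdot+1/n)-\varphi)/(1/n)$ and $u_n=(u(\cdot-1/n)-u)/(1/n)$, using the Cauchy--Schwarz inequality for $\mathcal{E}$ to control the error. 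Both routes are standard and equivalent; yours is slightly more abstract, the paper's slightly more self-contained. Otherwise your treatment of the first and third terms (Fubini plus integration by parts in $t$, with the observation that $L^2(U)\hookrightarrow\mathcal{F}'$ so $\partial_t^k f$ is an admissible right-hand side) matches the paper.
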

\subsection{Sketch of proof for a special case of Theorem \ref{L2thm}} In the next two sections we prove Theorem \ref{L2thm} and Corollary \ref{L2corollary}. In this section, we give a simplified outline in a special case - when $X$\ is compact, and we consider any local weak solution $u$\ of the heat equation on $I\times X$. The rigorous proof in the next section is built on this outline but takes into consideration the complications brought in by noncompactness of the space $X$\ and the restriction on some open subset $U\subset X$. In this more general context, the existence of nice cut-off functions and the $L^2$\ Gaussian type upper bound become essential. In the special case where $X$\ is compact and $u$\ is a local weak solution on the full time-space cylinder $I\times X$, since  $\mathcal{F}_c(X)=\mathcal{F}=\mathcal{F}_{\scaleto{\mbox{loc}}{5pt}}(X)$, we know that $u$\ itself is in the domain of the Dirichlet form, and in particular, in $L^2(X)$. The statement of the theorem is much shortened as we do not need to assume the existence of nice cut-off functions on $X$, nor that the heat semigroup satisfies the $L^2$\ Gaussian type upper bound. The spaces $\mathcal{F}_c(I\times X)$, $\mathcal{F}(I\times X)$, $\mathcal{F}_{\scaleto{\mbox{loc}}{5pt}}(I\times X)$\ are different due to the inclusion of the open time interval $I=(a,b)$. In the proof we do need to multiply $u$\ with some smooth cut-off function in time, but in the outline below we ignore that technicality and pretend the functions are globally good in time.

Recall that we take the following convention: for any function $g(s,x)$, we write $g^s(x):=g(s,x)$.
\begin{proposition}[special case] 
	Let $(X,m)$\ be a compact metric measure space and $(\mathcal{E}, \mathcal{F})$\ be a symmetric regular local Dirichlet form. Given $I=(a,b)\subset \mathbb{R}$\ and a function $f$\ that is locally in $W^{1,2}(I\rightarrow L^2(X))$, let $u$\ be a local weak solution of $(\partial_t+P)u=f$\ on $I\times X$. Then $u$\ is locally in $\mathcal{F}^1(I\times X)$.
\end{proposition}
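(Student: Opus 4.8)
\noindent\emph{Proof proposal.} The plan is to use the spatial smoothing of the heat semigroup to build an approximating family and then to trade that spatial smoothness for the desired temporal regularity via a Duhamel formula. Since $X$ is compact we have $\mathcal{F}_c(X)=\mathcal{F}=\mathcal{F}_{\mathrm{loc}}(X)$, so the hypothesis $u\in\mathcal{F}_{\mathrm{loc}}(I\times X)$ just says $u\in L^2_{\mathrm{loc}}(I\to\mathcal{F})$; in particular $u^t\in\mathcal{F}\subset L^2(X)$ for a.e.\ $t$. Hence it suffices to exhibit $w\in L^2_{\mathrm{loc}}(I\to\mathcal{F})$ with $\partial_t u=w$ in the sense of distributions, since then $u\in W^{1,2}_{\mathrm{loc}}(I\to\mathcal{F})=\mathcal{F}^1_{\mathrm{loc}}(I\times X)$. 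The approximants will be the spatial mollifications $u_\epsilon^t:=H_\epsilon u^t$, with source $f_\epsilon^t:=H_\epsilon f^t$, for $\epsilon\downarrow 0$.

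First I would insert test functions of the form $\varphi(t,x)=\psi(t)\,H_\epsilon g(x)$ into the weak formulation \eqref{localweaksol}, with $\psi\in C_c^\infty(I)$ and $g\in L^2(X)$; this is admissible because $X$ is compact and $H_\epsilon g\in\mathcal{F}$. Using self-adjointness of $H_\epsilon$ and the identity $\mathcal{E}(v,H_\epsilon g)=\langle v,PH_\epsilon g\rangle=\langle PH_\epsilon v,g\rangle$ (legitimate since $H_\epsilon g\in\mathcal{D}(P)$ and $PH_\epsilon$ is bounded and self-adjoint), \eqref{localweaksol} collapses to the scalar identity $\frac{d}{dt}\langle u_\epsilon^t,g\rangle=\langle -Pu_\epsilon^t+f_\epsilon^t,\,g\rangle$ in $\mathcal{D}'(I)$, for every $g$. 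Since $\|PH_\epsilon\|_{L^2\to L^2}\le 1/(e\epsilon)$ by the spectral bound recalled in Section~2, the right-hand side lies in $L^2_{\mathrm{loc}}(I\to L^2(X))$, so $u_\epsilon\in W^{1,2}_{\mathrm{loc}}(I\to L^2(X))$ with $\partial_t u_\epsilon=-Pu_\epsilon+f_\epsilon$ there.

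The heart of the argument is the next step. Since $u_\epsilon^t=H_\epsilon u^t\in\mathcal{D}(P)$ for a.e.\ $t$, a standard variation-of-parameters computation (differentiate $r\mapsto H_{t-r}u_\epsilon^r$ and integrate) gives $u_\epsilon^t=H_{t-s}u_\epsilon^s+\int_s^t H_{t-\sigma}f_\epsilon^\sigma\,d\sigma$ for $a<s<t<b$. Differentiating in $t$ and then integrating by parts in $\sigma$ — the step that genuinely uses $f\in W^{1,2}_{\mathrm{loc}}(I\to L^2(X))$ rather than merely $f\in L^2_{\mathrm{loc}}$ — produces
\[
\partial_t u_\epsilon^t=-PH_{t-s}u_\epsilon^s+H_{t-s}f_\epsilon^s+\int_s^t H_{t-\sigma}\,\partial_\sigma f_\epsilon^\sigma\,d\sigma .
\]
In this identity the spatial derivative acts only through $H_{t-s}$. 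Fixing $I'=(a',b')\Subset I$ and choosing $s$ with $a<s<a'$ and $(s,b')\Subset I$, we have $t-s\ge\delta:=a'-s>0$ for all $t\in I'$, and spectral estimates such as $\mathcal{E}(PH_rv,PH_rv)=\langle v,P^3H_{2r}v\rangle\le c\,r^{-3}\|v\|_{L^2}^2$ and $\mathcal{E}(H_rv,H_rv)\le c\,r^{-1}\|v\|_{L^2}^2$ bound the three terms in the $\mathcal{E}_1$-norm \emph{uniformly in $\epsilon\in(0,1)$}: the first two by constants $C(\delta)\|u^s\|_{L^2}$ and $C(\delta)\|f^s\|_{L^2}$ (finite once $s$ is chosen with $u^s,f^s\in L^2(X)$), the third by $C\int_s^t\bigl(1+(t-\sigma)^{-1/2}\bigr)\|\partial_\sigma f^\sigma\|_{L^2}\,d\sigma$, which lies in $L^2(I')$ by Young's inequality since $\sigma^{-1/2}\in L^1_{\mathrm{loc}}$ and $\partial_\sigma f\in L^2((s,b')\to L^2(X))$.

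Finally I would let $\epsilon\downarrow 0$. For the fixed $s$ and a.e.\ $t\in I'$, strong continuity of $(H_t)$ gives $H_\epsilon u^s\to u^s$, $H_\epsilon f^s\to f^s$ in $L^2(X)$ and $H_\epsilon\partial_\sigma f\to\partial_\sigma f$ in $L^2((s,b')\to L^2(X))$, while $t-s>0$ makes $PH_{t-s}$ and $H_{t-s}$ bounded maps of $L^2(X)$ into $\mathcal{D}(P^\infty)\subset\mathcal{F}$ and $H_\epsilon\to\mathrm{id}$ strongly on $(\mathcal{F},\mathcal{E}_1)$; hence each of the three terms converges in $\mathcal{F}$, so $\partial_t u_\epsilon^t\to w^t$ in $\mathcal{F}$ with $w^t:=-PH_{t-s}u^s+H_{t-s}f^s+\int_s^t H_{t-\sigma}\partial_\sigma f^\sigma\,d\sigma$. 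Combining this pointwise convergence with the uniform domination from the previous step, dominated convergence yields $\partial_t u_\epsilon\to w$ in $L^2(I'\to\mathcal{F})$, and since $I'\Subset I$ was arbitrary, $w\in L^2_{\mathrm{loc}}(I\to\mathcal{F})$. On the other hand $u_\epsilon\to u$ in $L^2_{\mathrm{loc}}(I\to L^2(X))$, so closedness of the weak time derivative forces $w=\partial_t u$; thus $\partial_t u\in L^2_{\mathrm{loc}}(I\to\mathcal{F})$ and $u\in\mathcal{F}^1_{\mathrm{loc}}(I\times X)$. I expect the main obstacle to be precisely the Duhamel step and the uniform-in-$\epsilon$ bounds it makes possible: one must recognize that the equation has to be used to ``spend'' the gained spatial regularity over a time window $[s,t]$ kept away from its endpoint, and that the source term can only be controlled after the integration by parts that converts the non-integrable kernel $(t-\sigma)^{-1}$ into the integrable $(t-\sigma)^{-1/2}$ — which is exactly where the $W^{1,2}$-in-time hypothesis on $f$ becomes indispensable.
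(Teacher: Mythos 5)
Your proof is correct, but it takes a genuinely different route from the one the paper pursues. The paper mollifies $u$ in \emph{time}, forming $u_\tau(s,x)=\int_I\rho_\tau(s-t)H_{s-t}u^t(x)\,dt$, and then shows $\{u_\tau\}$ is Cauchy in $W^{1,2}(I\to\mathcal{F})$ by proving $\|\partial_\tau u_\tau\|_{W^{1,2}(I\to\mathcal{F})}$ is integrable near $\tau=0$; that proof works by a duality argument that feeds $\partial_s\bigl[\bar\rho_\tau(s-t)H_{s-t}\bigr]\varphi^s$ into the weak formulation, splits off the source term, and bounds what remains with the spectral estimate $\|P^kH_t\|\le(k/et)^k$. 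You instead mollify in \emph{space} by $u_\epsilon^t:=H_\epsilon u^t$, promote $u_\epsilon$ to a strong solution of $\partial_t u_\epsilon=-Pu_\epsilon+f_\epsilon$ (by testing against $\psi(t)H_\epsilon g$ and a Pettis-type argument), write a Duhamel representation $u_\epsilon^t=H_{t-s}u_\epsilon^s+\int_s^tH_{t-\sigma}f_\epsilon^\sigma\,d\sigma$, and integrate by parts in $\sigma$ to cash in the $W^{1,2}$ hypothesis on $f$ and obtain $\epsilon$-uniform $L^2(I'\to\mathcal{F})$ bounds on $\partial_t u_\epsilon$; closedness of the weak time derivative then finishes. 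Each choice buys something: your argument is cleaner and more self-contained in the compact setting — it avoids the duality bookkeeping and sees very transparently where $f\in W^{1,2}$ and the spectral smoothing enter — but it leans on the Duhamel identity, which requires the solution on all of $X$ at a fixed time $s$ and would not transplant to the local ($I\times U\subsetneq I\times X$) situation without first localizing, which is exactly the hard part. The paper's time-mollification, which pairs $u$ only against compactly supported test objects and never needs a representation formula, is tailored to survive the insertion of cut-off functions and the passage to the general Theorem \ref{L2thm}, at the cost of a longer computation. One small point of care in your write-up: the Duhamel identity and its $t$-derivative should be established first on $[s,t-\delta']$ and then $\delta'\downarrow 0$ (or, equivalently, via $\int_0^{t-s}H_rf_\epsilon^{t-r}\,dr$), since the naive $\frac{d}{dt}\int_s^t H_{t-\sigma}f_\epsilon^\sigma\,d\sigma$ produces the nonintegrable kernel $PH_{t-\sigma}$ before the integration by parts removes it; you flag this yourself, and the fix is standard.
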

\begin{proof}[Outline of Proof]
We consider the function
	\begin{eqnarray*}
		u_\tau(s,x):=\int_I\rho_\tau(s-t)H_{s-t}u^t(x)\,dt.
	\end{eqnarray*}
	The integral makes sense as a Bochner integral. Here $\tau>0$, and $\rho_\tau(r)=\frac{1}{\tau}\rho(\frac{r}{\tau})$, where $\rho$\ is some smooth nonnegative cut-off function on $\mathbb{R}$\ supported in $(1,2)$, with total integral equal to $1$. Note that when there is the notion of convolution and when $H_t$\ admits a density function (heat kernel), the approximate sequence above is exactly the convolution in time and space of $u$\ and the heat kernel (with a cut-off function in time). 
	
	Since $H_t$\ is smooth in time, it is easy to show that $u_\tau$\ is smooth in time. More precisely, for any $\tau>0$, $u_\tau\in C^\infty(I\rightarrow \mathcal{F})$. It is routine to verify that $u_\tau$\ converges to $u$\ in $L^2(I\times X)$\ as $\tau$\ tends to $0$. So to prove the proposition, it suffices to show that $\{u_\tau\}_{\tau>0}$\ is Cauchy in $W^{1,2}(I\rightarrow \mathcal{F})=\mathcal{F}(I\times X)$.
	
	To this end, we show that $||\partial_\tau u_\tau||_{\scaleto{W^{1,2}(I\rightarrow \mathcal{F})}{6pt}}$\ is integrable in $\tau$\ near $0$, then
	\begin{eqnarray*}
		\int_{0}^{\gamma}\left|\left|\partial_\tau u_\tau\right|\right|_{\scaleto{W^{1,2}(I\rightarrow \mathcal{F})}{6pt}}\,d\tau\rightarrow 0\ \ \mbox{as }\gamma\rightarrow 0,
	\end{eqnarray*}
	and thus $\{u_\tau\}_{\tau>0}$\ is Cauchy in $W^{1,2}(I\rightarrow \mathcal{F})$. We first estimate $\left|\left|\partial_\tau \partial_s u_\tau\right|\right|_{\scaleto{L^2(I\times X)}{6pt}}$. Note that $\partial_\tau\rho_\tau(r)=-\partial_r\bar{\rho}_\tau(r)$, where $\bar{\rho}_\tau(r)=\frac{r}{\tau^2}\rho(\frac{r}{\tau})$. By duality,
	\begin{eqnarray*}
		\lefteqn{||\partial_\tau \partial_su_\tau||_{\scaleto{L^2(I\times X)}{6pt}}=\sup_{\substack{\left|\left|\varphi\right|\right|_{L^2(I\times X)}\leq 1\\
					\varphi\in C^\infty_c(I\rightarrow L^2(X))}}\left<\partial_\tau \partial_su_\tau,\, \varphi\right>_{\scaleto{L^2(I\times X)}{6pt}}}\\
		&=& \sup_{\substack{\left|\left|\varphi\right|\right|_{L^2(I\times X)}\leq 1\\
				\varphi\in C^\infty_c(I\rightarrow L^2(X))}}\left\{\left<\int_I\partial_s\left[\partial_t\bar{\rho}_\tau(s-t)H_{s-t}\right]u^t(x)\,dt,\ \varphi\right>\right\}\\
		&=& \sup_{\substack{\left|\left|\varphi\right|\right|_{L^2(I\times X)}\leq 1\\\varphi\in C^\infty_c(I\rightarrow L^2(X))}}\left\{\int_I\int_I\int_Xu^t(x)\cdot \partial_s\partial_t\left[\bar{\rho}_\tau(s-t)H_{s-t}\right]\varphi^s(x)\,dmdsdt\right.\\
		&&\left.-\int_I\int_I\int_Xu^t(x)\cdot \partial_s\left[\bar{\rho}_\tau(s-t)\cdot \partial_tH_{s-t}\right]\varphi^s(x)\,dmdsdt\right\}.
	\end{eqnarray*}
	From the second line to the third line we used the Fubini theorem and the self-adjointness of $H_t$\ to move $\partial_s[\partial_t\bar{\rho}_\tau(s-t)H_{s-t}]$\ from the ``$u$'' side to ``$\varphi$'' side, then used the product rule to redistribute $\partial_t$. Since $\partial_tH_{s-t}=PH_{s-t}$\ and $u$\ is a local weak solution of $(\partial_t+P)u=f$\ on $I\times X$, the above two terms in the brackets together, modulo a cut-off function in time that we omit in this proof (i.e., think of the function $(t,x)\mapsto \partial_s[\bar{\rho}_\tau(s-t)H_{s-t}]\varphi^s(x)$\ above as a test function), equals 
	\begin{eqnarray*}
	-\int_I\int_I\int_X f(t,x)\cdot\partial_s\left[\bar{\rho}_\tau(s-t)H_{s-t}\right]\varphi^s(x)\,dmdtds.
	\end{eqnarray*}
	By rewriting $\partial_s[\bar{\rho}_\tau(s-t)H_{s-t}]$\ as $-\partial_t[\bar{\rho}_\tau(s-t)H_{s-t}]$\ and use integration by parts, we get
	\begin{eqnarray*}
		\lefteqn{||\partial_\tau \partial_su_\tau||_{\scaleto{L^2(I\times X)}{6pt}}}\\
		&=&\sup_{\substack{\left|\left|\varphi\right|\right|_{L^2(I\times X)}\leq 1\\\varphi\in C^\infty_c(I\rightarrow L^2(X))}}\left|\int_I\int_I\int_X \partial_tf(t,x)\cdot\bar{\rho}_\tau(s-t)H_{s-t}\varphi^s(x)\,dmdtds\right|.
	\end{eqnarray*}
	Here we did not consider the boundary term, but that is not a problem once we add in the cut-off function in time in the rigorous proof in the next section. By using the $W^{1,2}(I\rightarrow L^2(X))$\ norm of $f$\ and note that $\sup_{s\in I}\int_I\bar{\rho}_\tau(s-t)\,dt=1$, we conclude that $||\partial_\tau \partial_su_\tau||_{\scaleto{L^2(I\times X)}{6pt}}$\ is bounded above independent of $\tau$.
	
	To estimate $||\partial_\tau \partial_su_\tau||_{\scaleto{L^2(I\rightarrow \mathcal{F})}{6pt}}$, note that for any $\tau>0$, $s\in I$, $\partial_\tau \partial_su_\tau$\ belongs to $\mathcal{D}(P)$. Thus
	\begin{eqnarray*}
		\left(\int_I\mathcal{E}(\partial_\tau \partial_su_\tau,\,\partial_\tau \partial_su_\tau)\,ds\right)^{1/2}
		\leq \left|\left|\partial_\tau \partial_su_\tau\right|\right|_{\scaleto{L^2(I\times X)}{6pt}}^{1/2}\left|\left|P(\partial_\tau \partial_su_\tau)\right|\right|_{\scaleto{L^2(I\times X)}{6pt}}^{1/2}.
	\end{eqnarray*}
	Since $\sup\limits_{0<\tau<1}||\partial_\tau \partial_su_\tau||_{\scaleto{L^2(I\times X)}{6pt}}<\infty$\ from above, it suffices to show that
	\begin{eqnarray*}
		\left|\left|P(\partial_\tau \partial_su_\tau)\right|\right|_{\scaleto{L^2(I\times X)}{6pt}}\lesssim \frac{1}{\tau}.
	\end{eqnarray*}
	Running the estimate for $||\partial_\tau \partial_su_\tau||_{\scaleto{L^2(I\times X)}{6pt}}$\ again with $H_{s-t}$\ replaced by $PH_{s-t}$, we can get the desired estimate.
\end{proof}
\section{Proof of the main results} \setcounter{equation}{0}
\subsection{Proof of Theorem \ref{L2thm} - general strategy}
In this section we prove Theorem \ref{L2thm}. To show that $u\in \mathcal{F}^n_{\scaleto{\mbox{loc}}{5pt}}(I\times U)$, for any $J\times V\Subset I\times U$, we show there exists some function in $\mathcal{F}^n(I\times X)$\ that equals to $\overline{\psi} u$\ a.e. over $J\times V$. Here $\overline{\psi}(s,x):=\psi(x)w(s)$\ is some nice product cut-off function such that $\overline{\psi}\equiv 1$\ on some $J_{\overline{\psi}}\times V_{\overline{\psi}}$\ where $J\times V\Subset J_{\overline{\psi}}\times V_{\overline{\psi}}$; $\mbox{supp}\{\overline{\psi}\}\subset I_{\overline{\psi}}\times U_{\overline{\psi}}$\ for some $I_{\overline{\psi}}\times U_{\overline{\psi}}\Subset I\times U$. Our notational choice is that $J,V$\ are proper subsets of $I,U$, and subscripts mark which function these sets are ``affiliated with''.

To find such a function in $\mathcal{F}^n(I\times X)=W^{k,2}(I\rightarrow \mathcal{F})$, we first define an approximate sequence (now, with proper nice cut-off functions inserted) to the local weak solution $u$\ and show that the approximate sequence is Cauchy in $\mathcal{F}^n(I\times X)$. Next, we show that the sequence converges to $u$\ in the $L^2$\ sense (this step does not make use of the fact that $u$\ is a local weak solution). The limit of the approximate sequence then serves as the function in $\mathcal{F}^n(I\times X)$\ that agrees with $\overline{\psi}u$\ on $J\times V$.

The approximate sequence is defined as follows. Let $\rho_\tau$\ be defined as in the last section, that is, $\rho(t)\in C^{\infty}_c(1,2)$\ is some positive bounded function satisfying $\int_{\mathbb{R}}\rho(t)\,dt=1$, and $\rho_\tau(t)$\ is defined as $\rho_\tau(t)=\frac{1}{\tau}\rho\left(\frac{t}{\tau}\right)$\ ($\tau>0$). Note that $\mbox{supp}\{\rho_\tau\}\subset (\tau,2\tau)$. By inspection, $\partial_\tau \rho_\tau(t)=-\partial_t\overline{\rho}_\tau(t)$, where $\overline{\rho}_\tau(t)=\frac{t}{\tau^2}\rho\left(\frac{t}{\tau}\right)$. Let $\overline{\eta}(y,t)=\eta(y)l(t)$\ be another nice product cut-off function which is $1$\ over some neighborhood of the support of $\overline{\psi}$. More precisely, $\overline{\eta}\equiv 1$\ on some $J_{\overline{\eta}}\times V_{\overline{\eta}}$\ where $J\times V\Subset I_{\overline{\psi}}\times U_{\overline{\psi}}\Subset J_{\overline{\eta}}\times V_{\overline{\eta}}$; $\mbox{supp}\{\overline{\eta}\}\subset I_{\overline{\eta}}\times U_{\overline{\eta}}$\ for some $I_{\overline{\eta}}\times U_{\overline{\eta}}\Subset I\times U$. Consider the sequence defined by
\begin{eqnarray}
\widetilde{u}_\tau(s,x):=\int_I\rho_\tau(s-t)H_{s-t}\left(\overline{\eta}^t u^t\right)(x)\,dt.
\end{eqnarray}
 
As mentioned above, we claim that (1) the family $\left\{\overline{\psi}\widetilde{u}_\tau\right\}$\ is Cauchy in $\mathcal{F}^n(I\times X)$\ and hence has a limit in the same function space; (2) $\overline{\psi}\widetilde{u}_\tau\rightarrow \overline{\psi}\overline{\eta} u=\overline{\psi} u$\ in $L^2(I\times X)$\ as $\tau\rightarrow 0$. So the two limit functions must equal a.e., in other words, the ``$L^2$\ limit'' $\overline{\psi} u$\ in fact belongs to $\mathcal{F}^n(I\times X)$. Since $\overline{\psi} u=u$\ a.e. on $J\times V$, $J\times V$\ is arbitrary, the statement in Theorem \ref{L2thm} follows.

To show $\left\{\overline{\psi}\widetilde{u}_\tau\right\}$\ is Cauchy in $\mathcal{F}^n(I\times X)=W^{n,2}(I\rightarrow \mathcal{F})$, we first show that for each $\tau>0$, $\overline{\psi}\widetilde{u}_\tau\in C^\infty(I\rightarrow \mathcal{F})$. It then suffices to prove the following two propositions.
\begin{proposition}
	\label{L2prop1}
	Under the hypotheses in Theorem \ref{L2thm}, for any nice product cut-off function $\overline{\psi}$\ supported in $I\times U$,
	\begin{eqnarray*}
		\max_{0\leq k\leq n}\sup_{0<\tau<1}\left|\left|\partial_\tau\partial_s^k\left(\overline{\psi}\widetilde{u}_\tau\right)\right|\right|_{\scaleto{L^2(I\times X)}{6pt}}<+\infty.
	\end{eqnarray*}
\end{proposition}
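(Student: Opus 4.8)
\textbf{Proof proposal for Proposition \ref{L2prop1}.}

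The plan is to reduce the estimate to a duality pairing against a test function $\varphi \in C^\infty_c(I \to L^2(X))$ with $\|\varphi\|_{L^2(I\times X)} \le 1$, exactly as in the compact sketch, but now carrying the nice product cut-off functions $\overline\psi$ and $\overline\eta$ explicitly. First I would fix $k$ with $0 \le k \le n$ and write
\[
\partial_\tau \partial_s^k\bigl(\overline\psi\,\widetilde u_\tau\bigr)(s,x) = \sum_{j=0}^{k}\binom{k}{j}\partial_s^{k-j}\overline\psi(s,x)\cdot \partial_\tau\partial_s^{j}\widetilde u_\tau(s,x),
\]
so it suffices to bound each $\|\partial_\tau\partial_s^{j}\widetilde u_\tau\|_{L^2(I_{\overline\psi}\times U_{\overline\psi})}$ (the derivatives of $\overline\psi$ in $s$ are bounded and supported where $\overline\eta \equiv 1$). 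Using $\partial_\tau \rho_\tau = -\partial_t \overline\rho_\tau$ and moving all $s$- and $t$-derivatives off the $u$-variable onto the semigroup kernel via Fubini and the self-adjointness of $H_t$, the pairing $\langle \partial_\tau\partial_s^{j}\widetilde u_\tau, \varphi\rangle$ becomes $\int_I\int_I\int_X (\overline\eta^t u^t)(x)\cdot \partial_s^{j}\partial_t\bigl[\overline\rho_\tau(s-t)H_{s-t}\bigr]\varphi^s(x)\,dm\,dt\,ds$, where $\partial_s^j$ and one extra $\partial_t$ arise; then I redistribute $\partial_t$ by the product rule as $\partial_t H_{s-t} = PH_{s-t}$ and regroup so that the combination $(\partial_t + P)$ hits $\overline\eta^t u^t$.

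The key algebraic point is that $\overline\eta^t u^t$ is \emph{not} a local weak solution, so I must compute its equation. Since $\overline\eta \equiv 1$ on a neighborhood of $\operatorname{supp}\{\overline\psi\}$ and the test function we are effectively pairing against (namely $x\mapsto \partial_s^j[\overline\rho_\tau(s-t)H_{s-t}]\varphi^s$, suitably cut off in time by $l$) has the relevant support properties, the commutator $(\partial_t+P)(\overline\eta u) - \overline\eta(\partial_t+P)u$ is supported on the region where $\nabla\overline\eta \ne 0$, i.e. on $\operatorname{supp}\{\overline\eta\}\setminus\{\overline\eta=1\}$, and produces an energy-measure term of the form $d\Gamma(\eta,\cdot)$ plus $u\,\partial_t\overline\eta$ terms. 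Here is where Assumption \ref{assumptionbothcases} (the gradient inequality, in the form \eqref{gradient11}--\eqref{gradient111}) and the weak $L^2$-Gaussian bound of Assumption \ref{L2gaussian} enter: the Gaussian bound $G_{V_1,V_2}(a,n,t)\to 0$ controls $\langle \partial_t^k H_{s-t}g_1, g_2\rangle$ when $g_1$ is supported where $\overline\eta$ jumps and $g_2 = \varphi^s$ is supported where $\overline\psi$ lives (these have disjoint compact supports by our nested choice $J\times V \Subset I_{\overline\psi}\times U_{\overline\psi}\Subset J_{\overline\eta}\times V_{\overline\eta}$), so the "bad" commutator terms contribute a quantity that is bounded — indeed $o(\tau^{-a})$ for any $a$ — uniformly in $\tau \in (0,1)$. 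The "good" term, $\overline\eta\cdot(\partial_t+P)u = \overline\eta f$, is handled as in the sketch: after rewriting $\partial_t[\overline\rho_\tau(s-t)H_{s-t}] = -\partial_s[\overline\rho_\tau(s-t)H_{s-t}]$ and integrating by parts in $s$ (the boundary terms vanish because of the time cut-off $w$ in $\overline\psi$ and $l$ in $\overline\eta$), the $\partial_t$ (and up to $j \le k \le n$ powers of $\partial_s$) land on $f$, producing $\partial_t^{\le n} f$, which is controlled by the local $W^{n,2}(I \to L^2(U))$-norm of $f$ together with $\sup_{s}\int_I \overline\rho_\tau(s-t)\,dt = 1$ and $\|H_{s-t}\|_{L^2\to L^2}\le 1$.

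The main obstacle I expect is bookkeeping the interplay of the two cut-offs with the derivative redistribution: one must verify that when $\partial_t$ and the $j$ copies of $\partial_s$ are moved around, every term either (a) has $\partial_t + P$ acting on $u$ so the weak-solution equation applies, or (b) is a commutator term whose two factors have disjoint supports so the Gaussian bound kills it, or (c) is a pure $f$-term bounded by $\|f\|_{W^{n,2}}$; and that no term escapes this trichotomy. In particular, terms where $\partial_s^j$ partially falls on $\overline\rho_\tau$ (contributing factors of $\tau^{-1}$ per derivative, since $\partial_\tau \overline\rho_\tau \sim \tau^{-1}\overline\rho_\tau$ in scaling but $\partial_s^j \overline\rho_\tau \sim \tau^{-j}$) need care — however, since we only take \emph{one} $\partial_\tau$ and the $k$ spatial-time derivatives $\partial_s^k$ are the ones producing the worst scaling, and since the $\tau^{-1}$-type blowups are multiplied against terms that either vanish faster than any power of $\tau$ (the Gaussian commutator terms) or are genuinely $\tau$-independent after the integration-by-parts trick (the $f$-terms, where the $\partial_s$ derivatives get transferred to $f$ rather than to $\overline\rho_\tau$), the supremum over $\tau\in(0,1)$ stays finite. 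I would isolate this as the technical heart and present it via one careful lemma on $(\partial_t+P)(\overline\eta u)$ before assembling the final bound.
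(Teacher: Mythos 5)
Your overall strategy is the same as the paper's: identify the commutator $(\partial_t+P)(\overline\eta u)-\overline\eta(\partial_t+P)u$, use the weak $L^2$-Gaussian bound for the spatial part, and push derivatives onto $f$ for the ``good'' term. However, there is a genuine gap in how you propose to handle the \emph{time} part of the commutator.

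The time-commutator term is $u\,\partial_t\overline\eta$, and since $\overline\eta(t,x)=l(t)\eta(x)$, we have $\partial_t\overline\eta(t,x)=l'(t)\eta(x)$. Its \emph{spatial} support is all of $\operatorname{supp}\{\eta\}=U_{\overline\eta}$ --- not the annulus $U_{\overline\eta}\setminus V_{\overline\eta}$ where $\eta$ jumps --- and this \emph{contains} $U_{\overline\psi}$. So $g_1 = u\,\partial_t\overline\eta$ and $g_2=\psi\varphi^s$ do \emph{not} have disjoint spatial supports, and Assumption \ref{L2gaussian} (which is explicitly formulated for pairs of sets with disjoint closures in $X$) cannot be applied to this term. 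Lumping it together with the energy-measure commutator $d\Gamma(\eta,\cdot)$ under the Gaussian-bound umbrella is therefore wrong. What saves this term is \emph{temporal} disjointness, not the Gaussian bound: $\partial_t\overline\eta\ne 0$ forces $t\notin J_{\overline\eta}$, the factor $w(s)$ inside $v_{k,\tau}$ forces $s\in I_{\overline\psi}\Subset J_{\overline\eta}$, and $\overline\rho_\tau(s-t)$ forces $\tau<s-t<2\tau$; hence for $\tau<c_0:=\frac{1}{2}d(I_{\overline\psi},(J_{\overline\eta})^c)$ the integrand vanishes identically, and for $\tau\ge c_0$ one uses the crude $\tau^{-(2k+1)}\le c_0^{-(2k+1)}$ bound. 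This is exactly the $A_k$-estimate in the paper and it is a separate mechanism from the $B_k$-estimate.

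A second, smaller slip: for the $f$-term you propose to rewrite $\partial_t[\overline\rho_\tau(s-t)H_{s-t}]=-\partial_s[\overline\rho_\tau(s-t)H_{s-t}]$ and integrate by parts in $s$. That moves the derivative onto the $s$-dependent factors $w(s)$ and $\varphi^s$, not onto $f^t$, so it does not produce the $\partial_t^{\le n}f$ you want to control by $\|f\|_{W^{n,2}}$. The correct direction (as in the paper) is the reverse: express the $\partial_s^a$ hitting $\overline\rho_\tau(s-t)H_{s-t}$ as $(-1)^a\partial_t^a$ and integrate by parts in $t$, so that the derivatives land on $\overline\eta^t f^t$; then $\int_I\overline\rho_\tau(s-t)\,dt\sim 1$ and $\|H_{s-t}\|_{2\to 2}\le 1$ finish the bound.
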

\begin{proposition}
	\label{L2prop2}
	Under the hypotheses in Theorem \ref{L2thm}, for any nice product cut-off function $\overline{\psi}$\ supported in $I\times U$,
	\begin{eqnarray*}
		\max_{0\leq k\leq n}\left(\int_I\mathcal{E}(\partial_\tau\partial_s^k(\overline{\psi}\widetilde{u}_\tau),\, \partial_\tau\partial_s^k(\overline{\psi}\widetilde{u}_\tau))\,ds\right)^{1/2}\lesssim \frac{1}{\sqrt{\tau}}.
	\end{eqnarray*}
\end{proposition}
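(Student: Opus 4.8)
The plan is to estimate $\int_I \mathcal{E}(\partial_\tau\partial_s^k(\overline\psi\widetilde u_\tau),\,\partial_\tau\partial_s^k(\overline\psi\widetilde u_\tau))\,ds$ by combining the gradient inequality of Lemma \ref{gradientineq} (in the convenient form (\ref{gradient111})) with the spectral estimate $\|P H_t\|_{L^2\to L^2}\le (1/et)$, and the bound on $\|\partial_\tau\partial_s^k(\overline\psi\widetilde u_\tau)\|_{L^2(I\times X)}$ already supplied by Proposition \ref{L2prop1}. Write $w:=\partial_\tau\partial_s^k(\overline\psi\widetilde u_\tau)$. For each fixed $\tau$ and $s$, the function $w^s$ lies in $\mathcal{D}(P)$ (it is built from $H_r$ applied to $L^2$ data, differentiated in the time parameters, then multiplied by the cut-off $\overline\psi$, and $\overline\psi$ times a $\mathcal{D}(P)$-function is in $\mathcal{F}$ by Lemma \ref{cut-offweaklemma}). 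Apply (\ref{gradient111}) with $\eta=\psi$ and $v=\partial_\tau\partial_s^k\widetilde u_\tau^s$ to get
\begin{eqnarray*}
\mathcal{E}(w^s,w^s)\le 2\,\mathcal{E}(\psi^2 v,\,v)+2C_2\int_{\mathrm{supp}\{\psi\}}v^2\,dm = 2\int_X \psi^2 v\,(Pv)\,dm+2C_2\int_{\mathrm{supp}\{\psi\}}v^2\,dm,
\end{eqnarray*}
where the identity uses that $v^s\in\mathcal{D}(P)$ and the definition of $P$ via $\mathcal{E}$. By Cauchy--Schwarz in space, the first term is bounded by $2\|\psi^2 v^s\|_{L^2}\,\|Pv^s\|_{L^2}\le 2\|v^s\|_{L^2}\,\|Pv^s\|_{L^2}$.

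Next I would integrate in $s$ over $I$ and apply Cauchy--Schwarz in $s$:
\begin{eqnarray*}
\int_I \mathcal{E}(w^s,w^s)\,ds \le 2\Big(\int_I \|v^s\|_{L^2}^2\,ds\Big)^{1/2}\Big(\int_I \|Pv^s\|_{L^2}^2\,ds\Big)^{1/2}+2C_2\int_I\|v^s\|_{L^2}^2\,ds.
\end{eqnarray*}
The quantity $\big(\int_I\|v^s\|_{L^2}^2\,ds\big)^{1/2}=\|\partial_\tau\partial_s^k\widetilde u_\tau\|_{L^2(I\times X)}$ is controlled uniformly in $\tau$: this is essentially Proposition \ref{L2prop1} applied with the trivial cut-off (or one re-runs its proof, noting $\overline\psi$ was only used to land in $\mathcal{F}_c$, not to bound the $L^2$ norm). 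So everything reduces to showing
\begin{eqnarray*}
\int_I \|P\,\partial_\tau\partial_s^k\widetilde u_\tau^s\|_{L^2(X)}^2\,ds \lesssim \frac{1}{\tau^2}.
\end{eqnarray*}
To see this, observe that $P$ commutes with $H_r$ and with the time derivatives $\partial_\tau,\partial_s$, so $P\,\partial_\tau\partial_s^k\widetilde u_\tau^s = \partial_\tau\partial_s^k\int_I \rho_\tau(s-t)\,P H_{s-t}(\overline\eta^t u^t)\,dt$; i.e. it is the same object as $\partial_\tau\partial_s^k\widetilde u_\tau$ but with one extra factor of $P$ inside the semigroup. One then re-runs the estimate used for $\|\partial_\tau\partial_s^k(\overline\psi\widetilde u_\tau)\|_{L^2(I\times X)}$ in Proposition \ref{L2prop1} (the duality / Fubini / ``$u$ is a local weak solution'' / integration-by-parts argument sketched for the special case), but now carrying the extra $PH_{s-t}$. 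That extra $P$ must be absorbed somewhere; the natural place is to write $PH_{s-t}=\sqrt{P}H_{(s-t)/2}\cdot\sqrt P H_{(s-t)/2}$, or more simply to bound $\|PH_{r}\|_{L^2\to L^2}\le 1/(er)\le 1/(e\tau)$ on $\mathrm{supp}\{\rho_\tau\}\subset(\tau,2\tau)$, which loses exactly one power of $1/\tau$ compared to the $L^2(I\times X)$ bound of Proposition \ref{L2prop1}; squaring and integrating in $s$ gives the $1/\tau^2$ above and hence the claimed $1/\sqrt\tau$ for $\big(\int_I\mathcal{E}(w^s,w^s)\,ds\big)^{1/2}$.

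The main obstacle is the bookkeeping in the last step: to pass the local-weak-solution equation through $\partial_\tau\partial_s^k$ legitimately, one must handle the cut-off $\overline\eta$ in space and the time cut-offs $l(t),w(s)$ so that every intermediate object $(t,x)\mapsto \partial_s^j[\rho_\tau(s-t)PH_{s-t}\varphi^s](x)$ is a genuine admissible test function in $\mathcal{F}_c(I\times U)\cap C_c^\infty(I\to\mathcal{F})$; this is where the commutator terms $[\,P,\overline\eta\,]$ and the error terms supported on $\mathrm{supp}\{\nabla\overline\eta\}$ appear, and where Assumption \ref{assumptionbothcases} and Assumption \ref{L2gaussian} are genuinely needed (they are what was bypassed in the compact special case). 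Concretely, when one commutes $\overline\eta$ past $P$ one generates energy-measure terms $\int d\Gamma(\eta,\cdot)$ on the annular region $\mathrm{supp}\{\eta\}\setminus V_{\overline\eta}$, and these are estimated using the $L^2$ Gaussian bound $G_{V_1,V_2}(a,n,t)\to0$ applied with $V_1=\mathrm{supp}\{\overline\psi\}$, $V_2$ the annulus, which forces those contributions to vanish faster than any power of $\tau$ — so they do not affect the $1/\sqrt\tau$ rate. Since I may invoke Propositions \ref{L2prop1} and the smoothness $\overline\psi\widetilde u_\tau\in C^\infty(I\to\mathcal{F})$ as already established, the remaining work is precisely assembling these pieces: apply the gradient inequality, use $\|v^s\|_{L^2}\,\|Pv^s\|_{L^2}$, and feed in the $1/\tau$-type bound for the $P$-weighted approximant.
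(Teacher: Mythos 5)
Your overall plan is the paper's own: apply the gradient inequality (\ref{gradient111}) with $\eta=\psi$ to reduce $\int_I\mathcal{E}(\partial_\tau\partial_s^k(\overline\psi\widetilde u_\tau),\cdot)\,ds$ to $L^2$ quantities involving $Pv$, feed one factor into Proposition \ref{L2prop1}, and get the other factor of order $1/\tau$ by re-running the Proposition \ref{L2prop1} argument with an extra $PH_{s-t}$, absorbing $P$ via $\|PH_r\|_{2\to 2}\le 1/(er)\lesssim 1/\tau$ on $\mathrm{supp}\{\rho_\tau\}$. That is exactly the paper's strategy.

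There is one step that, as written, does not close. After $\mathcal{E}(\psi^2 v,v)=\int_X\psi^2 v\,(Pv)\,dm$, you bound it by $\|\psi^2 v^s\|_{L^2}\|Pv^s\|_{L^2}\le\|v^s\|_{L^2}\|Pv^s\|_{L^2}$, so the target becomes $\int_I\|P\partial_\tau\partial_s^k\widetilde u_\tau^s\|_{L^2(X)}^2\,ds\lesssim\tau^{-2}$ \emph{with no spatial cut-off on the $Pv$ side}. That quantity cannot be obtained by "re-running Proposition \ref{L2prop1} with an extra $P$" as you suggest: the entire mechanism of Proposition \ref{L2prop1} --- Fubini/duality against $\varphi$, splitting into $A_k,B_k,C_k$, and crucially the $B_k$-estimate via strong locality plus the disjoint-support test data $(\Phi\cdot,\,\psi\varphi^s)$ in Assumption \ref{L2gaussian} --- requires the test function inside $v_{k,\tau}$ to be $\psi\varphi^s$ (compactly supported, disjoint from the annulus cut-off $\Phi$). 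Remove $\psi$ and the $L^2$-Gaussian step has nothing disjoint to pair with. Nor does a direct operator bound save you: once $\partial_\tau$ and $\partial_s^k$ act on $\rho_\tau$ and $\overline\rho_\tau$ (which are $O(\tau^{-1})$, $O(\tau^{-2})$), the naive semigroup estimate yields $\|\partial_\tau\partial_s^k(P\widetilde u_\tau)\|_{L^2(I\times X)}\lesssim\tau^{-(2k+2)}$, far worse than the needed $\tau^{-1}$; it is precisely the local-weak-solution cancellation implemented through the $A_k,B_k,C_k$ decomposition that buys back those powers, and that machinery lives on the cut-off quantity.

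The fix, which is what the paper does, is to split $\psi^2=\psi\cdot\psi$ rather than discard it: $\int_X\psi^2 v\,Pv\le\|\psi v^s\|_{L^2}\|\psi Pv^s\|_{L^2}$, giving (after integrating in $s$) $\|\partial_\tau\partial_s^k(\overline\psi\widetilde u_\tau)\|_{L^2(I\times X)}\cdot\|\partial_\tau\partial_s^k(\overline\psi P\widetilde u_\tau)\|_{L^2(I\times X)}$. The first factor is exactly Proposition \ref{L2prop1}; the second is literally Proposition \ref{L2prop1}'s proof with $H_{s-t}\mapsto PH_{s-t}$, where only the $C_k'$-term changes (using $\|PH_r\|_{2\to 2}\le 1/(e\tau)$ in place of $\|H_r\|_{2\to 2}\le 1$), producing the $1/\tau$. (A minor notational point: with $\eta=\psi$, the correct $v$ in the gradient inequality is $\partial_\tau\partial_s^k(w(s)\widetilde u_\tau^s)$, carrying the temporal factor $w(s)$, since $\overline\psi\widetilde u_\tau=\psi\cdot(w\widetilde u_\tau)$.) With these adjustments your argument coincides with the paper's.
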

These two propositions together imply that
\begin{eqnarray*}
	\int_{0}^{\gamma}\left|\left|\partial_\tau(\overline{\psi}\widetilde{u}_\tau)\right|\right|_{\scaleto{W^{n,2}(I\rightarrow \mathcal{F})}{6pt}}\,d\tau\lesssim \int_{0}^{\gamma} \frac{1}{\sqrt{\tau}}\,d\tau\rightarrow 0\ \ \mbox{as }\gamma\rightarrow 0,
\end{eqnarray*}
hence the family $\left\{\overline{\psi}\widetilde{u}_\tau\right\}$\ is Cauchy in $W^{n,2}(I\rightarrow \mathcal{F})$.

To verify that $\overline{\psi}\widetilde{u}_\tau\in C^\infty(I\rightarrow \mathcal{F})$, note that for any fixed $\tau>0$ and $m\in \mathbb{N}$,
\begin{eqnarray*}
\label{utauindomainP}
\int_I\rho_\tau(s-t)\left|\left|P^mH_{s-t}(\overline{\eta}^tu^t)\right|\right|_{\scaleto{L^2(X)}{6pt}}\,dt\lesssim \frac{1}{\tau^m}||\rho_\tau||_{\scaleto{L^\infty(\mathbb{R})}{6pt}}||\overline{\eta}u||_{\scaleto{L^2(I\times X)}{6pt}}<\infty.
\end{eqnarray*}
It follows that all $\partial_s^m(\widetilde{u}_\tau)$\ are well-defined as Bochner integrals and are in $L^\infty(I\rightarrow \mathcal{F})$. Hence $\widetilde{u}_\tau\in C^\infty(I\rightarrow \mathcal{F})$. The conclusion that $\overline{\psi}\widetilde{u}_\tau\in C^\infty(I\rightarrow \mathcal{F})$\ then follows from applying the gradient inequality (\ref{gradient111}).

In the next two subsections we prove Proposition \ref{L2prop1}. We present the proof in two steps. In the first step we express and split $\left|\left|\partial_\tau\partial_s^k(\overline{\psi}\widetilde{u}_\tau)\right|\right|_{\scaleto{L^2(I\times X)}{6pt}}$\ into three parts; in the second step we estimate each part and show that they are all bounded above independent of $0<\tau<1$\ and $0\leq k\leq n$.

\subsection{Proof of Proposition \ref{L2prop1} - Step 1}
We first compute $\partial_\tau \widetilde{u}_\tau(s,x)$.
\begin{eqnarray*}
	\partial_\tau \widetilde{u}_\tau(s,x)
	=\int_I\partial_\tau\rho_\tau(s-t)H_{s-t}(\overline{\eta}^tu^t)(x)\,dt
	=\int_I\partial_t\overline{\rho}_\tau(s-t) H_{s-t}(\overline{\eta}^tu^t)(x)\,dt.
\end{eqnarray*}
Recall (from the last subsection) that here  $\overline{\rho}_\tau(s-t)=\frac{s-t}{\tau}\rho_\tau(s-t)=\frac{s-t}{\tau^2}\rho\left(\frac{s-t}{\tau}\right)$. Let $\mathcal{T}:=\{\varphi\,\big|\,\left|\left|\varphi\right|\right|_{\scaleto{L^2(I\times X)}{6pt}}\leq 1,\,\varphi\in C^\infty_c(I\rightarrow L^2(X))\}$. Recall that $\overline{\psi}(s,x)=\psi(x)w(s)$. We have
\begin{eqnarray*}
	\left|\left|\partial_\tau\partial_s^k(\overline{\psi}\widetilde{u}_\tau)\right|\right|_{\scaleto{L^2(I\times X)}{6pt}}
		= \sup_{\varphi\in \mathcal{T}}\left<\psi\partial_\tau\partial_s^k(w\cdot\widetilde{u}_\tau),\  \varphi\right>_{\scaleto{L^2(I\times X)}{6pt}},
\end{eqnarray*}
where
\begin{eqnarray*}
\lefteqn{\left<\psi\partial_\tau\partial_s^k(w\cdot\widetilde{u}_\tau),\  \varphi\right>_{\scaleto{L^2(I\times X)}{6pt}}}\\
&=&\int_I\int_X\, \left\{\int_I\partial_s^k[w(s)\left(\partial_t\overline{\rho}_\tau(s-t)\right) H_{s-t}](\overline{\eta}^tu^t)(x)\,dt\right\} \psi(x)\varphi(s,x)\,dmds\\
&=&\int_I \int_I\int_X(\overline{\eta}^tu^t)(x)\cdot \partial_s^k[w(s)\left(\partial_t\overline{\rho}_\tau(s-t)\right) H_{s-t}](\psi\varphi^s)(x)\,dmdtds.
\end{eqnarray*}
The last line is by the Fubini Theorem (changing the integration order from $\int_I\int_X\int_I\,dtdmds$\ to $\int_I\int_I\int_X\,dmdtds$) and by the self-adjointness of $H_{s-t}$. Using the product rule for $\partial_t$\ to rewrite $w(s)\left(\partial_t\overline{\rho}_\tau(s-t)\right)H_{s-t}$\ in the square bracket as $\partial_t(w(s)\overline{\rho}_\tau(s-t)H_{s-t})-w(s)\overline{\rho}_\tau(s-t)\partial_tH_{s-t}$, altogether we get that
\begin{eqnarray*}
	\lefteqn{\left|\left|\partial_\tau\partial_s^k\left(\overline{\psi}\widetilde{u}_\tau\right)\right|\right|_{L^2\left(I\times X\right)}}\\
	&=&\sup_{\varphi\in \mathcal{T}}
	\left\{\int_I \int_I\int_X(\overline{\eta}^tu^t)(x)\cdot \partial_t\{\partial_s^k[w(s)\overline{\rho}_\tau(s-t)H_{s-t}](\psi\varphi^s)(x)\}\,dmdtds\right.\\
	&& \left.\hfill-\int_I \int_I\int_X(\overline{\eta}^tu^t)(x)\cdot \partial_s^k[w(s)\overline{\rho}_\tau(s-t)\partial_tH_{s-t}](\psi\varphi^s)(x)\,dmdtds\right\}.
\end{eqnarray*}	
In the last line, since $\partial_tH_{s-t}=P H_{s-t}$, the second term equals
\begin{eqnarray*}
	\lefteqn{\mbox{second term}}\\
	&=&\int_I\int_I\int_X(\overline{\eta}^tu^t)(x)\cdot P[\partial_s^k(w(s)\overline{\rho}_\tau(s-t)H_{s-t})(\psi\varphi^s)(x)]\,dm\,dtds\\
	&=&\int_I\int_I\mathcal{E}(\overline{\eta}^tu^t,\  \partial_s^k(w(s)\overline{\rho}_\tau(s-t)H_{s-t})(\psi\varphi^s))\,dtds.
\end{eqnarray*}
To simplify notation, let
\begin{eqnarray}
\label{abbreviation}
v_{\scaleto{k,\tau}{5pt}}(s,t,x):=\partial_s^k(w(s)\overline{\rho}_\tau(s-t)H_{s-t})(\psi\varphi^s)(x).
\end{eqnarray}
It is clear that for any fixed $\tau>0$, any $s,t\in I$, $v_{\scaleto{k,\tau}{5pt}}^{s,t}\in \mathcal{D}(P)$. Moreover, $v_{\scaleto{k,\tau}{5pt}}\in L^2(I^2\rightarrow\mathcal{D}(P))$. The result of the computations above can be written as
\begin{eqnarray}
\lefteqn{\left|\left|\partial_\tau\partial_s^k(\overline{\psi}\widetilde{u}_\tau)\right|\right|_{\scaleto{L^2(I\times X)}{6pt}}}\notag\\
&=&\sup_{\varphi\in \mathcal{T}}
\left\{\int_I \int_I\int_X\overline{\eta}(t,x)u(t,x)\cdot \partial_t[v_{\scaleto{k,\tau}{5pt}}(s,t,x)]\,dmdtds\right.\notag\\
\hfill&&\left.-\int_I\int_I\mathcal{E}(\overline{\eta}(t,\cdot)u(t,\cdot),\,  v_{\scaleto{k,\tau}{5pt}}(s,t,\cdot))\,dtds\right\}.\label{L2i}
\end{eqnarray}
Recall that $u$\ is a local weak solution on $I\times U$. If in (\ref{L2i}), $\overline{\eta}$\ is not grouped with $u$\ but appears on the same side with $v_{\scaleto{k,\tau}{5pt}}$, then (\ref{L2i}) is exactly $\int_I<f,\, \overline{\eta}v_{\scaleto{k,\tau}{5pt}}^s>\,ds$\ (the pairing is the $L^2(I\times X)$\ pairing). This observation inspires us to write (\ref{L2i}) as this term plus the difference, and then estimate them each separately. More precisely, we have
\begin{eqnarray*}
	\lefteqn{\left|\left|\partial_\tau\partial_s^k(\overline{\psi}\widetilde{u}_\tau)\right|\right|_{\scaleto{L^2(I\times X)}{6pt}}
		=(\ref{L2i})}\notag\\
	&\leq& \sup_{\varphi\in \mathcal{T}}|A_k(\tau, \varphi)|+\sup_{\varphi\in \mathcal{T}}|B_k(\tau, \varphi)|+\sup_{\varphi\in \mathcal{T}}|C_k(\tau, \varphi)|,
\end{eqnarray*}
where
\begin{eqnarray*}
\lefteqn{A_k(\tau,\varphi)=
\int_I\int_I\int_X(\overline{\eta}^tu^t)\cdot \partial_tv_{\scaleto{k,\tau}{5pt}}^{s,t}-u^t\cdot \partial_t(\overline{\eta}^tv_{\scaleto{k,\tau}{5pt}}^{s,t})\,dmdtds}\notag\\
&=&-\int_I\int_I\int_Xu(t,x)\cdot \partial_t\overline{\eta}(t,x)\cdot v_{\scaleto{k,\tau}{5pt}}(s,t,x)\,dmdtds;
\end{eqnarray*}
\begin{eqnarray*}
\lefteqn{B_k(\tau,\varphi)=-\int_I\int_I\mathcal{E}(\overline{\eta}^tu^t,\,v_{\scaleto{k,\tau}{5pt}}^{s,t})\,dtds+\int_I\int_I\mathcal{E}(u^t,\,\overline{\eta}^tv_{\scaleto{k,\tau}{5pt}}^{s,t})\,dtds}\notag\\
&=&-\int_I\int_I\int_Xd\Gamma(\overline{\eta}^tu^t,\,v_{\scaleto{k,\tau}{5pt}}^{s,t})\,dtds+\int_I\int_I\int_Xd\Gamma(u^t,\,\overline{\eta}^tv_{\scaleto{k,\tau}{5pt}}^{s,t})\,dtds;
\end{eqnarray*}
\begin{eqnarray*}
C_k(\tau,\varphi)=\int_I\left<f,\,\overline{\eta}v_{\scaleto{k,\tau}{5pt}}^{s}\right>_{\scaleto{L^2(I\times X)}{6pt}}\,ds= \int_I\int_I\int_X f(t,x)\overline{\eta}(t,x)v_{\scaleto{k,\tau}{5pt}}(s,t,x)\,dmdtds.
\end{eqnarray*}
\subsection{Proof of Proposition \ref{L2prop1} - Step 2}
Next we estimate $|A_k(\tau,\varphi)|$, $|B_k(\tau,\varphi)|$, and $|C_k(\tau,\varphi)|$\ individually. We will see that the upper bounds we find for $|A_k|,|B_k|,|C_k|$\ involve some $L^2$\ or $\mathcal{E}_1$\ norms of the local weak solution $u$\ on some precompact subsets of $I\times X$\ (hence the norms are well-defined). To conveniently express these norms of $u$, we introduce a nice (product) cut-off function that lives in (i.e., has compact support in) $I\times U$\ and is flat $1$\ on some open set that covers the supports of all other cut-off functions in the whole proof. We denote this cut-off function by $\overline{\Psi}(t,x)=n(t)\Psi(x)$. It can be determined after all other nice (product) cut-off functions in the proof of Theorem \ref{L2thm} are being introduced.

For $A_k(\tau,\varphi)$, note that $\partial_t\overline{\eta}(t,x)$\ is only nonzero for $t\in \left(J_{\scaleto{\overline{\eta}}{6pt}}\right)^c$\ (i.e., away from where $\overline{\eta}\equiv 1$) and $s\in I_{\scaleto{\overline{\psi}}{6pt}}\Subset J_{\scaleto{\overline{\eta}}{6pt}}$\ (because of $w(s)$). Hence for small $\tau$, more precisely, for $\tau<\min\left\{d\left(I_{\scaleto{\overline{\psi}}{6pt}},\,\left(J_{\scaleto{\overline{\eta}}{6pt}}\right)^c\right)/2,\,1/2\right\}=:c_0$,
\begin{eqnarray*}
	\partial_t\overline{\eta}(t,x)v_{\scaleto{k,\tau}{5pt}}(s,t,x)\equiv 0,
\end{eqnarray*}
and $A_k(\tau,\varphi)=0$\ for $0<\tau<c_0$. For $\tau\geq c_0$, first note that
\begin{eqnarray*}
\lefteqn{\left|\left|\partial_s^k(w(s)\overline{\rho}_\tau(s-t)H_{s-t})(\psi\varphi^s)\right|\right|_{\scaleto{L^2(X)}{6pt}}}\\
&\leq& 2^k\frac{2||w||_{\scaleto{C^k}{6pt}}||\rho||_{\scaleto{C^k}{6pt}}}{\tau^{k+1}}\max_{0\leq b\leq k}\left|\left|\partial_s^bH_{s-t}\right|\right|_{\scaleto{L^2(X)\rightarrow L^2(X)}{6pt}}\left|\left|\psi\varphi^s\right|\right|_{\scaleto{L^2(X)}{6pt}},
\end{eqnarray*}
where $||\partial_s^bH_{s-t}||_{2\rightarrow 2}=||P^bH_{s-t}||_{2\rightarrow 2}\leq\left(b/e(s-t)\right)^b\lesssim (1/\tau)^b$\ since $\tau<s-t<2\tau$\ for $\bar{\rho}_\tau$\ to be nonzero. So
\begin{eqnarray*}
\left|\left|\partial_s^k(w(s)\overline{\rho}_\tau(s-t)H_{s-t})(\psi\varphi^s)\right|\right|_{\scaleto{L^2(X)}{6pt}}\leq \frac{C(k,\overline{\psi},\rho)}{\tau^{2k+1}}\left|\left|\varphi^s\right|\right|_{\scaleto{L^2(X)}{6pt}}.
\end{eqnarray*}
It follows that
\begin{eqnarray*}
	\lefteqn{\hspace{-.5in}\left|A_k(\tau,\varphi)\right|
	=\left|\int_I\int_I\int_Xu^t\cdot \partial_t\overline{\eta}^t\cdot\partial_s^k(w(s)\overline{\rho}_\tau(s-t)H_{s-t})(\psi\varphi^s)\,dmdtds\right|}\\
	&\hspace{-.7in}\leq& \hspace{-.3in}\frac{C(k,\overline{\eta},\overline{\psi},\rho)}{\tau^{2k+1}}\int_{I_{\overline{\psi}}}\left|\left|\varphi^s\right|\right|_{\scaleto{L^2(X)}{6pt}}\,ds\int_{I_{\scaleto{\overline{\eta}}{5pt}}}\left|\left|u^t\right|\right|_{\scaleto{L^2(U_{\overline{\eta}})}{6pt}}\,dt\\
	&\hspace{-.7in}\leq& \hspace{-.3in}\widetilde{C}(k,\overline{\eta},\overline{\psi},\rho)\left|\left|\varphi\right|\right|_{\scaleto{L^2(I\times X)}{6pt}}\left|\left|\overline{\Psi} u\right|\right|_{\scaleto{L^2(I\times X)}{6pt}}.
\end{eqnarray*}
Here the constant $\widetilde{C}(k,\overline{\eta},\overline{\psi},\rho):=|I|^2C(k,\overline{\eta},\overline{\psi},\rho)c_0^{-(2k+1)}$\  depends only on the two cut-off functions $\overline{\eta}$, $\overline{\psi}$, the function $\rho$, and the sum of the binomial coefficients that is bounded by $2^k$. Note that $c_0=\min\left\{d\left(I_{\scaleto{\overline{\psi}}{6pt}},\,\left(J_{\scaleto{\overline{\eta}}{6pt}}\right)^c\right)/2,\,1/2\right\}$\ is determined by the cut-off functions. The function $\overline{\Psi}$\ is equal to $1$\ on the support of $\overline{\eta}$\ as introduced at the beginning of this subsection. So
\begin{eqnarray*}
	\max_{0\leq k\leq n}\widetilde{C}(k,\overline{\eta},\overline{\psi},\rho)<\infty.
\end{eqnarray*}
Denote some fixed upper bound of it by $C_A$, and recall that we take supremum over the functions $\varphi$\ with $\left|\left|\varphi\right|\right|_{\scaleto{L^2(I\times X)}{6pt}}\leq 1$. Hence 
\begin{eqnarray}
\label{estimateA}
\max_{0\leq k\leq n}\sup_{0<\tau<1}\sup_{\varphi\in \mathcal{T}}\left|A_k(\tau, \varphi)\right|\leq C_A(n,\overline{\eta},\overline{\psi},\rho)\cdot \left|\left|\overline{\Psi}u\right|\right|_{\scaleto{L^2(I\times X)}{6pt}}.
\end{eqnarray}
For $B_k(\tau,\varphi)$, since $\overline{\eta}(t,y)=l(t)\eta(y)$\ and $\eta\equiv 1$\ on $V_{\overline{\eta}}$, by the strong locality of the energy measure $d\Gamma$, the two terms in $B_k(\tau,\varphi)$\ satisfy that
\begin{eqnarray*}
	1_ {\scaleto{V_{\overline{\eta}}}{6pt}}\,d\Gamma(\overline{\eta}^tu^t,\,v_{\scaleto{k,\tau}{5pt}}^{s,t})=1_ {\scaleto{V_{\overline{\eta}}}{6pt}}\,d\Gamma(u^t,\,\overline{\eta}^tv_{\scaleto{k,\tau}{5pt}}^{s,t}).
\end{eqnarray*}
In other words,
\begin{eqnarray}
\label{L2ii}
d\Gamma(\overline{\eta}^tu^t,\,v_{\scaleto{k,\tau}{5pt}}^{s,t})-d\Gamma(u^t,\,\overline{\eta}^tv_{\scaleto{k,\tau}{5pt}}^{s,t})=d\Gamma(\overline{\eta}^tu^t,\,\Phi v_{\scaleto{k,\tau}{5pt}}^{s,t})-d\Gamma(u^t,\,\Phi\overline{\eta}^tv_{\scaleto{k,\tau}{5pt}}^{s,t})
\end{eqnarray}
for any ``bowl-shaped'' $\Phi$\ that equals $0$\ inside $V_{\overline{\eta}}$\ and becomes $1$\ before it reaches the boundary of $V_{\overline{\eta}}$, provided that the products of the functions are still in the domain $\mathcal{F}$. To later utilize the $L^2$\ Gaussian type upper bound to estimate, we take $\Phi$\ to be a nice cut-off function ``disjointly supported'' from $\psi$. More precisely, recall that $V_{\overline{\psi}}\Subset U_{\overline{\psi}}\Subset V_{\overline{\eta}}\Subset U_{\overline{\eta}}$. Let $V',U'$\ be two open sets that sit in the middle of this chain, and let $V'',U''$\ be two open sets at the right end of the chain, i.e.,
\begin{eqnarray*}
	V_{\overline{\psi}}\Subset U_{\overline{\psi}}\Subset V'\Subset U'\Subset V_{\overline{\eta}}\Subset U_{\overline{\eta}}\Subset V''\Subset U''\Subset U.
\end{eqnarray*}
Let $V_\Phi:=V''\setminus U'$\ and $U_\Phi:=U''\setminus V'$. Then $V_\Phi\Subset U_\Phi$, and there exists a nice cut-off function that is $1$\ on $V_\Phi$\ and supported in $U_\Phi$. We fix such a function and denote it by $\Phi$. The existence of $\Phi$\ is guaranteed by Lemma \ref{cut-offannuli}, or we can take the difference of two nice cut-off functions (for pairs $V''\subset U''$\ and $V'\subset U'$) and show that the difference still satisfies (\ref{cut-offbddenergy}). The nice cut-off function $\Phi$\ has the desired ``bowl-shape'', satisfies equation (\ref{L2ii}), and has disjoint support from $\psi$. In summary,
\begin{eqnarray*}
	\lefteqn{\left|B_k(\tau,\varphi)\right|}\\
	&=&\left|-\int_I\int_I\int_Xd\Gamma(\overline{\eta}^tu^t,\,\Phi v_{\scaleto{k,\tau}{5pt}}^{s,t})\,dtds+\int_I\int_I\int_Xd\Gamma(u^t,\,\Phi\overline{\eta}^tv_{\scaleto{k,\tau}{5pt}}^{s,t})\,dtds\right|,
\end{eqnarray*}
where by the Cauchy-Schwartz inequality for the strongly local part of $\mathcal{E}$\ and the H\"{o}lder's inequality applied to the integrals on $I$, we have
\begin{eqnarray*} 
	\lefteqn{\int_I\int_I\int_Xd\Gamma(\overline{\eta}^tu^t,\,\Phi v_{\scaleto{k,\tau}{5pt}}^{s,t})\,dtds}\\
	&\leq&\left(\int_I\int_I\int_Xd\Gamma(\overline{\eta}^tu^t,\,\overline{\eta}^tu^t)\,dtds\right)^{1/2} \left(\int_I\int_I\int_Xd\Gamma(\Phi v_{\scaleto{k,\tau}{5pt}}^{s,t},\,\Phi v_{\scaleto{k,\tau}{5pt}}^{s,t})\,dtds\right)^{1/2}\\
	&\leq& \left(|I|\cdot \int_I\mathcal{E}(\overline{\eta}^tu^t,\,\overline{\eta}^tu^t)\,dt\right)^{1/2}\left(\int_I\int_I\mathcal{E}(\Phi v_{\scaleto{k,\tau}{5pt}}^{s,t},\,\Phi v_{\scaleto{k,\tau}{5pt}}^{s,t})\,dtds\right)^{1/2},
\end{eqnarray*}	
and similarly (recall that $\overline{\Psi}$\ equals to $1$\ on the supports of all other nice cut-off functions),
\begin{eqnarray*}
	\lefteqn{\int_I\int_I\int_Xd\Gamma(u^t,\,\Phi\overline{\eta}^tv_{\scaleto{k,\tau}{5pt}}^{s,t})\,dtds
		=\int_I\int_I\int_Xd\Gamma(\overline{\Psi}u^t,\,\Phi\overline{\eta}^tv_{\scaleto{k,\tau}{5pt}}^{s,t})\,dtds}\\
	&\leq&\left(|I|\cdot \int_I\mathcal{E}(\overline{\Psi}u^t,\,\overline{\Psi}u^t)\,dtds\right)^{1/2} \left(\int_I\int_I\mathcal{E}(\Phi \overline{\eta}^tv_{\scaleto{k,\tau}{5pt}}^{s,t},\,\Phi \overline{\eta}^tv_{\scaleto{k,\tau}{5pt}}^{s,t})\,dtds\right)^{1/2}.
\end{eqnarray*}
Hence
\begin{eqnarray*}
	\lefteqn{\left|B_k(\tau,\varphi)\right|\leq C\left(\left|\left|\overline{\eta}u\right|\right|_{\scaleto{L^2(I\rightarrow \mathcal{F})}{6pt}}+ \left|\left|\overline{\Psi}u\right|\right|_{\scaleto{L^2(I\rightarrow \mathcal{F})}{6pt}}\right)\times}\\
	&& \left[\left(\int_{I^2}\mathcal{E}(\Phi v_{\scaleto{k,\tau}{5pt}}^{s,t},\,\Phi v_{\scaleto{k,\tau}{5pt}}^{s,t})\,dtds\right)^{1/2}+\left(\int_{I^2}\mathcal{E}(\Phi \overline{\eta}^tv_{\scaleto{k,\tau}{5pt}}^{s,t},\,\Phi \overline{\eta}^tv_{\scaleto{k,\tau}{5pt}}^{s,t})\,dtds\right)^{1/2}\right],
\end{eqnarray*}
it remains to estimate the two integrals in the square bracket. The estimate for the two terms are almost identical, so here we only do it for the second term, $\left(\int_{I^2}\mathcal{E}(\Phi \overline{\eta}^tv_{\scaleto{k,\tau}{5pt}}^{s,t},\,\Phi \overline{\eta}^tv_{\scaleto{k,\tau}{5pt}}^{s,t})\,dtds\right)^{1/2}$. Recall that $v_{\scaleto{k,\tau}{5pt}}^{s,t}\in L^2(I^2\rightarrow\mathcal{D}(P))$, we first want to move all $\Phi\overline{\eta}$\ to one side in order to rewrite the $\mathcal{E}$\ integral as an $L^2$\ integral with $Pv_{\scaleto{k,\tau}{5pt}}^{s,t}$. To this end we apply the gradient inequality. Using (\ref{gradient11}) applied to the nice cut-off function $\Phi\eta$, we get that
\begin{eqnarray*}
	\lefteqn{\int_{I^2}\mathcal{E}(\Phi \overline{\eta}^tv_{\scaleto{k,\tau}{5pt}}^{s,t},\,\Phi \overline{\eta}^tv_{\scaleto{k,\tau}{5pt}}^{s,t})\,dtds}\\
	&\leq& 2\int_{I^2}\left|\int_X(\Phi \overline{\eta}^t)^2v_{\scaleto{k,\tau}{5pt}}^{s,t}\cdot Pv_{\scaleto{k,\tau}{5pt}}^{s,t}\,dm\right|\,dtds+2C_2\int_{I^2}\int_{\scaleto{\mbox{supp}\{\Phi\overline{\eta}\}}{6pt}}(v_{\scaleto{k,\tau}{5pt}}^{s,t})^2\,dmdtds.
\end{eqnarray*}
Here $C_2$\ is associated with $\Phi\eta$. The second integral equals $2C_2\int_{I^2}\int_X1_{\scaleto{\Phi\eta}{5pt}} v_{\scaleto{k,\tau}{5pt}}^{s,t}\cdot v_{\scaleto{k,\tau}{5pt}}^{s,k}\,dmdtds$. Recall that by (\ref{abbreviation}),
\begin{eqnarray*}
	v_{\scaleto{k,\tau}{5pt}}(s,t,x)=\partial_s^k(w(s)\overline{\rho}_\tau(s-t)H_{s-t})(\psi\varphi^s)(x),
\end{eqnarray*}
which is essentially $P^mH_{s-t}\left(\psi\varphi^s\right)$\ for $0\leq m\leq k$\ (up to the derivatives of $w(s)\overline{\rho}_\tau(s-t)$\ which are bounded by some multiple of $1/(s-t)^{k+1}$). Moreover, by construction, $\Phi$\ and $\psi$\ have disjoint supports. Hence the two pairs of functions, $(\Phi \overline{\eta}^t)^2v_{\scaleto{k,\tau}{5pt}}^{s,t}$\ and $\psi\varphi^s$, $1_{\scaleto{\Phi\eta}{5pt}}v^{s,t}_{k,\tau}$\ and $\psi\varphi^s$, have disjoint supports, respectively. Thus we can apply the $L^2$\ Gaussian type upper bound and get
\begin{eqnarray*}
	\lefteqn{\int_I\int_I\mathcal{E}(\Phi \overline{\eta}^tv_{\scaleto{k,\tau}{5pt}}^{s,t},\,\Phi \overline{\eta}^tv_{\scaleto{k,\tau}{5pt}}^{s,t})\,dtds
	\leq  C(k,w,\rho,C_2)\sup_{\tau<r<2\tau}G(k+1,k,r) \times}&& \\
	&&\left\{\left|\left|(\Phi \overline{\eta})^2v_{\scaleto{k,\tau}{5pt}}\right|\right|_{\scaleto{L^2(I\times I\times X)}{6pt}}+\left|\left|1_{\scaleto{\Phi\eta}{5pt}} v_{\scaleto{k,\tau}{5pt}}\right|\right|_{\scaleto{L^2(I\times I\times X)}{6pt}}\right\} \left|\left|\psi\varphi\right|\right|_{\scaleto{L^2(I\times I\times X)}{6pt}}.
	\end{eqnarray*}
Here $G(k+1,k,r):=G_{\scaleto{U_\Phi,\,U_\psi}{6pt}}(k+1,k,r)$\ as defined under Assumption \ref{L2gaussian}. 	
Then to estimate $\left|\left|1_{\scaleto{\Phi\eta}{5pt}}v_{\scaleto{k,\tau}{5pt}}\right|\right|_{\scaleto{L^2(I\times I\times X)}{6pt}}$\ (and  $\left|\left|(\Phi \overline{\eta})^2v_{\scaleto{k,\tau}{5pt}}\right|\right|_{\scaleto{L^2(I\times I\times X)}{6pt}}$), note that
\begin{eqnarray*}
	\lefteqn{\left|\left|1_{\scaleto{\Phi\eta}{5pt}}v_{\scaleto{k,\tau}{5pt}}\right|\right|_{\scaleto{L^2(I\times I\times X)}{6pt}}^2}\\
	&=&\int_I\int_I 1_{\scaleto{\Phi\eta}{5pt}}v_{\scaleto{k,\tau}{5pt}}(s,t,x)\cdot \partial_s^k(w(s)\overline{\rho}_\tau(s-t)H_{s-t})(\psi\varphi^s)(x)\,dmdtds\\
	&\leq& C(k,w,\rho) \sup_{\tau<r<2\tau}G(k+1,k,r)\cdot\left|\left|1_{\scaleto{\Phi\eta}{5pt}}v_{\scaleto{k,\tau}{5pt}}\right|\right|_{\scaleto{L^2(I\times I\times X)}{6pt}}\left|\left|\psi\varphi\right|\right|_{\scaleto{L^2(I\times X)}{6pt}}|I|^{1/2},
\end{eqnarray*}
where the left-hand side and the right-hand side have a common factor $\left|\left|1_{\scaleto{\Phi\eta}{5pt}}v_{\scaleto{k,\tau}{5pt}}\right|\right|$. Combining the two estimates above gives
\begin{eqnarray*}
	\lefteqn{\int_I\int_I\mathcal{E}(\Phi \overline{\eta}^tv_{\scaleto{k,\tau}{5pt}}^{s,t},\,\Phi \overline{\eta}^tv_{\scaleto{k,\tau}{5pt}}^{s,t})\,dtds}\\
&\leq& C(k,\overline{\eta},\overline{\psi},\rho,\Phi)\sup_{\tau<r<2\tau}G(k+1,k,r)^2\left|\left|\varphi\right|\right|^2_{L^2\left(I\times X\right)}.
\end{eqnarray*}
Since $\sup_{0<r<1}G(k+1,k,r)$\ is finite by Assumption \ref{L2gaussian}, we obtain the estimate for $B_k(\tau,\varphi)$
\begin{eqnarray}
\label{estimateB}
\lefteqn{\max_{0\leq k\leq n}\sup_{0<\tau<1}\sup_{\varphi\in \mathcal{T}}\left|B_k(\tau, \varphi)\right|}\notag\\
&\leq& C_B(n,\overline{\eta},\overline{\psi},\rho,\Phi)\cdot \left(\left|\left|\overline{\eta}u\right|\right|_{\scaleto{L^2(I\rightarrow \mathcal{F})}{6pt}}+ \left|\left|\overline{\Psi}u\right|\right|_{\scaleto{L^2(I\rightarrow \mathcal{F})}{6pt}}\right),
\end{eqnarray}
where $C_B(n,\overline{\eta},\overline{\psi},\rho,\Phi)>0$\ is some constant.

Last, we estimate the term $C_k(\tau,\varphi)$. The idea is to first use the product rule for differentiation in time ($\partial_s$) to expand and rewrite
\begin{eqnarray*}
	\lefteqn{v_{\scaleto{k,\tau}{5pt}}^{s,t}=\partial_s^k(w(s)\overline{\rho}_\tau(s-t)H_{s-t})(\psi\varphi^s)}\\
	&=& \sum_{a=0}^{k}\binom{k}{a}\partial_s^{k-a}w(s)\cdot \partial_s^a(\overline{\rho}_\tau(s-t)H_{s-t})(\psi\varphi^s)\\
	&=& \sum_{a=0}^{k}\binom{k}{a}\partial_s^{k-a}w(s)\cdot (-1)^a\partial_t^a(\overline{\rho}_\tau(s-t)H_{s-t})(\psi\varphi^s),
\end{eqnarray*}
then move all the $\partial_t^a$\ on $\overline{\rho}_\tau(s-t)H_{s-t}$, $0\leq a\leq k$, to $\overline{\eta}f$, using integration by parts. More precisely,
\begin{eqnarray*}
	\lefteqn{\left|C_k(\tau,\varphi)\right|}\\
	&=&\left|\int_I\int_I\int_X f(t,x) \overline{\eta}(t,x)\,\partial_s^k(w(s)\overline{\rho}_\tau(s-t)H_{s-t})(\psi\varphi^s)(x)\,dmdtds\right|\\
	&=&\left|\sum_{a=0}^{k}\binom{k}{a}\int_I\partial_s^{k-a}w(s)\cdot\left<\partial_t^a(\overline{\eta}^tf^t),\ \overline{\rho}_\tau(s-t)H_{s-t}(\psi\varphi^s)\right>_{\scaleto{L^2(I\times X)}{6pt}}\,ds\right|\\
	&\leq& 2^k\left|\left|w\right|\right|_{\scaleto{C^k}{6pt}}\max_{0\leq a\leq k}\left|\int_I\int_I\int_X\partial_t^a(\overline{\eta}^tf^t)\cdot \overline{\rho}_\tau(s-t)H_{s-t}(\psi\varphi^s)\,dmdtds\right|.
\end{eqnarray*}
For any $0\leq a\leq k$, note that
\begin{eqnarray*}
	\lefteqn{\left|\int_I\int_X\partial_t^a(\overline{\eta}^tf^t)\cdot \int_I\overline{\rho}_\tau(s-t)H_{s-t}(\psi\varphi^s)\,ds\,dmdt\right|}\\
	&\leq& \int_I\left|\left|\partial_t^a(\overline{\eta}^tf^t)\right|\right|_{\scaleto{L^2(X)}{6pt}}\cdot \left|\left|\int_I\overline{\rho}_\tau(s-t)H_{s-t}(\psi\varphi^s)\,ds\right|\right|_{\scaleto{L^2(X)}{6pt}}\,dt\\
	&\leq& \int_I\left|\left|\partial_t^a(\overline{\eta}^tf^t)\right|\right|_{\scaleto{L^2(X)}{6pt}}\cdot \int_I\overline{\rho}_\tau(s-t)\left|\left|H_{s-t}(\psi\varphi^s)\right|\right|_{\scaleto{L^2(X)}{6pt}}\,ds\,dt.
	\end{eqnarray*}
To further estimate, we apply the H\"{o}lder's inequality to the integral in $t$. Since
\begin{eqnarray*}
\max_{0\leq a\leq k}\left(\int_I\left|\left|\partial_t^a(\overline{\eta}^tf^t)\right|\right|_{\scaleto{L^2(X)}{6pt}}\,dt\right)^{1/2}\leq \left|\left|\overline{\eta}f\right|\right|_{\scaleto{W^{k,2}(I\rightarrow L^2(X))}{6pt}};
\end{eqnarray*}
by the Jensen's inequality and that $H_t$\ is a contraction on $L^2(X)$,
\begin{eqnarray*}
\lefteqn{\left(\int_I\left(\int_I\overline{\rho}_\tau(s-t)\left|\left|H_{s-t}(\psi\varphi^s)\right|\right|_{\scaleto{L^2(X)}{6pt}}\,ds\right)^2 dt\right)^{1/2}}\\
&\leq& \left(\int_IA(t)\int_I\bar{\rho}_\tau(s-t)\left|\left|H_{s-t}(\psi\varphi^s)\right|\right|_{\scaleto{L^2(X)}{6pt}}^2\,ds\,dt\right)^{1/2}\leq 2\left|\left|\psi\varphi\right|\right|_{\scaleto{L^2(I\times X)}{6pt}},
\end{eqnarray*}
where $A(t):=\int_I\bar{\rho}_\tau(s-t)\,ds\leq 2$; altogether we get that
\begin{eqnarray*}
	\lefteqn{\left|C_k(\tau,\varphi)\right|}\\
	&\leq& C(k,w)\max_{0\leq a\leq k}\left|\int_I\int_X\partial_t^a(\overline{\eta}^tf^t)\cdot \int_I\overline{\rho}_\tau(s-t)H_{s-t}(\psi\varphi^s)\,ds\ dmdt\right|\\
	&\leq& C'(k,w)\left|\left|\overline{\eta}f\right|\right|_{\scaleto{W^{k,2}(I\rightarrow L^2(X))}{6pt}}\cdot \left|\left|\psi\varphi\right|\right|_{\scaleto{L^2(I\times X)}{6pt}}.
\end{eqnarray*}
Hence
\begin{eqnarray}
\label{estimateC}
\max_{0\leq k\leq n}\sup_{0<\tau<1}\sup_{\varphi\in\mathcal{T}}\left|C_k(\tau, \varphi)\right|\leq C_C(n,\overline{\psi})\cdot \left|\left|\overline{\eta}f\right|\right|_{\scaleto{W^{k,2}(I\rightarrow L^2(X))}{6pt}}
\end{eqnarray}
for some constant $C_C(n,\overline{\psi})>0$.

In the above estimates for $A_k,B_k,C_k$, we kept terms like $\left|\left|\overline{\eta}f\right|\right|_{\scaleto{W^{k,2}(I\rightarrow L^2(X))}{6pt}}$\ and $\left|\left|\overline{\Psi}u\right|\right|_{\scaleto{L^2(I\rightarrow \mathcal{F})}{6pt}}$, since $u,f$\ are only assumed to be locally in those function spaces. If we take any proper representative $u^\sharp,f^\sharp$, we can bound those norms by the corresponding norms of $u^\sharp$\ and $f^\sharp$.

Combining the estimates (\ref{estimateA}), (\ref{estimateB}), and (\ref{estimateC}) for $A_k(\tau,\varphi)$, $B_k(\tau,\varphi)$, and $C_k(\tau,\varphi)$, completes the proof for Proposition \ref{L2prop1}. To finish with the proof for $\{\overline{\psi}\widetilde{u}_\tau\}$\ being Cauchy in $W^{n,2}(I\rightarrow \mathcal{F})$, we still need to prove Proposition \ref{L2prop2}.

\subsection{Proof of Proposition \ref{L2prop2}} Our aim is to show that for $0\leq k\leq n$,
\begin{eqnarray}
\label{L2prop2toshow}
\int_I\mathcal{E}(\partial_\tau\partial_s^k(\overline{\psi}\widetilde{u}_\tau),\,\partial_\tau\partial_s^k(\overline{\psi}\widetilde{u}_\tau))\,ds\lesssim \frac{1}{\tau}.
\end{eqnarray}
Note that, $\widetilde{u}_\tau\in L^2(I\rightarrow\mathcal{D}(P))$\ for any fixed $\tau$, we apply the gradient inequality (\ref{gradient111}) to bound this $\mathcal{E}$\ integral of product functions by $L^2$\ integrals as below.
\begin{eqnarray*}
	\lefteqn{\int_I\mathcal{E}(\partial_\tau\partial_s^k(\overline{\psi}\widetilde{u}_\tau),\, \partial_\tau\partial_s^k(\overline{\psi}\widetilde{u}_\tau))\,ds}\\
	&\leq& 2\int_I\mathcal{E}(\psi^2\partial_\tau\partial_s^k(w(s)\widetilde{u}_\tau^s),\, \partial_\tau\partial_s^k(w(s)\widetilde{u}_\tau^s))\,ds\\
	&&+2C_2\int_I\int_{\scaleto{\mbox{supp}\{\psi\}}{6pt}}(\partial_\tau\partial_s^k(w(s)\widetilde{u}_\tau^s))^2\,dmds.
\end{eqnarray*}
Here $C_2$\ is associated with $\psi$. The proof of Proposition \ref{L2prop1} implies that the second term is bounded, i.e.,
\begin{eqnarray*}
	C_2\int_I\int_{\scaleto{\mbox{supp}\{\psi\}}{6pt}}(\partial_\tau\partial_s^k(w(s)\widetilde{u}_\tau^s))^2\,dmds=C_2\left|\left|\partial_\tau(w\widetilde{u}_\tau)\right|\right|^2_{\scaleto{W^{k,2}(I\rightarrow U_{\overline{\psi}})}{6pt}}\leq M_1
\end{eqnarray*}
for some constant $M_1$\ independent of $0<\tau<1$.

To estimate the first term, note that for any $s\in I$, $\widetilde{u}_\tau^s\in \mathcal{D}(P)$, so
\begin{eqnarray*}
	\lefteqn{\int_I\mathcal{E}(\psi^2\partial_\tau\partial_s^k(w(s)\widetilde{u}_\tau^s),\, \partial_\tau\partial_s^k(w(s)\widetilde{u}_\tau^s))\,ds}\\
	&=&\int_I\int_X\psi^2\partial_\tau\partial_s^k(w(s)\widetilde{u}_\tau^s)\cdot \partial_\tau\partial_s^k(w(s)P\widetilde{u}_\tau^s)\,ds\\
	&\leq& \left|\left|\partial_\tau\partial_s^k(\overline{\psi}\widetilde{u}_\tau)\right|\right|_{\scaleto{L^2(I\times X)}{6pt}} \left|\left|\partial_\tau\partial_s^k(\overline{\psi}P\widetilde{u}_\tau)\right|\right|_{\scaleto{L^2(I\times X)}{6pt}}.
\end{eqnarray*}
The first $L^2$\ norm is exactly the quantity treated in  Proposition \ref{L2prop1}, it is bounded independent of $0<\tau<1$. (\ref{L2prop2toshow}) follows after we show that the second $L^2$\ norm,
\begin{eqnarray*}
\left|\left|\partial_\tau\partial_s^k(\overline{\psi}P\widetilde{u}_\tau)\right|\right|_{\scaleto{L^2(I\times X)}{6pt}}\lesssim \frac{1}{\tau}.
\end{eqnarray*}
Replace $\widetilde{u}_\tau$\ by $P\widetilde{u}_\tau$\ in the proof of Proposition \ref{L2prop1}. By the same arguments, $\left|\left|\partial_\tau\partial_s^k(\overline{\psi}P\widetilde{u}_\tau)\right|\right|_{\scaleto{L^2(I\times X)}{6pt}}$\ breaks into three parts $A_k'(\tau,\varphi)$, $B_k'(\tau,\varphi)$, $C_k'(\tau,\varphi)$, and the estimates for $A_k'$\ and $B_k'$\ look almost identical to that for $A_k$\ and $B_k$. We write about the estimate for $C_k'(\tau,\varphi)$\ here. The only difference is that instead of using $||H_t||_{2\rightarrow 2}\leq 1$\ as in the estimate for $C_k$, we use $||PH_t||_{2\rightarrow 2}\leq 1/e\tau$\ here.
\begin{eqnarray*}
	\lefteqn{C_k'(\tau,\varphi)}\\
	&=& \int_I\int_I\int_X f(t,x) \overline{\eta}(t,x)\,\partial_s^k(w(s)\overline{\rho}_\tau(s-t)PH_{s-t})(\psi\varphi^s)(x)\,dm(x)dtds.
\end{eqnarray*}
As in the estimate for $C_k$, the estimate for $C_k'$\ comes down to estimate
\begin{eqnarray*}
	\lefteqn{\max_{0\leq a\leq k}\left|\int_I\int_X\partial_t^a(\overline{\eta}^tf^t)\cdot \int_I\overline{\rho}_\tau(s-t)PH_{s-t}(\psi\varphi^s)ds\,dmdt\right|}\\
	&\leq& \left|\left|\overline{\eta}f\right|\right|_{\scaleto{W^{k,2}(I\rightarrow L^2(X))}{6pt}}\cdot \left[2\int_I\int_I\overline{\rho}_\tau(s-t)\left|\left|PH_{s-t}(\psi\varphi^s)\right|\right|^2_{\scaleto{L^2(X)}{6pt}}\,ds dt\right]^{1/2}\\ 
	&\leq& \left|\left|\overline{\eta}f\right|\right|_{\scaleto{W^{k,2}(I\rightarrow L^2(X))}{6pt}}\sup_{s\in I}\left\{2\int_I\overline{\rho}_\tau(s-t)\,dt\right\}^{1/2}\left(\int_I\frac{1}{(e\tau)^2}\left|\left|\psi\varphi^s\right|\right|^2_{\scaleto{L^2(X)}{6pt}}\,ds\right)^{1/2}\\ 
	&\leq&\frac{2}{e\tau}\left|\left|\overline{\eta}f\right|\right|_{\scaleto{W^{k,2}(I\rightarrow L^2(X))}{6pt}}\cdot \left|\left|\psi\varphi\right|\right|_{\scaleto{L^2(I\times X)}{6pt}}.
\end{eqnarray*}
Hence indeed
\begin{eqnarray*}
	\lefteqn{\left|\left|\partial_\tau\partial_s^k(\overline{\psi}P\widetilde{u}_\tau)\right|\right|_{\scaleto{L^2(I\times X)}{6pt}}=\sup_{\substack{\left|\left|\varphi\right|\right|_{L^2(I\times X)}\leq 1\\
				\varphi\in C^\infty_c(I\rightarrow L^2(X))}}\left<\psi\partial_\tau\partial_s^k(w(s)P\widetilde{u}_\tau),\  \varphi\right>_{\scaleto{L^2(I\times X)}{6pt}}}\\
	&\leq&	\sup_{\varphi\in\mathcal{T}}\left|A_k'(\tau,\varphi)\right|+\sup_{\varphi\in\mathcal{T}}\left|B_k'(\tau,\varphi)\right|+\sup_{\varphi\in\mathcal{T}}\left|C_k'(\tau,\varphi)\right|\lesssim \frac{1}{\tau}.
\end{eqnarray*}
\subsection{Convergence of the approximate sequence in $L^2$\ sense}
Proposition \ref{L2prop1} and \ref{L2prop2} together imply that the approximate sequence $\{\overline{\psi}\widetilde{u}_\tau\}$\ is Cauchy in $W^{n,2}(I\rightarrow \mathcal{F})$. As we explained at the beginning of this section, to finish with the proof for Theorem \ref{L2thm}, it suffices to show that the approximate sequence converges to $\overline{\psi}u$\ in some weak sense. We prove the following slightly more general result.

For any function $w$\ in $L^2(I\times X)$, any $s\in I$, $\tau>0$, define
\begin{eqnarray}
\label{Ataudef}
A_\tau w(s,x):=\int_I\rho_\tau(s-t)H_{s-t}(w^t)(x)\,dt.
\end{eqnarray}
When $\tau$\ is not small enough, $A_\tau w$\ is the zero function. Similar to checking $\widetilde{u}_\tau\in C^\infty(I\rightarrow \mathcal{F})$\ for any $\tau>0$, we can show that for any $\tau>0$, $A_\tau w\in C^\infty(I\rightarrow \mathcal{F})$.

\begin{proposition}
	\label{L2convergence}
	Let $(H_t)_{t>0}$\ be any strongly continuous semigroup. Then $A_\tau w$\ defined as in (\ref{Ataudef}) converges to $w$\ in $L^2(I\times X)$, for any $w$\ in $L^2(I\times X)$.
\end{proposition}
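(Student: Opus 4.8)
The plan is to show that $\{A_\tau\}_{0<\tau\le 1}$ is an approximate identity on the Hilbert space $L^2(I\times X)$, by the usual three-step scheme: a uniform operator bound, a density reduction, and an explicit estimate on a convenient dense class. Nothing about the Dirichlet structure, locality, or Assumptions \ref{assumptionbothcases}--\ref{L2gaussian} is used here; the only inputs are that $(H_t)$ is a strongly continuous semigroup of bounded operators on $L^2(X,m)$ and that $\rho\ge 0$ with $\int_{\mathbb R}\rho=1$ and $\mathrm{supp}\,\rho\subset(1,2)$.

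\emph{Setup and uniform bound.} A strongly continuous semigroup satisfies $\|H_t\|_{L^2(X)\to L^2(X)}\le Me^{\omega t}$ for some $M\ge1$, $\omega\in\mathbb R$ (in the Dirichlet-form case $M=1$, $\omega=0$); hence $\|H_r\|_{L^2\to L^2}\le M':=Me^{2|\omega|}$ for all $r\in(0,2]$, which covers $\mathrm{supp}\,\rho_\tau\subset(\tau,2\tau)$ once $\tau\le1$. Extend any $w\in L^2(I\times X)$ by $0$ to $\mathbb R\times X$ (keeping the notation $w$, so $w^t=0$ for $t\notin I$); substituting $r=s-t$ we may write, for $s\in I$, $A_\tau w(s,\cdot)=\int_{\mathbb R}\rho_\tau(r)\,H_r(w^{s-r})\,dr$ as a Bochner integral, with no boundary correction at the endpoints of $I$. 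Minkowski's integral inequality gives $\|A_\tau w(s,\cdot)\|_{L^2(X)}\le M'\int_{\mathbb R}\rho_\tau(r)\,\|w^{s-r}\|_{L^2(X)}\,dr$, and since $\rho_\tau\,dr$ is a probability measure, Jensen's inequality gives $\|A_\tau w(s,\cdot)\|_{L^2(X)}^2\le (M')^2\int_{\mathbb R}\rho_\tau(r)\,\|w^{s-r}\|_{L^2(X)}^2\,dr$. Integrating over $s\in I$, then applying Tonelli and the translation invariance of Lebesgue measure (together with $\int_{\mathbb R}\rho_\tau=1$), one obtains $\|A_\tau w\|_{L^2(I\times X)}\le M'\|w\|_{L^2(I\times X)}$ for all $\tau\in(0,1]$.

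\emph{Density reduction and the dense class.} Given $w$ and $\varepsilon>0$, pick $v$ in the dense linear span of product functions $v(t,x)=g(t)h(x)$ with $g\in C_c(I)$, $h\in L^2(X)$, so that $\|w-v\|_{L^2(I\times X)}<\varepsilon$; then $\|A_\tau w-w\|\le\|A_\tau(w-v)\|+\|A_\tau v-v\|+\|v-w\|\le(M'+1)\varepsilon+\|A_\tau v-v\|$, so it suffices to treat a single such tensor $v=g\otimes h$. For it, $A_\tau v(s,\cdot)-v(s,\cdot)=\int_{\mathbb R}\rho_\tau(r)\big(g(s-r)\,H_rh-g(s)\,h\big)\,dr$, whence
\[
\|A_\tau v(s,\cdot)-v(s,\cdot)\|_{L^2(X)}\le\|g\|_{\infty}\sup_{0<r\le 2\tau}\|H_rh-h\|_{L^2(X)}+\|h\|_{L^2(X)}\sup_{|r|\le 2\tau}|g(s-r)-g(s)|=:\delta(\tau),
\]
which is independent of $s$. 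The first summand tends to $0$ as $\tau\to0^+$ by strong continuity of $(H_t)$ at $0$ applied to the single vector $h$, the second by uniform continuity of the compactly supported function $g$; so $\delta(\tau)\to0$. Finally, if $[c,d]:=\mathrm{supp}\,g\Subset I$ then $A_\tau v(s,\cdot)-v(s,\cdot)=0$ for $s\notin[c,d+2]$ whenever $\tau\le1$, so $\|A_\tau v-v\|_{L^2(I\times X)}^2\le (d+2-c)\,\delta(\tau)^2\to0$. Combined with the reduction, $\limsup_{\tau\to0^+}\|A_\tau w-w\|\le(M'+1)\varepsilon$ for every $\varepsilon>0$, which proves the proposition.

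\emph{Where the care goes.} There is no conceptual obstacle — the statement is exactly the $L^2$-continuity of a one-sided mollification in time composed with the semigroup — but two bookkeeping points need clean handling: (1) extending $w$ by zero before the change of variables $r=s-t$, which is what makes the uniform bound and the formula $A_\tau w(s,\cdot)=\int_{\mathbb R}\rho_\tau(r)H_r(w^{s-r})\,dr$ hold without endpoint corrections; and (2) since $I$ may be unbounded, arranging that the convergence on the dense class is both uniform in $s$ and supported, in $s$, in one fixed compact interval, so that the final integration in $s$ is over a compact set — both guaranteed by choosing the dense class to consist of products $g\otimes h$ with $g\in C_c(I)$. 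Note that the mollifier is one-sided ($\mathrm{supp}\,\rho_\tau\subset(\tau,2\tau)$), so evenness is never used and only the forward semigroup $(H_r)_{r>0}$ is ever evaluated, as it must be.
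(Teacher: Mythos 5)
Your proof is correct and takes a genuinely different, and somewhat cleaner, route than the paper's. The paper works with the dense class $C_c(I\rightarrow L^2(X))$, splits $A_\tau w-w$ into a boundary term $\bigl(1-\int_I\rho_\tau(\cdot-t)\,dt\bigr)w$ plus $\int_I\rho_\tau(\cdot-t)\bigl[H_{\cdot-t}(w^t-w^\cdot)+\bigl(H_{\cdot-t}w^\cdot-w^\cdot\bigr)\bigr]\,dt$, and then controls $\sup_{s,\,\tau<r<2\tau}\|H_r w^s-w^s\|_{L^2(X)}$ by an equicontinuity/Heine--Borel argument over the (compact) set where $w$ lives; note the paper's phrase ``since $\overline{I}$ is compact'' actually requires $I$ bounded (or should be replaced by $\mathrm{supp}\,w$). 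You instead reduce to the smaller dense class of tensors $g\otimes h$ with $g\in C_c(I)$, $h\in L^2(X)$, and extend $w$ by zero before changing variables, which (i) removes the boundary term entirely, since $\rho_\tau\,dr$ integrates to $1$ over $\mathbb{R}$, and (ii) collapses the equicontinuity argument to the single-vector statement $\|H_r h-h\|\to 0$ as $r\to 0^+$, which is exactly strong continuity applied to $h$. Your version also handles unbounded $I$ without any patching. What the paper's formulation buys in exchange is that it proves convergence for every fixed $w\in C_c(I\rightarrow L^2(X))$ directly rather than after an additional linear-span/density step, which is the more natural dense class given the rest of their setup; both routes are valid, and both rely on the same two ingredients (a uniform operator bound in $\tau$ and convergence on a dense set).
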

\begin{proof}
	In this proposition we treat the larger class of strongly continuous semigroups $H_t$\ (not necessarily satisfying the Markov property and corresponding to a Dirichlet form), as roughly the same proof works under the weaker assumption. Such $H_t$\ satisfies that there exist some $M,\omega>0$, so that
	\begin{eqnarray*}
		\left|\left|H_t\right|\right|_{\scaleto{L^2(X)\rightarrow L^2(X)}{6pt}}\leq Me^{\omega t}.
	\end{eqnarray*}
	We first show that for any $w$\ in $C_c(I\rightarrow L^2(X))$, $A_\tau w$\ converges to $w$\ in $L^2(I\times X)$. As $C_c(I\rightarrow L^2(X))$\ is dense in $L^2(I\times X)$, and
	\begin{eqnarray*}
	\sup_{0<\tau<1}\left|\left|A_\tau\right|\right|_{\scaleto{L^2(I\times X)\rightarrow L^2(I\times X)}{6pt}}<+\infty,
	\end{eqnarray*}
	the statement thus holds for all $w$\ in $L^2(I\times X)$. For $w\in C_c(I\rightarrow L^2(X))$,
	\begin{eqnarray*}
		\lefteqn{\hspace{-.6in}\left|\left|A_\tau w-w\right|\right|_{\scaleto{L^2(I\times X)}{6pt}}
		\leq \left|\left|\int_I\rho_\tau(\cdot-t)\left[H_{\cdot-t}(w^t)-w^\cdot\right]dt\right|\right|_{\scaleto{L^2(I\times X)}{6pt}}}\\
		&&\hspace{.3in}+\left|\left|\left(1-\int_I\rho_\tau(\cdot-t)\,dt\right)w^\cdot\right|\right|_{\scaleto{L^2(I\times X)}{6pt}}.
	\end{eqnarray*}
    When $s-a\geq 2\tau$\ ($s\in I=(a,b)$), $\frac{s-t}{\tau}$\ runs over the full $(1,2)$\ as $t$\ runs over $I$, so $1-\int_I\rho_\tau(s-t)\,dt=1-1=0$. So $1-\int_I\rho_\tau(s-t)\,dt$\ can only be nonzero when $a<s<a+2\tau$, which is an interval of length $2\tau$. Since $w$\ is in $C_c(I\rightarrow L^2(X))$, we conclude that the second term
	\begin{eqnarray*}
	\left|\left|\left(1-\int_I\rho_\tau(\cdot-t)\,dt\right)w^\cdot\right|\right|_{\scaleto{L^2(I\times X)}{6pt}}\rightarrow 0\ \ \mbox{as }\tau\rightarrow 0.
	\end{eqnarray*}
	For the first term, we first write
	\begin{eqnarray*}
	\lefteqn{\int_I\rho_\tau(s-t)\left[H_{s-t}(w^t)-w^s\right]dt}\\
	&=& \int_I\rho_\tau(s-t)H_{s-t}(w^t-w^s)\,dt
	+\int_I\rho_\tau(s-t)\left[H_{s-t}(w^s)-w^s\right]dt.
	\end{eqnarray*}
	The $L^2$\ norm of the first part, $\left|\left|\int_I\rho_\tau(s-t)H_{s-t}(w^t-w^s)\,dt\right|\right|_{\scaleto{L^2(I\times X)}{6pt}}$, is bounded above by 
	\begin{eqnarray*}
		\lefteqn{\int_{\tau}^{2\tau}\rho_\tau(r)\left|\left|H_r(w^{\cdot-r}-w^\cdot)\right|\right|_{\scaleto{L^2(I\times X)}{6pt}}\,dr=\int_{\tau}^{2\tau}\rho_\tau\left|\left|\left|\left|H_r(w^{\cdot-r}-w^\cdot)\right|\right|_{\scaleto{L^2(X)}{6pt}}\right|\right|_{\scaleto{L^2(I)}{6pt}}dr}\\
		&\leq&\int_{\tau}^{2\tau}\rho_\tau(r)\left|\left|Me^{\omega r}\left|\left|w^{\cdot-r}-w^\cdot\right|\right|_{\scaleto{L^2(X)}{6pt}}\right|\right|_{\scaleto{L^2(I)}{6pt}}\,dr\leq C\sup_{\substack{\scaleto{s\in I}{5pt}\\\scaleto{\tau<r<2\tau}{5pt}}}\left|\left|w^{s-r}-w^s\right|\right|_{\scaleto{L^2(X)}{6pt}},
	\end{eqnarray*}
	which tends to $0$\ as $\tau\rightarrow 0$. The $L^2$\ norm of the second part has upper bound
	\begin{eqnarray*}
		\left|\left|\int_I\rho_\tau(s-t)\left[H_{s-t}(w^s)-w^s\right]dt\right|\right|_{\scaleto{L^2(I\times X)}{6pt}}\leq C\sup_{s\in I,\tau<r<2\tau}\left|\left|H_r(w^s)-w^s\right|\right|_{\scaleto{L^2(X)}{6pt}}.
	\end{eqnarray*}
	The right-hand side tends to $0$\ as $\tau\rightarrow 0$\ essentially because $\{s\mapsto H_r(w^s)\}_{r>0}\subset C_c(I\rightarrow L^2(X))$\ is equicontinuous in $s$. The details are as follows. First note that for any fixed $r>0$, any $s,t\in I$,
	\begin{eqnarray*}
	\lefteqn{\left|\left|H_r(w^s)-w^s\right|\right|_{\scaleto{L^2(X)}{6pt}}}\\
	&\leq& \left|\left|H_r(w^s-w^t)\right|\right|_{\scaleto{L^2(X)}{6pt}}+\left|\left|H_r(w^t)-w^t\right|\right|_{\scaleto{L^2(X)}{6pt}}+\left|\left|w^t-w^s\right|\right|_{\scaleto{L^2(X)}{6pt}}\\
	&\leq& 2Me^{\omega r}\left|\left|w^t-w^s\right|\right|_{\scaleto{L^2(X)}{6pt}}+\left|\left|H_r(w^t)-w^t\right|\right|_{\scaleto{L^2(X)}{6pt}}.
	\end{eqnarray*}
	For any $\epsilon>0$, any $s\in I$, there is some $\tau_0(s)>0$\ such that
	\begin{itemize}
		\item[(1)] for any $r<\tau_0(s)$, $\left|\left|H_r(w^s)-w^s\right|\right|_{\scaleto{L^2(X)}{6pt}}<\epsilon$\ (since $w^s\in L^2(X)$);
		\item[(2)] $\left|\left|w^t-w^s\right|\right|_{\scaleto{L^2(X)}{6pt}}<\epsilon$, for any $|s-t|<\tau_0(s)$\ (since $w\in C_c(I\rightarrow L^2(X))$).
	\end{itemize}
Let $B(s;\tau_0(s)):=(s-\tau_0(s),\,s+\tau_0(s))$. Since $\overline{I}$\ is compact and $\{B(s;\tau_0(s))\}_{s\in I}$\ covers $\overline{I}$, there exists some finite subcover $\{B(s_k;\tau_0(s_k))\}_{k=1}^{N}$. Hence there exists some fixed $\tau_0$\ ($\tau_0=\min\limits_{\scaleto{1\leq k\leq N}{4pt}}\{\tau_0(s_k)\}$) such that
\begin{itemize}
	\item[(1)] for any $r<\tau_0$, any $s_k$, $1\leq k\leq N$, $\left|\left|H_r(w^{s_k})-w^{s_k}\right|\right|_{\scaleto{L^2(X)}{6pt}}<\epsilon$;
	\item[(2)] for any $s\in I$, there exists some $s_k$\ such that $s\in B(s_k;\tau_0(s_k))$, so $\left|\left|w^s-w^{s_k}\right|\right|_{\scaleto{L^2(X)}{6pt}}<\epsilon$.
\end{itemize}
	Therefore,
	\begin{eqnarray*}
	\sup_{s\in I,\tau<r<2\tau}\left|\left|H_r(w^s)-w^s\right|\right|_{\scaleto{L^2(X)}{6pt}}\rightarrow 0\ \ \mbox{as }\tau\rightarrow 0.
	\end{eqnarray*}
This completes the proof for Proposition \ref{L2convergence}.
\end{proof}
Note that for any local weak solution $u$\ in Theorem \ref{L2thm}, the function $\widetilde{u}_\tau$\ is exactly $A_\tau(\overline{\eta}u)$. So Proposition \ref{L2convergence} applies to $\widetilde{u}_\tau$, and it follows that $\overline{\psi}\widetilde{u}_\tau\rightarrow \overline{\psi}u$\ in $L^2(I\times X)$\ as $\tau\rightarrow 0$. This completes the proof for Theorem \ref{L2thm}.
\subsection{Proof of Corollary \ref{L2corollary}}
In this subsection we prove Corollary \ref{L2corollary}, which says essentially that time derivatives of local weak solutions of the heat equation are still local weak solutions. 
\begin{proof}[Proof for Corollary \ref{L2corollary}]
	By Theorem \ref{L2thm}, $u$\ belongs to $\mathcal{F}_{\scaleto{\mbox{loc}}{5pt}}^n(I\times U)$. By definition of local weak solutions on $I\times U$, for any test function $\varphi$\ (and hence $\partial_t^k\varphi$\ for any $1\leq k\leq n$) in $\mathcal{F}_c(I\times U)\bigcap C_c^\infty(I\rightarrow \mathcal{F})$,
	\begin{eqnarray}
	\label{L2coreqn1.1}
	-\int_I\int_X u\, \partial_t^{k+1}\varphi\,dmdt+\int_I\mathcal{E}(u,\partial_t^k\varphi)\,dt=\int_I\int_X f \partial_t^k\varphi\,dmdt.
	\end{eqnarray}
	To show $\partial_t^ku$\ is a local weak solution of the heat equation (\ref{L2coreqn1}), intuitively it suffices to do integration by parts $k$\ times to move $\partial_t^k$\ to the $u$\ and $f$\ sides of the integrals. We now justify this procedure.
	
	For the justification of integration by parts for the first and third integrals in (\ref{L2coreqn1.1}), we only describe the first step and the remaining is clear by induction. By Fubini-Tonelli Theorem, suppose $\mbox{supp}\{\varphi\}\subset J\times V\Subset I\times U$, since
	\begin{eqnarray*}
		\int_I\int_X\left|u\,\partial_t^{k+1}\varphi\right|dmdt\leq \left|\left|u\right|\right|_{L^2(J\times U)}\cdot \left|\left|\varphi\right|\right|_{W^{k+1,2}(I\rightarrow L^2(U))}<\infty,
	\end{eqnarray*}
	we can switch the order of integration and get
	\begin{eqnarray*}
		-\int_I\int_X u\, \partial_t^{k+1}\varphi\,dmdt=-\int_X\int_I u\, \partial_t^{k+1}\varphi\,dtdm= \int_X\int_I \partial_tu\, \partial_t^{k}\varphi\,dtdm,
	\end{eqnarray*}
	where the second equality is by integration by parts and that $\varphi$\ is compactly supported in time. The same argument works for the integral 
	\begin{eqnarray*}
		\int_I\int_X f \partial_t^k\varphi\,dmdt=-\int_X\int_I \partial_tf\, \partial_t^{k-1}\varphi\,dtdm.
	\end{eqnarray*}
	For the second term in (\ref{L2coreqn1.1}), let $\varphi_n(t,\cdot):=\frac{\varphi(t+1/n,\cdot)-\varphi(t,\cdot)}{1/n}$, then $\varphi_n\rightarrow\varphi$\ in $C^\infty(I\rightarrow \mathcal{F})$. By the Cauchy-Schwartz inequality for $\mathcal{E}$, 
	\begin{eqnarray*}
	\left|\int_I\mathcal{E}(u,\varphi_n-\partial_t\varphi)\,dt\right|\leq \int_I(\mathcal{E}(u,u))^{1/2}(\mathcal{E}(\varphi_n-\partial_t\varphi,\,\varphi_n-\partial_t\varphi))^{1/2}\,dt\rightarrow 0
	\end{eqnarray*}
	as $n\rightarrow\infty$. Here $\mathcal{E}(u,u)$\ is understood as $\mathcal{E}(u^\sharp,u^\sharp)$\ where $u^\sharp\in \mathcal{F}(I\times X)$\ and agrees with $u$\ a.e. on a neighborhood of $\mbox{supp}\{\varphi\}$. For $n$\ large enough,
	\begin{eqnarray*}
	\int_I\mathcal{E}(u,\varphi_n)\,dt=\int_I\mathcal{E}(u_n,\varphi)\,dt,
	\end{eqnarray*}
	where $u_n(t,\cdot):=\frac{u(t-1/n,\cdot)-u(t,\cdot)}{1/n}$. Passing to the limit then shows
	\begin{eqnarray*}
	\int_I\mathcal{E}(u,\partial_t\varphi)\,dt=-\int_I\mathcal{E}(\partial_tu,\varphi)\,dt.
	\end{eqnarray*}
	In summary, after $k$\ times of integration by parts, (\ref{L2coreqn1.1}) becomes
$$(-1)^{k+1}\int_I\int_X\partial_t^ku\,\partial_t\varphi\, dmdt+(-1)^k\int_I\mathcal{E}(\partial_t^ku,\varphi)\,dt 
= (-1)^k\int_I\int_X\partial_t^kf\varphi\,dmdt,$$
thus $\partial_t^ku$\ is a local weak solution of (\ref{L2coreqn1}) on $I\times U$. The statement in Corollary \ref{L2corollary} for $f=0$\ then follows.
\end{proof}
\section{Ancient solutions} \setcounter{equation}{0}
\subsection{Statement of results}
In this section we generalize the results in \cite{app1,app2analyticity} on the structure of ancient solutions of the heat equation to the setting of Dirichlet spaces. As usual we assume that $(\mathcal{E},\mathcal{F})$\ is symmetric regular local. We call a local weak solution $u$\ of $(\partial_t+P)u=0$\ on $(-\infty,b)\times X$\ for some $b>0$\ an \textit{ancient (local weak) solution}. We assume $(X,\mathcal{E},\mathcal{F})$\ satisfies the assumption on existence of nice cut-off functions (Assumption \ref{assumptionbothcases}), and the following further assumption.
\begin{assumption}
	\label{appassumption}
	For any precompact open set $V\Subset X$, any $C_1>0$, any $n\in \mathbb{N}_+$, there exists 
	\begin{itemize}
	    \item[(1)] an exhaustion of $X$, $\{W_{V,i}^n\}_{i\in \mathbb{N}_+}$, with each set covering $V$. That is, $\{W_{V,i}^n\}_{i\in \mathbb{N}_+}$\ is a sequence of increasing open sets, satisfying
	\begin{eqnarray*}
	V\subset W^n_{V,1},\ W_{V,i}^n\Subset W_{V,i+1}^n,\ \bigcup_{i=1}^{\infty}W_{V,i}^n=X.
	\end{eqnarray*}
	\item[(2)] a sequence of cut-off functions $\{\varphi_{V,i}^n\}_{i\in \mathbb{N}_+}$, satisfying that each $\varphi_{V,i}^n=:\varphi_i$\ is a cut-off function for the pair $W_{V,i}^n\subset W_{V,i+1}^n$, i.e., $\varphi_i=1$\ on $W_{V,i}^n$, $\mbox{supp}\{\varphi_i\}\subset W_{V,i+1}^n$; $\varphi_i$\ further satisfies that for any $v\in \mathcal{F}$,
	\begin{eqnarray}
	\label{cut-offappineq}
	\int_Xv^2\,d\Gamma(\varphi_i,\varphi_i)\leq C_1 \int_X \varphi_i^2\,d\Gamma(v,v)+\frac{1}{n}\int_{\scaleto{\mbox{supp}\{\varphi_i\}}{6pt}}v^2\,dm.
	\end{eqnarray}
	\end{itemize}
\end{assumption}

When $X$\ is compact,  Assumption \ref{appassumption} trivially holds because we can take all $W_{V,i}^n$\ to be the whole space $X$, and take all $\varphi_i$\ to be the constant function $1$. For noncompact spaces, in the most classical setting $(\mathbb{R}^d,dx)$\ with the standard Dirichlet form, if $V\subset B(0;R)$\ where $B(0;R)$\ stands for the ball of radius $R$\ centered at the origin, we can take $W_{V,i}^n=B(0; R+cin^{1/2})$\ for some $c\geq 1$. It is standard to construct nice cut-off functions $\varphi_i$\ for each pair $W_{V,i}^n\subset W_{V,i+1}^n$\ such that
\begin{eqnarray*}
	d\Gamma(\varphi_i,\varphi_i)\leq \frac{2}{c^2 n}\,dx,
\end{eqnarray*}
which implies (\ref{cut-offappineq}) with $C_1=0$. See also the end of Section 6.1 and Section 7.1.

In the following theorems we consider two types of ancient solutions, one with polynomial $L^2$\ growth bound, and the other with exponential $L^2$\ growth bound. We first remark that for any ancient local weak solution $u$, by Theorem \ref{L2thm}, $u$\ is locally in $W^{\infty,2}((-\infty,b)\rightarrow \mathcal{F})\subset C^\infty((-\infty,b)\rightarrow \mathcal{F})$. As generalizations of results in \cite{app1,app2analyticity}, we state the following two theorems on the structure of ancient solutions in the Dirichlet space setting.

\begin{theorem}
	\label{appthmpoly}
	Let $(X,m)$\ be a metric measure space and $(\mathcal{E},\mathcal{F})$\ be a symmetric regular local Dirichlet form on $X$. Assume that the Dirichlet space $(X,\mathcal{E},\mathcal{F})$\ satisfies Assumption \ref{assumptionbothcases}, and when $X$\ is not compact, further satisfies Assumption \ref{appassumption}. Let $(H_t)_{t>0}$\ and $-P$\ be the corresponding semigroup and generator. Let $u$\ be a local weak solution of $(\partial_t+P)u=0$\ on $(-\infty,b)\times X$\ for some $b>0$, i.e., $u$\ is an ancient solution of the heat equation. Suppose $u$\ satisfies the $L^2$\ polynomial growth condition, namely, for any open subset $V\Subset X$, any $i\in \mathbb{N}_+$, there exist positive constants $b_{u},d_{u},C_{u,V,i}>0$\ ($b_u,d_u$\ are independent of $V,i$), such that for any $T>1$, $n\in \mathbb{N}_+$,
	\begin{eqnarray}
	\label{polygrowthbound}
	\left(\int_{[-T,\,0]\times W_{V,i}^n}|u(t,x)|^2\,dmdt\right)^{1/2}\leq C_{u,V,i}\max\left\{T^{d_{u}},\,n^{b_{u}}\right\}.
	\end{eqnarray}
	Then there exists some $N>0$\ such that for any $k>N$,
	\begin{eqnarray*}
		\partial_t^ku=0.
	\end{eqnarray*}
	More precisely, $u$\ is a polynomial in time, with
	\begin{eqnarray*}
		u(t,x)=u(0,x)+\partial_tu(0,x)\,t+\partial_t^2u(0,x)\,t^2\frac{1}{2!}+\cdots+\partial_t^Nu(0,x)\frac{1}{N!}t^N.
	\end{eqnarray*}
Here $N=\lfloor d_u\rfloor$, the largest integer not exceeding $d_u$.
\end{theorem}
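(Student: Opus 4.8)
The plan is to reduce the theorem to the single assertion $\partial_t^{N+1}u\equiv 0$ on $(-\infty,b)\times X$, and to prove this by an iterated energy (Caccioppoli-type) estimate, following the scheme of \cite{app1}. By Theorem \ref{L2thm} and Corollary \ref{L2corollary}, $u$ is smooth in time with values in $\mathcal{F}_{\mbox{\tiny loc}}$ and each $v_k:=\partial_t^ku$ is itself an ancient local weak solution of $(\partial_t+P)v_k=0$. Since the conclusion is local in space, it suffices to fix a precompact open set $V\Subset X$ and show $v_{N+1}=0$ a.e.\ on $(-1,0)\times V$; running over all such $V$ gives $\partial_t^{N+1}u\equiv0$, and then integrating the identity $\partial_t^{N+1}u=0$ in $t$ (a linear ODE in the Fréchet space $\mathcal{F}_{\mbox{\tiny loc}}$, legitimate by the time smoothness from Theorem \ref{L2thm}) yields the explicit Taylor polynomial $u(t)=\sum_{j=0}^N\partial_t^ju(0)\,t^j/j!$ with $N=\lfloor d_u\rfloor$.

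The heart of the matter is a localized parabolic energy estimate making precise the heuristic that one time derivative of a solution "costs a factor $\sim 1/T$" over a time window of length $T$. Fixing the exhaustion $\{W^n_{V,i}\}$ and cut-offs $\{\varphi_i\}$ of Assumption \ref{appassumption}, and a nice product cut-off $\overline{\eta}=l(t)\varphi_i(x)$ that is $\equiv1$ on $[-T,0]\times W^n_{V,i}$ with $|l'|\lesssim 1/T$, I would test the weak formulation of $(\partial_t+P)v_k=0$ (in the variant form of Definition \ref{soldef2}, legitimate by Assumption \ref{cut-offgeneralassumption}) against $\overline{\eta}^2v_k$. The time-derivative term pairs $\int\overline{\eta}^2v_k^2$ against $\int|l\,l'|v_k^2$, and the energy term, treated via the gradient inequality (Lemma \ref{gradientineq}) together with the cut-off inequality (\ref{cut-offappineq}) — whose constant $1/n$ is exactly what absorbs the gradient-of-cut-off error terms — yields an estimate of the form
\begin{equation*}
\sup_{-T/2\le t\le 0}\int_{W^n_{V,i}}v_k^2\,dm+\int_{-T/2}^{0}\!\!\int_{W^n_{V,i}}\!d\Gamma(v_k,v_k)\,dt\ \le\ \frac{C}{T}\int_{-T}^{0}\!\!\int_{W^n_{V,i+1}}\!v_k^2\,dm\,dt+\frac{C}{n}\,(\cdots),
\end{equation*}
and, combining with $v_{k+1}=\partial_tv_k=-Pv_k$ and the smoothing bound $\|P^jH_t\|_{L^2\to L^2}\le(j/et)^j$ on short time sub-intervals (equivalently, one more energy estimate), an inequality of the form $\int_{-T/4}^{0}\!\int_{W^n_{V,i}}v_{k+1}^2\lesssim T^{-2}\int_{-T}^{0}\!\int_{W^n_{V,i+1}}v_k^2+\tfrac1n(\cdots)$, where the error terms $(\cdots)$ are again $L^2$-norms of $v_k$ over slightly larger time–space regions.

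With this estimate in hand I would iterate it $N+1$ times down a chain of nested regions, shrinking the time window by bounded factors at each step (harmless, since the final window $(-1,0)$ survives once $T$ is large) and stepping the exhaustion index $i$ upward so that the spatial error terms are always controlled in a larger set of the exhaustion. The outcome, via the polynomial growth hypothesis (\ref{polygrowthbound}), is a bound
\begin{equation*}
\|v_{N+1}\|_{L^2((-1,0)\times V)}\ \le\ C\left(\frac{C}{T}\right)^{N+1}\max\{T^{d_u},\,n^{b_u}\}+\frac{C(T)}{n}.
\end{equation*}
Choosing $n=n(T)\to\infty$ growing slowly enough that $n^{b_u}\le T^{d_u}$ and $C(T)/n\to0$, and using $d_u<N+1$ (so $T^{\,d_u-N-1}\to0$), the right-hand side tends to $0$ as $T\to\infty$; hence $v_{N+1}=0$ a.e.\ on $(-1,0)\times V$, which is what remained to be shown.

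The main obstacle is the bookkeeping in the iteration: one must propagate through $N+1$ steps both the loss of time-window length and the accumulation of the cut-off error terms, and arrange the joint limit in $(T,n)$ so that the $T^{d_u}$ contribution is killed by the gain $(C/T)^{N+1}$ while the $n^{b_u}$ contribution and all error terms are killed by the smallness $1/n$ of the nice cut-offs. A secondary technical point is justifying the chosen test functions and the "$1/T$ per time derivative" gain in the rough Dirichlet setting, i.e.\ without bounded-gradient cut-offs — which is precisely what Lemma \ref{gradientineq} and inequality (\ref{cut-offappineq}) supply, together with the local semigroup representation $\widetilde u_\tau=A_\tau(\overline{\eta}u)$ from the proof of Theorem \ref{L2thm} and the bound $\|\partial_t^jH_t\|_{L^2\to L^2}\le(j/et)^j$.
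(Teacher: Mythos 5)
Your skeleton matches the paper's (reduce to $\partial_t^{N+1}u\equiv 0$, use Corollary~\ref{L2corollary} to make each $v_k=\partial_t^ku$ an ancient local weak solution, iterate a Caccioppoli-type estimate built on the cut-offs of Assumption~\ref{appassumption}), but the limiting argument in $(T,n)$ has a genuine gap, and it misses the decisive coupling in the paper's proof. The paper's one-step estimate (Propositions~\ref{appproposition1}--\ref{appcor}) gives a gain constant $K_2(C,r)=1200\,(C+1/r)^2$, where $C=1/n$ is the cut-off constant and $r$ is the per-step \emph{enlargement} of the time window. The crucial choice is $r=n$, so that the per-step gain is $\sim n^{-2}$; one then keeps $T$ \emph{fixed} and sends $n\to\infty$. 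After $k$ steps the time interval has length $T+2kn\sim n$ and the spatial set is $W^n_{V,1+3k}$, so \eqref{polygrowthbound} yields $\max\{(T+2kn)^{d_u},n^{b_u}\}^2\sim n^{2\max(d_u,b_u)}$, which the gain $n^{-2k}$ beats once $k>\max(d_u,b_u)$ — no $(T,n)$ tuning at all.

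Your version instead shrinks the time window by bounded factors (per-step factor $\sim 1/T$), sends $T\to\infty$, and couples $n=n(T)$ growing slowly so that $n^{b_u}\le T^{d_u}$. Iterating your one-step estimate does \emph{not} give the decoupled bound $(C/T)^{N+1}\max\{T^{d_u},n^{b_u}\}+C(T)/n$ you wrote; the $1/n$ cut-off errors enter multiplicatively with the $1/T$ gains, so the correct accumulated factor is $\sim(T^{-2}+n^{-1})^{N+1}$. In your prescribed regime ($n$ growing slowly relative to $T$, hence $n^{-1}\gg T^{-2}$) the per-step factor is dominated by $n^{-1}$, giving at best a gain $n^{-(N+1)}$; this cannot beat the $T^{2d_u}$ growth as $T\to\infty$ much faster than $n$. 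Optimizing $n=T^\gamma$ shows your joint limit forces $N+1>\max(d_u,2b_u)$ rather than $N+1>d_u$, so for $b_u$ large the claimed $N=\lfloor d_u\rfloor$ fails. The repair is exactly the paper's device: take the time-step length $r$ equal to $n$, keep $T$ fixed, and send $n\to\infty$, so the time-window length is polynomial in $n$ and the $n^{-2k}$ gain wins outright. (And as in the paper, one first obtains $\partial_t^ku=0$ for some $k>\max(d_u,b_u)$ and only then refines to $N=\lfloor d_u\rfloor$ by substituting the polynomial form of $u$ back into \eqref{polygrowthbound}.)
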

For ancient solutions of the exponential growth type, we only need one sequence of exhaustion to get sufficient estimates, so we fix $n=1$\ and some precompact open set $V_0$, and consider the sequence $W_{V_0,i}^1=:W_i$\ only.
\begin{theorem}
\label{appthmexp}
Let $(X,m)$\ be a metric measure space and $(\mathcal{E},\mathcal{F})$\ be a symmetric regular local Dirichlet form on $X$. Assume that the Dirichlet space $(X,\mathcal{E},\mathcal{F})$\ satisfies Assumption \ref{assumptionbothcases}, and when $X$\ is not compact, further satisfies Assumption \ref{appassumption}. Let $(H_t)_{t>0}$\ and $-P$\ be the corresponding semigroup and generator. Let $u$\ be a local weak solution of $(\partial_t+P)u=0$\ on $(-\infty,b)\times X$\ for some $b>0$, i.e., $u$\ is an ancient solution of the heat equation. Suppose $u$\ satisfies the $L^2$\ exponential growth condition, namely, there exists some $c_u>0$, such that for any $T>1$, any $i\in \mathbb{N}_+$,
	\begin{eqnarray}
	\label{expgrowthbound}
	\int_{[-T,\,0]\times W_i}|u(t,x)|^2\,dmdt\leq e^{c_u(T+i)}.
	\end{eqnarray}
	Then $u$\ is analytic in $t\in (-\infty,0]$, in the sense that for any precompact open set $V\subset X$,
	\begin{eqnarray}
	\label{appcaseii0}
	\left|\left|u(t,\cdot)-\sum_{i=1}^{k}\frac{\partial_t^iu(0,\cdot)}{i!}t^i\right|\right|_{L^2(V)}\rightarrow 0\ \mbox{as }k\rightarrow \infty,
	\end{eqnarray}
	and the convergence is uniform in $t\in [a,0]$\ for any $a<0$.
\end{theorem}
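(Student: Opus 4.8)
\emph{Overview of the plan.} The plan is to reduce \eqref{appcaseii0} to an \emph{exponential}-in-$k$ bound for the time derivatives $\partial_t^k u$ on a fixed cylinder, and then to obtain that bound by peeling off one time derivative at a time with the $n=1$ case of Theorem \ref{L2thm}, keeping all constants uniform by walking outward along the exhaustion furnished by Assumption \ref{appassumption}.

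\emph{Reduction via Taylor.} First I would record the starting point. Since $u$ is an ancient local weak solution, Corollary \ref{L2corollary} (hence Theorem \ref{L2thm}; the $L^2$-Gaussian bound of Assumption \ref{L2gaussian} needed there is itself a consequence of Assumption \ref{appassumption}, cf.\ the remarks following Assumption \ref{L2gaussian}) shows that $u$ is locally in $W^{\infty,2}((-\infty,b)\to\mathcal F)$, that $u\in C^\infty((-\infty,0]\to\mathcal F_{\mathrm{loc}})$, and that every $\partial_t^k u$ is again an ancient local weak solution. Taylor's formula with integral remainder in the Bochner sense gives, for $t\le0$,
\[
u(t,\cdot)=\sum_{i=0}^{k}\frac{\partial_t^iu(0,\cdot)}{i!}\,t^i+\frac{1}{k!}\int_0^t(t-s)^k\,\partial_s^{k+1}u(s,\cdot)\,ds ,
\]
so that for a precompact open $V\Subset X$ and $t\in[a,0]$,
\[
\Big\|u(t,\cdot)-\sum_{i=0}^{k}\tfrac{\partial_t^iu(0,\cdot)}{i!}t^i\Big\|_{L^2(V)}\le\frac{|a|^{k+1}}{(k+1)!}\,\sup_{s\in[a,0]}\big\|\partial_s^{k+1}u(s,\cdot)\big\|_{L^2(V)} .
\]
Thus \eqref{appcaseii0}, together with its uniformity over $[a,0]$, follows once one proves: for every $a<0$ and every $V\Subset X$ there are $C,E>0$ with $\sup_{s\in[a,0]}\|\partial_s^{k}u(s,\cdot)\|_{L^2(V)}\le C\,E^{k}$ for all $k$.

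\emph{The iteration.} To prove this bound I would iterate as follows. Choose $i_0$ with $V\subset W_{i_0}$ (possible since $\{W_i\}$ from Assumption \ref{appassumption} exhausts $X$), and for each $k$ set up nested space-time cylinders $Q_0\Subset Q_1\Subset\cdots\Subset Q_k$ with $Q_j=[-(1+|a|+j),0]\times W_{i_0+j}$, consecutive ones differing by one unit of time and one exhaustion level. Applying Theorem \ref{L2thm} with $n=1$ to the ancient solution $\partial_t^{\,j-1}u$ on $Q_j$ — whose right-hand side is $0$, hence trivially in $W^{1,2}(\to L^2)$ — yields, in the quantitative form that its proof produces (Propositions \ref{L2prop1}, \ref{L2prop2}),
\[
\big\|\partial_t^{\,j}u\big\|_{L^2(Q_{j-1}\to\mathcal F)}\le\Lambda_j\,\big\|\partial_t^{\,j-1}u\big\|_{L^2(Q_{j}\to\mathcal F)},
\]
where $\Lambda_j$ depends only on the cut-off functions relating $Q_{j-1}\Subset Q_j$ and on a quantity of the form $\sup_{0<r<1}G_{\cdot,\cdot}(2,1,r)$ for the annulus/ball pair at that scale. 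Here Assumption \ref{appassumption} is used twice: it supplies, for the pairs $W_{i_0+j-1}\subset W_{i_0+j}$, cut-off functions whose energy constant in \eqref{cut-offappineq} does not depend on $j$, and (through the remarks of Section 3, i.e.\ via the explicit bound \eqref{L2Gaussiancut-off}) it makes the Gaussian constant $\sup_rG_{\cdot,\cdot}(2,1,r)$ uniform over scales, once the exhaustion is chosen with uniformly spaced consecutive sets; the time cut-offs can be taken with a common gap. Hence all $\Lambda_j$ are dominated by a single geometric constant $\Lambda$, and multiplying the $k$ inequalities gives $\|\partial_t^{k}u\|_{L^2(Q_0\to\mathcal F)}\le\Lambda^{k}\|u\|_{L^2(Q_k\to\mathcal F)}$. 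One Caccioppoli estimate (built from \eqref{cut-offbddenergy} and \eqref{gradient111}) then bounds $\|u\|_{L^2(Q_k\to\mathcal F)}^2$ by $\int_{[-(2+|a|+k),0]\times W_{i_0+k+1}}|u|^2$, which the exponential growth hypothesis \eqref{expgrowthbound} estimates by $e^{c_u((2+|a|+k)+(i_0+k+1))}\lesssim e^{2c_uk}$; and a one-dimensional trace inequality in time (legitimate because $\partial_t(\partial_t^{k}u)=\partial_t^{k+1}u\in L^2_{\mathrm{loc}}$) passes from $\|\partial_t^{k}u\|_{L^2(Q_0\to\mathcal F)}$ to $\sup_{s\in[a,0]}\|\partial_s^{k}u(s,\cdot)\|_{L^2(V)}$. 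This produces the claimed bound with $E=\Lambda e^{c_u}$; running the same argument based at an arbitrary $s\in[a,0]$ instead of at $0$ gives the uniformity in $s$. When $X$ is compact Assumption \ref{appassumption} is vacuous, all $W_i$ may be taken to be $X$, and the iteration simplifies accordingly.

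\emph{Main obstacle.} The main obstacle is precisely the uniformity of the iteration constant across the $k$ steps. One cannot instead apply Theorem \ref{L2thm} once with a large $n$: the $n$-dependence of its constant — entering through $\sup_rG_{\cdot,\cdot}(n+1,n,r)$, which carries negative powers $r^{-(n+1)}$ and is therefore of Gevrey type in $n$ — is far too large to be absorbed by the $k!$ in the Taylor remainder. One must therefore remove the time derivatives one by one, and the cost is that the spatial localization climbs the exhaustion; the whole scheme then rests on Assumption \ref{appassumption} delivering cut-off functions with a scale-independent energy bound \eqref{cut-offappineq} and on the $L^2$-Gaussian constants for consecutive exhaustion annuli staying uniformly bounded. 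Making these two uniformities precise — in particular, isolating from the proof of Theorem \ref{L2thm} (at $n=1$) a bound whose constant is of the advertised purely geometric form, and checking it does not degrade along the exhaustion — is the technical heart; the remainder is bookkeeping of the nested cylinders together with the input of the exponential growth bound, in the spirit of \cite{app2analyticity}.
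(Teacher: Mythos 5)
Your outer scaffold matches the paper: Taylor expansion with integral remainder, then an exponential-in-$k$ bound on $\|\partial_t^{k}u\|_{L^2}$ over a fixed cylinder, obtained by peeling off one time derivative per step while walking outward along the exhaustion of Assumption \ref{appassumption}. But the mechanism you propose for the one-step estimate is genuinely different from the paper's, and it has a gap that the paper's mechanism avoids.

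The paper's one-step estimate is Proposition \ref{appproposition1}, a direct Caccioppoli/energy estimate: test $(\partial_t+P)u=0$ against $u\varphi_i^2l^2$ and against $\partial_tu\,\varphi_i^2l^2$, use the chain rule for $\Gamma$, the Cauchy--Schwarz inequality \eqref{CSineq}, the cut-off inequality \eqref{cut-offappineq}, and Lemma \ref{applemma}. This gives $\int_J\int_{W_i}(\partial_tu)^2\le K_2(C,r)\int_{J_{-2r}}\int_{W_{i+3}}u^2$ with $K_2=1200(C+1/r)^2$, directly in $L^2$ norms, with a constant that manifestly does not degrade along the exhaustion because \eqref{cut-offappineq} furnishes the same $(C_1,C)$ at every step. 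Crucially, this estimate never invokes the heat semigroup $H_t$, the approximation scheme of Section~5, or Assumption \ref{L2gaussian}. The only use of Theorem \ref{L2thm}/Corollary \ref{L2corollary} in Section~6 is qualitative, to guarantee $u\in C^\infty((-\infty,b)\to\mathcal F_{\scaleto{\mbox{loc}}{5pt}})$ so that $\partial_t^k u$ are again local weak solutions and all the integrals in the energy estimate make sense.

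Your plan instead proposes to extract a quantitative $W^{1,2}$-bound from the proof of Theorem \ref{L2thm} (Propositions \ref{L2prop1}, \ref{L2prop2}) and claim its constant $\Lambda_j$, governed in part by $\sup_{0<r<1}G_{\cdot,\cdot}(2,1,r)$, is uniform over the exhaustion scales $j$. This is a genuine gap. Assumption \ref{L2gaussian} only asserts, for each fixed pair $(V_1,V_2)$ of disjoint compacts, that $G_{V_1,V_2}(a,n,t)\to0$ as $t\to0$; it gives no uniformity across a family of pairs. Your attempted remedy --- appealing to the explicit bound \eqref{L2Gaussiancut-off} --- requires $C_2(C_1,U,V)=C(U,V)C_1^{-\alpha}$, which is \emph{not} what Assumption \ref{appassumption} provides: \eqref{cut-offappineq} fixes $C_2=1/n$ with no prescribed dependence on $C_1$, so Lemma \ref{L2Gaussian1} does not apply. (Your parenthetical that Assumption \ref{L2gaussian} ``is itself a consequence of Assumption \ref{appassumption}'' is therefore also unjustified.) Even setting uniformity aside, Propositions \ref{L2prop1}--\ref{L2prop2} control $\partial_\tau$ of the approximants and would need further work to be recast as the clean $L^2$-to-$L^2$ Caccioppoli inequality you want, and their right-hand sides involve $\|\overline\Psi u\|_{L^2(I\to\mathcal F)}$, i.e.\ energy norms, so you would already have to interlace extra Caccioppoli steps. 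In short: the route through Theorem \ref{L2thm}'s proof machinery and the Gaussian bound is both unnecessary and not supported by the stated hypotheses; the paper gets the quantitative bound you need by a straight integration-by-parts argument whose constants are built from the cut-off data alone, which is why Assumption \ref{appassumption} --- and not Assumption \ref{L2gaussian} --- is the operative hypothesis in Theorems \ref{appthmpoly} and \ref{appthmexp}.
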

We first make some remarks about the two theorems.
\begin{remark}
	\label{polyrmk}
	For Theorem \ref{appthmpoly}, if we denote $\frac{1}{k!}\partial_t^ku(0,x)=u_k(x)$\ and let $N=\lfloor d_u\rfloor$, then $\{u_k\}_{k=0}^{N}$\ satisfies 
	\begin{eqnarray*}
		-P u_k(x)&=&(k+1)\,u_{k+1}(x),\ \ \mbox{for }0\leq k\leq N-1,\\
		-P u_{N}(x)&=&0,
	\end{eqnarray*}
	both in the sense that for any $\varphi\in \mathcal{F}_c(X)$,
	\begin{eqnarray*}
		\mathcal{E}(u_k,\,\varphi)=(k+1)\int_X u_{k+1}\varphi\,dm,
	\end{eqnarray*}
	for any $1\leq k\leq N$\ ($u_{N+1}=0$). We call $u_k$\ a local weak solutions of $-Pu_k=(k+1)u_{k+1}$\ on $X$. Moreover, all $u_k$\ satisfy the $L^2$\ growth bound that for any precompact open set $V\Subset X$, any $i,n\in \mathbb{N}_+$, there exist constants $C_{u,V,i}>0$\ and $b_u>0$\ (independent of $V,i$), such that
	\begin{eqnarray*}
		\left(\int_{W_{V,i}^n}|u_k(x)|^2\,dm\right)^{1/2}\leq C_{u,V,i}\,n^{b_u}.
	\end{eqnarray*}
\end{remark}
\begin{remark}
	In Theorem \ref{appthmexp}, if we write $u(t,x)=\sum\limits_{k=0}^{\infty} \frac{a_k(x)}{k!}t^k$\ where the two sides equal in the above $L^2$\ sense, then the $a_k(x)$\ functions are $a_k(x)=\partial_t^ku(0,x)$. A Cacciopolli type estimate for local weak solutions can be derived from the proof of Proposition \ref{appproposition1}, namely, for any ancient local weak solution $v$\ of the heat equation $(\partial_t+P)v=0$, for any $T>0$, there exists some $K>0$\ such that
	\begin{eqnarray*}
		\sup_{t\in [-T,\,0]}\int_{W_i}|v(t,x)|^2\,dm\leq K\int_{[-T-1,\,0]\times W_{i+1}}|v(t,x)|^2\,dmdt,
	\end{eqnarray*}
	where $i\in \mathbb{N}_+$\ and $W_i$\ is defined as in Theorem \ref{appthmexp}. For any $k$, by taking $T=1$\ and $v=\partial_t^ku(t,x)$\ which by Corollary \ref{L2corollary} is a local weak solution, and by using the inequality in Proposition \ref{appcor} given in the next section, we get that $a_k(x)$\ satisfies the $L^2$\ upper bound
	\begin{eqnarray*}
		\int_{W_i}|a_k(x)|^2\,dm\leq C_{u,k}\,e^{c_u(i+5k)}.
	\end{eqnarray*}
\end{remark}
\begin{remark}
	The conclusion in Theorem \ref{appthmexp} is in the $L^2$\ sense. By Proposition \ref{appproposition1}, for any ancient (local weak) solution $u$, it is also true that the partial sum $\sum\limits_{i=0}^{k}\frac{\partial_t^iu(0,x)}{i!}t^i$\ tends to $u$\ in the energy integral over any precompact set as $k$\ tends to infinity, uniformly in time on any finite interval. If the (essential) supremum of $u$\ over each time-space cylinder can be controlled by the $L^2$\ integral of $u$\ over the same cylinder, then we can make the conclusion in Theorem \ref{appthmexp} an (m-a.e.) pointwise conclusion. For example, some ultracontractivity property of the heat semigroup is sufficient for this purpose.
\end{remark}
As a corollary for Theorem \ref{appthmpoly}, we recover in the current setting the dimension result in \cite{app1} under an additional condition on the polynomial growth of the $W_{V,i}^n$\ sets, $V\Subset X$\ is an arbitrarily fixed open set. We first define the function spaces. For each $d,b\in \mathbb{N}_+$, let $\mathcal{P}_{d,b}(X)$\ denote the vector space of all ancient (local weak) solutions $u$\ of $(\partial_t+P)u=0$\ satisfying that for any $n,i\in \mathbb{N}_+$, there exists some constant $C_{u,V,i}>0$, such that
\begin{eqnarray}
\label{appinfinitybound}
\esssup_{[-T,\,0]\times W_{V,i}^n}|u(t,x)|\leq C_{u,V,i}\max\left\{T^d,n^b\right\}.
\end{eqnarray}
Let $\mathcal{H}_{b}(X)$\ denote the vector space of all local weak solutions $v$\ of $Pv=0$\ on $X$\ with polynomial growth bound
\begin{eqnarray*}
	\esssup_{x\in W_{V,i}^n}|v(x)|\,dm\leq D_{v,V,i}\,n^b.
\end{eqnarray*}
Here $D_{v,V,i}>0$\ is some constant, $i\in \mathbb{N}_+$.
\begin{corollary}
	Under the hypotheses of Theorem \ref{appthmpoly}, and further assume that for some precompact open set $V\Subset X$, for any $n,i\in \mathbb{N}_+$, the sets $W_{V,i}^n$\ satisfy some polynomial volume growth bound
	\begin{eqnarray}
	\label{polyvolgrowth}
	m(W_{V,i}^n)\leq E_{V,i}\,n^a
	\end{eqnarray}
	where $E_{V,i},a>0$\ are constants. Then
	\begin{eqnarray*}
		\mbox{dim}\,\mathcal{P}_{d,b}(X)\leq (d+1)\,\mbox{dim}\,\mathcal{H}_{b}(X).
	\end{eqnarray*}
\end{corollary}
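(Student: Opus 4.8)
The plan is to realize $\partial_t$ as a nilpotent linear operator on $\mathcal{P}_{d,b}(X)$ whose kernel embeds into $\mathcal{H}_b(X)$, so that a Jordan-type dimension count produces the factor $d+1$. The first and main preliminary step is to show that every $u\in\mathcal{P}_{d,b}(X)$ is a polynomial of degree at most $d$ in $t$. I would combine the pointwise bound (\ref{appinfinitybound}) with the volume growth (\ref{polyvolgrowth}): for $u\in\mathcal{P}_{d,b}(X)$,
\[
\Big(\int_{[-T,0]\times W_{V,i}^n}|u|^2\,dmdt\Big)^{1/2}\le\big(T\,m(W_{V,i}^n)\big)^{1/2}\esssup_{[-T,0]\times W_{V,i}^n}|u|\le C_{u,V,i}E_{V,i}^{1/2}\,T^{1/2}n^{a/2}\max\{T^d,n^b\},
\]
and since $T>1,\ n\ge1$ the right side is at most $C'_{u,V,i}\max\{T^D,n^D\}$ with $D:=\tfrac12+\tfrac a2+\max\{d,b\}$, depending only on $a,b,d$. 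Thus $u$ satisfies the $L^2$ polynomial growth hypothesis (\ref{polygrowthbound}) with $d_u=b_u=D$, and Theorem \ref{appthmpoly} gives that $u$ agrees with a polynomial in $t$ whose coefficients are $u_k=\tfrac1{k!}\partial_t^k u(0,\cdot)$. To sharpen the degree to $\le d$, I would then fix $i$ and note that for a.e.\ $x\in W_{V,i}^1$ the continuous polynomial $t\mapsto u(t,x)$ satisfies $|u(-T,x)|\le C_{u,V,i}\max\{T^d,1\}=C_{u,V,i}T^d$ for all $T>1$ by (\ref{appinfinitybound}); letting $T\to\infty$ forces degree $\le d$, so $\partial_t^k u(0,\cdot)=0$ a.e.\ on $W_{V,i}^1$ for $k>d$, and letting $i\to\infty$ (the $W_{V,i}^1$ exhaust $X$ by Assumption \ref{appassumption}) gives $\partial_t^k u\equiv0$ for all $k\ge d+1$.

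Next I would check that $\partial_t$ preserves $\mathcal{P}_{d,b}(X)$. For $u\in\mathcal{P}_{d,b}(X)$, Corollary \ref{L2corollary} already says $\partial_t u$ is an ancient local weak solution of $(\partial_t+P)(\partial_t u)=0$, so the only issue is the growth bound (\ref{appinfinitybound}) for $\partial_t u$. Writing $u=\sum_{k=0}^d u_k t^k$, for a.e.\ $x\in W_{V,i}^n$ the polynomial $t\mapsto u(t,x)$ has degree $\le d$ and modulus $\le C_{u,V,i}\max\{T^d,n^b\}$ on $[-T,0]$ for every $T>1$; a Markov-brothers type estimate for the coefficients of a polynomial of bounded degree bounded on an interval gives $|u_k(x)|\le C_d\,C_{u,V,i}\,T^{-k}\max\{T^d,n^b\}$ for all such $T$, and optimizing over $T$ (take $T=n^{b/d}$) yields $|u_k(x)|\le C_d\,C_{u,V,i}\,n^{b(d-k)/d}$. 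Since $n^{b(d-k)/d}T^{k-1}=(n^b)^{(d-k)/d}(T^d)^{(k-1)/d}$ has nonnegative exponents summing to $(d-1)/d<1$, each such product is $\le\max\{n^b,T^d\}$, so
\[
|\partial_t u(t,x)|\le\sum_{k=1}^d k\,|u_k(x)|\,T^{k-1}\le C_d\,C_{u,V,i}\,\tfrac{d(d+1)}2\max\{T^d,n^b\}\quad\text{on }[-T,0]\times W_{V,i}^n,
\]
i.e.\ $\partial_t u\in\mathcal{P}_{d,b}(X)$ with the same exponents $d,b$.

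With these in hand the dimension count is routine. $N:=\partial_t$ is a linear endomorphism of $W:=\mathcal{P}_{d,b}(X)$ with $N^{d+1}=0$. Setting $W_j:=\ker N^j$, the maps $W_{j+1}/W_j\to W_j/W_{j-1}$ induced by $N$ are injective, so $\dim(W_{j+1}/W_j)\le\dim(W_1/W_0)=\dim\ker N$ for all $j$, whence $\dim W=\sum_{j=0}^{d}\dim(W_{j+1}/W_j)\le(d+1)\dim\ker N$ (nothing to prove if $\ker N$ is infinite-dimensional). Finally $\ker N$ consists of the time-independent $u(t,x)=u_0(x)$ in $\mathcal{P}_{d,b}(X)$; plugging such $u$ into Definition \ref{soldef1} and using that test functions are compactly supported in $t$ shows $u_0$ is a local weak solution of $Pv=0$ on $X$, and letting $T\to1^+$ in (\ref{appinfinitybound}) gives $\esssup_{W_{V,i}^n}|u_0|\le C_{u,V,i}n^b$, so $u\mapsto u_0$ embeds $\ker N$ into $\mathcal{H}_b(X)$. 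Combining, $\dim\mathcal{P}_{d,b}(X)\le(d+1)\dim\ker N\le(d+1)\dim\mathcal{H}_b(X)$.

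The main obstacle is the second step: staying inside $\mathcal{P}_{d,b}(X)$ with the \emph{same} exponents $d,b$ rather than merely with some polynomial growth. This forces the sharp coefficient bound $|u_k|\lesssim n^{b(d-k)/d}$, hence a genuine use of a polynomial (Markov-type) inequality together with the correct choice of $T$ to balance the two regimes in $\max\{T^d,n^b\}$. A secondary subtlety is that Theorem \ref{appthmpoly} only returns a polynomial of \emph{some} degree $\lfloor d_u\rfloor$ (here $d_u=D$ may exceed $d$), so the sharp bound degree $\le d$ must be recovered separately from the $L^\infty$ growth by sending $T\to\infty$ at a fixed point.
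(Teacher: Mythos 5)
Your argument is correct and amounts to a self-contained writeup of the dimension count that the paper delegates to ``the arguments in \cite{app1}''. Both routes start identically: combine (\ref{appinfinitybound}) with the volume bound (\ref{polyvolgrowth}) to verify (\ref{polygrowthbound}), invoke Theorem \ref{appthmpoly} to get a polynomial in $t$, then sharpen the degree to $\le d$ using (\ref{appinfinitybound}) alone. Where you diverge is in how you extract a usable coefficient estimate: the paper evaluates the polynomial at $d+1$ fixed times $t_j\in[-\tfrac12,0]$ and inverts the interpolation to get the flat bound $|u_k(x)|\lesssim n^b$, whereas you rescale to $[-T,0]$, apply a Markov/Lagrange-type bound, and then optimize $T=n^{b/d}$ to obtain the sharper graded bound $|u_k(x)|\lesssim n^{b(d-k)/d}$. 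Your sharper bound is exactly what is needed to make $\partial_t$ an \emph{endomorphism} of $\mathcal{P}_{d,b}(X)$ with the \emph{same} exponents $d,b$, after which the nilpotent Jordan filtration $\dim W\le (d+1)\dim\ker N$ and the embedding $\ker N\hookrightarrow\mathcal{H}_b(X)$ close the argument cleanly. The paper's flat bound $|u_k|\lesssim n^b$ by itself would not suffice to show $\partial_t u\in\mathcal{P}_{d,b}(X)$ (one only gets $|\partial_t u|\lesssim n^bT^{d-1}$, not $\max\{T^d,n^b\}$), and the paper's intermediate assertion ``$u_k\in\mathcal{H}_b(X)$'' is imprecise since $u_k$ is not harmonic for $k<d$ (only the growth bound holds; cf. Remark \ref{polyrmk}); these gaps are presumably absorbed in the deferred \cite{app1} argument, which in substance is the nilpotent count you carry out. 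So your proposal is a genuinely more careful version: it replaces a flat interpolation estimate with a scaled, $T$-optimized one, and thereby makes the nilpotent dimension count go through without appealing to external references.

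One small point worth flagging for rigor: the optimization $T=n^{b/d}$ requires $T>1$ for (\ref{appinfinitybound}) to apply, so for $n=1$ one should take $T\to 1^+$ (or $T$ slightly above $\max\{1,n^{b/d}\}$); and the passage from the essential-sup bound to ``for a.e.\ $x$, the polynomial $t\mapsto u(t,x)$ satisfies the bound for all $T>1$'' needs a countable-dense-$T$ plus continuity argument, which you implicitly use in both Step~1 (sending $T\to\infty$) and Step~2 (choosing the optimal $T$). These are routine but should be said.
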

\begin{proof}
	Take any $u\in \mathcal{P}_{d,b}(X)$. Note that (\ref{appinfinitybound}) and (\ref{polyvolgrowth}) together imply the $L^2$\ growth condition (\ref{polygrowthbound}) for some $d_u,b_u$\ (e.g. $d_u=2d+1,b_u=2b+a$). Hence by Theorem \ref{appthmpoly}, $u$\ is a polynomial in time with $\partial_t^ku=0$\ for $k$\ large enough. The growth condition (\ref{appinfinitybound}) then implies that $\partial_t^ku=0$\ for $k>d$. As in Remark \ref{polyrmk}, let $u_k=\frac{1}{k!}\partial_t^ku(0,x)$. By the discussion in \cite{app1}, for any fixed $t_0,t_1,\cdots,t_d\in [-\frac{1}{2},0]$\ that are distinct, there exist numbers $b_j^k\geq 0$\ such that for any $0\leq k\leq d$,
	\begin{eqnarray*}
		u_k(x)=\sum_{j=0}^{d}b_j^ku(t_j,x).
	\end{eqnarray*}
	Since all $|t_j|<1$, and $u\in \mathcal{P}_{d,b}(X)$, for any $i,n\in \mathbb{N}_+$,
	\begin{eqnarray*}
		\esssup_{x\in W_{V,i}^n}|u_k(x)|\leq \max_{0\leq j\leq d}\left|b_j^k\right|\cdot C_{u,V,i}\,n^b.
	\end{eqnarray*}
	This implies that $u_k\in \mathcal{H}_b(X)$. By the arguments in \cite{app1}, it follows that
	\begin{eqnarray*}
	\mbox{dim}\,\mathcal{P}_{d,b}(X)\leq (d+1)\,\mbox{dim}\,\mathcal{H}_{b}(X).
	\end{eqnarray*}
\end{proof}

We make some final remarks about the two assumptions on existence of cut-off functions, Assumptions \ref{assumptionbothcases} and \ref{appassumption}.

First, Assumption \ref{assumptionbothcases} focuses on for any fixed pair of open sets $V\Subset U$, in particular they could be very close to each other, for any small $C_1$, the existence of a cut-off function for the pair $V\subset U$\ that satisfies (\ref{cut-offbddenergy}). There $C_2$\ depends on $C_1,U,V$\ and is usually a large number when $C_1$\ is small and $U,V$\ are close. Intuitively, the cut-off function is steep. In contrast, in Assumption \ref{appassumption}, the focus is on for any fixed beginning set $V\Subset X$\ and fixed $C_1$, for small $C_2$\ ($C_2=\frac{1}{n}$\ for large $n$), the existence of an exhaustion and cut-off functions for each pair of adjacent open sets therein. Intuitively, for large $n$, the sets in the exhaustion are far apart, and the cut-off functions have flat shapes.

Regarding the validity of Assumption \ref{appassumption}, we remark that in general Dirichlet spaces which have some notion of distance that interacts well with the energy measure this assumption is satisfied. Roughly speaking, for large $n$, to find $W_{V,i}^n$'s and $\varphi_i$'s, we just require $W_{V,i}^n$ and the complement of  $W_{V,i+1}^n$  to be separated by a large enough distance. For example, consider a Dirichlet space $(X,m,\mathcal{E},\mathcal{F})$\ that admits ``nice metric cut-off functions'', namely, there exists some distance $d$\ that defines the same topology of $X$, such that for any pair of open sets $V\Subset U$, any $0<C_1<1$, there exists some nice cut-off function $\varphi$\ satisfying that for any $v\in \mathcal{F}$,
\begin{eqnarray*}
\int_Xv^2\,d\Gamma(\varphi,\varphi)\leq C_1\int_X \varphi^2\,d\Gamma(v,v)+C(C_1)\cdot d(V,U^c)^{-\beta}\int_{\scaleto{\mbox{supp}\{\varphi\}}{6pt}}v^2\,dm,
\end{eqnarray*}
where $\beta>0$; $C(C_1)$\ is some positive function of $C_1$. Assume $V\subset B(0;R)$. Then we can take $W_{V,i}^n=B(x_0; R+ain^{1/\beta})$, for any $a$\ satisfying $a^\beta\geq C(C_1)$. Concretely,
\begin{itemize}
	\item[(1)] when the Dirichlet space admits a nice intrinsic distance, it is a special case of the discussion above with $\beta=2$;
	\item[(2)] when the Dirichlet space is the standard Dirichlet form on the Sierpinski gasket and $d$\ is the Euclidean metric, the discussion above applies with $\beta=\frac{\log 5}{\log 2}$, which is the walk dimension $d_w$\ of the Sierpinski gasket. 
\end{itemize}

\subsection{Proof of Theorem \ref{appthmpoly} and Theorem \ref{appthmexp}}
\subsubsection{Overview and a key estimate}
There are two difficulties in generalizing the structure results on ancient heat equation solutions to the setting of Dirichlet spaces. The first difficulty is to introduce proper assumptions on the existence of  cut-off functions in order to adapt estimates of the form  $$|\int fg\nabla v\cdot \nabla w\,dx|\leq \left(\int (fg)^2\,dx\right)^{1/2}\left(\int |\nabla v|^2|\nabla w|^2\,dx\right)^{1/2}$$ to the setting of energy measures, especially when the energy measure is singular with respect to the measure $m$\ in the metric measure space $(X,m)$. The more essential difficulty is about whether the time derivatives of an ancient local weak solution is still an ancient weak solution, and this is addressed by the main part of this paper with an affirmative answer (Corollary \ref{L2corollary}).

In this section we state the key estimate and use it to prove the two theorems stated in the previous section. The estimate is about bounding the $L^2$\ integral of time derivatives of an ancient solution $u$\ over some time-space cylinder by the $L^2$\ integral of $u$\ over some larger time-space cylinder, where the spatial sets are ones in an exhaustion of $X$. Here we treat the exhaustion with less precision in the sense that we do not specify an initial precompact open set $V$, and instead of specifying the constant $\frac{1}{n}$\ in (\ref{cut-offappineq}) of Assumption \ref{appassumption}, we consider an exhaustion with cut-off functions to correspond to constants $C_1$\ and $C$. In other words, we consider some exhaustion denoted by $\{W_i\}$\ together with cut-off functions $\{\varphi_i\}$, where each $\varphi_i$\ is a cut-off function for the pair $W_i\subset W_{i+1}$, and satisfies for any $v\in \mathcal{F}$,
\begin{eqnarray*}
\int_Xv^2\,d\Gamma(\varphi_i,\varphi_i)\leq C_1 \int_X \varphi_i^2\,d\Gamma(v,v)+C\int_{\scaleto{\mbox{supp}\{\varphi_i\}}{6pt}}v^2\,dm.
\end{eqnarray*}
We call such $\left(\{W_i\}_{i\in \mathbb{N}_+},\{\varphi_i\}_{i\in \mathbb{N}_+}\right)$\ an exhaustion of $X$\ corresponding to $C_1$, $C$.
\begin{proposition}
	\label{appcor}
	Let $(X,m)$\ be a metric measure space and $(\mathcal{E},\mathcal{F})$\ be a symmetric regular local Dirichlet form on $X$. Assume that the Dirichlet space $(X,\mathcal{E},\mathcal{F})$\ satisfies Assumption \ref{assumptionbothcases}, and when $X$\ is not compact, further satisfies Assumption \ref{appassumption}. Let $(H_t)_{t>0}$\ and $-P$\ be the corresponding semigroup and generator. Let $u$\ be an ancient (local weak) solution of $(\partial_t+P)u=0$. Let $J=[c,0]$, $c<0$, be any finite interval with a fixed right end, let $J_{-s}:=[c-s,0]$\ for any $s>0$.Take $C_1=\frac{1}{16}$\ and fix an arbitrary $C>0$. Let $\left(\{W_i\},\{\varphi_i\}\right)$\ be an exhaustion of $X$\ corresponding to $C_1$, $C$. Then for any $i,k\in \mathbb{N}_+$,
	\begin{eqnarray*}
		\int_{J}\int_{W_i}\left(\partial_t^ku\right)^2\,dmdt\leq \left(1200\left(C+\frac{1}{r}\right)^2\right)^k\int_{J_{-2kr}}\int_{W_{i+3k}}u^2\,dmdt.
	\end{eqnarray*}
\end{proposition}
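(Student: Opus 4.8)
The plan is to prove the estimate for a single derivative ($k=1$) and then iterate, using the fact (Corollary \ref{L2corollary}) that $\partial_t u$ is again an ancient local weak solution to which the same one-step estimate applies. The one-step claim to aim for is: for any $i\in\mathbb{N}_+$ and any $r>0$,
\begin{eqnarray*}
\int_J\int_{W_i}(\partial_t u)^2\,dmdt\leq 1200\Bigl(C+\tfrac1r\Bigr)^2\int_{J_{-2r}}\int_{W_{i+3}}u^2\,dmdt.
\end{eqnarray*}
Granting this, replace $u$ by $\partial_t^{k-1}u$, the interval $J$ by $J_{-2(k-1)r}$ and the set $W_i$ by $W_{i+3(k-1)}$; telescoping the $k$ resulting inequalities gives exactly the stated bound, since the spatial index advances by $3$ and the interval lengthens by $2r$ at each step, and the constant $1200(C+1/r)^2$ multiplies $k$ times.

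For the one-step estimate I would run a Caccioppoli-type argument in the time-space cylinder. Fix a smooth time cut-off $w(t)$ with $w=1$ on $J$, $\mathrm{supp}\{w\}\subset J_{-2r}$, and $|w'|\lesssim 1/r$; fix the spatial cut-off $\varphi:=\varphi_{i}\varphi_{i+1}\varphi_{i+2}$ (or, more simply, a single nice cut-off for the pair $W_i\subset W_{i+3}$ obtained by composing the $\varphi_j$ from Assumption \ref{appassumption}, so that $\varphi=1$ on $W_i$, $\mathrm{supp}\{\varphi\}\subset W_{i+3}$, and $\varphi$ satisfies the energy inequality \eqref{cut-offappineq}/\eqref{cut-offbddenergy} with $C_1=1/16$ and a constant comparable to $C$). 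Using $\partial_t u$ as a local weak solution of $(\partial_t+P)\partial_t u=0$, test the weak formulation against $\overline{\eta}^2\,\partial_t u$ where $\overline{\eta}(t,x)=w(t)\varphi(x)$; this is a legitimate test function by Assumption \ref{cut-offgeneralassumption} and Lemma \ref{cut-offweaklemma}. The parabolic energy identity then reads, schematically,
\begin{eqnarray*}
\tfrac12\int_X w^2\varphi^2(\partial_t u)^2\,dm\Big|_{t=0}+\int\!\!\int \overline{\eta}^2\,d\Gamma(\partial_t u,\partial_t u)\,dt=\int\!\!\int w w'\,\varphi^2(\partial_t u)^2\,dmdt-2\int\!\!\int \overline{\eta}\,\partial_t u\;d\Gamma(\overline{\eta},\partial_t u)\,dt.
\end{eqnarray*}
Handle the last term by the Cauchy–Schwarz inequality for energy measures \eqref{CSineq}, absorbing $\int\!\!\int \overline{\eta}^2\,d\Gamma(\partial_t u,\partial_t u)$ into the left side at the cost of a term $\int\!\!\int (\partial_t u)^2\,d\Gamma(\overline{\eta},\overline{\eta})$, which by the energy inequality for $\varphi$ (with $C_1$ small) is controlled by another small multiple of $\int\!\!\int \overline{\eta}^2\,d\Gamma(\partial_t u,\partial_t u)$ plus a constant times $\int\!\!\int_{\mathrm{supp}\{\varphi\}}(\partial_t u)^2$. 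What remains after these absorptions is a bound for $\int_J\int_{W_i}(\partial_t u)^2$ in terms of $\bigl(C+1/r\bigr)^2$ times $\int_{J_{-2r}}\int_{W_{i+3}}(\partial_t u)^2\,dmdt$ — i.e.\ we have ``removed'' one derivative's worth of localization but the right side still carries $\partial_t u$. To convert that to an integral of $u$ itself, apply the same Caccioppoli estimate once more to $u$ (now controlling $\int\!\!\int \varphi^2 w^2 (\partial_t u)^2$ by $\int\!\!\int d\Gamma$-terms of $u$ via the equation $(\partial_t+P)u=0$, then those by $\int\!\!\int u^2$), which accounts for the extra advance in the spatial index and the extra lengthening of the interval; bookkeeping the constants from the two applications yields the numerical factor $1200$.

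The main obstacle is the bookkeeping of constants so that the one-step factor is genuinely $1200(C+1/r)^2$ and the spatial index advances by exactly $3$ and the interval by exactly $2r$ per derivative — in particular making the three nested absorptions (of the cross energy term, of the $d\Gamma(\overline\eta,\overline\eta)$ term via \eqref{cut-offappineq}, and of the $d\Gamma(u,u)$ term via the equation for $u$) all fit inside a single spatial ``buffer'' of width three exhaustion-steps. A secondary technical point is justifying the integration-by-parts/energy identity above rigorously: one must first mollify in time (as in the proof of Theorem \ref{L2thm}, replacing $\partial_t u$ by $\partial_\tau\partial_s$ of the approximation $\widetilde u_\tau$ or simply by $\partial_t$ of a smooth representative $u^\sharp$), verify that $u^\sharp$ and its time derivatives lie in $L^2_{\mathrm{loc}}(I\to\mathcal F)$ by Theorem \ref{L2thm}, perform the computation for the smooth approximants, and pass to the limit; the convergence of the boundary terms at $t=0$ is harmless because one only needs the inequality, discarding the nonnegative boundary contribution at the left endpoint.
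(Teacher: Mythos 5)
Your overall iteration strategy and the final bookkeeping are the same as the paper's, and they are fine: prove a one-step estimate and apply it $k$ times, using Corollary \ref{L2corollary} to know that each $\partial_t^{j}u$ is again an ancient local weak solution. The problem is in how you propose to establish the one-step estimate.

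Your ``first step'' — testing $(\partial_t+P)\partial_t u=0$ against $\overline{\eta}^2\partial_t u$ — is not correct for the stated purpose. After integrating by parts in time and absorbing the cross-energy term, the energy identity you wrote yields a bound of the form
\begin{eqnarray*}
\tfrac12\int\overline{\eta}^2(\partial_t u)^2\Big|_{t=0}
+\tfrac12\int\!\!\int\overline{\eta}^2\,d\Gamma(\partial_t u,\partial_t u)
\ \lesssim\ \Bigl(C+\tfrac1r\Bigr)\int\!\!\int_{\mathrm{supp}\{\overline{\eta}\}}(\partial_t u)^2\,dmdt.
\end{eqnarray*}
The left-hand side contains the energy of $\partial_t u$ and a boundary term at $t=0$, not the cylinder integral $\int_J\int_{W_i}(\partial_t u)^2\,dmdt$; it does not give the intermediate bound you assert (``$\int_J\int_{W_i}(\partial_t u)^2$ is controlled by $\int_{J_{-2r}}\int_{W_{i+3}}(\partial_t u)^2$''), which would in any case be useless since the constant on the right is not below one. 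What is actually needed is to test the equation for $u$, not for $\partial_t u$: testing $(\partial_t+P)u=0$ against $\overline{\eta}^2\partial_t u$ produces the identity
\begin{eqnarray*}
\int\!\!\int\overline{\eta}^2(\partial_t u)^2\,dmdt\ =\ -\int l^2\,\mathcal{E}\bigl(u,\,\varphi^2\partial_t u\bigr)\,dt,
\end{eqnarray*}
so that the quantity you want to bound appears on the left as the ``kinetic'' term, while the right can be estimated by energy integrals of $u$ (cf.\ the paper's display \eqref{appstep2}). Your ``second step'' — controlling $\int\!\!\int\varphi^2 w^2(\partial_t u)^2$ by $\int\!\!\int d\Gamma$-terms of $u$ via the equation, and those by $\int\!\!\int u^2$ via Caccioppoli for $u$ (the paper's \eqref{appest1}) — is precisely the correct argument; it is not a supplementary ``conversion'' after the first step but the entire content of the one-step estimate. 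You should discard the first step entirely and develop the second step carefully.

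Two further technical gaps. First, when you expand $-\int l^2\mathcal{E}(u,\varphi^2\partial_t u)$ you obtain a cross term of the schematic form $\int\!\!\int\varphi\,\partial_t u\,d\Gamma(u,\varphi)$, whose Cauchy–Schwarz estimate produces $\int\!\!\int(\partial_t u)^2\varphi^2\,d\Gamma(\varphi,\varphi)$ (note the extra $\varphi^2$). This is \emph{not} directly absorbable via the cut-off energy inequality \eqref{cut-offappineq}; the paper's Lemma \ref{applemma} (which relies on $u$ being a local weak solution) is needed to bound $\int\!\!\int\varphi^2(\partial_t u)^2\,d\Gamma(\varphi,\varphi)\lesssim\int\!\!\int\varphi^2(\partial_t u)^2\,dm$, and your proposal does not address this point. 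Second, the term $\int\!\!\int\varphi^2\,d\Gamma(u,\partial_t u)$ requires showing it equals $\tfrac12\partial_t(\int\varphi^2\,d\Gamma(u,u))$ (a differentiation-under-the-energy-measure argument); this, too, is a step that needs justification and is not mentioned in your sketch.
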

In Lemma \ref{applemma} below it can be seen that $C_1$\ can take any value less than $\frac{1}{9}$, we take $C_1=\frac{1}{16}$\ for convenience. We now use Proposition \ref{appcor} to prove Theorem \ref{appthmpoly} and Theorem \ref{appthmexp}. The proof of the proposition is given in Section 6.3.
\subsubsection{Proof of Theorem \ref{appthmpoly}}
To show $\partial_t^ku=0$\ for $k$\ large enough, we show that the $L^2$\ integral of such $\partial_t^ku$\ over any large time-space cylinder is zero. Consider an arbitrary cylinder $[-T,\,0]\times V$\ where $V\Subset X$\ and $T>0$, recall that for such a precompact open set $V$\ and for any $n\in \mathbb{N}_+$, Assumption \ref{appassumption} guarantees the existence of an exhaustion $\left(\{W_{V,i}^n\},\{\varphi_i\}\right)$\ of $X$\ corresponding to $C_1=\frac{1}{16}$, $C=\frac{1}{n}$. Apply Proposition \ref{appcor} to $\left(\{W_{V,i}^n\},\{\varphi_i\}\right)$\ with $i=1$, $J=[-T,\,0]\subset (-\infty,0]$. Taking $r=n$, we have
\begin{eqnarray*}
	\int_{J}\int_{W_{V,1}^n}\left(\partial_t^ku\right)^2\,dmdt\leq \left(\frac{5000}{n^2}\right)^k\int_{J_{-2kr}}\int_{W_{V,1+3k}^n}u^2\,dmdt.
\end{eqnarray*}

The strategy is to let $n$\ tend to infinity in the above inequality. This has the effect of taking ``balls'' with bigger and bigger distance from each other, so that the right-hand side, which is bounded by some rational function in $n$, tends to zero as $n$\ tends to infinity.

More precisely, we have
\begin{eqnarray*}
	\int_{[-T,\,0]\times W_{V,1}^n}\left(\partial_t^ku\right)^2\,dmdt\leq \frac{5000^k \left(C_{u,V,1+3k}\max\left\{(\lceil T\rceil +2kn)^{d_u},n^{b_u}\right\}\right)^2}{n^{2k}}.
\end{eqnarray*}
Since for any $2k>2(d_u+b_u)$, the right-hand side tends to $0$\ as $n$\ tends to infinity ($k$\ is fixed), by the discussion above, we conclude that for $k>d_u+b_u$, 
\begin{eqnarray*}
	\partial_t^ku=0.
\end{eqnarray*}
This concludes the proof that $u$\ is a polynomial in $t$. Applying the growth bound (\ref{polygrowthbound}) to $u$\ in the explicit polynomial form, we conclude that $\partial_t^ku=0$\ for $k>d_u$.
\subsubsection{Proof of Theorem \ref{appthmexp}} 
By Taylor expansion formula (expansion in $t$), for any $t<0$,
\begin{eqnarray*}
	u(t,x)=\sum_{i=0}^{k}\frac{\partial_t^iu(0,x)}{i!}t^i+\int_{0}^{t}\partial_s^{k+1}u(s,x)\frac{(t-s)^k}{k!}\,ds
\end{eqnarray*}
as $L^2$\ functions in $x$. So to prove the statement in Theorem \ref{appthmexp}, we prove that for any precompact open set $V$, any $a<0$,
\begin{eqnarray}
\sup_{a\leq t\leq 0}\int_V\left(\int_{0}^{t}\partial_s^{k+1}u(s,x)\frac{(t-s)^k}{k!}\,ds\right)^2dm(x)\rightarrow 0
\end{eqnarray}
as $k\rightarrow \infty$. We first bound the integral by
\begin{eqnarray}
\lefteqn{\left|\int_V\left(\int_{0}^{t}\partial_s^{k+1}u(s,x)\frac{(t-s)^k}{k!}\,ds\right)^2dm(x)\right|}\notag\\
&\leq& \frac{|t|}{(k!)^2}\int_V\int_{t}^{0}\left(\partial_s^{k+1}u(s,x)\cdot (t-s)^k\right)^2\,dsdm\notag\\
&\leq& \frac{|t|^{2k+1}}{(k!)^2}\int_V\int_{t}^{0}\left(\partial_s^{k+1}u(s,x)\right)^2\,dsdm.\label{appcaseii1}
\end{eqnarray}
Recall the notation introduced in the statement of Theorem \ref{appthmexp}, i.e., $W_j:=W_{V_0,j}^1$\ for some fixed $V_0\Subset X$. Intuitively by fixing $n=1$\ (or any fixed integer), we are looking at open sets whose sizes grow linearly. Since $V\Subset X$\ and $\{W_j\}$\ is an exhaustion of $X$, there exists some $j_0$\ such that for all $j\geq j_0$, $V\subset W_j$. By Proposition \ref{appcor}, for any $r>0$,
\begin{eqnarray*}
	\lefteqn{\int_{t}^{0}\int_{W_j}\left(\partial_s^{k+1}u(s,x)\right)^2\,dmdt}\\
	&\leq& \left(1200\left(1+\frac{1}{r}\right)^2\right)^{k+1}\int_{t-2(k+1)r}^{0}\int_{W_{j+3(k+1)}}u(s,x)^2\,dmdt.
\end{eqnarray*}
By the exponential growth assumption (\ref{expgrowthbound}) on $u$, we conclude that (take for example $r=1$) for any $t\in [a,0]$,
\begin{eqnarray*}
	\int_{t}^{0}\int_{W_j}\left(\partial_s^{k+1}u(s,x)\right)^2\,dmdt
	\leq (5000)^{k+1}e^{c_u\left(|a|+j+5(k+1)\right)}.
\end{eqnarray*}
Substituting this bound back to (\ref{appcaseii1}), note that $V\subset W_j$, we get
\begin{eqnarray*}
	\lefteqn{\left|\int_V\left(\int_{0}^{t}\partial_s^{k+1}u(s,x)\frac{(t-s)^k}{k!}\,ds\right)^2dm(x)\right|}\\
	&\leq& \frac{|a|^{2k+1}}{(k!)^2}\cdot (5000)^{k+1}e^{c_u(|a|+j+5(k+1))}
	\rightarrow 0\ \ (k\rightarrow \infty).
\end{eqnarray*}
This completes the proof of Theorem \ref{appthmexp}.
\subsection{Proof of the key estimate}
In this subsection we give the proof for Proposition \ref{appcor}. Proposition \ref{appcor} follows from the following proposition by iteration.
\begin{proposition}
	\label{appproposition1}
	Let $(X,m)$\ be a metric measure space and $(\mathcal{E},\mathcal{F})$\ be a symmetric regular local Dirichlet form on $X$. Assume that the Dirichlet space $(X,\mathcal{E},\mathcal{F})$\ satisfies Assumption \ref{assumptionbothcases}, and when $X$\ is not compact, further satisfies Assumption \ref{appassumption}. Let $(H_t)_{t>0}$\ and $-P$\ be the corresponding semigroup and generator. Let $u$\ be an ancient (local weak) solution of $(\partial_t+P)u=0$. Let $J=[c,0]$, $c<0$, be any finite interval with a fixed right end. Take $C_1=\frac{1}{16}$\ and fix an arbitrary $C>0$. Let $\left(\{W_i\},\{\varphi_i\}\right)$\ be an exhaustion of $X$\ corresponding to $C_1,C$. Then for any $r>0$, there exist constants $K_1,K_2$\ (dependent on $C$\ and $r$) such that for any $i\in \mathbb{N}_+$,
	\begin{eqnarray*}
		\int_{J}\int_{W_i}(\partial_tu)^2\,dmdt&\leq& K_1(C,r)\int_{J_{-r}}\left(\int_{W_{i+2}}d\Gamma(u,u)+\int_{W_{i+2}}u^2\,dk\right)dt\\
		&\leq& K_2(C,r)\int_{J_{-2r}}\int_{W_{i+3}}u^2\,dmdt.
	\end{eqnarray*}
	Here $J_{-s}:=[c-s,0]$\ for any $s>0$, and
	\begin{eqnarray*}
	K_1(C,r)=200\left(C+\frac{1}{r}\right),\ K_2(C,r)=1200\left(C+\frac{1}{r}\right)^2.
	\end{eqnarray*}
\end{proposition}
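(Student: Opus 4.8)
The plan is to establish the two displayed inequalities of Proposition~\ref{appproposition1} separately and then to derive Proposition~\ref{appcor} by iteration. Two preliminary facts drive the argument. First, by Theorem~\ref{L2thm} every ancient local weak solution $u$ belongs to $C^\infty((-\infty,b)\to\mathcal F)$ locally, and by Corollary~\ref{L2corollary} each $\partial_t^j u$ is again an ancient local weak solution; integrating the weak formulation by parts in time therefore gives, for a.e.\ $t$ and all $\psi\in\mathcal F_c$ supported where $u$ is nice, $\langle\partial_tu^t,\psi\rangle=-\mathcal E(u^t,\psi)$ and $\langle\partial_t^2u^t,\psi\rangle=-\mathcal E(\partial_tu^t,\psi)$. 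Second, if $\eta$ is a nice cut-off function then $\eta$ and $\eta^2$ map $\mathcal F_{\scaleto{\mbox{loc}}{5pt}}$ into $\mathcal F_c$ (Lemma~\ref{cut-offweaklemma}), so all the test functions used below are admissible.

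\textbf{The second inequality (energy controlled by $L^2$).} This is a Caccioppoli estimate. Fix the spatial cut-off $\eta=\varphi_{i+2}$ (so $\eta\equiv1$ on $W_{i+2}$, $\mbox{supp}\,\eta\subset W_{i+3}$) and a time cut-off $\zeta$ with $\zeta\equiv1$ on $J_{-r}$, $\mbox{supp}\,\zeta\subset J_{-2r}$, $|\zeta'|\lesssim1/r$, and test the equation for $u$ with $\eta^2\zeta^2u$. Using $\int_I\langle\partial_tu^t,\eta^2\zeta^2u^t\rangle\,dt=-\int_I\zeta\zeta'\!\int_X\eta^2u^2\,dm\,dt$ together with the Leibniz rule $d\Gamma(u,\eta^2u)=\eta^2\,d\Gamma(u,u)+2u\eta\,d\Gamma(u,\eta)$, the weak formulation becomes
\begin{eqnarray*}
\int_I\zeta^2\!\int_X\eta^2\,d\Gamma(u,u)\,dt+\int_I\zeta^2\!\int_X\eta^2u^2\,dk\,dt
&=&\int_I\zeta\zeta'\!\int_X\eta^2u^2\,dm\,dt-2\int_I\zeta^2\!\int_X u\eta\,d\Gamma(u,\eta)\,dt.
\end{eqnarray*}
The last term is bounded by the Cauchy--Schwarz inequality~(\ref{CSineq}) for the energy measure with a free parameter $\delta$, followed by the cut-off inequality~(\ref{cut-offbddenergy}); with $C_1=\frac1{16}$ and $\delta=\frac14$ the coefficient of $\int\zeta^2\!\int\eta^2\,d\Gamma(u,u)$ produced this way is $\frac12<1$, so it is absorbed into the left-hand side, and the remaining $L^2$ term is $\lesssim C\int_{J_{-2r}\times W_{i+3}}u^2$. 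Since $\eta\equiv1$ on $W_{i+2}$, $\zeta\equiv1$ on $J_{-r}$ and $|\zeta\zeta'|\lesssim1/r$, this yields the claimed bound with $K_2/K_1\lesssim C+1/r$.

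\textbf{The first inequality (the $L^2$ norm of $\partial_tu$ controlled by the energy of $u$).} Here one morally tests the equation for $u$ with $\eta^2\zeta^2\,\partial_tu$, where now $\eta$ runs through two nested cut-offs $\varphi_{i+1},\varphi_{i+2}$ and $\zeta$ is adapted to $J\subset J_{-r}$. The left-hand side produces $\int\eta^2\zeta^2(\partial_tu)^2\,dm\,dt$; the genuine energy contribution, after using $2\,d\Gamma(\partial_tu,u)=\partial_t\,d\Gamma(u,u)$ and one integration by parts in $t$, becomes $\int\zeta\zeta'\bigl(\int\eta^2\,d\Gamma(u,u)+\int\eta^2u^2\,dk\bigr)\,dt$, which is exactly the middle quantity of the proposition up to the factor $1/r$. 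The delicate point is the cross term $2\int_I\zeta^2\!\int_X\eta\,\partial_tu\,d\Gamma(u,\eta)\,dt$: when the energy measure is singular with respect to $m$ one cannot treat $d\Gamma(u,\eta)$ as an $L^2(dm)$ density, so the classical trick of pairing $\eta\partial_tu$ against $\nabla\eta\cdot\nabla u$ is unavailable. I would instead apply~(\ref{CSineq}) so as to throw $\partial_tu$ onto the $d\Gamma(\eta,\eta)$ factor, then use~(\ref{cut-offbddenergy}) and the gradient inequality~(\ref{gradient11}) to replace $\int\eta^2(\partial_tu)^2\,d\Gamma(\eta,\eta)$ by $\int d\Gamma(\eta^2\partial_tu,\partial_tu)$ plus an $L^2$ term, then use the weak equation for $\partial_tu$ (from Corollary~\ref{L2corollary}) and a further integration by parts in $t$ to rewrite $\int_I\zeta^2\!\int_X d\Gamma(\eta^2\partial_tu,\partial_tu)\,dt$ as $\int_I\zeta\zeta'\!\int_X\eta^2(\partial_tu)^2\,dm\,dt$, and finally a Young inequality to split the whole cross term into a small multiple of $\int\eta^2\zeta^2(\partial_tu)^2$ (absorbed on the left, using that $C_1$ is small) and additional energy and $L^2$ contributions already controlled by the second inequality. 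Collecting constants gives $K_1\lesssim C+1/r$ and the spatial shift $i\mapsto i+2$; this cross-term estimate is essentially the content of Lemma~\ref{applemma}.

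\textbf{Iteration and the main obstacle.} Proposition~\ref{appcor} then follows by applying Proposition~\ref{appproposition1} successively to $\partial_t^{k-1}u,\partial_t^{k-2}u,\dots,u$ (each an ancient local weak solution by Corollary~\ref{L2corollary}); each application costs a factor $K_2(C,r)$, an enlargement $W_\bullet\mapsto W_{\bullet+3}$ in space and $J\mapsto J_{-2r}$ in time, giving $\int_J\int_{W_i}(\partial_t^ku)^2\le K_2(C,r)^k\int_{J_{-2kr}}\int_{W_{i+3k}}u^2$. I expect the only real difficulty to be the cross term in the first inequality: controlling $\int\eta\,\partial_tu\,d\Gamma(u,\eta)$ without absolute continuity of $d\Gamma(u,\eta)$ forces the layered use of~(\ref{CSineq}), (\ref{cut-offbddenergy}), (\ref{gradient11}) and the equation for $\partial_tu$ described above, and making the final absorption work is exactly what pins down the smallness of $C_1$ and the index bookkeeping.
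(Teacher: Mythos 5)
Your proposal follows essentially the same route as the paper: test with $\eta^2\zeta^2 u$ to get the Caccioppoli estimate (paper's (\ref{appest1})), then test with $\eta^2\zeta^2\partial_t u$, rewrite the diagonal energy term via $2\,d\Gamma(\partial_tu,u)=\partial_t\,d\Gamma(u,u)$ plus time integration by parts, and absorb the cross term $2\int\zeta^2\int\eta\,\partial_tu\,d\Gamma(u,\eta)$ by Cauchy--Schwarz together with Lemma~\ref{applemma} applied to $\partial_tu$; Corollary~\ref{L2corollary} licenses treating $\partial_tu$ as a local weak solution throughout. The one place you diverge is the killing-measure term: you suggest the parallel identity $2u\partial_tu\,dk=\partial_t(u^2)\,dk$ followed by the same time integration by parts, whereas the paper estimates $-\int l^2\int u\,\partial_tu\,\varphi_i^2\,dk\,dt$ by Cauchy--Schwarz with a free parameter $c$, then controls the resulting $\int l^4(\partial_tu)^2\varphi_i^4\,dk$ via the auxiliary bound (\ref{new}) (an iterate of (\ref{appest1}) with $\varphi_i$ replaced by $\varphi_i^2$) applied to $\partial_tu$. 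Your variant is a bit cleaner and avoids (\ref{new}) entirely, at the mild cost of justifying that $t\mapsto\int u^2\eta^2\,dk$ is differentiable with the expected derivative, which follows by the same $\mathcal E_1$-approximation argument the paper already uses for the corresponding $d\Gamma$ identity.
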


\subsubsection{Proof of Proposition \ref{appproposition1}}
	Let $l$\ be a bump function in time (on $(-\infty,0]$) that equals to $1$\ on $[c,0]$\ and has compact support in $(c-r,0]$, with $||l'||_\infty\leq \frac{2}{r}$. It can be easily extended into a function in $C_c^\infty(\mathbb{R})$, in the following we only use its part on $(-\infty,0]$.  By Assumption \ref{appassumption}, for each $i$\ there exists a nice cut-off function $\varphi_i$\ for the pair $W_i\subset W_{i+1}$\ such that for any $v\in \mathcal{F}$,
	\begin{eqnarray}
	\label{cut-offi}
	\int_Xv^2\,d\Gamma(\varphi_i,\varphi_i)\leq C_1 \int_X \varphi_i^2\,d\Gamma(v,v)+C\int_{\scaleto{\mbox{supp}\{\varphi_i\}}{6pt}}v^2\,dm.
	\end{eqnarray}
	The product $\varphi_i(x)l(t)$\ is a nice product cut-off function for the pair $(c,0]\times W_i\Subset (c-r,0]\times W_{i+1}$. First,
	\begin{eqnarray*}
		\lefteqn{\int_{J_{-r}} \int_{W_{i+1}} 2u\partial_tu \varphi_i^2l^2\,dmdt+\int_{J_{-r}}\int_{W_{i+1}} u^2\varphi_i^2\left(l^2\right)'\,dmdt}\notag\\
		&=&\int_{J_{-r}}\partial_t\left(\int_{W_{i+1}} u^2\varphi_i^2l^2\,dm\right)\,dt
		=\left(\int_{W_{i+1}} u^2\varphi_i^2l^2\,dm\right)\bigg|_{t=0}\geq 0.\label{apppropi}
	\end{eqnarray*}
	On the other hand, since $u$\ is an ancient local weak solution of the heat equation $(\partial_t+P)u=0$, 
	\begin{eqnarray*}
		\lefteqn{\int_{J_{-r}} \int_{W_{i+1}} 2u\partial_tu \varphi_i^2l^2\,dmdt
		= -2\int_{J_{-r}}l^2\mathcal{E}(u,\, u\varphi_i^2)\,dt}\\
		&=& -2\int_{J_{-r}}\int_{W_{i+1}} \varphi_i^2l^2\,d\Gamma(u,u)\,dt-4\int_{J_{-r}}l^2\int_{W_{i+1}} \varphi_i u\,d\Gamma(\varphi_i,u)\,dt\\
		&&-2\int_{J_{-r}}l^2\int_Xu^2\varphi_i^2\,dkdt.
	\end{eqnarray*}	
By the Cauchy-Schwartz inequality (\ref{CSineq}), 
	 \begin{eqnarray*}
	 \lefteqn{-2\int_{J_{-r}}\int_{W_{i+1}} \varphi_i^2l^2\,d\Gamma(u,u)\,dt-4\int_{J_{-r}}l^2\int_{W_{i+1}} \varphi_i u\,d\Gamma(\varphi_i,u)\,dt}\\
	 &\leq&  -\int_{J_{-r}}\int_{W_{i+1}} \varphi_i^2l^2\,d\Gamma(u,u)\,dt+4\int_{J_{-r}}\int_{W_{i+1}} u^2l^2\,d\Gamma(\varphi_i,\varphi_i)\,dt,
	\end{eqnarray*}
	 Combining the estimates so far and apply (\ref{cut-offi}), we get
	\begin{eqnarray*}
		&&\hspace{-.35in}(1-4C_1)\int_{J_{-r}}\int_{W_{i+1}} l^2\varphi_i^2\,d\Gamma(u,u)\,dt+2\int_{J_{-r}}l^2\int_Xu^2\varphi_i^2\,dkdt\\
		&\leq& (4C+\frac{4}{r})\int_{J_{-r}}\int_{W_{i+1}}u^2\,dmdt,
	\end{eqnarray*}
	which implies
	\begin{eqnarray}
	\lefteqn{\int_{J}\left(\int_{W_i} d\Gamma(u,u)+\int_{W_i}u^2\,dk\right)dt}\notag\\
	&\leq& (1-4C_1)^{-1}\left(4C+\frac{4}{r}\right)\int_{J_{-r}}\int_{W_{i+1}}u^2\,dmdt.\label{appest1}
	\end{eqnarray}
	Replace $\varphi_i$\ by $\varphi_i^2$\ in the above estimates, we obtain
	\begin{eqnarray*}
		\lefteqn{-\int_{J_{-r}}\int_{W_{i+1}} u^2\varphi_i^4\left(l^2\right)'\,dmdt}\\
		&\leq& \int_{J_{-r}} \int_{W_{i+1}} 2u\partial_tu \varphi_i^4l^2\,dmdt
		= -2\int_{J_{-r}}l^2\mathcal{E}(u,\, u\varphi_i^4)\,dt\\
		&=& -2\int_{J_{-r}}\int_{W_{i+1}} \varphi_i^4l^2\,d\Gamma(u,u)\,dt-8\int_{J_{-r}}l^2\int_{W_{i+1}} \varphi_i^3 u\,d\Gamma(\varphi_i,u)\,dt\\
		&&-2\int_{J_{-r}}l^2\int_Xu^2\varphi_i^4\,dkdt;
		\end{eqnarray*}
		where
		\begin{eqnarray*}
		\lefteqn{-2\int_{J_{-r}}\int_{W_{i+1}} \varphi_i^4l^2\,d\Gamma(u,u)\,dt-8\int_{J_{-r}}l^2\int_{W_{i+1}} \varphi_i^3u\,d\Gamma(\varphi_i,u)\,dt}\\
		&\leq&  -\int_{J_{-r}}\int_{W_{i+1}} \varphi_{i}^4l^2\,d\Gamma(u,u)\,dt+16\int_{J_{-r}}\int_{W_{i+1}} \varphi_i^2u^2l^2\,d\Gamma(\varphi_i,\varphi_i)\,dt.
		\end{eqnarray*}
		By a lemma below (Lemma \ref{applemma}), the term
		\begin{eqnarray*}
		\int_{J_{-r}}\int_{W_{i+1}} \varphi_i^2u^2l^2\,d\Gamma(\varphi_i,\varphi_i)\,dt\leq C_0\int_{J_{-r}}\int_{W_{i+1}}\varphi_i^2u^2l\,dmdt.
		\end{eqnarray*}
		Hence
		\begin{eqnarray}
		\label{new}
		\lefteqn{\int_{J_{-r}}\int_{W_{i+1}} \varphi_{i}^4l^2\,d\Gamma(u,u)\,dt+\int_{J_{-r}}l^2\int_Xu^2\varphi_i^4\,dkdt}\notag\\
		&\leq& 4\left(4C_0+\frac{1}{r}\right)\int_{J_{-r}}l\int_{W_{i+1}}\varphi_i^2u^2\,dmdt.
		\end{eqnarray}
		Here both sides of the inequality are integrals over the same set $J_{-r}\times W_{i+1}$.
		
	Next we estimate the $L^2$\ norm of $\partial_tu$, which by Corollary \ref{L2corollary} is also a local weak solution on $(-\infty,b)\times X$. Thus
	\begin{eqnarray}
	\lefteqn{\int_{J_{-r}}\int_{W_{i+1}}\left(\partial_tu\,\varphi_i l\right)^2\,dmdt=-\int_{J_{-r}}l^2\,\mathcal{E}(u,\,\partial_tu\,\varphi_i^2)\,dt}\notag\\
	&=&-\int_{J_{-r}}l^2\int_{W_{i+1}}\varphi_i^2\,d\Gamma(u,\partial_tu)\,dt-\int_{J_{-r}}l^2\int_{W_{i+1}}2\varphi_i \partial_tu\,d\Gamma(u,\varphi_i)\,dt\notag\\
	&&-\int_{J_{-r}}l^2\int_Xu\partial_tu\varphi_i^2\,dkdt.\label{appstep2}
    \end{eqnarray}
	We show that
	\begin{eqnarray*}
		\int_{W_{i+1}}\varphi_i^2\,d\Gamma(\partial_tu,u)=\frac{1}{2}\partial_t\left(\int_{W_{i+1}}\varphi_i^2\,d\Gamma(u,u)\right)
	\end{eqnarray*}
	to replace the first term in (\ref{appstep2}). This follows from the estimate
	\begin{eqnarray*}
		\left|\left(\int_Xf\,d\Gamma(v,v)\right)^{1/2}-\left(\int_Xf\,d\Gamma(w,w)\right)^{1/2}\right|\leq \left(\int_Xf\,d\Gamma(v-w,v-w)\right)^{1/2}
	\end{eqnarray*}
	where $f$\ is any nonnegative, bounded, Borel function, and $v,w\in \mathcal{F}$\ (cf. Chapter 3 in \cite{Fukushima}). Together with the Cauchy-Schwartz inequality (\ref{CSineq}), it follows that if $v_n\rightarrow v$\ in $\mathcal{E}_1$\ norm (or just in the $\mathcal{E}$-energy), then
	\begin{eqnarray*}
		\lim_{n\rightarrow \infty}\int_Xf\,d\Gamma(v_n,w)= \int_Xf\,d\Gamma(v,w).
	\end{eqnarray*}
	Here $w\in \mathcal{F}$, and $f$\ is as above. For any fixed $t$, by taking $v_n(x):=\frac{u(t+1/n,x)-u(t,x)}{1/n}$, $w(x):=u(t,x)$, and $f:=\varphi_i^2$, we conclude that
	\begin{eqnarray*}
		\frac{1}{2}\partial_t\left(\int_{W_{i+1}}\varphi_i^2\,d\Gamma(u,u)\right)=\int_{W_{i+1}}\varphi_i^2\,d\Gamma(\partial_tu,u).
	\end{eqnarray*}
	Thus the first term in (\ref{appstep2}) equals
	\begin{eqnarray}
	\lefteqn{-\int_{J_{-r}}l^2\int_{W_{i+1}}\varphi_i^2\,d\Gamma(u,\partial_tu)\,dt
		= -\frac{1}{2}\int_{J_{-r}}l^2\,\partial_t\left(\int_{W_{i+1}}\varphi_i^2\,d\Gamma(u,u)\right)\,dt}\notag\\
	&=& -\frac{1}{2}\int_{J_{-r}}\partial_t\left(l^2\int_{W_{i+1}}\varphi_i^2\,d\Gamma(u,u)\right)\,dt+\frac{1}{2}\int_{J_{-r}}\left(l^2\right)'\int_{W_{i+1}}\varphi_i^2\,d\Gamma(u,u)\,dt\notag\\
	&\leq&\frac{1}{2}\int_{J_{-r}}\left(l^2\right)'\int_{W_{i+1}}\varphi_i^2\,d\Gamma(u,u)\,dt.\label{appstep2.1}
	\end{eqnarray}
The second term in (\ref{appstep2}) satisfies
	\begin{eqnarray*}
		\lefteqn{\left|-\int_{J_{-r}}l^2\int_{W_{i+1}}2\varphi_i \partial_tu\,d\Gamma(u,\varphi_i)\,dt\right|}\\
&=&\left|-\int_{J_{-r}}l^2\int_{W_{i+1}}2\varphi_i \varphi_{i+1}\partial_tu\,d\Gamma(u,\varphi_i)\,dt\right|\\
		&\leq& \epsilon\int_{J_{-r}}\int_{W_{i+1}}l^4\varphi_i^2\left(\partial_tu\right)^2\,d\Gamma(\varphi_i,\varphi_i)\,dt+\frac{1}{\epsilon}\int_{J_{-r}}\int_{X}\varphi_{i+1}^2\,d\Gamma(u,u)\,dt
	\end{eqnarray*}
for any $\epsilon>0$. Here $\varphi_{i+1}$\ is the nice cut-off function for the pair $W_{i+1}\subset W_{i+2}$, in particular, $\varphi_{i+1}\equiv 1$\ on $\mbox{supp}\{\varphi_i\}$. By Lemma \ref{applemma},
	\begin{eqnarray*}
		\int_{J_{-r}}\int_{W_{i+1}}l^4\varphi_i^2\left(\partial_tu\right)^2\,d\Gamma(\varphi_i,\varphi_i)dt\,\leq C_0\int_{J_{-r}}\int_{W_{i+1}} l^2\varphi_i^2\left(\partial_tu\right)^2\,dmdt,
	\end{eqnarray*}
	where $C_0\leq 3C+\frac{1}{r}$. Thus
	\begin{eqnarray}
	\lefteqn{\left|-\int_{J_{-r}}l^2\int_{W_{i+1}}2\varphi_i \partial_tu\,d\Gamma(u,\varphi_i)\,dt\right|}\notag\\
&\hspace{-.3in}\leq&\hspace{-.15in}\epsilon C_0\int_{J_{-r}}\int_{W_{i+1}} \left(\partial_tu\varphi_il\right)^2\,dmdt+\frac{1}{\epsilon}\int_{J_{-r}}\int_{X}\varphi_{i+1}^2\,d\Gamma(u,u)\,dt.\label{appstep2.2}
	\end{eqnarray}
	The last term in (\ref{appstep2}) satisfies by Cauchy-Schwartz inequality
	\begin{eqnarray}
	\lefteqn{-\int_{J_{-r}}l^2\int_Xu\partial_tu\varphi_i^2\,dkdt=-\int_{J_{-r}}\int_X\varphi_{i+1} l^2u\partial_tu\varphi_i^2\,dkdt}\notag\\
	&\leq& \frac{1}{2c}\int_{J_{-r}}\int_{X}\varphi_{i+1}^2u^2\,dkdt+\frac{c}{2}\int_{J_{-r}}\int_Xl^4(\partial_tu)^2\varphi_{i}^4\,dkdt\label{kpart}
	\end{eqnarray}
	for any $c>0$. 
Now we plug in $C_1=\frac{1}{16}$\ and take $\epsilon=\frac{1}{2C_0}$, then by (\ref{new}), (\ref{appstep2}), (\ref{appstep2.1}), (\ref{appstep2.2}), and (\ref{kpart}),
	\begin{eqnarray*}
		\lefteqn{\int_{J_{-r}}\int_{W_{i+1}}\left(\partial_tu\,\varphi_i l\right)^2\,dmdt}\notag\\
		&\leq& \frac{1}{2}\int_{J_{-r}}\left(l^2\right)'\int_{W_{i+1}}\varphi_i^2\,d\Gamma(u,u)\,dt+\frac{1}{2}\int_{J_{-r}}\int_{W_{i+1}}\left(\partial_tu\varphi_il\right)^2\,dmdt\\
		&&+2C_0\int_{J_{-r}}\int_{X}\varphi_{i+1}^2\,d\Gamma(u,u)\,dt+\frac{1}{2c}\int_{J_{-r}}\int_{X}\varphi_{i+1}^2u^2\,dkdt\\
		&&+c\left(8C_0+\frac{2}{r}\right)\int_{J_{-r}}\int_Xl^2(\partial_tu)^2\varphi_{i}^2\,dmdt.
	\end{eqnarray*}
	Let $c=\frac{1}{4}\left(8C_0+\frac{2}{r}\right)^{-1}$, then
	\begin{eqnarray}
		\lefteqn{\int_{J_{-r}}\int_{W_{i+1}}\left(\partial_tu\,\varphi_i l\right)^2\,dmdt}\notag\\
		&\leq& 2\int_{J_{-r}}\left(l^2\right)'\int_{W_{i+1}}\varphi_i^2\,d\Gamma(u,u)\,dt+8C_0\int_{J_{-r}}l^2\int_{X}\varphi_{i+1}^2\,d\Gamma(u,u)\,dt\notag\\
		&&8\left(8C_0+\frac{2}{r}\right)\int_{J_{-r}}\int_Xu^2\varphi_{i+1}^2\,dkdt\notag\\
		&\leq&8\left(8C_0+\frac{2}{r}\right)\int_{J_{-r}}\left(\int_{W_{i+2}}d\Gamma(u,u)+\int_{W_{i+2}}u^2\,dk\right)dt.\label{appest2}
	\end{eqnarray}
	Take $J_{-r}$\ and $W_{i+2}$\ on the left in (\ref{appest1}) with $C_1=\frac{1}{16}$, combine (\ref{appest1}) and (\ref{appest2}), and recall that $C_0\leq 2C+\frac{1}{r}$, we obtain that
\begin{eqnarray*}
\lefteqn{\int_{J}\int_{W_{i}}(\partial_tu)^2\,dmdt}\\
&\leq&200\left(C+\frac{1}{r}\right)\int_{J_{-r}}\left(\int_{W_{i+2}}d\Gamma(u,u)+\int_{W_{i+2}}u^2\,dk\right)dt\\
&\leq& 1200\left(C+\frac{1}{r}\right)^2\int_{J_{-2r}}\int_{W_{i+3}}u^2\,dmdt.
\end{eqnarray*}
Let $K_1(C,r):=200\left(C+\frac{1}{r}\right)$\ and $K_2(C,r):=1200\left(C+\frac{1}{r}\right)^2$. This completes the proof for Proposition \ref{appproposition1}. Note that by taking $C$\ small and $r$\ large enough, we can make the coefficients $K_1(C,r)$\ and $K_2(C,r)$\ as small as needed.

Straightforward iterations lead to Proposition \ref{appcor}.
\subsubsection{A technical lemma} Last we state and prove the technical lemma used in the proof of Proposition \ref{appproposition1}.
\begin{lemma}
\label{applemma}
Let $(X,m)$\ be a metric measure space and $(\mathcal{E},\mathcal{F})$\ be a symmetric regular local Dirichlet form on $X$. Assume that the Dirichlet space $(X,\mathcal{E},\mathcal{F})$\ satisfies Assumption \ref{assumptionbothcases}, and when $X$\ is not compact, further satisfies Assumption \ref{appassumption}. Let $(H_t)_{t>0}$\ and $-P$\ be the corresponding semigroup and generator. Let $I,I'$\ be two intervals where $I=(a,b)$\ or $(a,b]$, $I\Subset I'$. Let $u$\ be a local weak solution of the heat equation $(\partial_t+P)u=0$\ on $I'\times X$. Let $\overline{\varphi}(t,x):=\varphi(x)l(t)$\ be a nice product cut-off function corresponding to coefficients $C_1,C_2$\ where $C_1<\frac{1}{9}$\ and $l$\ is a bump function with support in $I$. Then there exists some $C_0$\ such that
	\begin{eqnarray*}
	\int_I\int_X\overline{\varphi}^2u^2\,d\Gamma(\overline{\varphi},\overline{\varphi})\,dt\leq C_0\int_I\int_X\overline{\varphi}^2u^2\,dmdt.
	\end{eqnarray*}
\end{lemma}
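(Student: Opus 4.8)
The plan is to bound the left-hand side by itself with a strictly smaller multiplicative constant and then absorb; the gain comes from feeding in the heat equation for $u$ together with the cut-off inequality $(\ref{cut-offbddenergy})$ and the Cauchy--Schwarz inequality $(\ref{CSineq})$ for the energy measure. Write $\overline\varphi(t,x)=\varphi(x)l(t)$; since $\Gamma$ involves only the spatial variable, $d\Gamma(\overline\varphi^{\,t},\overline\varphi^{\,t})=l(t)^2\,d\Gamma(\varphi,\varphi)$, so after normalizing $0\le l\le 1$ the claim becomes $\mathcal Q\le C_0\mathcal R$, where
\[
\mathcal Q:=\int_I l^4\,\mathcal Q_t\,dt,\quad \mathcal Q_t:=\int_X\varphi^2u^2\,d\Gamma(\varphi,\varphi)=\int_X(\varphi u^t)^2\,d\Gamma(\varphi,\varphi),\quad \mathcal R:=\int_I l^2\!\int_X\varphi^2u^2\,dm\,dt .
\]
That $\mathcal Q<\infty$ a priori is immediate: for a.e.\ $t$ one has $\varphi u^t\in\mathcal F$ by Lemma \ref{cut-offweaklemma} (as $u^t\in\mathcal F_{\mathrm{loc}}$), so $(\ref{cut-offbddenergy})$ bounds $\mathcal Q_t$ by an $\mathcal E_1$-norm of a fixed representative of $u$, which is locally integrable in $t$ since $u$ is a local weak solution.

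First I would apply $(\ref{cut-offbddenergy})$ with $\eta=\varphi$ and $v=\varphi u^t$; using $(\varphi u^t)^2\,d\Gamma(\varphi,\varphi)=\varphi^2u^2\,d\Gamma(\varphi,\varphi)$ and $\int_{\mathrm{supp}\,\varphi}(\varphi u^t)^2\,dm=\int_X\varphi^2u^2\,dm$ this gives $\mathcal Q\le C_1\int_I l^4\!\int_X\varphi^2\,d\Gamma(\varphi u,\varphi u)\,dt+C_2\mathcal R$; the point is that the error term already carries the weight $\varphi^2$, which is why one must keep $\varphi u$ (not $u$) as the ``field'' in every estimate. Next I would run two Leibniz computations: expanding $d\Gamma(\varphi^2u,\varphi^2u)=d\Gamma(\varphi\cdot\varphi u,\varphi\cdot\varphi u)$ and integrating gives $\int_X d\Gamma(\varphi^2u,\varphi^2u)=\int_X\varphi^2\,d\Gamma(\varphi u,\varphi u)+\mathcal Q_t+2\int_X\varphi^2u\,d\Gamma(\varphi,\varphi u)$, and estimating the last term by $(\ref{CSineq})$ with a free parameter $\gamma>1$ yields
\[
\int_X\varphi^2\,d\Gamma(\varphi u,\varphi u)\le\tfrac{\gamma}{\gamma-1}\int_X d\Gamma(\varphi^2u,\varphi^2u)+\gamma\,\mathcal Q_t ;
\]
while a direct Leibniz expansion shows $\int_X d\Gamma(\varphi^2u,\varphi^2u)=\int_X d\Gamma(u,\varphi^4u)+4\mathcal Q_t$, and since the killing parts match this upgrades to the pointwise-in-$t$ identity $\mathcal E(\varphi^2u,\varphi^2u)=\mathcal E(u,\varphi^4u)+4\mathcal Q_t$.

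Now I would use the equation. Testing the weak formulation of $(\partial_t+P)u=0$ on $I'\times X$ with $\psi=\varphi^4l^4u$ --- a legitimate test function because $\varphi^4u^t\in\mathcal F_c$ by Lemma \ref{cut-offweaklemma} and, by Corollary \ref{L2corollary}, $u$ is smooth in time with values in $\mathcal F_{\mathrm{loc}}$ (so there is no circularity: Lemma \ref{applemma} is used only afterwards, in Section 6) --- and integrating by parts in time (using $\mathrm{supp}\,l\subset I\Subset I'$) gives $\int_I l^4\mathcal E(u,\varphi^4u)\,dt=\tfrac12\int_I\int_X u^2\varphi^4(l^4)'\,dm\,dt$, up to an additional nonnegative boundary term $\tfrac12\int_X u(b,\cdot)^2\varphi^4l(b)^4\,dm$ when $l(b)\ne 0$, which is discarded where it helps and otherwise absorbed via the elementary $\sup_t$-in-time Caccioppoli bound. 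Combining with the identity above and using $\varphi^4\le\varphi^2$ and $|(l^4)'|\le 4\|l'\|_\infty l^2$,
\[
\int_I l^4\!\int_X d\Gamma(\varphi^2u,\varphi^2u)\,dt\le\int_I l^4\mathcal E(\varphi^2u,\varphi^2u)\,dt=4\mathcal Q+\tfrac12\int_I\!\int_X u^2\varphi^4(l^4)'\,dm\,dt\le 4\mathcal Q+2\|l'\|_\infty\mathcal R .
\]
Chaining the three estimates yields $\mathcal Q\le C_1\big(\tfrac{4\gamma}{\gamma-1}+\gamma\big)\mathcal Q+\big(\tfrac{2\gamma C_1}{\gamma-1}\|l'\|_\infty+C_2\big)\mathcal R$; the function $\gamma\mapsto\tfrac{4\gamma}{\gamma-1}+\gamma$ is minimized at $\gamma=3$ with value $9$, and since $C_1<\tfrac19$ we have $9C_1<1$, so $\mathcal Q$ (being finite) can be absorbed on the left, giving $\mathcal Q\le\frac{3C_1\|l'\|_\infty+C_2}{1-9C_1}\,\mathcal R$, which is the assertion with $C_0=\frac{3C_1\|l'\|_\infty+C_2}{1-9C_1}$.

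The hard part will be the bookkeeping: one must arrange every application of $(\ref{cut-offbddenergy})$ and of Cauchy--Schwarz so that the $L^2$-errors carry the weight $\varphi^2$ rather than merely $\mathbf 1_{\mathrm{supp}\,\varphi}$, and then tune the single free parameter $\gamma$ so that the coefficient of $\mathcal Q$ drops below $1$ --- it is exactly the optimization $\gamma=3$ that pins the threshold $C_1<\tfrac19$ in the statement. The secondary technical point is the rigorous justification of the test-function computation, which rests on the time regularity of local weak solutions from Corollary \ref{L2corollary} together with the standard handling of the endpoint boundary term.
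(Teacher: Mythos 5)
Your proof is correct and reproduces the paper's argument in all essentials: the same Leibniz identities (your expansions of $d\Gamma(\varphi^2u,\varphi^2u)$ are exactly the paper's observations (i)+(ii)), the same Cauchy--Schwarz with a free parameter (your $\gamma$ is the paper's $1/\epsilon$), the same use of the weak formulation tested against $\varphi^4l^4u$, and the same absorption producing the threshold $C_1<1/9$ --- and indeed your $C_0=\frac{3C_1\|l'\|_\infty+C_2}{1-9C_1}$ equals the paper's $C_1K+C_2$ after simplification at $\epsilon=1/3$. The only differences are cosmetic (keeping $\varphi$ and $l$ factored rather than working with $\overline\varphi$, absorbing $\mathcal Q$ directly rather than the intermediate $\int\overline\varphi^2\,d\Gamma(\overline\varphi u,\overline\varphi u)$, and pinning the parameter at $\gamma=3$ independent of $C_1$), so no further commentary is needed.
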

The last inequality says when there is the same cut-off function with bounded energy in both the integrand and in the energy measure, the net effect is the same as having a cut-off function with bounded gradient in the energy measure. This is easy to check for the special case $\int_X\varphi^2\,d\Gamma(\varphi,\varphi)$. Here we generalize this observation to $\int_X\varphi^2u^2\,d\Gamma(\varphi,\varphi)$, for local weak solutions $u$.
\begin{proof} Since $\overline{\varphi}$\ is a nice product cut-off function associated with $C_1,C_2$,
	\begin{eqnarray*}
	\int_I\int_X\overline{\varphi}^2u^2\,d\Gamma(\overline{\varphi},\overline{\varphi})\,dt\leq C_1\int_I\int_X\overline{\varphi}^2\,d\Gamma(\overline{\varphi} u,\overline{\varphi} u)\,dt+C_2\int_I\int_X\overline{\varphi}^2u^2\,dmdt.
	\end{eqnarray*}
To estimate $\int_I\int_X\overline{\varphi}^2\,d\Gamma(\overline{\varphi} u,\overline{\varphi} u)\,dt$, we make the following two observations
\begin{itemize}
	\item[(i)] $\int_I\int_X\overline{\varphi}^2\,d\Gamma(u,\overline{\varphi}^2u)\,dt=\int_I\int_X\overline{\varphi}^2\,d\Gamma(\overline{\varphi} u,\overline{\varphi} u)\,dt-\int_I\int_X\overline{\varphi}^2u^2\,d\Gamma(\overline{\varphi},\overline{\varphi})\,dt$;
\item[(ii)]$\int_I\int_X\overline{\varphi}^2\,d\Gamma(u,\overline{\varphi}^2u)\,dt\\[0.05in]
=\int_I\int_Xd\Gamma(u,\overline{\varphi}^4u)\,dt-2\int_I\int_X\overline{\varphi}^2u\,d\Gamma(\overline{\varphi} u,\overline{\varphi})+2\int_I\int_X\overline{\varphi}^2u^2\,d\Gamma(\overline{\varphi},\overline{\varphi})\,dt$.
\end{itemize}
The middle term in (ii) can be estimated by
\begin{eqnarray*}
\left|2\int_I\int_X\overline{\varphi}^2u\,d\Gamma(\overline{\varphi} u,\overline{\varphi})\right|\leq \epsilon\int_I\int_X\overline{\varphi}^2\,d\Gamma(\overline{\varphi} u,\overline{\varphi} u)\,dt+\frac{1}{\epsilon}\int_I\int_X\overline{\varphi}^2u^2\,d\Gamma(\overline{\varphi},\overline{\varphi})\,dt.
\end{eqnarray*}
To estimate the first term in (ii), note that $u$\ being a local weak solution implies that
\begin{eqnarray*}
\lefteqn{\int_I\int_Xd\Gamma(u,\overline{\varphi}^4u)\,dt=-\int_I\int_X\partial_tu\cdot \overline{\varphi}^4u\,dmdt-\int_I\int_X\overline{\varphi}^4u^2\,dkdt}\\
&\leq&-\frac{1}{2}\int_I\int_X(\partial_t(l^4u^2\varphi^4)-\partial_t(l^4)u^2\varphi^4)\,dmdt\leq 2||l'||_\infty\int_I\int_Xl^3u^2\varphi^4\,dmdt.
\end{eqnarray*}
Combining (i)(ii) and the estimates above, we get that
\begin{eqnarray*}
\lefteqn{\int_I\int_X\overline{\varphi}^2\,d\Gamma(\overline{\varphi} u,\overline{\varphi} u)\,dt\leq 2||l'||_\infty\int_I\int_Xl^3\varphi^4u^2\,dmdt}\\
&&+\epsilon\int_I\int_X\overline{\varphi}^2\,d\Gamma(\overline{\varphi} u,\overline{\varphi} u)\,dt+\left(\frac{1}{\epsilon}+3\right)\int_I\int_X\overline{\varphi}^2u^2\,d\Gamma(\overline{\varphi},\overline{\varphi})\,dt\\
&\leq& 2||l'||_\infty\int_I\int_Xl^3\varphi^4u^2\,dmdt+\left[\epsilon+C_1\left(\frac{1}{\epsilon}+3\right)\right]\int_I\int_X\overline{\varphi}^2\,d\Gamma(\overline{\varphi} u,\overline{\varphi} u)\,dt\\
&&+\left(\frac{1}{\epsilon}+3\right)C_2\int_I\int_X\overline{\varphi}^2u^2\,dmdt.
\end{eqnarray*}
When $C_1<\frac{1}{9}$, we can pick $\epsilon$\ small so that $\epsilon+C_1\left(\frac{1}{\epsilon}+3\right)<1$. Let $\alpha=1-\left[\epsilon+C_1\left(\frac{1}{\epsilon}+3\right)\right]>0$, the above estimate is equivalent to
\begin{eqnarray*}
\lefteqn{\int_I\int_X\overline{\varphi}^2\,d\Gamma(\overline{\varphi} u,\overline{\varphi} u)\,dt}\\
&\leq& \frac{1}{\alpha}\left\{2||l'||_\infty\int_I\int_Xl^3\varphi^4u^2\,dmdt+\left(\frac{1}{\epsilon}+3\right)C_2\int_I\int_X\overline{\varphi}^2u^2\,dmdt\right\}\\
&\leq& K\int_I\int_X\overline{\varphi}^2u^2\,dmdt,
\end{eqnarray*}
where $K=\frac{1}{\alpha}\left[2||l'||_\infty+\left(\frac{1}{\epsilon}+3\right)C_2\right]$. Combining this with the very first inequality, we get
\begin{eqnarray*}
\int_I\int_X\overline{\varphi}^2u^2\,d\Gamma(\overline{\varphi},\overline{\varphi})\,dt\leq C_1K\int_I\int_X\overline{\varphi}^2u^2\,dmdt+C_2\int_I\int_X\overline{\varphi}^2u^2\,dmdt.
\end{eqnarray*}
Letting $C_0=C_1K+C_2$\ gives the inequality in the lemma. To apply this lemma to the proof of Proposition \ref{appproposition1}, let $C_1=\frac{1}{16}$\ and $C_2=C$, plug in $||l'||_\infty\leq \frac{2}{r}$, and take for example $\alpha=\frac{5}{16}$, we get that $\epsilon=\frac{1}{4}$, $C_0\leq 3C+\frac{1}{r}$.
\end{proof}

\section{Examples} \setcounter{equation}{0}
In this section we list some examples to which our theorems apply. We group them according to the types of nice cut-off functions they admit. Note that the properties we require on the nice cut-off functions involve only the energy measure associated with the Dirichlet form, so in the following we describe examples of strongly local Dirichlet forms; our theorems apply to local Dirichlet forms whose strongly local parts belong to the following examples as well.

\subsection{Dirichlet spaces with good intrinsic distance} In \cite{Sturm2}, Sturm showed that in a symmetric strongly local regular Dirichlet space, when the topology induced by the intrinsic distance (\ref{intrinsicdist}), that is,
\begin{eqnarray*}
\rho_X(x,y)=\sup\left\{\varphi(x)-\varphi(y)\,|\,\varphi\in \mathcal{F}_{\scaleto{\mbox{loc}}{5pt}}(X)\cap C(X),\ d\Gamma(\varphi,\varphi)\leq dm\right\},
\end{eqnarray*}
is equivalent to the original topology on $X$, one can use the intrinsic distance to construct nice cut-off functions with bounded gradient. More precisely, for $V\Subset U\Subset X$, define 
\begin{eqnarray*}
\eta(x):=\frac{\left(\frac{1}{\sqrt{2}}\rho_X(V,U^c)-\rho_X(x,V)\right)_+}{\frac{1}{\sqrt{2}}\rho_X(V,U^c)}.
\end{eqnarray*}
Clearly $\eta=1$\ on $V$ and $\mbox{supp}\{\eta\}\subset U$. Further, $\eta$\ is in $\mathcal{F}_{\scaleto{\mbox{loc}}{5pt}}(X)\cap C(X)$, and
\begin{eqnarray}
\label{gradientbound}
d\Gamma(\eta,\eta)\leq \frac{2}{d(V,U^c)^2}\,dm.
\end{eqnarray} 
These results are from Lemma 1.9 in \cite{Sturm2}. It clearly follows that such Dirichlet spaces satisfy Assumptions \ref{assumptionbothcases} and \ref{appassumption} (pick the exhaustion $\{W_{V,i}^n\}_{i=1}^\infty$\ to be given by balls with radii $r_i$\ that increase fast enough). By Lemma \ref{L2Gaussian1}, these Dirichlet spaces satisfy the $L^2$\ Gaussian type upper bound. Thus all results in this paper apply to this type of examples which include:

\begin{itemize}
\item[(1)] Weighted Riemannian manifold with Dirichlet form associated with any uniformly elliptic operator with bounded measurable coefficients. See,  e.g., \cite{unifelliptic}.  This includes the example we described in the Introduction, 
and we remark that all results in this paper hold when the operator is only locally uniformly elliptic. 
\item[(2)] Riemannian polyhedra under minimal local assumptions (cf. \cite{cplxbook,Riemcplx} and \cite{stripcplx}).
\item[(3)] 
Alexandrov spaces and their Dirichlet space structures as considered for instance  in \cite{Alexandrovdef,Alexandrov}.
\end{itemize}

\subsection{Fractal type Dirichlet spaces}
For fractal spaces, Assumption \ref{assumptionbothcases} is a nontrivial hypothesis to check. It is well known that in many fractal spaces the only functions in $\mathcal{F}_{\scaleto{\mbox{loc}}{5pt}}(X)\cap C(X)$\ are constant functions (cf. e.g. \cite{Kusuoka}), so fractal spaces in general do not possess cut-off functions with bounded gradient. More generally, in a recent paper \cite{Mathavsingularity}, it was shown that for a very general class of Dirichlet spaces, two-sided off-diagonal heat kernel estimate with walk-dimension strictly larger than two implies the singularity of the energy measure with respect to the symmetric measure. 

On the other hand, many fractal spaces admit cut-off functions satisfying the inequality (\ref{cut-offbddenergy}) in Assumption \ref{assumptionbothcases}. For example, the Sierpinsket gasket and its non-compact extension as in the following pictures both satisfy Assumption \ref{assumptionbothcases}. (The picture on left ($\mathcal{SG}$) is from Wikipedia, the picture on right ($\mathcal{ISG}$) is created by shifting copies of $\mathcal{SG}$.)

\begin{figure}[h]
\begin{center}
	\begin{subfigure}[h]{0.2\textwidth}
		\includegraphics[width=\textwidth]{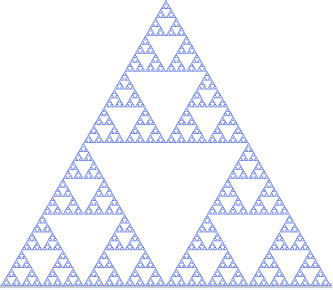}
		\caption{the Sierpinski gasket $\mathcal{SG}$}
	\end{subfigure}
	\hspace{.3in}
	\begin{subfigure}[h]{0.5\textwidth}
		\includegraphics[width=\textwidth]{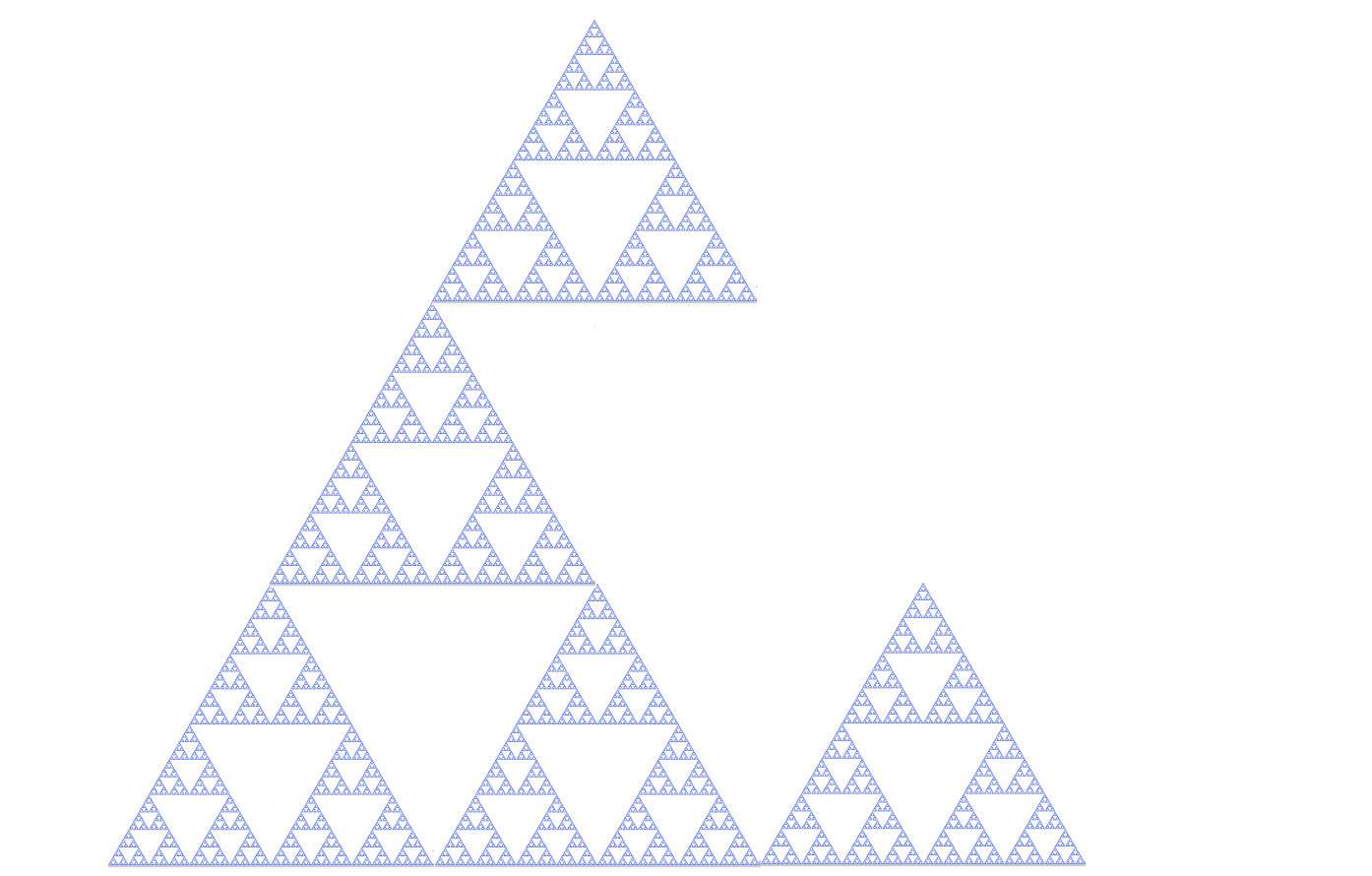}
		\caption{the infinite Sierpinski gasket $\mathcal{ISG}$}
	\end{subfigure}
\end{center}	
\end{figure}

We remark that the existence of cut-off functions satisfying (\ref{cut-offbddenergy}) on such examples is highly nontrivial, and although their existence is known (cf. \cite{AndresBarlow}), there is in general no direct geometric construction of such cut-off functions. For example, in \cite{AndresBarlow} the authors showed that fractal spaces that satisfy some version of parabolic Harnack inequality must admit cut-off functions that satisfy some more specified version of the inequality (\ref{cut-offbddenergy}). A typical more specific dependence of $C_2$\ on $C_1,U,V$\ is that $C_2\sim C_1^{-\alpha}d_X(V,U^c)^{-\beta}$\ for some distance $d_X(V,U^c)$\ between $V,U^c$\ and some constants $\alpha,\beta>0$. In \cite{EHI}, the authors proved that this dependence is in fact a fairly general case, that for a class of functions called regular scale functions, denoted by $\Psi$, there exists some distance $d_\Psi$\ that defines the same topology (being so-called quasisymmetric with the original distance of the fractal space), such that the fractal space admits cut-off functions satisfying the so-called cut-off energy inequality $CS(\Psi)$\ (a special form of (\ref{cut-offbddenergy}) where $C_2$\ is expressed in terms of the $\Psi$\ function, more general than powers), if and only if the fractal space admits cut-off functions satisfying (\ref{cut-offbddenergy}) with $C_2\sim C_1^{-\alpha}d_\Psi(V,U^c)^{-\beta}$, which are power functions with respect to the distance $d_\Psi$. For all these fractal spaces, Lemma \ref{L2Gaussian1} guarantees they satisfy the $L^2$\ Gaussian type upper bound. Because of the distance $d_X$, similar to the first type of examples, we can check that Assumption \ref{appassumption} is satisfied. Hence all results in this paper apply.
\subsection{Infinite products of Dirichlet spaces of the first two types}
The first examples we have in mind for this type of examples are the infinite dimensional torus $\mathbb{T}^\infty$\ and the infinite product of Sierpinski gaskets $\mathcal{SG}^\infty$, the first one being a special case of the class of locally compact connected metrizable (infinite dimensional) groups, cf. \cite{IwaegLaurent}, and the second one the simplest of the infinite product of compact fractal spaces. A general treatment of the elliptic diffusion on (compact) infinite product spaces like $\mathbb{T}^\infty$\ is \cite{Elliptic}, and their results apply more generally to anomalous diffusion on infinite products of (compact) fractal spaces too. To have some noncompact examples we can consider the Iwasawa's example (cf. \cite{Iwaeg,IwaegLaurent}), or replace one piece of Sierpinski gasket in the product $\mathcal{SG}^\infty$\ by the infinite Sierpinski gasket $\mathcal{ISG}$. 

On a locally compact connected metrizable group $G$\ that is unimodular, one usually starts with a heat (convolution) semigroup, or a (left-invariant) Laplacian of the form $L=-\sum a_{ij}X_iX_j$, where $(a_{ij})_{i,j=1}^\infty$\ is symmetric positive definite and $\{X_i\}_{i=1}^\infty$\ is a projective basis of the left-invariant vector fields on $G$\ (in the projective Lie algebra of $G$), and then consider the associated (left-invariant) Dirichlet form. Depending on the coefficients, the Dirichlet form may or may not have nondegenerate intrinsic distance.

For general product spaces $G$\ that have rougher differential structures, like $\mathcal{SG}^\infty$, it is easier and more convenient to consider only the ``diagonal Dirichlet form'', namely, for any diagonal matrix $(a_{ii})_{i=1}^\infty$\ with all $a_{ii}>0$, consider
\begin{eqnarray}
\label{infiniteE}
	\mathcal{E}(f,g)=\sum_{i=1}^\infty a_{ii}\int \mathcal{E}_i(f,g)\, d\left(\tens{\scaleto{j\neq i}{5pt}}m_j\right).
\end{eqnarray}
Here $\mathcal{E}_i$\ stands for the standard Dirichlet form on the $i$th factor of $G$, $m_j$\ stands for the normalized Hausdorff measure on the $j$th factor of $G$, and $f,g$\ are proper functions. 

This third type of examples does not satisfy a property often satisfied in the previous two types of examples, namely, for these infinite dimensional spaces, the volume doubling property (local or global) cannot hold. In some cases these infinite product examples do possess nondegenerate intrinsic distances that define the same topology (e.g. when the coefficient matrix for the Laplacian on $\mathbb{T}^\infty$\ is diagonal and satisfies $\sum a_{ii}^{-2}<\infty$), in which case
Assumption \ref{assumptionbothcases} and Assumption \ref{appassumption} follow.
But, more generally, one can show that the cut-off function assumptions 
(Assumption \ref{assumptionbothcases} and Assumption \ref{appassumption}) are satisfied using the fact that each factor in the infinite product possesses nice cut-off functions in the senses required.

More precisely, since the product topology is generated by cylindric sets (sets that are direct products of open sets as the first few factors, and the whole space for all remaining factors), for pairs of cylindric sets it is easy to construct a nice cut-off function being a product of nice cut-off functions for pairs of open sets on the first few factors, namely,
\begin{eqnarray}
\label{prodcut-off}
	\varphi({\bf x}):=\prod_{i=1}^{N}\varphi_i(x_i)
\end{eqnarray}
for some $N$. We verify this for the simpler case when the Dirichlet form is defined as in (\ref{infiniteE}) (for the group case this is when the coefficient matrix is diagonal, and there is no drift part in the Laplacian).

Suppose $\varphi_i(x_i)$\ is a nice cut-off function on the $i$-th factor $X_i$\ of the infinite product space $X=\prod_{i}X_i$, satisfying that for any $v\in \mathcal{D}(\mathcal{E}_i)$,
\begin{eqnarray}
\int v^2\, d\Gamma_i(\varphi_i,\varphi_i)\leq C_1\int \varphi_i^2\,d\Gamma(v,v)+C_2\int_{\scaleto{\mbox{supp}\{\varphi_i\}}{6pt}}v^2\,dm_i.
\end{eqnarray}
Here $\Gamma_i$\ represents the energy measure on $X_i$, and $C_1,C_2$\ are the same for all factors $X_i$. Then for any $f\in \mathcal{D}(\mathcal{E})$, for the function $\varphi$\ defined as in (\ref{prodcut-off}),
\begin{eqnarray*}
\lefteqn{\int_X f^2\,d\Gamma(\varphi,\varphi)}\\
&=&\sum_{i=1}^\infty a_{ii}\int_{\prod_{j\neq i}X_j}\left(\int_{X_i}f^2\,d\Gamma_i(\varphi_i,\varphi_i)\right)\left(\prod_{\substack{\scaleto{j=1}{4pt}\\\scaleto{j\neq i}{4pt}}}^{\scaleto{N}{4pt}}\varphi_j(x_j)\right)^2\,d\left(\tens{\scaleto{j\neq i}{5pt}}m_j\right)\\
&\leq& \sum_{i=1}^\infty a_{ii}\left[C_1\int_{\prod_{j\neq i}X_j}\left(\int_{X_i}(\varphi_i)^2\,d\Gamma_i(f,f)\right)\left(\prod_{\substack{\scaleto{j=1}{4pt}\\\scaleto{j\neq i}{4pt}}}^{\scaleto{N}{4pt}}\varphi_j(x_j)\right)^2\,d\left(\tens{\scaleto{j\neq i}{5pt}}m_j\right)\right.\\
&&\left.\hfill+C_2\int_{\scaleto{\mbox{supp}\{\varphi\}}{6pt}}f^2\,dm\right].
\end{eqnarray*}
In the last line we bounded the product of $\varphi_i$'s by $1$. Then note that since
\begin{eqnarray*}
\int_X\varphi^2\,d\Gamma(f,f)=\sum_{i=1}^\infty a_{ii}\int_{\prod_{j\neq i}X_j}\int_{X_i}\varphi^2\,d\Gamma(f,f)\,d\left(\tens{\scaleto{j\neq i}{5pt}}m_j\right),
\end{eqnarray*} 
we conclude that
\begin{eqnarray*}
\int_X f^2\,d\Gamma(\varphi,\varphi)\leq C_1\int_X\varphi^2\,d\Gamma(f,f)+C_2\int_{\scaleto{\mbox{supp}\{\varphi\}}{6pt}}f^2\,dm.
\end{eqnarray*}
Thus these infinite product spaces satisfy Assumption \ref{assumptionbothcases}. Using the topological basis of cylindric open sets, we can also easily check that these infinite product spaces satisfy Assumption \ref{appassumption}. By Lemma \ref{L2Gaussian1}, these spaces satisfy the $L^2$\ Gaussian type upper bound. We remark that here we do not have additional requirements on the coefficient matrix $(a_{ii})$\ except that all $a_{ii}>0$, or for the general infinite dimensional group case, that the coefficient matrix is positive definite.
\begin{remark} On infinite dimensional compact groups, when the Laplacian $L$\ is bi-invariant, one can define more function spaces associated with $L$\ that capture the smoothness of functions and define corresponding distributional solutions of the heat equation $(\partial_t+L)u=0$. These are broader classes of solutions than the local weak solutions we consider in this paper. In the new settings one can consider the time regularity and other spatial regularity properties of the distributional solutions of the heat equation, under more assumptions on the associated heat (convolution) semigroup, cf. \cite{distributionspaces,hypoellipticity}. In a sequel paper we will show that for these bi-invariant Laplacians $L$\ and some other differential operators that have comparable Dirichlet forms, the distributional solutions are smooth, with repeated time and spatial derivatives that still belong to the function spaces associated with $L$. These results provide generalizations of the results in \cite{hypoellipticity} and describe hypoellipticiy type properties of $\partial_t+L$.
\end{remark}
\section{The weak Gaussian bound and other lemmas} \setcounter{equation}{0}
\subsection{The weak Gaussian bound}
In this subsection we prove an $L^2$\ Gaussian type upper bound assuming the existence of cut-off functions satisfying (\ref{cut-offbddenergy}) with $C_2(C_1,U,V)=C_1^{-\alpha}C(U,V)$\ for some $\alpha>0$. Our proof is a modification of the classical proof for $L^2$\ Gaussian bound when there are cut-off functions with bounded gradient. For references that discuss about stronger (sub)-Gaussian estimates under stronger assumptions, we mention \cite{Davies,anomalous}. The last part in this subsection about transitioning to estimates on derivatives of the heat semigroup is a straightforward modification of the methods in \cite{Coulhon}.

The following is the main lemma for $L^2$\ Gaussian type upper bound. Its proof is close to for example the beginning part of the proof in \cite{anomalous}.
\begin{lemma}
	\label{L2Gaussian1}
	Let $(X,m,\mathcal{E},\mathcal{F})$\ be a symmetric local regular Dirichlet space. Assume the Dirichlet space satisfies Assumption \ref{assumptionbothcases} and that for any precompact open sets $U,V$\ with disjoint closures, $C_2$\ in (\ref{cut-offbddenergy1}) is of the form $C_2=C_1^{-\alpha}C(U,V)$\ for some $\alpha>0$\ and $C(U,V)>0$. Then for any such open sets $U,V$, for any $f,g\in L^2(X)$\ with $\mbox{supp}\{f\}\subset U$, $\mbox{supp}\{g\}\subset V$,
	\begin{eqnarray}
	\left|\left<H_tf,g\right>\right|\leq \exp{\left\{-\frac{1}{2}\left(\frac{1}{2C(U,V)t}\right)^{\frac{1}{1+2\alpha}}\right\}}\,\left|\left|f\right|\right|_{L^2}\left|\left|g\right|\right|_{L^2}.
	\end{eqnarray}
	Here $<,>$\ represents the $L^2$\ inner product on $X$.
\end{lemma}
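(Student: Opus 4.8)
The plan is to run Davies' exponential perturbation method with the twist that, since there are no bounded-gradient cut-off functions, the perturbation weight must be built from a \emph{nice} cut-off function in the sense of (\ref{cut-offbddenergy1}) and the energy term it generates must be absorbed using the $C_1$-parameter, at the cost of a $C_1^{-\alpha}$ factor that we will optimize at the end. Fix precompact open sets $U,V$ with $\overline U\cap\overline V=\emptyset$ and $f,g\in L^2(X)$ supported in $U,V$ respectively; without loss of generality $f,g\ge 0$ and $\|f\|_{L^2}=\|g\|_{L^2}=1$. For a parameter $C_1\in(0,1)$ to be chosen, let $\eta$ be a nice cut-off function with $\eta=1$ on $V$, $\eta=0$ on $U$, satisfying (\ref{cut-offbddenergy1}) with $C_2=C_1^{-\alpha}C(U,V)$; set $\xi:=\lambda\eta$ for a real parameter $\lambda>0$ and consider the weighted semigroup acting on $u(t):=H_tf$ through the quantity
\begin{eqnarray*}
E(t):=\int_X e^{2\xi}\,u(t)^2\,dm.
\end{eqnarray*}
Since $f$ is supported where $\eta=0$, we have $E(0)=\int_U f^2\,dm=1$, and $\langle H_tf,g\rangle=\int_V u(t)g\,dm\le e^{-\lambda}\big(\int_V u(t)^2 e^{2\xi}dm\big)^{1/2}\le e^{-\lambda}E(t)^{1/2}$ because $\xi=\lambda$ on $V$. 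So it suffices to bound $E(t)$.

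The core computation is the differential inequality for $E$. Formally $\tfrac12 E'(t)=\int e^{2\xi}u\,\partial_t u\,dm=-\mathcal{E}(u,\,e^{2\xi}u)$, which one expands via the Leibniz and chain rules for the energy measure: writing $w:=e^{\xi}u$,
\begin{eqnarray*}
-\mathcal{E}(u,e^{2\xi}u)\le -\int_X d\Gamma(w,w)+\int_X w^2\,d\Gamma(\xi,\xi),
\end{eqnarray*}
the standard Davies identity (the killing part only helps). Now apply (\ref{cut-offbddenergy1}) to $v=w$ and $\eta$, recalling $\xi=\lambda\eta$ so $d\Gamma(\xi,\xi)=\lambda^2\,d\Gamma(\eta,\eta)$: the term $\int w^2\,d\Gamma(\xi,\xi)=\lambda^2\int w^2 d\Gamma(\eta,\eta)$ is bounded by $\lambda^2 C_1\int\eta^2 d\Gamma(w,w)+\lambda^2 C_1^{-\alpha}C(U,V)\int w^2\,dm$. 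Choosing $C_1$ so that $\lambda^2 C_1\le 1$ — i.e. $C_1=\lambda^{-2}$ provided $\lambda\ge 1$ — the first piece is absorbed by $-\int d\Gamma(w,w)$ (using $\eta^2\le1$), leaving
\begin{eqnarray*}
\tfrac12 E'(t)\le \lambda^{2}\,\lambda^{2\alpha}\,C(U,V)\int_X w^2\,dm=\lambda^{2+2\alpha}C(U,V)\,E(t).
\end{eqnarray*}
Grönwall then gives $E(t)\le \exp\{2\lambda^{2+2\alpha}C(U,V)\,t\}$, hence
\begin{eqnarray*}
|\langle H_tf,g\rangle|\le \exp\{-\lambda+\lambda^{2+2\alpha}C(U,V)t\}.
\end{eqnarray*}
Optimizing over $\lambda\ge 1$: the choice $\lambda=\big(\tfrac{1}{2(1+\alpha)C(U,V)t}\big)^{1/(1+2\alpha)}$ makes the exponent $-c_\alpha\big(\tfrac{1}{C(U,V)t}\big)^{1/(1+2\alpha)}$ for small $t$; a slightly lossy but clean choice recovers exactly the stated bound with constant $\tfrac12$ and $\big(\tfrac{1}{2C(U,V)t}\big)^{1/(1+2\alpha)}$ inside the exponential, valid once $t$ is small enough that the optimal $\lambda\ge1$ (for larger $t$ the bound is trivial since $|\langle H_tf,g\rangle|\le1$ and the right side is $\ge e^{-1/2}\cdot(\cdots)$, which can be arranged by adjusting constants).

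The main obstacle is \textbf{rigor of the weighted computation}: $e^{2\xi}u$ need not lie in $\mathcal{F}$, the identity $\tfrac12 E'=-\mathcal{E}(u,e^{2\xi}u)$ requires justifying differentiation under the integral and that $u(t)=H_tf\in\mathcal F$ for $t>0$ with the energy-measure manipulations licensed by the chain/Leibniz rules quoted in Section 2 (these hold for bounded functions, so one truncates $\xi$ at level $\log N$, runs the estimate with constants uniform in $N$, and passes to the limit). One also needs $\eta$ bounded (it is, being valued in $[0,1]$) so that $e^{\xi}$ and its products stay in the right spaces after truncation, and one must check that the absorption step genuinely uses only $\eta^2\le 1$ and the Cauchy–Schwarz inequality (\ref{CSineq}) for the cross term $d\Gamma(e^{\xi}u,e^{\xi}u)$ versus $e^{2\xi}d\Gamma(u,u)+e^{2\xi}u^2 d\Gamma(\xi,\xi)$. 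Everything else — Grönwall, the optimization in $\lambda$, and the reduction via $\xi|_V=\lambda$, $\xi|_U=0$ — is routine. The extension to $|\langle\partial_t^n H_tg_1,g_2\rangle|$, needed for Assumption \ref{L2gaussian}, is then obtained by the complex-time / Cauchy-integral argument of \cite{Coulhon} applied to the analytic semigroup $H_z$, as the excerpt indicates, and does not affect the proof of this lemma.
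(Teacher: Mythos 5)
Your proposal runs Davies' method exactly as the paper does: perturbing by $e^{\lambda\eta}$ for a nice cut-off function $\eta$, absorbing the extra $\int w^2\,d\Gamma(\eta,\eta)$ term into $-\mathcal{E}(w,w)$ by taking $C_1=\lambda^{-2}$, applying Gr\"onwall, and optimizing $\lambda$, landing on the identical exponent $-\lambda+\lambda^{2+2\alpha}C(U,V)t$ and the same optimal $\lambda=(2C(U,V)t)^{-1/(1+2\alpha)}$. Your functional $E(t)=\int e^{2\lambda\eta}(H_tf)^2\,dm$ is the paper's $\|H_t^{\lambda\phi}f\|_{L^2}^2$ under the relabeling $\eta\leftrightarrow 1-\phi$, and your side remarks about truncating $e^{\xi}$ for rigor and about the implicit $\lambda\ge 1$ (small-$t$) constraint are valid points that the paper elides.
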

When there exist enough nice cut-off functions with bounded gradient, Lemma \ref{L2Gaussian1} is a classical result obtained from the so-called Davies' Method. We adapt it to include the case when there only exist nice cut-off functions with bounded energy (as specified in the statement above). In the proof below we refer to the cut-off functions (that equal to $1$\ on $U$\ and $0$\ on $V$) corresponding to some $C_1,C_2$\ with $C_2=C_1^{-\alpha}C(U,V)$\ in short as nice cut-off functions.
\begin{proof}
	For any fixed $\lambda>0$, any nice cut-off function $\phi$, consider the following perturbed semigroup
	\begin{eqnarray*}
		H_t^{\lambda \phi}f:=e^{-\lambda\phi}H_t\left(e^{\lambda\phi}f\right).
	\end{eqnarray*}
	For any $f,g\in L^2(X)$\ with $\mbox{supp}\{f\}\subset U$, $\mbox{supp}\{g\}\subset V$, first observe that
	\begin{eqnarray}
	\left|\left<H_t^{\lambda\phi}f,\,g\right>\right|= e^\lambda\left|\left<H_tf,\,g\right>\right|.\label{L2Gaussian00}
	\end{eqnarray}
	On the other hand, 
	\begin{eqnarray*}
		\left|\left<H_t^{\lambda \phi}f,\, g\right>\right|\leq \left|\left|H_t^{\lambda\phi}f\right|\right|_{L^2}\left|\left|g\right|\right|_{L^2}.
	\end{eqnarray*}
	We estimate $\left|\left|H_t^{\lambda\phi}f\right|\right|_{L^2}$\ by looking at its (square's) time derivative first.
	\begin{eqnarray}
	\lefteqn{\frac{d}{dt}\left(\left|\left|H_t^{\lambda \phi}f\right|\right|_{L^2}^{2}\right)
		=\int_X 2\left(H_t^{\lambda \phi}f\right)\frac{d}{dt}H_t^{\lambda \phi}f\,dm}\notag\\
	&=&\int_X 2\left(H_t^{\lambda \phi}f\right)e^{-\lambda \phi}\frac{d}{dt}H_t\left(e^{\lambda \phi}f\right)dm=-2\mathcal{E}\left(e^{-\lambda \phi}H_t^{\lambda \phi}f,\, e^{\lambda\phi}H_t^{\lambda \phi}f\right)\notag\\
	&=& -2\mathcal{E}\left(H_t^{\lambda \phi}f,\,H_t^{\lambda \phi}f\right)+2\lambda^2\int_X\left(H_t^{\lambda \phi}f\right)^{2}d\Gamma(\phi,\phi).\label{L2Gaussian0}
	\end{eqnarray}
	Since $\phi$\ is a nice cut-off function associated with $C_1$, $C_2$, we have 
	\begin{eqnarray*}
		\lefteqn{\int_X\left(H_t^{\lambda \phi}f\right)^{2}d\Gamma(\phi,\phi)}\\
		&\leq& C_1\int_X\phi^2\,d\Gamma\left(H_t^{\lambda \phi}f,\,H_t^{\lambda \phi}f\right)+C_2\int_{\scaleto{\mbox{supp}\{\phi\}}{6pt}}\left(H_t^{\lambda \phi}f\right)^2dm\\
		&\leq& C_1\mathcal{E}\left(H_t^{\lambda \phi}f,\,H_t^{\lambda \phi}f\right)+C_2\int_{\scaleto{\mbox{supp}\{\phi\}}{6pt}}\left(H_t^{\lambda \phi}f\right)^2dm.
	\end{eqnarray*}
	Substituting this bound back to (\ref{L2Gaussian0}) gives
	\begin{eqnarray*}
		\lefteqn{\frac{d}{dt}\left(\left|\left|H_t^{\lambda \phi}f\right|\right|_{L^2}^{2}\right)
			=-2\mathcal{E}\left(H_t^{\lambda \phi}f,\ H_t^{\lambda \phi}f\right)+2\lambda^2\int_X\left(H_t^{\lambda \phi}f\right)^{2}d\Gamma(\phi,\phi)}\\
		&\leq& \left(-2+2\lambda^2C_1\right)\mathcal{E}\left(H_t^{\lambda \phi}f,\,H_t^{\lambda \phi}f\right)+2\lambda^2C_2\int_{\scaleto{\mbox{supp}\{\phi\}}{6pt}}\left(H_t^{\lambda \phi}f\right)^2dm.
	\end{eqnarray*}
	When $-2+2\lambda^2C_1\leq 0$\ ($C_1\leq \frac{1}{\lambda^2}$), we can drop the first term and get
	\begin{eqnarray*}
		\frac{d}{dt}\left(\left|\left|H_t^{\lambda \phi}f\right|\right|_{L^2}^{2}\right)\leq 2\lambda^2C_2\left|\left|H_t^{\lambda \phi}f\right|\right|_{L^2(X)}^2.
	\end{eqnarray*}
	Observe that at $t=0$, $\left|\left|H_t^{\lambda \phi}f\right|\right|_{L^2}^{2}\bigg|_{t=0}=\left|\left|f\right|\right|_{L^2}^2$, so Gronwall's inequality gives
	\begin{eqnarray*}
		\left|\left|H_t^{\lambda \phi}f\right|\right|_{L^2}^{2}\leq \left|\left|f\right|\right|_{L^2}^2 \exp{\left(2\lambda^2C_2t\right)}.
	\end{eqnarray*}
	Combining this with (\ref{L2Gaussian00}), we have
	\begin{eqnarray*}
		\left|\left<H_tf,g\right>\right|\leq e^{-\lambda}\left|\left|H_t^{\lambda \phi}f\right|\right|_{L^2(X)}\left|\left|g\right|\right|_{L^2(X)}
		\leq \left|\left|f\right|\right|_{L^2}\left|\left|g\right|\right|_{L^2(X)} \exp{\left(-\lambda+\lambda^2C_2t\right)}.
	\end{eqnarray*}
	Take $\phi$\ corresponding to $C_1=\frac{1}{\lambda^2}$\ and let
	\begin{eqnarray*}
		\lambda=\left(\frac{1}{2C(U,V)t}\right)^{\frac{1}{1+2\alpha}}.
	\end{eqnarray*}
	By $C_2=C_1^{-\alpha}C(U,V)$, $\lambda=2\lambda^2C_2t$, and
	\begin{eqnarray*}
		|<H_tf,g>|
		\leq \left|\left|f\right|\right|_{L^2}\left|\left|g\right|\right|_{L^2(X)}\exp{\left\{-\frac{1}{2}\left(\frac{1}{2C(U,V)t}\right)^{\frac{1}{1+2\alpha}}\right\}}.
	\end{eqnarray*}
\end{proof}
\begin{remark}
When $C_2$\ has the more explicit dependence $C_2(C_1,U,V)=C_1^{-\alpha}d_X(U,V)^{-\beta}$\ for some $\alpha,\beta>0$\ and some distance $d_X$\ on $X$\ that defines the same topology, substituting $C(U,V)=d_X(U,V)^{-\beta}$\ in the above $L^2$\ Gaussian bound gives the $L^2$\ version of the sub-Gaussian upper bound. For example, for fractals with walk dimension $d_w$, $C_2\sim C_1^{1-\frac{d_w}{2}}d_X(U,V)^{-d_w}$\ (cf. \cite{anomalous}), then in our expression, $\alpha=\frac{d_w}{2}-1$, $\beta=d_w$, and the exponential term in the upper bound for $\left|\left<H_tf,g\right>\right|$\ is $\exp{\left\{-c\left(\frac{d_X(U,V)^{d_w}}{4t}\right)^{\frac{1}{d_w-1}}\right\}}$\ for some $c>0$.
\end{remark}
Next we estimate $\left|\left<\partial_t^kH_tf,g\right>\right|$. The estimate essentially follows from a straightforward adaptation of Proposition 2.2 in \cite{Coulhon}. For another approach on obtaining estimates on time derivatives of $<H_tf,g>$, cf. \cite{DaviesnonGaussian}. We first record a lemma.
\begin{lemma}
	\label{L2Gaussian2}
	Suppose that $F$\ is an analytic function on $\mathbb{C}_+$. Assume that, for given numbers $A,B,\gamma>0$,
	\begin{eqnarray*}
		|F(z)|\leq B,\ \forall z\in \mathbb{C},
	\end{eqnarray*}
	and for some $0<a\leq 1$,
	\begin{eqnarray*}
		|F(t)|\leq Ae^{at}e^{-\left(\frac{\gamma}{t}\right)^a},\ \forall t\in \mathbb{R}_+.
	\end{eqnarray*}
	Then
	\begin{eqnarray}
	|F(z)|\leq B\exp{\left(-Re\left[\left(\frac{\gamma}{z}\right)^a\right]\right)},\ \forall z\in \mathbb{C}_+.
	\end{eqnarray}
\end{lemma}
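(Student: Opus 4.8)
The plan is a Phragm\'en--Lindel\"of argument applied to the auxiliary function
$$
G(z):=F(z)\,\exp\!\Big(\big(\tfrac{\gamma}{z}\big)^{a}\Big),
$$
with $(\gamma/z)^{a}$ the principal branch; $G$ is holomorphic on $\mathbb{C}_{+}$. Since $z\mapsto\gamma/z$ preserves $\mathbb{C}_{+}$ and $0<a\le 1$, the values of $(\gamma/z)^{a}$ lie in the sector $\{|\arg w|\le a\pi/2\}$, so $\mathrm{Re}\,(\gamma/z)^{a}\ge 0$ on $\mathbb{C}_{+}$ and $\log|G|$ is subharmonic there; the assertion to be proved is exactly that $|G|\le B$ on $\mathbb{C}_{+}$.

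The obstacle is that the two hypotheses sit on different parts of $\overline{\mathbb{C}_{+}}$: the crude bound $|F|\le B$ holds on the imaginary axis (the boundary), while the genuine decay $|F(t)|\le Ae^{at}e^{-(\gamma/t)^{a}}$ lives on the positive real semi-axis, which is interior. To bring the real-axis data onto a boundary I would slit $\mathbb{C}_{+}$ along $(0,\infty)$ and argue on the two open quadrants $\Sigma_{+}=\{z:0<\arg z<\tfrac{\pi}{2}\}$ and $\Sigma_{-}=\{z:-\tfrac{\pi}{2}<\arg z<0\}$ separately; as $z\mapsto\overline{F(\bar z)}$ satisfies the same hypotheses and $(\gamma/\bar z)^{a}=\overline{(\gamma/z)^{a}}$, it is enough to treat $\Sigma_{+}$, and the bound on $(0,\infty)$ then follows by continuity. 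On $\partial\Sigma_{+}$ one has $|G(iy)|\le B\exp\!\big((\gamma/y)^{a}\cos(a\pi/2)\big)$ on the imaginary semi-axis and $|G(t)|\le\min\{Be^{(\gamma/t)^{a}},\,Ae^{at}\}$ on the real semi-axis. The geometric point is that $G$ is bounded on $\Sigma_{+}$ except near the two vertices $0$ and $\infty$, where it is at most of order $\exp(c|z|^{-a})$ and $e^{at}$ respectively; both are strictly below the critical order $2=\pi/(\pi/2)$ for a sector of opening $\tfrac{\pi}{2}$, so after the inversion $z\mapsto 1/z$ (which swaps the vertices) both vertices are removable for Phragm\'en--Lindel\"of on $\Sigma_{+}$.

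The remaining and delicate step is to run a genuine maximum principle keeping the constant equal to $B$. I would regularise by replacing $(\gamma/z)^{a}$ with $\lambda\big(\gamma/(z+\epsilon)\big)^{a}$, $0<\lambda<1$, $\epsilon>0$: the resulting $G_{\lambda,\epsilon}$ is holomorphic on $\Sigma_{+}$, continuous on $\overline{\Sigma_{+}}$, bounded in modulus on the imaginary semi-axis (the $+\epsilon$ removes the singularity at $0$), and on the real semi-axis the extra factor $e^{\lambda(\gamma/(t+\epsilon))^{a}}$ is dominated by $e^{-(\gamma/t)^{a}}$, leaving the surviving decay $e^{-(1-\lambda)(\gamma/t)^{a}}$ near $0$ against only the order-one growth $e^{at}$ near $\infty$. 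Applying the sector Phragm\'en--Lindel\"of principle (equivalently the two-constants theorem for $\Sigma_{+}$), with a harmonic comparison function manufactured from $\mathrm{Re}\,(\gamma/z)^{a}$ itself, and then letting $\epsilon\to 0$ and $\lambda\to 1$, should yield $|G|\le B$ on $\Sigma_{+}$, hence on $\Sigma_{-}$ and on $\mathbb{C}_{+}$. The hard part is precisely making the absorption of the large boundary values near $z=0$ and of the $e^{at}$ growth near $z=\infty$ quantitative enough that the final constant is exactly $B$; this is where the restriction $0<a\le 1$ and the deliberate slack $e^{at}$ in the hypothesis are used.
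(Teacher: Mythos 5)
Your overall plan---pass to $G=F\,e^{(\gamma/z)^a}$, slit $\mathbb{C}_+$ along $\mathbb{R}_+$, and run a Phragm\'en--Lindel\"of/two-constants argument on the two quadrants---is exactly the circle of ideas behind Coulhon's Proposition~2.2, which is all the ``proof'' the paper itself supplies. But your proposal stops at precisely the step you label ``the hard part'', and that step cannot be closed in the form you expect. Phragm\'en--Lindel\"of on $\Sigma_+$ gives at best the harmonic majorant of the boundary data, and your own computation shows that on $i\mathbb{R}_+$ one only has $|G(iy)|\le B\exp\big((\gamma/y)^a\cos(a\pi/2)\big)$, which strictly exceeds $B$ (and blows up at $0$) as soon as $a<1$. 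Neither $\lambda<1$ nor the shift by $\epsilon$ repairs this, since $\mathrm{Re}\,(\gamma/(iy+\epsilon))^a>0$ still, so $|G_{\lambda,\epsilon}(iy)|>|F(iy)|$. The correct auxiliary function on $\Sigma_+$ is $F\exp\big(c(\gamma/z)^a\big)$ with $c=1-i\cot(a\pi/2)$: since $\mathrm{Re}\,c=1$ and $c(\gamma/(iy))^a$ is purely imaginary, it is $\le B$ on both rays of $\partial\Sigma_+$ and Phragm\'en--Lindel\"of then applies, yielding
\begin{equation*}
|F(z)|\le B\exp\Big(-(\gamma/|z|)^a\,\frac{\sin\big(a(\tfrac{\pi}{2}-|\arg z|)\big)}{\sin(a\pi/2)}\Big),\qquad z\in\mathbb{C}_+,
\end{equation*}
which for $a<1$ is strictly weaker near $i\mathbb{R}$ than the bound you (and the lemma) state.

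Indeed the stated conclusion cannot be obtained by any route when $a<1$, because it is false: the function $F_0(z)=B\exp\big(-(1-i\cot(a\pi/2))(\gamma/z)^a\big)$ is holomorphic on $\mathbb{C}_+$, satisfies $|F_0|\le B$ there and $|F_0(t)|=Be^{-(\gamma/t)^a}$ on $\mathbb{R}_+$, yet $|F_0(re^{i\theta})|\to B$ as $\theta\to\pi/2$ for each fixed $r$, while the claimed right-hand side tends to $Be^{-(\gamma/r)^a\cos(a\pi/2)}<B$. (At $a=1$ one has $c=1$ and the discrepancy disappears, which is why Coulhon's case closes cleanly.) So the missing step is not a quantitative nuisance that regularisation and the slack $e^{at}$ can absorb, as you suggest; it is a genuine change of comparison harmonic function, from $\mathrm{Re}\big[(\gamma/z)^a\big]$ to $\mathrm{Re}\big[(1\mp i\cot(a\pi/2))(\gamma/z)^a\big]$ on $\Sigma_\pm$. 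The paper's downstream estimate (\ref{L2Gaussianfull}) is unaffected, because there $z$ lives in the cone $|\arg z|\le\pi/6$, where the two exponents differ only by a bounded positive factor; but as a free-standing lemma the statement needs the corrected comparison function, and your proof, as written, reaches neither form.
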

When $a=1$, this is exactly Proposition 2.2 in \cite{Coulhon}, and the proof for Lemma \ref{L2Gaussian2} is close to that of the proposition in \cite{Coulhon}. Here we follow their use of the notation $\mathbb{C}_+$\ for the right half plane.
\begin{lemma}[$L^2$\ Gaussian upper bound]
	Under the hypotheses in Lemma \ref{L2Gaussian1}, for any $f,g\in L^2(X)$\ with $\mbox{supp}\{f\}\subset U$, $\mbox{supp}\{g\}\subset V$, where $U,V$\ are precompact open sets with disjoint closures, 
	\begin{eqnarray}
	\left|\left<\partial_t^nH_tf,\,g\right>\right|\leq n!\frac{2^n}{t^n}\left|\left|f\right|\right|_{L^2}\left|\left|g\right|\right|_{L^2} \exp{\left\{-\frac{1}{2}\left(\frac{1}{3C(U,V)t}\right)^{\frac{1}{1+2\alpha}}\right\}}.
	\end{eqnarray}
\end{lemma}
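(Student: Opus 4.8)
The strategy is to combine the $L^2$ Gaussian bound of Lemma \ref{L2Gaussian1} with the Cauchy-type integral representation for analytic semigroups, packaged via Lemma \ref{L2Gaussian2}. First I would fix $f,g$ supported in $U,V$ respectively with $\|f\|_{L^2}=\|g\|_{L^2}=1$, and consider the scalar function
\begin{eqnarray*}
F(z):=\left<H_zf,\,g\right>,\qquad z\in \mathbb{C}_+,
\end{eqnarray*}
where $H_z=e^{-zP}$ is the analytic extension of the heat semigroup to the right half-plane (this exists and is analytic because $P$ is a nonnegative self-adjoint operator; it is the holomorphic semigroup generated by $-P$). The self-adjointness gives $\|H_z\|_{L^2\to L^2}\le 1$ for $\mathrm{Re}\,z>0$ by spectral calculus, hence $|F(z)|\le B$ with $B=1$ on all of $\mathbb{C}_+$. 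On the positive real axis Lemma \ref{L2Gaussian1} gives $|F(t)|\le \exp\{-\tfrac12(2C(U,V)t)^{-1/(1+2\alpha)}\}$; writing $a:=\tfrac1{1+2\alpha}\in(0,1]$ and absorbing the harmless factor $e^{at}$ (which only improves nothing but is allowed in the hypothesis of Lemma \ref{L2Gaussian2}, at the cost of replacing the constant $2$ inside by $3$, say), this puts $F$ in the scope of Lemma \ref{L2Gaussian2} with $\gamma$ comparable to $(C(U,V))^{-1}$. Applying that lemma yields the off-diagonal decay of $F$ throughout the half-plane:
\begin{eqnarray*}
|F(z)|\le \exp\left\{-\mathrm{Re}\left[\left(\frac{\gamma}{z}\right)^{a}\right]\right\},\qquad z\in\mathbb{C}_+ .
\end{eqnarray*}

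Next I would recover $\partial_t^nH_t$ by the Cauchy integral formula applied to the analytic function $z\mapsto F(z)$. For $t>0$ fixed, integrating over the circle of radius $t/2$ centered at $t$ (which stays inside $\mathbb{C}_+$),
\begin{eqnarray*}
\partial_t^nF(t)=\frac{n!}{2\pi i}\oint_{|z-t|=t/2}\frac{F(z)}{(z-t)^{n+1}}\,dz ,
\end{eqnarray*}
so that $|\partial_t^nF(t)|\le n!\,(2/t)^{n}\sup_{|z-t|=t/2}|F(z)|$. On that circle $\mathrm{Re}\,z\ge t/2$ and $|z|\le 3t/2$, so $\mathrm{Re}\big[(\gamma/z)^a\big]$ is bounded below by a constant multiple of $(\gamma/t)^a$; choosing the constants carefully (this is where the $3C(U,V)t$ in the denominator of the target inequality comes from, versus the $2C(U,V)t$ of Lemma \ref{L2Gaussian1}) gives
\begin{eqnarray*}
\sup_{|z-t|=t/2}|F(z)|\le \exp\left\{-\frac12\left(\frac{1}{3C(U,V)t}\right)^{\frac{1}{1+2\alpha}}\right\}.
\end{eqnarray*}
Since $\partial_t^nF(t)=\left<\partial_t^nH_tf,\,g\right>$ (the $n$-th strong derivative of $H_t$ in $L^2$ exists and coincides with the scalar derivative after pairing with $g$), combining the last two displays and restoring general $f,g$ by homogeneity gives exactly
\begin{eqnarray*}
\left|\left<\partial_t^nH_tf,\,g\right>\right|\le n!\,\frac{2^n}{t^n}\,\|f\|_{L^2}\|g\|_{L^2}\,\exp\left\{-\frac12\left(\frac{1}{3C(U,V)t}\right)^{\frac{1}{1+2\alpha}}\right\},
\end{eqnarray*}
as claimed.

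The only genuinely delicate point is the passage from the real-axis estimate to the half-plane estimate via Lemma \ref{L2Gaussian2}: one must check that the real-axis bound of Lemma \ref{L2Gaussian1}, which has the shape $A e^{-(\gamma/t)^a}$ rather than $A e^{at}e^{-(\gamma/t)^a}$, indeed satisfies the hypotheses of Lemma \ref{L2Gaussian2} (it does, trivially, since one may insert the extra factor $e^{at}\ge 1$), and that the resulting $\mathrm{Re}[(\gamma/z)^a]$ on the Cauchy circle is controlled from below — this uses $0<a\le 1$ so that $z\mapsto z^{-a}$ behaves well on the sector $|\arg z|\le\pi/6$ that the circle $|z-t|=t/2$ is contained in. Everything else — the Cauchy estimate, the bookkeeping of constants turning $2C$ into $3C$, and the identification of $\partial_t^nF(t)$ with $\left<\partial_t^nH_tf,g\right>$ — is routine. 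I would therefore organize the write-up as: (1) quote the analyticity of $H_z$ and the trivial bound $\|H_z\|_{2\to2}\le1$; (2) invoke Lemma \ref{L2Gaussian1} and Lemma \ref{L2Gaussian2} to get the half-plane decay; (3) apply the Cauchy integral formula on $|z-t|=t/2$ and collect the constants.
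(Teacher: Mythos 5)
Your proposal is correct and follows essentially the same route as the paper: extend $F(t)=\langle H_tf,g\rangle$ holomorphically to $\mathbb{C}_+$ via spectral calculus with $|F(z)|\le \|f\|_{L^2}\|g\|_{L^2}$, feed the real-axis bound from Lemma \ref{L2Gaussian1} into Lemma \ref{L2Gaussian2} (the paper's $\gamma$ works out to $\frac{1}{4^{1+\alpha}C(U,V)}$ — the prefactor $\frac12$ is absorbed into $\gamma$, not into the ``$3$''), and then apply the Cauchy integral formula on $|z-t|=t/2$; the $3$ in the final denominator comes entirely from $|z|\le 3t/2$ on that circle, as your parenthetical correctly notes. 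The only minor imprecision is the phrase ``at the cost of replacing the constant $2$ inside by $3$'' when invoking Lemma \ref{L2Gaussian2} — absorbing $e^{at}\ge 1$ costs nothing there; the $3$ enters only at the Cauchy-circle step.
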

\begin{proof} 
	Let $F(t):=<H_tf,g>$. By spectral calculus, for any $z\in \mathbb{C}$\ with $Re(z)>0$,
	\begin{eqnarray*}
		H_zv=\int_{0}^{+\infty}e^{-z\lambda}dE_\lambda v
	\end{eqnarray*}
	is well-defined for all $v\in L^2$, hence $F(z)$\ can be analytically extended to $z\in \mathbb{C}_+$. Moreover,
	\begin{eqnarray*}
		\left|\left|H_zf\right|\right|_{L^2}^2=\int_{0}^{\infty}e^{-2Re(z)\lambda}\,d(E_\lambda f,f)\leq \left|\left|f\right|\right|_{L^2}^2,
	\end{eqnarray*}
	so $F(z)$\ satisfies $|F(z)|\leq \left|\left|f\right|\right|_{L^2}\left|\left|g\right|\right|_{L^2}$. By Lemma \ref{L2Gaussian1}, for $t>0$,
	\begin{eqnarray*}
		|F(t)|\leq \exp{\left\{-\frac{1}{2}\left(\frac{1}{2C(U,V)t}\right)^{\frac{1}{1+2\alpha}}\right\}}\,\left|\left|f\right|\right|_{L^2}\left|\left|g\right|\right|_{L^2}.
	\end{eqnarray*}
	So by Lemma \ref{L2Gaussian2}, 
	\begin{eqnarray}
	\label{L2GaussianF(z)bound}
	|F(z)|\leq \left|\left|f\right|\right|_{L^2}\left|\left|g\right|\right|_{L^2} \exp{\left(-Re\left[\left( \frac{\gamma}{z}\right)^{\frac{1}{1+2\alpha}}\right] \right)},
	\end{eqnarray}
	where $\gamma=\frac{1}{4^{1+\alpha}C(U,V)}$.
	
	Recall that in complex analysis we have the expression for the nth derivative of $F(z)$\ using the integral over some circle around $z$,
	\begin{eqnarray}
	\label{Cauchyderivativeformula}
	F^{(n)}(z)=\frac{n!}{2\pi i}\int_{\mathcal{C}}\frac{F(\xi)}{(\xi-z)^{n+1}}\,d\xi= \frac{n!}{2\pi}\int_{0}^{2\pi}\frac{F(z+re^{i\theta})}{r^{n}e^{in\theta}}\,d\theta.
	\end{eqnarray}
	Consider $z=t\in \mathbb{R}_+$. Take for example $r=\frac{t}{2}$. Then (\ref{L2GaussianF(z)bound}) gives the bound
	\begin{eqnarray*}
		\left|F\left(t+\frac{t}{2}e^{i\theta}\right)\right|&\leq& \left|\left|f\right|\right|_{L^2}\left|\left|g\right|\right|_{L^2} \exp{\left(-Re\left[ \left( \frac{\gamma}{t+\frac{t}{2}e^{i\theta}}\right)^{\frac{1}{1+2\alpha}} \right] \right)}\\
		&\leq&\left|\left|f\right|\right|_{L^2}\left|\left|g\right|\right|_{L^2} \exp{\left\{-\left(\frac{2\gamma}{3t}\right)^{\frac{1}{1+2\alpha}}\right\}}.
	\end{eqnarray*}
	Substituting this bound in (\ref{Cauchyderivativeformula}), we get
	\begin{eqnarray}
	\label{L2Gaussianfull}
	\left|F^{(n)}(t)\right|=\left|\left<\partial_t^nH_tf,\,g\right>\right|\leq n!\frac{2^n}{t^n}\left|\left|f\right|\right|_{L^2}\left|\left|g\right|\right|_{L^2} \exp{\left\{-\left(\frac{2\gamma}{3t}\right)^{\frac{1}{1+2\alpha}}\right\}}.
	\end{eqnarray}	
\end{proof}
In the application of the Gaussian upper bound in the proofs in previous sections, the exact form of the upper bounds is not essential, we only need the property that the upper bound, divided by any power of $t$, tends to $0$\ as $t$\ tends to $0$. Hence we take Assumption \ref{L2gaussian} in previous sections.
\subsection{Other lemmas} In this subsection we prove Lemma \ref{cut-offannuli} on existence of nice cut-off functions for general pairs of open sets. Starting with the existence of nice cut-off functions for pairs in a topological basis $\mathcal{TB}$\ in the sense of Assumption \ref{assumptionbothcases}, we now construct nice cut-off functions for any pair of open sets $V\Subset U$ (Lemma \ref{cut-offannuli}).

In the next two lemmas we first discuss the properties of the sum and product of two nice cut-off functions. By taking maximum if necessary, we assume that all cut-off functions correspond to the same $C_1,C_2$.
\begin{lemma}[sum of nice cut-off functions] For any two nice cut-off functions $\eta_1,\eta_2$\ for some $V_1\Subset U_2$, $V_2\Subset U_2$, respectively, where $V_1,U_1,V_2,U_2$\ are open subsets of $X$, their sum $\eta:=\eta_1+\eta_2$\ is still a nice cut-off function satisfying
	\begin{eqnarray}
	\label{cut-offsum}
	\lefteqn{\int_X v^2\,d\Gamma(\eta_1+\eta_2,\,\eta_1+\eta_2)}\notag\\
	&\leq& 2C_1\int_X(\eta_1+\eta_2)^2\,d\Gamma(v,v)+4C_2\int_{\scaleto{\mbox{supp}\{\eta_1+\eta_2\}}{6pt}}v^2\,dm.
	\end{eqnarray}
\end{lemma}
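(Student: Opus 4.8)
The plan is to bound the energy measure of the sum $\eta=\eta_1+\eta_2$ by expanding bilinearly and then applying the nice cut-off inequality \eqref{cut-offbddenergy} to each summand $\eta_j$. First I would write
\begin{eqnarray*}
d\Gamma(\eta_1+\eta_2,\,\eta_1+\eta_2)\leq 2\,d\Gamma(\eta_1,\eta_1)+2\,d\Gamma(\eta_2,\eta_2),
\end{eqnarray*}
which is just the elementary inequality $(a+b)^2\leq 2a^2+2b^2$ applied at the level of (measure-valued) quadratic forms; more carefully, $d\Gamma(\eta_1+\eta_2,\eta_1+\eta_2) = d\Gamma(\eta_1,\eta_1) + 2\,d\Gamma(\eta_1,\eta_2) + d\Gamma(\eta_2,\eta_2)$ and the cross term is controlled by the Cauchy-Schwarz inequality for energy measures \eqref{CSineq} with $C=1$. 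Integrating $v^2$ against both sides then gives
\begin{eqnarray*}
\int_X v^2\,d\Gamma(\eta,\eta)\leq 2\int_X v^2\,d\Gamma(\eta_1,\eta_1)+2\int_X v^2\,d\Gamma(\eta_2,\eta_2).
\end{eqnarray*}

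Next I would apply Assumption \ref{assumptionbothcases}, i.e.\ inequality \eqref{cut-offbddenergy}, to each of $\eta_1$ and $\eta_2$ (both assumed to correspond to the same constants $C_1,C_2$, as stated before the lemma), obtaining for each $j=1,2$
\begin{eqnarray*}
\int_X v^2\,d\Gamma(\eta_j,\eta_j)\leq C_1\int_X\eta_j^2\,d\Gamma(v,v)+C_2\int_{\scaleto{\mbox{supp}\{\eta_j\}}{6pt}}v^2\,dm.
\end{eqnarray*}
Summing and using $\eta_j^2\leq(\eta_1+\eta_2)^2$ (valid since $\eta_1,\eta_2\geq0$) for the gradient term, and $\mbox{supp}\{\eta_j\}\subset\mbox{supp}\{\eta_1+\eta_2\}$ for the $L^2$ term, yields
\begin{eqnarray*}
\int_X v^2\,d\Gamma(\eta,\eta)\leq 2C_1\int_X(\eta_1+\eta_2)^2\,d\Gamma(v,v)+4C_2\int_{\scaleto{\mbox{supp}\{\eta_1+\eta_2\}}{6pt}}v^2\,dm,
\end{eqnarray*}
which is exactly \eqref{cut-offsum}.

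There is essentially no hard obstacle here: the only points that require minor care are (i) justifying the measure-level inequality $d\Gamma(\eta_1+\eta_2,\eta_1+\eta_2)\leq 2\,d\Gamma(\eta_1,\eta_1)+2\,d\Gamma(\eta_2,\eta_2)$, which follows from bilinearity and the pointwise (measure) Cauchy-Schwarz bound $|d\Gamma(\eta_1,\eta_2)|\leq\frac12 d\Gamma(\eta_1,\eta_1)+\frac12 d\Gamma(\eta_2,\eta_2)$ recorded after \eqref{CSineq}; and (ii) checking that $\eta_1+\eta_2\in\mathcal{F}\cap C(X)$ with values in an appropriate range — which is immediate since $\mathcal{F}$ is a vector space, continuity is preserved under sums, and $\eta_1+\eta_2$ is a legitimate cut-off for the relevant pair of sets (typically one applies this when the supports overlap suitably so that $\eta_1+\eta_2\equiv1$ on the desired inner set; the precise geometric bookkeeping of which pair $V\Subset U$ the sum cuts off is handled when this lemma is invoked in the proof of Lemma \ref{cut-offannuli}, not here). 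Thus the statement is a short computation and the ``main obstacle'' is merely organizing the constant-tracking cleanly.
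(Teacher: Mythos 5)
Your proof is correct and follows exactly the paper's argument: expand the energy measure bilinearly, absorb the cross term via the measure-version Cauchy--Schwarz inequality to get $d\Gamma(\eta_1+\eta_2,\eta_1+\eta_2)\leq 2\,d\Gamma(\eta_1,\eta_1)+2\,d\Gamma(\eta_2,\eta_2)$, then apply (\ref{cut-offbddenergy}) to each $\eta_j$ and use nonnegativity ($\eta_1^2+\eta_2^2\leq(\eta_1+\eta_2)^2$) and the support inclusion to assemble the constants $2C_1$ and $4C_2$. Your closing remarks on measure-level Cauchy--Schwarz and on why $\eta_1+\eta_2\in\mathcal{F}\cap C(X)$ are accurate and match the intended reading.
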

\begin{proof}
	The energy measure $d\Gamma(\eta_1+\eta_2,\,\eta_1+\eta_2)$\ equals
	\begin{eqnarray*}
		d\Gamma(\eta_1+\eta_2,\,\eta_1+\eta_2)=d\Gamma(\eta_1,\eta_1)+2d\Gamma(\eta_1,\eta_2)+d\Gamma(\eta_2,\eta_2).
	\end{eqnarray*}
	Apply the Cauchy-Schwartz inequality (\ref{CSineq}), we get that for any $v\in \mathcal{F}$,
	\begin{eqnarray*}
		\lefteqn{\int_X v^2\,d\Gamma(\eta_1+\eta_2,\,\eta_1+\eta_2)
		\leq 2\int_X v^2\,d\Gamma(\eta_1,\eta_1)+2\int_Xv^2\,d\Gamma(\eta_2,\eta_2)}\\
		&\leq& 2C_1\int_X(\eta_1+\eta_2)^2\,d\Gamma(v,v)+4C_2\int_{\scaleto{\mbox{supp}\{\eta_1+\eta_2\}}{6pt}}v^2\,dm.
	\end{eqnarray*}
	The last line follows from that $\eta_1,\eta_2$\ are nice cut-off functions corresponding to $C_1,C_2$; $\eta_1,\eta_2\geq 0$; and $\mbox{supp}\{\eta_1\},\,\mbox{supp}\{\eta_2\}\subset \mbox{supp}\{\eta_1+\eta_2\}$.
\end{proof}
\begin{lemma}[product of nice cut-off functions]
	If $0\leq C_1<\frac{1}{4}$, for any two nice cut-off functions $\eta_1,\eta_2$\ for some $V_1\Subset U_2$, $V_2\Subset U_2$, respectively, where $V_1,U_1,V_2,U_2$\ are open subsets of $X$, the product function $\eta:=\eta_1\eta_2$\ is still a nice cut-off function satisfying
	\begin{eqnarray}
	\label{cut-offproduct}
	\lefteqn{\int_X v^2\,d\Gamma(\eta_1\eta_2,\,\eta_1\eta_2)}\notag\\
	&\leq& 16C_1\int_X(\eta_1\eta_2)^2\,d\Gamma(v,v)+8C_2\int_{\scaleto{\mbox{supp}\{\eta_1\eta_2\}}{6pt}}v^2\,dm.
	\end{eqnarray}
\end{lemma}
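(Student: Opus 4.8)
The target inequality bounds $\int_X v^2\,d\Gamma(\eta_1\eta_2,\eta_1\eta_2)$ in terms of $\int_X (\eta_1\eta_2)^2\,d\Gamma(v,v)$ and an $L^2$ error term. The natural route is to expand the energy measure of the product by the Leibniz rule,
\[
d\Gamma(\eta_1\eta_2,\eta_1\eta_2)=\eta_2^2\,d\Gamma(\eta_1,\eta_1)+2\eta_1\eta_2\,d\Gamma(\eta_1,\eta_2)+\eta_1^2\,d\Gamma(\eta_2,\eta_2),
\]
then apply the Cauchy--Schwarz inequality (\ref{CSineq}) to the cross term to get, say,
\[
d\Gamma(\eta_1\eta_2,\eta_1\eta_2)\leq 2\eta_2^2\,d\Gamma(\eta_1,\eta_1)+2\eta_1^2\,d\Gamma(\eta_2,\eta_2).
\]
After integrating against $v^2$, we are left to control $\int_X v^2\eta_2^2\,d\Gamma(\eta_1,\eta_1)$ and the symmetric term.

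The key move is to apply the defining inequality (\ref{cut-offbddenergy}) for the nice cut-off function $\eta_1$, but with the \emph{test function} $v\eta_2$ in place of $v$ (this is legitimate because $\eta_2\in\mathcal{F}\cap L^\infty$, so $v\eta_2$ lies in $\mathcal{F}$ whenever $v$ does, and $0\le\eta_2\le1$ gives $\int_X (v\eta_2)^2 d\Gamma(\eta_1,\eta_1)\ge \int_X v^2\eta_2^2 d\Gamma(\eta_1,\eta_1)$... wait — one must be careful here: (\ref{cut-offbddenergy}) is stated as $\int v^2 d\Gamma(\eta,\eta)\le C_1\int\eta^2 d\Gamma(v,v)+C_2\int_{\mathrm{supp}\{\eta\}}v^2 dm$, so feeding in $v\eta_2$ yields $\int (v\eta_2)^2 d\Gamma(\eta_1,\eta_1)\le C_1\int\eta_1^2 d\Gamma(v\eta_2,v\eta_2)+C_2\int_{\mathrm{supp}\{\eta_1\}}v^2\eta_2^2 dm$). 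Then I would expand $d\Gamma(v\eta_2,v\eta_2)$ again by Leibniz, bound it by $2\eta_2^2 d\Gamma(v,v)+2v^2 d\Gamma(\eta_2,\eta_2)$, and use (\ref{cut-offbddenergy}) for $\eta_2$ to absorb the last piece. Because $0\le\eta_2\le1$ we have $\eta_1^2\eta_2^2\le\eta_1^2\eta_2^2$ trivially but also $\eta_1^2\le\eta_1^2$ and $\eta_1^2\eta_2^2\le(\eta_1\eta_2)^2$ — the point is all the resulting $d\Gamma(v,v)$ coefficients are dominated by $(\eta_1\eta_2)^2$ up to the bounds $\eta_i\le1$, so everything collects into the stated form with numerical constants $16C_1$ and $8C_2$, provided $C_1<\tfrac14$ so that the $d\Gamma(v,v)$ terms with coefficient a multiple of $C_1$ that are not yet of the form $(\eta_1\eta_2)^2$ can be reabsorbed.

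Concretely the steps, in order, are: (1) Leibniz-expand $d\Gamma(\eta_1\eta_2,\eta_1\eta_2)$ and kill the cross term via (\ref{CSineq}); (2) integrate against $v^2$ and, for each of the two surviving terms, invoke (\ref{cut-offbddenergy}) for $\eta_i$ applied to the test function $v\eta_j$ ($j\ne i$); (3) Leibniz-expand $d\Gamma(v\eta_j,v\eta_j)$, bound the cross term by (\ref{CSineq}), and use (\ref{cut-offbddenergy}) for $\eta_j$ on the residual $\int v^2 d\Gamma(\eta_j,\eta_j)$; (4) collect terms, using $0\le\eta_i\le1$ to dominate all $d\Gamma(v,v)$-coefficients by $(\eta_1\eta_2)^2$ and all $dm$-coefficients by $1_{\mathrm{supp}\{\eta_1\eta_2\}}$ — noting $\mathrm{supp}\{\eta_1\eta_2\}\subset\mathrm{supp}\{\eta_i\}$ is false in general but $\mathrm{supp}\{\eta_1\eta_2\}=\mathrm{supp}\{\eta_1\}\cap\mathrm{supp}\{\eta_2\}$, so actually the $L^2$ errors live on the \emph{smaller} set and we only need $\int_{\mathrm{supp}\{\eta_i\}}v^2\eta_j^2\,dm\le\int_{\mathrm{supp}\{\eta_1\eta_2\}}v^2\,dm$ — then solve the resulting self-referential inequality for $\int v^2 d\Gamma(\eta_1\eta_2,\eta_1\eta_2)$.

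\textbf{Main obstacle.} The delicate point is the bookkeeping in step (4): after two rounds of Leibniz expansion one produces several $d\Gamma(v,v)$-terms whose coefficients are $\eta_1^2\eta_2^2$, $\eta_1^4\eta_2^2$, $\eta_1^2\eta_2^4$, etc., multiplied by various powers of $C_1$, and one must verify that the total coefficient of $\int(\eta_1\eta_2)^2 d\Gamma(v,v)$ that cannot be absorbed stays strictly below $1$ precisely under the hypothesis $C_1<\tfrac14$, so that the inequality can be rearranged. Getting the constants to land exactly at $16C_1$ and $8C_2$ requires choosing the Cauchy--Schwarz splitting parameters carefully; since (as the paper notes elsewhere) the exact constants are immaterial for the applications, I would not optimize, but I do need the threshold $C_1<\tfrac14$ to be genuine, which it is because the $C_1$-coefficient of the leading $d\Gamma(v,v)$ term comes out proportional to $4C_1$ after the expansions.
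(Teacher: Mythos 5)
Your high-level plan is the right one and, in outline, the same as the paper's: expand $d\Gamma(\eta_1\eta_2,\eta_1\eta_2)$ by the Leibniz rule, kill the cross term by Cauchy--Schwarz, feed $v\eta_j$ into (\ref{cut-offbddenergy}) for $\eta_i$, expand again, and rearrange a self-referential inequality. But steps (3)--(4) as you describe them contain a genuine gap that would stop the argument from closing.

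In step (3) you propose to pass to $\int_X v^2\,d\Gamma(\eta_j,\eta_j)$ and apply (\ref{cut-offbddenergy}) for $\eta_j$ there. That produces a term $C_1\int_X\eta_j^2\,d\Gamma(v,v)$, and in step (4) you claim it can be dominated by $\int_X(\eta_1\eta_2)^2\,d\Gamma(v,v)$ ``using $0\le\eta_i\le1$.'' The inequality runs the wrong way: $0\le\eta_i\le1$ gives $(\eta_1\eta_2)^2\le\eta_j^2$, so $\int\eta_j^2\,d\Gamma(v,v)$ is \emph{larger} than $\int(\eta_1\eta_2)^2\,d\Gamma(v,v)$, not smaller. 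The fix, and what the paper actually does, is never to drop the $\eta_i^2$ prefactor that appears when you apply (\ref{cut-offbddenergy}): you are left with $C_1\int\eta_i^2\,d\Gamma(\eta_j v,\eta_j v)$, which you expand with a Cauchy--Schwarz parameter $\beta$ to get
$(1+\beta)\int\eta_1^2\eta_2^2\,d\Gamma(v,v)+(1+\tfrac1\beta)\int\eta_i^2 v^2\,d\Gamma(\eta_j,\eta_j)$.
The second piece is \emph{exactly} one of the two quantities you started from, so it is absorbed by moving it to the left-hand side; no second application of (\ref{cut-offbddenergy}) is made, and every surviving $d\Gamma(v,v)$-coefficient is genuinely $\eta_1^2\eta_2^2$, never a bare $\eta_j^2$.

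Two smaller slips. First, you assert that $\mathrm{supp}\{\eta_1\eta_2\}\subset\mathrm{supp}\{\eta_i\}$ ``is false in general'' and that $\mathrm{supp}\{\eta_1\eta_2\}=\mathrm{supp}\{\eta_1\}\cap\mathrm{supp}\{\eta_2\}$; in fact the inclusion is always true, and the claimed equality is in general only an inclusion. This does not break the proof, but the $L^2$-error bookkeeping should be stated correctly. Second, the threshold $C_1<\tfrac14$ does not arise because a ``$4C_1$'' coefficient of $d\Gamma(v,v)$ appears: with $\beta=1$ the rearrangement requires $1-2C_1>0$ (so in principle $C_1<\tfrac12$ suffices), and $C_1<\tfrac14$ is imposed only so that $1-2C_1>\tfrac12$, which is what makes the final constants come out as $16C_1$ and $8C_2$.
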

\begin{proof}
	By the product rule for the energy measure,
	\begin{eqnarray*}
		d\Gamma(\eta_1\eta_2,\,\eta_1\eta_2)=\eta_1^2\,d\Gamma(\eta_2,\eta_2)+2\eta_1\eta_2\,d\Gamma(\eta_1,\eta_2)+\eta_2^2\,d\Gamma(\eta_1,\eta_1).
	\end{eqnarray*}
    Then by the Cauchy-Schwartz inequality (\ref{CSineq}), for any $v\in \mathcal{F}$,
	\begin{eqnarray}
	\label{product1}
	\hspace{-.2in} \int_X v^2\,d\Gamma(\eta_1\eta_2,\,\eta_1\eta_2)
	\leq 2\int_Xv^2\eta_1^2\,d\Gamma(\eta_2,\eta_2)+2\int_Xv^2\eta_2^2\,d\Gamma(\eta_1,\eta_1),
	\end{eqnarray}
	and for any $\beta>0$,
	\begin{eqnarray*}
		\lefteqn{\int_Xv^2\eta_1^2\,d\Gamma(\eta_2,\eta_2)+\int_Xv^2\eta_2^2\,d\Gamma(\eta_1,\eta_1)}\\
		&\leq& C_1\left[\int_X\eta_2^2\,d\Gamma(\eta_1v,\,\eta_1v)+\int_X\eta_1^2\,d\Gamma(\eta_2v,\,\eta_2v)\right]+2C_2\int_{\scaleto{\mbox{supp}\{\eta_1\eta_2\}}{6pt}}v^2\,dm\\
		&\leq& C_1\left[2(1+\beta)\int_X\eta_1^2\eta_2^2\,d\Gamma(v,v)+\left(1+\frac{1}{\beta}\right)\int_X\eta_1^2v^2\,d\Gamma(\eta_2,\eta_2)\right.\\
		&&\left.+\left(1+\frac{1}{\beta}\right)\int_X\eta_2^2v^2\,d\Gamma(\eta_1,\eta_1)\right]+2C_2\int_{\scaleto{\mbox{supp}\{\eta_1\eta_2\}}{6pt}}v^2\,dm.
	\end{eqnarray*}
	So
	\begin{eqnarray*}
		\lefteqn{\left(1-C_1\left(1+\frac{1}{\beta}\right)\right)\left[\int_Xv^2\eta_1^2\,d\Gamma(\eta_2,\eta_2)+\int_Xv^2\eta_2^2\,d\Gamma(\eta_1,\eta_1)\right]}\\
		&\leq& 2C_1\left(1+\beta\right)\int_X\eta_1^2\eta_2^2\,d\Gamma(v,v)+2C_2\int_{\scaleto{\mbox{supp}\{\eta_1\eta_2\}}{6pt}}v^2\,dm.
	\end{eqnarray*}
	For $C_1<\frac{1}{4}$, we can take $\beta=1$, then $\frac{2C_1(1+\beta)}{1-C_1\left(1+\frac{1}{\beta}\right)}=\frac{4C_1}{1-2C_1}<8C_1$, and
	\begin{eqnarray}
	\label{product2}
	\lefteqn{\int_Xv^2\eta_1^2\,d\Gamma(\eta_2,\eta_2)+\int_Xv^2\eta_2^2\,d\Gamma(\eta_1,\eta_1)}\notag\\
	&\leq& 8C_1\int_X\eta_1^2\eta_2^2\,d\Gamma(v,v)+4C_2\int_{\scaleto{\mbox{supp}\{\eta_1\eta_2\}}{6pt}}v^2\,dm.
	\end{eqnarray}
	Combining (\ref{product1})\ and (\ref{product2}), we get (\ref{cut-offproduct}).
\end{proof}
To extend Assumption \ref{assumptionbothcases}, we use a construction similar to the standard construction of partitions of unity to obtain cut-off functions for general pairs of open sets and then check that the so-obtained functions satisfy (\ref{cut-offbddenergy}). We first state the following lemma on using open sets in the basis $\mathcal{TB}$\ to cover any compact set.
\begin{lemma}
	\label{topology}	
	For any compact set $K\subset X$\ and any open neighborhood $U$\ of $K$\ ($K\subset U\Subset X$), there exist two finite open covers $\mathcal{C}_1=\left\{U_1,U_2,\cdots, U_n\right\}$\ and $\mathcal{C}_2=\left\{V_1,V_2,\cdots, V_m\right\}$, such that all $U_j$, $V_i$\ are elements in $\mathcal{TB}$, $K\subset \cup_{i=1}^{m}V_i\subset \cup_{j=1}^nU_j\subset U$, and $\mathcal{C}_2$\ is subordinate to $\mathcal{C}_1$, i.e., for any $V_i\in \mathcal{C}_2$, there exists some $U_j\in \mathcal{C}_1$\ such that $V_i\Subset U_j$. 
\end{lemma}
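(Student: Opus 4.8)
The plan is to construct the two covers point by point, combining local compactness of $X$ (to interpose precompact sets between a point and a given neighbourhood) with the defining property of the basis $\mathcal{TB}$: for every point $x$ and every open set $O\ni x$ there is some $B\in\mathcal{TB}$ with $x\in B\subset O$. No heavy machinery is involved; this is a standard topological preliminary to the partition-of-unity construction used in Lemma \ref{cut-offannuli}.

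First I would fix $x\in K$. Using local compactness together with the Hausdorff property, choose a precompact open set $W$ with $x\in W$ and $\overline{W}\subset U$; then pick $U_x\in\mathcal{TB}$ with $x\in U_x\subset W$. This gives $\overline{U_x}\subset\overline{W}\subset U$ with $\overline{U_x}$ compact, i.e. $U_x\Subset U$. Applying the same two-step procedure to the pair $x\in U_x$ in place of $x\in U$ yields a further basis element $V_x\in\mathcal{TB}$ with $x\in V_x$ and $\overline{V_x}\subset U_x$, i.e. $V_x\Subset U_x$.

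Next I would invoke compactness of $K$: the family $\{V_x\}_{x\in K}$ is an open cover of $K$, so it admits a finite subcover $\{V_{x_1},\dots,V_{x_m}\}$. Setting $V_i:=V_{x_i}$ and $\mathcal{C}_1:=\{U_{x_1},\dots,U_{x_m}\}$ (relabelling to drop repetitions if desired), all listed sets lie in $\mathcal{TB}$; one has $K\subset\bigcup_{i} V_i$ by the subcover property, $\bigcup_{i} V_i\subset\bigcup_{j} U_{x_j}$ since $V_{x_i}\Subset U_{x_i}$, and $\bigcup_{j} U_{x_j}\subset U$ since each $U_{x_j}\Subset U$; and $\mathcal{C}_2$ is subordinate to $\mathcal{C}_1$ because each $V_i=V_{x_i}$ satisfies $V_i\Subset U_{x_i}\in\mathcal{C}_1$ by construction.

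I do not expect a genuine obstacle here. The only point that must be handled with care is that the nested inclusions $V_x\Subset U_x\Subset U$ cannot be read off from the basis property alone — a basis element sitting inside an open set need not have compact closure contained in it — so each refinement step has to be routed through a precompact intermediate set supplied by local compactness. Separability of $X$ plays no role beyond allowing $\mathcal{TB}$ to be taken countable, which is immaterial to the statement.
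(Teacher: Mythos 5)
Your proof is correct and follows essentially the same approach as the paper's: around each point of $K$ construct a nested pair of basis elements $V_x\Subset U_x\Subset U$ using local compactness (the paper invokes regularity of $X$, but local compactness of the Hausdorff space is what actually delivers precompact intermediate open sets, so your phrasing is the more accurate one), then pass to a finite subcover. The only organizational difference is that the paper performs two separate compactness extractions — first building $\mathcal{C}_1$, then refining each $U_j$ and extracting $\mathcal{C}_2$ — whereas you build the pair $(U_x,V_x)$ together and do a single extraction; this makes $\mathcal{C}_1$ and $\mathcal{C}_2$ have the same cardinality (before dropping repetitions), which is a harmless streamlining. Your closing caveat — that the basis property alone does not give compact containment and each step must be routed through a precompact intermediate set — is exactly the point the paper's parenthetical remark is addressing.
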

\begin{proof}
	For any point $p\in K$, there exists an open neighborhood $U_p\in \mathcal{TB}$\ such that $p\in U_p\Subset U$\ since $\mathcal{TB}$\ is a topology basis and $X$\ is regular (to ensure there is some $U_p$\ that is precompact in $U$). Then $\left\{U_p\,\big|\,p\in K\right\}$\ is an open cover of $K$, which has a finite sub-cover $\mathcal{C}_1=\left\{U_{p_1},U_{p_2},\cdots, U_{p_n}\right\}$. We rename $U_{p_j}$\ as $U_j$.
	
	Now we construct $\mathcal{C}_2$\ from $\mathcal{C}_1$. For any point $p\in K$, there exists some $U_j$, $j=1,2,...,n$, such that $p\in U_j$. Then there exists some smaller open neighborhood $V_p\in \mathcal{TB}$\ such that $p\in V_p\Subset U_j$. $\left\{V_p\,\big|\,p\in K\right\}$\ is an open cover of $K$. Let $\left\{V_{p_1},V_{p_2},\cdots, V_{p_m}\right\}$\ be a finite sub-cover, then this gives the $\mathcal{C}_2$\ open cover we wanted, after renaming $V_{p_i}$\ as $V_i$.
	\end{proof}
Next we proceed to prove the lemma on the automatic extension of the applicability of Assumption \ref{assumptionbothcases} from pairs of open sets in a topological basis to all open sets.
\begin{proof}[Proof of Lemma \ref{cut-offannuli}]
	For any pair of open sets $V\Subset U$, for any $0<C_1<1$, we want to construct a nice cut-off function $\psi$\ for the pair $V\subset U$\ corresponding to $C_1$\ in (\ref{cut-offbddenergy}). Pick another open set $V'$\ such that $V\Subset V'\Subset U\Subset X$. Applying Lemma \ref{topology} to the compact set $K=\overline{V'}$\ with open neighborhood $U$, we get two finite open covers $\mathcal{C}_1=\left\{O_1,\cdots,O_n\right\}$\ and  $\mathcal{C}_2=\left\{\Omega_1,\cdots \Omega_m\right\}$\ such that $\mathcal{C}_2$\ is subordinate to $\mathcal{C}_1$, and that both cover $\overline{V'}$\ and are contained in $U$. Applying Lemma \ref{topology} to the compact set $\overline{U}\setminus V'$\ with open neighborhood $X\setminus \overline{V}$, we get two more finite open covers $\mathcal{C}'_1=\left\{O'_1,\cdots,O'_{n'}\right\}$\ and $\mathcal{C}'_2=\left\{\Omega'_1,\cdots \Omega'_{m'}\right\}$, such that $\mathcal{C}'_2$\ is subordinate to $\mathcal{C}'_1$, both $\mathcal{C}'_1,\mathcal{C}'_2$\ cover $\overline{U}\setminus V'$, and are contained in $X\setminus \overline{V}$.
	
	For any $0<C<1$, apply Assumption \ref{assumptionbothcases} to each pair $O_i\Subset \Omega_j$\ and $O'_i\Subset \Omega'_j$. Since all $\mathcal{C}_1,\mathcal{C}_2,\mathcal{C}'_1,\mathcal{C}'_2$\ are finite covers, there are finitely many nice cut-off functions $\left\{\eta_1,\cdots, \eta_r\right\}$\ and $\left\{\varphi_1,\cdots,\varphi_k\right\}$\ for pairs $O_i\Subset \Omega_j$\ and $O'_i\Subset \Omega'_j$, respectively, where all cut-off functions correspond to $C_1=C$\ in (\ref{cut-offbddenergy}). Let
	\begin{eqnarray*}
		\eta:=\eta_1+\cdots+\eta_r,\ \ \varphi:=\sum_{i=1}^{k}\varphi_i+\sum_{j=1}^{r}\eta_j.
	\end{eqnarray*}
	Then $1\leq \varphi\leq k+r$\ on $U$, and $\varphi=\eta$\ on $\overline{V}$, since all $\varphi_i$'s vanish on $\overline{V}$. Hence $\eta/\varphi$\ is well-defined on $U$\ and becomes $0$\ before it reaches the boundary of $U$\ since $\eta$\ is supported in $U$. By extending the quotient by $0$\ outside $U$, we obtain the function $\psi$\ satisfying 
	\begin{eqnarray*}
		\psi(x)=\begin{cases}
			\frac{\eta}{\varphi},\ x\in U,\\
			\\
			0,\ x\in U^c
		\end{cases}=\ \begin{cases}
			1,\ x\in \overline{V},\\
			\mbox{between $0$\ and $1$},\ x\in U\setminus \overline{V},\\
			0,\ x\in U^c.
		\end{cases}
	\end{eqnarray*}
	Hence it remains to show $\psi$\ satisfies (\ref{cut-offbddenergy}). By the lemmas on the sum and product of nice cut-off functions, we only need to show $1/\varphi$\ satisfies (\ref{cut-offbddenergy}) for $u\in \mathcal{F}$\ with support in $U$\ (since $\psi$\ is supported in $U$). For any $u\in \mathcal{F}$\ with support in $U$,
	\begin{eqnarray*}
		\lefteqn{\int u^2\,d\Gamma(1/\varphi,\,1/\varphi)
		= \int u^2\cdot \left(-\frac{1}{\varphi^2}\right)^2d\Gamma(\varphi,\varphi)}\\
		&\leq& \int u^2\,d\Gamma(\varphi,\varphi) \leq C'\int \varphi^2\,d\Gamma(u,u)+C_2\int_{\scaleto{\mbox{supp}\{\varphi\}}{6pt}}u^2\,dm,
	\end{eqnarray*}
	where $C'=2(k+r)C$\ is obtained from the lemma on sums of nice product functions and our definition of $\varphi$; $C_2$\ can be computed correspondingly. Moreover, since $1\leq \varphi\leq k+r$, $1\leq \varphi^2\leq (k+r)^2$, we get that $\varphi\leq (k+r)^2/\varphi$\ on $U$, hence 
	\begin{eqnarray*}
		\int u^2\,d\Gamma(1/\varphi,\,1/\varphi)
		\leq C'\int \frac{(k+r)^4}{\varphi^2}\,d\Gamma(u,u)+C_2\int_{\scaleto{\mbox{supp}\{\varphi\}}{6pt}}u^2\,dm,
	\end{eqnarray*}
	which is indeed of the form (\ref{cut-offbddenergy}). By picking a proper $C$, $\psi$\ would correspond to the given $C_1$\ in (\ref{cut-offbddenergy}).
\end{proof}

\bibliographystyle{plain}
\bibliography{hypl2}

\begin{thebibliography}{10}

\bibitem{AndresBarlow}
Sebastian Andres and Martin~T. Barlow.
\newblock Energy inequalities for cutoff functions and some applications.
\newblock {\em J. Reine Angew. Math.}, 699:183--215, 2015.

\bibitem{EHI}
Martin~T. Barlow and Mathav Murugan.
\newblock Stability of the elliptic {H}arnack inequality.
\newblock {\em Ann. of Math. (2)}, 187(3):777--823, 2018.

\bibitem{Elliptic}
A.~Bendikov and L.~Saloff-Coste.
\newblock Elliptic diffusions on infinite products.
\newblock {\em J. Reine Angew. Math.}, 493:171--220, 1997.

\bibitem{distributionspaces}
A.~Bendikov and L.~Saloff-Coste.
\newblock Spaces of smooth functions and distributions on infinite-dimensional
  compact groups.
\newblock {\em J. Funct. Anal.}, 218(1):168--218, 2005.

\bibitem{hypoellipticity}
A.~Bendikov and L.~Saloff-Coste.
\newblock Hypoelliptic bi-invariant {L}aplacians on infinite dimensional
  compact groups.
\newblock {\em Canad. J. Math.}, 58(4):691--725, 2006.

\bibitem{IwaegLaurent}
Alexander Bendikov and Laurent Saloff-Coste.
\newblock Invariant local {D}irichlet forms on locally compact groups.
\newblock {\em Ann. Fac. Sci. Toulouse Math. (6)}, 11(3):303--349, 2002.

\bibitem{stripcplx}
Alexander Bendikov, Laurent Saloff-Coste, Maura Salvatori, and Wolfgang Woess.
\newblock The heat semigroup and {B}rownian motion on strip complexes.
\newblock {\em Adv. Math.}, 226(1):992--1055, 2011.

\bibitem{app1}
Tobias~Holck Colding and William P.~Minicozzi II.
\newblock Optimal bounds for ancient caloric functions, 2019.

\bibitem{Coulhon}
Thierry Coulhon and Adam Sikora.
\newblock Gaussian heat kernel upper bounds via the
  {P}hragm\'{e}n-{L}indel\"{o}f theorem.
\newblock {\em Proc. Lond. Math. Soc. (3)}, 96(2):507--544, 2008.

\bibitem{Davies}
E.~B. Davies.
\newblock {\em Heat kernels and spectral theory}, volume~92 of {\em Cambridge
  Tracts in Mathematics}.
\newblock Cambridge University Press, Cambridge, 1989.

\bibitem{DaviesnonGaussian}
E.~B. Davies.
\newblock Non-{G}aussian aspects of heat kernel behaviour.
\newblock {\em J. London Math. Soc. (2)}, 55(1):105--125, 1997.

\bibitem{cplxbook}
J.~Eells and B.~Fuglede.
\newblock {\em Harmonic maps between {R}iemannian polyhedra}, volume 142 of
  {\em Cambridge Tracts in Mathematics}.
\newblock Cambridge University Press, Cambridge, 2001.
\newblock With a preface by M. Gromov.

\bibitem{Nate}
Nathaniel Eldredge and Laurent Saloff-Coste.
\newblock Widder's representation theorem for symmetric local {D}irichlet
  spaces.
\newblock {\em J. Theoret. Probab.}, 27(4):1178--1212, 2014.

\bibitem{Fukushima}
Masatoshi Fukushima, Yoichi Oshima, and Masayoshi Takeda.
\newblock {\em Dirichlet forms and symmetric {M}arkov processes}, volume~19 of
  {\em De Gruyter Studies in Mathematics}.
\newblock Walter de Gruyter \& Co., Berlin, extended edition, 2011.

\bibitem{Pavel}
Pavel Gyrya and Laurent Saloff-Coste.
\newblock Neumann and {D}irichlet heat kernels in inner uniform domains.
\newblock {\em Ast\'{e}risque}, (336):viii+144, 2011.

\bibitem{Hino}
Masanori Hino and Jos\'{e}~A. Ram\'{\i}rez.
\newblock Small-time {G}aussian behavior of symmetric diffusion semigroups.
\newblock {\em Ann. Probab.}, 31(3):1254--1295, 2003.

\bibitem{Iwaeg}
Kenkichi Iwasawa.
\newblock On some types of topological groups.
\newblock {\em Ann. of Math. (2)}, 50:507--558, 1949.

\bibitem{Mathavsingularity}
Naotaka Kajino and Mathav Murugan.
\newblock On (non-)singularity of energy measures under full off-diagonal heat
  kernel estimates, 2019.

\bibitem{Kusuoka}
S.~Kusuoka and D.~Stroock.
\newblock Applications of the {M}alliavin calculus. {II}.
\newblock {\em J. Fac. Sci. Univ. Tokyo Sect. IA Math.}, 32(1):1--76, 1985.

\bibitem{Alexandrov}
Kazuhiro Kuwae, Yoshiroh Machigashira, and Takashi Shioya.
\newblock Sobolev spaces, {L}aplacian, and heat kernel on {A}lexandrov spaces.
\newblock {\em Math. Z.}, 238(2):269--316, 2001.

\bibitem{Lierl1}
Janna {Lierl}.
\newblock {Local behavior of solutions of quasilinear parabolic equations on
  metric spaces}.
\newblock {\em arXiv e-prints}, page arXiv:1708.06329, Aug 2017.

\bibitem{Lierl2}
Janna Lierl.
\newblock Parabolic {H}arnack inequality on fractal-type metric measure
  {D}irichlet spaces.
\newblock {\em Rev. Mat. Iberoam.}, 34(2):687--738, 2018.

\bibitem{Lierl3}
Janna Lierl.
\newblock Parabolic {H}arnack inequality for time-dependent non-symmetric
  {D}irichlet forms.
\newblock {\em J. Math. Pures Appl. (9)}, 140:1--66, 2020.

\bibitem{App2LZ}
Fanghua Lin and Q.~S. Zhang.
\newblock On ancient solutions of the heat equation.
\newblock {\em Comm. Pure Appl. Math.}, 72(9):2006--2028, 2019.

\bibitem{anomalous}
Mathav Murugan and Laurent Saloff-Coste.
\newblock Davies' method for anomalous diffusions.
\newblock {\em Proc. Amer. Math. Soc.}, 145(4):1793--1804, 2017.

\bibitem{Alexandrovdef}
Yukio Otsu and Takashi Shioya.
\newblock The {R}iemannian structure of {A}lexandrov spaces.
\newblock {\em J. Differential Geom.}, 39(3):629--658, 1994.

\bibitem{Riemcplx}
Melanie Pivarski and Laurent Saloff-Coste.
\newblock Small time heat kernel behavior on {R}iemannian complexes.
\newblock {\em New York J. Math.}, 14:459--494, 2008.

\bibitem{unifelliptic}
Laurent Saloff-Coste.
\newblock Uniformly elliptic operators on {R}iemannian manifolds.
\newblock {\em J. Differential Geom.}, 36(2):417--450, 1992.

\bibitem{Sturm3}
K.~T. Sturm.
\newblock Analysis on local {D}irichlet spaces. {III}. {T}he parabolic
  {H}arnack inequality.
\newblock {\em J. Math. Pures Appl. (9)}, 75(3):273--297, 1996.

\bibitem{Sturm2}
Karl-Theodor Sturm.
\newblock Analysis on local {D}irichlet spaces. {II}. {U}pper {G}aussian
  estimates for the fundamental solutions of parabolic equations.
\newblock {\em Osaka J. Math.}, 32(2):275--312, 1995.

\bibitem{Takeda}
Masayoshi Takeda.
\newblock On a martingale method for symmetric diffusion processes and its
  applications.
\newblock {\em Osaka J. Math.}, 26(3):605--623, 1989.

\bibitem{Wloka}
J.~Wloka.
\newblock {\em Partial differential equations}.
\newblock Cambridge University Press, Cambridge, 1987.
\newblock Translated from the German by C. B. Thomas and M. J. Thomas.

\bibitem{app2analyticity}
Qi~S. Zhang.
\newblock A note on time analyticity for ancient solutions of the heat
  equation.
\newblock {\em Proc. Amer. Math. Soc.}, 148(4):1665--1670, 2020.

\end{thebibliography}

\end{document}